\documentclass[11pt]{amsart}
\usepackage{graphicx,verbatim,lineno,titletoc}
\usepackage{amssymb,mathrsfs}
\usepackage{amsmath}
\usepackage{adjustbox}
\usepackage{calc}
\usepackage{array}
\usepackage[font=small]{caption}
\usepackage[T1]{fontenc}
\usepackage{fourier}
\usepackage[english]{babel}
\usepackage{appendix}
\usepackage{enumerate}
\usepackage[english]{babel}
\usepackage{multirow}
\usepackage{float}
\usepackage{enumitem}
\usepackage[numbers]{natbib}
\usepackage[all]{xy}
\usepackage{tikz}
\usetikzlibrary{shapes.multipart,patterns,arrows}
\usepackage{hyperref}
\hypersetup{colorlinks,linkcolor={red},citecolor={olive},urlcolor={red}}
\usepackage[top=1.3in, left=1.5in, right=1.5in, bottom=1.5in]{geometry}
\usepackage{mathtools}
\mathtoolsset{showonlyrefs}
\usepackage{csquotes}
\usepackage{blkarray}
\usepackage[utf8]{inputenc}

\usepackage{tikz-cd}

\newcolumntype{M}[1]{>{\centering\arraybackslash}m{#1}}
\newcolumntype{N}{@{}m{0pt}@{}}

\def\.{\hskip.06cm}

\newcommand{\vertiii}[1]{{\left\vert\kern-0.25ex\left\vert\kern-0.25ex\left\vert #1 
		\right\vert\kern-0.25ex\right\vert\kern-0.25ex\right\vert}}

\def\cF{\mathcal F}

\def\T{\mathbf{T}}

\def\balpha{\boldsymbol{\alpha}}

\def\bbeta{\boldsymbol{\beta}}

\def\I{\mathbf{I}}

\def\bSigma{\boldsymbol{\Sigma}}
\def\bLambda{\boldsymbol{\Lambda}}

\def\<{\langle}
\def\>{\rangle}

\def\0{{\mathbf 0}}

\def\.{\hskip.06cm}

\def\bA{\textbf{\textit{A}}}

\def\P{{\textup{\textsf{P}}}}

\DeclareMathOperator{\Var}{\textnormal{Var}}

\def\liminf{\mathop{\rm lim\,inf}\limits}

\def\D{\mathbf{D}}

\def\bZ{\mathbf{Z}}
\def\R{\mathbb{R}}

\def\E{\mathbb{E}}
\def\P{\mathbb{P}}

\def\T{\mathcal{T}}

\def\r{\mathtt{r}}

\def\eps{\varepsilon}

\def\X{\mathbf{X}}

\def\a{\mathbf{a}}
\def\b{\mathbf{b}}
\def\r{\mathbf{r}}
\def\c{\mathbf{c}}

\def\x{\mathbf{x}}

\usepackage{graphicx}

\makeatletter
\newcommand*\rel@kern[1]{\kern#1\dimexpr\macc@kerna}
\newcommand*\widebar[1]{%
	\begingroup
	\def\mathaccent##1##2{%
		\rel@kern{0.8}%
		\overline{\rel@kern{-0.8}\macc@nucleus\rel@kern{0.2}}%
		\rel@kern{-0.2}%
	}%
	\macc@depth\@ne
	\let\math@bgroup\@empty \let\math@egroup\macc@set@skewchar
	\mathsurround\z@ \frozen@everymath{\mathgroup\macc@group\relax}%
	\macc@set@skewchar\relax
	\let\mathaccentV\macc@nested@a
	\macc@nested@a\relax111{#1}%
	\endgroup
}
\DeclareMathOperator{\diag}{diag}

\DeclareMathOperator*{\argmin}{arg\,min}

\usepackage{theoremref}

\newtheorem{theorem}{Theorem}
\numberwithin{theorem}{section}
\numberwithin{equation}{section}

\newtheorem{lemma}[theorem]{Lemma}
\newtheorem{prop}[theorem]{Proposition}

\newtheorem{assumption}[theorem]{Assumption}

\theoremstyle{definition}
\newtheorem{definition}[theorem]{Definition}

\newtheorem{example}[theorem]{Example}
\newtheorem{remark}[theorem]{Remark}

\allowdisplaybreaks

\definecolor{hancolor}{rgb}{0.0 0.0, 1.0}
\makeatletter
\newcommand{\addresseshere}{%
	\enddoc@text\let\enddoc@text\relax
}
\makeatother

\newcommand{\Y}{\mathbf{Y}}
\newcommand{\A}{\mathbf{A}}
\newcommand{\cX}{\mathcal{X}}

\DeclarePairedDelimiter\norm{\lVert}{\rVert}
\usepackage{bbm} % for indicator function
\def\1{\mathbbm{1}}

\begin{document}
	
	%\title[]{Poisson Random Matrices with Prescribed Margins: Rescaling, Conditioning, and the Schrödinger Bridge} 
	\title[]{Scaling limit of Sinkhorn-rescaled Random Matrices \\ via Stability of Static Schrödinger  Bridges  }

	\author{Danny Duan}
	\address{Danny Duan, Department of Mathematics, University of Wisconsin - Madison, WI, 53717, USA}
	\email{\texttt{bduan5@wisc.edu}}

	\author{Hanbaek Lyu}
	\address{Hanbaek Lyu, Department of Mathematics, University of Wisconsin - Madison, WI, 53717, USA}
	\email{\texttt{hlyu@math.wisc.edu}}

	\author{William Powell}
	\address{William Powell, Department of Mathematics, University of Wisconsin - Madison, WI, 53717, USA}
	\email{\texttt{wgpowell@wisc.edu}}

	\keywords{Random matrices, matrix scaling, Sinkhorn algorithm, Schr\"{o}dinger bridge, entropic optimal transport, stability, scaling limit}
	\subjclass[2020]{60B20, 60F05, 49Q22, 94A17}

	\begin{abstract}
		We analyze the asymptotic behavior and scaling limits of large random matrices rescaled via the Sinkhorn algorithm to match prescribed row and column margins. For a random matrix with independent sub-exponential entries, we show that its Sinkhorn rescaling concentrates around the rescaling of its mean matrix, both at the level of the Schr\"odinger potentials and as random measures on the unit square, with explicit non-asymptotic rates. As the dimensions grow, the rescaled random matrix converges to the continuous static Schr\"odinger bridge (SSB) determined by the limiting margins and reference density. Around this scaling limit we develop a fluctuation theory: bulk rigidity for the empirical spectral distribution of the associated sample covariance matrix, and a central limit theorem for the empirical Schr\"odinger potentials of the rescaled empirical mean.
		
		Our analysis is driven by a new quantitative stability theory for the SSB, developed in three forms: Lipschitz continuity in the Hellinger distance under perturbations of the reference measure (kernel stability); H\"older-$1/2$ continuity in the Hellinger distance under $L^1$ perturbations of the margins (margin stability); and $L^\infty$ stability of the discrete Schr\"odinger potentials under margin perturbation (potential stability). Translated to the discrete random-matrix setting, these bounds yield the concentration and scaling-limit results, while a local law for random Gram matrices with a non-uniform variance profile drives the bulk rigidity. Our SSB stability theory may be of independent interest.
	\end{abstract}
	
	\maketitle
	
	\let\cleardoublepage\clearpage
	\vspace{-0.5cm}
	%\tableofcontents

	\section{Introduction}

	\subsection*{Matrix scaling and Sinkhorn's algorithm}
	
	%Below we describe how the rescaled model is constructed. While a typical realization of $\X$ need not belong to $\T(\r,\c)$, we can force it via either rescaling. 
	
	Whether a given matrix with positive entries can be rescaled via diagonal matrices to satisfy prescribed row and column margins is a classical problem known as %the 
	\textit{matrix scaling} \cite{sinkhorn1964relationship, sinkhorn1967concerning}. In most cases such a rescaled matrix cannot be written as a simple function of the input matrix and the target margins, but one can compute it (if it exists) via an iterative algorithm known as the \textit{Sinkhorn algorithm} (a.k.a. iterated proportional fitting), which is to simply rescale the rows and the columns of the input matrix alternatively to match the target row and column sums until convergence (see a recent survey \cite{idel2016review}).
	
	More precisely, given a nonnegative $m\times n$ matrix $\bLambda$ and target row and column margins $(\r,\c)$, one seeks diagonal matrices $\D_{1},\D_{2}$ such that $\D_{1}\bLambda \D_{2}$ lies in the transportation polytope
	\begin{align}\label{eq:transport_polytope}
		\T(\r,\c) := \left\{ \x\in \R_{\ge 0}^{m\times n}\,:\,  r(\x)=\r, \,\, c(\x)=\c   \right\},
	\end{align}
	where $r(\x)$ and $c(\x)$ denote the row and column margin vectors of matrix $\x$. It is well known \cite{deming1940least, sinkhorn1964relationship, sinkhorn1967concerning, ireland1968contingency} that the rescaled matrix $\D_{1}\bLambda \D_{2}$, whenever it exists, is the unique solution to the following relative entropy minimization problem 
	\begin{align}\label{eq:SSB_Poisson_main}
		\bLambda^{\r,\c}:=\argmin_{\bZ \in \T(\r,\c)} \left[ D_{\textup{KL}}(\bZ\,\Vert \, \bLambda) = \sum_{ij}  \bZ_{ij}\log \left( \bZ_{ij}/\bLambda_{ij} \right)  \right]. 
	\end{align}
	A standard Lagrange multiplier argument shows that 
	\begin{align}
		\bLambda^{\r,\c}= \exp(\balpha\oplus \bbeta) \odot \bLambda = (e^{\balpha(i)}\bLambda_{ij} e^{\bbeta(j)})_{ij},
	\end{align}
	where $\oplus$ and $\odot$ denote the direct sum and the entry-wise product, respectively. Here, $\balpha\in \R^{m}$ and $\bbeta\in \R^{n}$ satisfying the above relation are called the \textit{Schr\"{o}dinger potentials}, %Hence $\bZ^{\r,\c;\bLambda}$ is indeed of the form $\D_{1}\bLambda \D_{2}$ with $\D_{1}$ is the diagonal matrix of diagonal entries $e^{\balpha}$ and similarly for $\D_{2}$. Conversely, 
	which also solve the following \textit{Kantorovich dual} problem 
	\begin{align}\label{eq:dual}
		\max_{\balpha,\bbeta} \,\, \langle \balpha, \r  \rangle + \langle \bbeta, \c \rangle - \langle   \exp(\balpha\oplus \bbeta), \bLambda \rangle.
	\end{align}
	%{\color{red} Will- Suggestion (We already introduced sinkhorn and IPF above): Sinkhorn's algorithm is implemented by maximizing the objective in \eqref{eq:dual} with respect to $\balpha$ and $\bbeta$ in an alternating fashion. The update can be written in closed form as:}
	Alternating maximization for \eqref{eq:dual} in $\balpha$ and $\bbeta$ gives the dual form of Sinkhorn's algorithm, which iteratively updates the potentials as follows:
	\begin{align}\label{eq:sinkhon}
		\begin{cases}
			\textit{For $1\le j \le n$, }	\bbeta_{k}(j) \leftarrow \log\left(   \c(j)\bigg/ \sum_{i=1}^{m} \exp(\balpha_{k-1}(i))  \bLambda_{ij}\right),\\
			\textit{For $1\le i \le m$, } \balpha_{k}(i) \leftarrow \log \left(  \r(i) \bigg/  \sum_{j=1}^{n} \exp(\bbeta_{k}(j)) \bLambda_{ij} \right). 
		\end{cases}
	\end{align}
	It is well-known that the above algorithm converges exponentially fast \cite{sinkhorn1967concerning, choudharylinear} to Schr\"{o}dinger potentials if they exist.

	\subsection*{Sinkhorn-rescaled random matrices and the scaling limit}
	
	Given a nonnegative $m\times n$ random matrix $\X$ with independent entries and mean $\bLambda$, as well as a margin $(\r,\c)$, we define the Sinkhorn-rescaled random matrix $\X^{\r,\c}$ by replacing the deterministic input matrix $\bLambda$ in \eqref{eq:SSB_Poisson_main} with the random matrix $\X$:
	\begin{align}
		\X^{\r,\c} =  \D_{1}(\X)\, \X \,\D_{2}(\X), 
	\end{align}
	which is well-defined whenever the realization of $\X$ can be rescaled to the margin $(\r,\c)$ (it is well-defined a.s. if $\X$ is a.s. a positive matrix). The diagonal scaling matrices $\D_{1}(\X)$ and $\D_{2}(\X)$ above depend on $\X$. The fundamental questions we investigate in this work are the following. 
	\begin{description}
		\item[Q1.]  What is the structure of $\X^{\r,\c}$ at fixed dimensions $m\times n$? 
		
		%{\color{red} Will- The statement of Q2 reads a little unclear to me. Maybe ``Suppose $\X_k \in \R^{m_k \times n_k}$ is a sequence of random matrices with mean $\bLambda_k$ and $(\r_k, \c_k) \in \R^{m_k} \times \R^{n_k}$ is a sequence of margins.  If the means $\bLambda_k$ and margins $(\r_k, \c_k$) converge in a suitable sense as the dimensions grow to infinity, what is the scaling limit of $\X_k^{\r_k, \c_k}$?" }
		\item[Q2.] %Given a sequence of margins $(\r_{k},\c_{k})$ and random matrix $\X_{k}$ with growing dimensions with the mean matrix $\E\X_{k}=\bLambda_{k}$ converging in a suitable sense, what is the scaling limit of $\X_{k}^{\r_{k},\c_{k}}$ as the dimensions grow to infinity?
		Suppose $\X_k \in \R^{m_k \times n_k}$ is a sequence of random matrices with mean $\bLambda_k$ and $(\r_k, \c_k) \in \R^{m_k} \times \R^{n_k}$ is a sequence of margins. If the means $\bLambda_k$ and margins $(\r_k, \c_k)$ converge in a suitable sense as the dimensions grow to infinity, what is the scaling limit of $\X_k^{\r_k, \c_k}$?
	\end{description}
	
	The key challenge in analyzing the rescaled random matrix $\X^{\r,\c}$ is that the random scaling factors $\D_{1}(\X), \D_{2}(\X)$ are nonlinear functions of the realization $\X$ without an explicit functional form. A natural idea then is to decouple the dependency between the scaling factors and $\X$ by using the deterministic scaling factors computed for the expectation $\bLambda=\E\X$. Namely, we introduce the following \textit{comparison model}
	\begin{align}\label{eq:comparison_model_intro}
		\hat{\X}^{\r,\c} := \D_{1}(\bLambda) \,\X\, \D_{2}(\bLambda). 
	\end{align}
	%Since $\E[\hat{\X}^{\r,\c}] = \D_{1}(\bLambda)\bLambda\D_{2}(\bLambda)$ is precisely the solution to the discrete SSB \eqref{eq:SSB_Poisson_main}, showing that $\X^{\r,\c} \approx \hat{\X}^{\r,\c}$ reduces the analysis to the concentration of independent random variables around their means. The following result justifies this approximation by showing that the random Schr\"{o}dinger potentials concentrate around their deterministic counterparts.
	Our analysis is roughly based on justifying the following approximation scheme:
	\begin{align}
		\X^{\r,\c} \approx \hat{\X}^{\r,\c} \approx \bLambda^{\r,\c}.
	\end{align}
	To this effect, we are motivated to study the \textit{stability} of the matrix scaling problem. 
	\begin{description}
		\item[Q3.] How do the rescaled matrix $\bLambda^{\r,\c}$, the Schr\"{o}dinger potentials $\balpha,\bbeta$, and the scaling factors $\D_{i}$, change  when perturbing the margin $(\r,\c)$ and the input matrix $\bLambda$?
	\end{description}
	%Namely, comparing $\X^{\r,\c}$ and $\hat{\X}^{\r,\c}$ amounts to bounding the difference between the random scaling factors $\D_{i}(\X)$ 

	\subsection*{Passage to limit theory: The Static Schr\"{o}dinger Bridge}

	A natural candidate for the limiting object for matrix scaling with growing dimensions is the \textit{static Schr\"{o}dinger bridge} (SSB). Suppose one seeks the joint probability distribution closest in relative entropy to a given reference measure among all couplings with prescribed marginals \cite{fortet1940resolution}. For marginal measures $\mu_{0}$ and $\mu_{1}$ and a reference measure $\mathcal{R}$, the SSB is defined as
	\begin{align}\label{eq:RM_min_SB}
		\pi^{\mu_{0},\mu_{1};\mathcal{R}} = \argmin_{\mathcal{H}\in \Pi(\mu_{0},\mu_{1})}   D_{\textup{KL}}(\mathcal{H}\, \Vert \, \mathcal{R}),
	\end{align}
	where $D_{\textup{KL}}(\mathcal{H}\, \Vert \, \mathcal{R})= \int \log  \frac{d\mathcal{H}}{d\mathcal{R}}  \, d\mathcal{H}$ if $\mathcal{H} \ll \mathcal{R}$ and $+\infty$ otherwise. When $\mathcal{R}\ll \mu_{0}\otimes \mu_{1}$, writing the Radon--Nikodym derivative as %{\color{red} Will- do we want to use $\kappa$ instead of $c$ here since we do so in the rest of the paper?}
	%\commHL{Let's use $\kappa$ throughout.}
	\begin{align}
		\frac{d \mathcal{R}}{d\mu_{0} d\mu_{1}}(x,y) = \exp(-\kappa(x,y)/\eps),
	\end{align}
	the SSB problem is equivalent to the \textit{entropic optimal transport} (EOT) problem
	\begin{align}\label{eq:EOT}
		\min_{\mathcal{H}\in \Pi(\mu_{0},\mu_{1})} \int \kappa(x,y)\, d\mathcal{H}(x,y) + \eps\, D_{\textup{KL}}(\mathcal{H}\,\Vert \, \mu_{0}\otimes \mu_{1}),
	\end{align}
	where $\kappa(\cdot,\cdot)$ is the cost function and $\eps>0$ is the entropic regularization parameter. The SSB thus provides a variational framework unifying entropic regularization in optimal transport \cite{villani2021topics, nutz2021introduction} with the classical theory of Schr\"{o}dinger bridges \cite{pavon2021data}. Clearly the finite dimensional matrix scaling problem \eqref{eq:SSB_Poisson_main} is a special instance of the SSB problem \eqref{eq:RM_min_SB}. Indeed, without loss of generality, we can assume that $\r,\c$, and $\bLambda$ are probability measures on $[m]$, $[n]$, and $[m]\times [n]$, respectively. 
	
	The technical core of this work establishes stability theory for SSBs, addressing the question \textbf{Q3} on the stability of matrix scaling in the general setting of SSBs:
	\begin{description}
		\item[Q4.] How do the SSB, the Schr\"{o}dinger potentials, and the scaling factors change  when perturbing the margin and the reference measures? 
	\end{description}
	While stability of the entropic optimal transport plan or Schr\"{o}dinger potentials with respect to the margin have been investigated recently \cite{carlier2020differential, eckstein2022quantitative, deligiannidis2024quantitative, nutz2023stability}, their stability with respect to the input matrix $\bLambda$ for matrix scaling or the reference measure $\mathcal{R}$ for SSB are less well-developed.

	\subsection{Organization}
	The remainder of this paper is organized as follows. Section \ref{sec:results} provides the formal statements of our main results, sequentially covering the continuous stability estimates (Section \ref{sec:SSB_stability_statements}), the non-asymptotic discrete concentration bounds (Section \ref{sec:concentration_statements}), the scaling limit theorem (Section \ref{sec:scaling_limit_statements}), and the fluctuation analysis including bulk rigidity and the Central Limit Theorem (Section \ref{sec:fluctuation_statements}). Section \ref{sec:related_work} situates our contributions within the broader literature on entropic optimal transport, matrix scaling, biwhitening, and random contingency tables. 
	
	The proofs of our main theorems are divided into two sections. Section \ref{sec:SSB_stability} is devoted exclusively to the stability theory for the static Schr\"{o}dinger bridges and potentials. Finally, Section \ref{sec:rescaled_proofs} contains all proofs for the finite-dimensional Sinkhorn-rescaled random matrices, sequentially establishing the discrete concentration of potentials, the scaling limit, and the empirical fluctuation limits.

	\section{Statement of results}
	\label{sec:results}

	\subsection{Overview of results}
	\label{sec:results_roadmap}

	The aim of this paper is to develop a quantitative theory of Sinkhorn-rescaled random matrices. Our main results, all stated formally in this section, fall into four groups.

	\smallskip
	\noindent\emph{Concentration of the rescaled random matrix.} For $\X$ an $m\times n$ random matrix with independent sub-exponential entries and mean $\bLambda$, scaled to a target margin $(\r,\c)$, the random Schr\"{o}dinger potentials $(\balpha_{\X}, \bbeta_{\X})$ concentrate in $\ell^{\infty}$ around their deterministic counterparts $(\balpha,\bbeta)$ for $\bLambda$ (Theorem \ref{thm:concentration_margins_rs}). Consequently the rescaled matrix $\X^{\r,\c}$ is close to a comparison model in operator norm and to the deterministic bridge $\bLambda^{\r,\c}$ against bounded test functions, with explicit non-asymptotic rates (Theorems \ref{thm:first_approximation_rescaled} and \ref{thm:bound_X^rs-Z}).

	\smallskip
	\noindent\emph{Scaling limit.} Under a histogram-convergence assumption on the margins and the prior mean matrices, the sequence of rescaled random matrices converges, with an explicit rate, to the unique \emph{continuous static Schr\"{o}dinger bridge} (SSB) determined by the limiting margins and reference density (Theorem \ref{thm:RM_rescaled_limit}). The deterministic limit alone is treated separately as a quantitative Hellinger-distance bound on the convergence of discrete bridges to the continuous SSB (Theorem \ref{thm:SSB_deterministic_limit}).

	\smallskip
	\noindent\emph{Bulk rigidity of the sample covariance matrix.} The fluctuation matrix $\A$ extracted from $\X^{\r,\c}$ has a non-uniform variance profile $\mathbf{S}$, and the eigenvalue distribution of $\A\A^T$ is governed by the Dyson equation associated with $\mathbf{S}$. We prove bulk rigidity: the eigenvalues of $\A\A^T$ concentrate around the classical locations determined by this Dyson equation, with sharp rates and spectral confinement (Theorem \ref{thm:ESD}).

	\smallskip
	\noindent\emph{Central limit theorem for empirical Schr\"{o}dinger potentials.} For the Sinkhorn potentials computed from the empirical mean of $M$ i.i.d.\ copies of $\X$, the rescaled fluctuations around the deterministic potentials converge in distribution to a multivariate Gaussian as $M\to\infty$ (Theorem \ref{thm: CLT_potentials_fixed_dim}).

	\smallskip
	\noindent\emph{Stability of the SSB and its role.} All four groups of random-matrix results rest on a common deterministic input: a quantitative stability theory for the continuous and discrete static Schr\"{o}dinger bridge. We develop this theory in three forms — Lipschitz stability of the bridge in Hellinger distance under perturbations of the reference measure (Theorem \ref{thm:continuous_kernel_stability}), H\"older-$1/2$ stability under $L^1$ perturbations of the margins (Theorem \ref{thm:continuous_margin_stability}), a combined total stability bound (Theorem \ref{thm:total_stability}), and an $L^{\infty}$ stability bound for the discrete Schr\"{o}dinger potentials with explicit constants (Theorem \ref{thm:potential_stability}). These stability results are of independent interest. Because the random-matrix theorems quote the stability constants in their statements, we present the stability theory first (\S\ref{sec:SSB_stability_statements}) and then build on it to state the random-matrix results in \S\ref{sec:concentration_statements}--\S\ref{sec:fluctuation_statements}.

	\subsection{Stability of the static Schr\"{o}dinger bridge and potentials}
	\label{sec:SSB_stability_statements}

	In this section, we state three quantitative stability estimates for the static Schr\"{o}dinger bridge and Schr\"{o}dinger potentials  with respect to perturbations of the kernel and the margins. 
	
	We first formulate our two stability results for the static Schr\"{o}dinger bridge in a measure-theoretic setting. For a measure space $(\cX, \cF, \mu)$, we write $\mathcal{P}_{\ge 0}(\cX, \mu)$ for the set of probability measures that have density with respect to $\mu$ on $\cX$, and $\mathcal{P}_{>0}(\cX, \mu)$ for the strictly positive subset, i.e., the set of probability measures equivalent to $\mu$. 
	
	We fix marginal probability spaces $(\Omega_{\r}, \cF_{\r}, \mu)$ and $(\Omega_{\c}, \cF_{\c}, \nu)$. To formulate our stability results, we consider the family of static Schr\"{o}dinger bridges  %$\pi^{\rho_{\r},\rho_{\c};\varphi}$
	$\pi^{\mu_0, \mu_1; \mathcal{R}}$
	with marginals %$\rho_{\r}
	$\mu_0 \in \mathcal{P}_{\ge 0}(\Omega_{\r}, \mu)$, %$\rho_{\c} 
	$\mu_1 \in \mathcal{P}_{\ge 0}(\Omega_{\c}, \nu)$, and reference measure $\mathcal{R} \in \mathcal{P}_{>0}(\Omega_{\r} \otimes \Omega_{\c}, \mu_0 \otimes \mu_1)$. 
	We then provide bounds on the change in the bridges as we perturb the reference measure or the marginal densities.

	In the discrete case of the matrix scaling problem (see \eqref{eq:SSB_Poisson_main}), several necessary and sufficient conditions for the existence of $\pi^{\mu_0, \mu_1; \mathcal{R}}$ are known (see, e.g., Menon and Schneider \cite{menon1969spectrum} and Proposition \ref{prop:matrix_scaling_equiv}), and the uniqueness follows from the existence. In the general continuous setting, a sufficient condition for existence and uniqueness of the bridge is that $\mathcal{R}$ is measure-theoretically equivalent to $\mu_0 \otimes \mu_1$ and there is at least one coupling $\pi \in \Pi(\mu_0,\mu_1)$ with finite KL-divergence to $\mathcal{R}$ (see \cite[Thm. 2.1]{nutz2021introduction}).
	
	Recall that for two measures $\mu, \mu'$ both absolutely continuous with respect to a base measure $\sigma$, the Hellinger distance is
	\begin{align}
		d_{H}(\mu, \mu') = \left( \int \left(\sqrt{\frac{d\mu}{d\sigma}} - \sqrt{\frac{d\mu'}{d\sigma}}\right)^2 d\sigma \right)^{1/2}
	\end{align}
	and is independent of the choice of $\sigma$. For our purposes, it will be convenient to use either ${\sigma} = \mu \otimes \nu$ or $\sigma = \mu_0 \otimes \mu_1$.  We also remind the reader that the Hellinger metric induces the same topology as the total variation/$L_1$ metric through the relation
	\begin{align}\label{eq:TV_Hellinger}
		d_{H}(\mu, \mu')^2 \leq d_{TV}(\mu, \mu') \leq 2\sqrt{2}d_{H}(\mu, \mu')
	\end{align}
	where $d_{TV}(\mu, \mu') = \int |\frac{d\mu}{d\sigma} - \frac{d\mu'}{d\sigma}|d\sigma$. 
	%$d\sigma = \rho_{\r} d\mu\otimes \rho_{\c} d\nu$.

	Our first result establishes that, for fixed margins, the map from the %kernel $\varphi$ 
	reference measure $\mathcal{R}$ to the bridge $\pi^{\mu_0, \mu_1; \mathcal{R}}$ is Lipschitz continuous in the Hellinger distance.

	\begin{theorem}[Stability of SSB w.r.t.\ reference measure]
		\label{thm:continuous_kernel_stability}
		Fix marginal probability measures $\mu_0 \in \mathcal{P}_{\ge 0}(\Omega_{\r}, \mu)$ and $\mu_1 \in \mathcal{P}_{\ge 0}(\Omega_{\c}, \nu)$.
		Fix reference probability measures $\mathcal{R},\mathcal{R}'$ where $d\mathcal{R}= e^{-\kappa} d(\mu_0\otimes \mu_{1})$ and $d\mathcal{R}'= e^{-\kappa'} d(\mu_0\otimes \mu_{1})$ for some
		$\kappa, \kappa' \in L^{\infty}(\mu_0 \otimes \mu_1)$. Then we have
		\begin{align}
			d_{H}(\pi^{\mu_0,\mu_1; \mathcal{R}}, \pi^{\mu_0,\mu_1; \mathcal{R}'}) \leq \exp\left(\tfrac{3}{2}\norm{\kappa^{+}}_{\infty} \vee \norm{(\kappa')^{+}}_{\infty}\right)d_{H}(\mathcal{R}, \mathcal{R}').
		\end{align}
	\end{theorem}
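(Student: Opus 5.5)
The plan is to compare the two bridges through their Schr\"odinger potentials and a symmetrized KL identity, and then convert that identity into a Hellinger bound. Write $\pi=\pi^{\mu_0,\mu_1;\mathcal{R}}$ and $\pi'=\pi^{\mu_0,\mu_1;\mathcal{R}'}$. Since $\kappa,\kappa'$ are bounded, $\mathcal{R},\mathcal{R}'$ are equivalent to $\sigma:=\mu_0\otimes\mu_1$, and existence and uniqueness hold by \cite[Thm. 2.1]{nutz2021introduction}. Both bridges have the product form
\begin{align}
p:=\frac{d\pi}{d\sigma}=e^{f\oplus g-\kappa},\qquad p':=\frac{d\pi'}{d\sigma}=e^{f'\oplus g'-\kappa'},
\end{align}
with potentials solving the Schr\"odinger system, e.g.\ $e^{-f(x)}=\int e^{g(y)-\kappa(x,y)}\,d\mu_1(y)$. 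Set $r=e^{-\kappa}$, $r'=e^{-\kappa'}$ and $K:=\norm{\kappa^+}_\infty\vee\norm{(\kappa')^+}_\infty$.

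\emph{Step 1 (Pythagorean identity).} For any coupling $\mathcal{H}\in\Pi(\mu_0,\mu_1)$ with $\mathcal{H}\ll\pi$, we have
\begin{align}
D_{\textup{KL}}(\mathcal{H}\Vert\mathcal{R})=D_{\textup{KL}}(\mathcal{H}\Vert\pi)+\int f\,d\mu_0+\int g\,d\mu_1,
\end{align}
because $\mathcal{H}$ has the same marginals as $\pi$. Apply this with $\mathcal{H}=\pi'$, and symmetrically with the roles of $(\mathcal{R},\pi)$ and $(\mathcal{R}',\pi')$ exchanged. Adding the two identities and using $\log(d\mathcal{R}'/d\mathcal{R})=\kappa-\kappa'$, the constants cancel and we get
\begin{align}
D_{\textup{KL}}(\pi'\Vert\pi)+D_{\textup{KL}}(\pi\Vert\pi')=\int(\kappa-\kappa')\,(p'-p)\,d\sigma .
\end{align}
The left side is at least $2\,d_H(\pi,\pi')^2$, since $D_{\textup{KL}}\ge d_H^2$ with the normalization of $d_H$ used here.

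\emph{Step 2 (Cauchy--Schwarz).} Factor the right side as follows. First,
\begin{align}
p'-p=(\sqrt{p'}-\sqrt p)(\sqrt{p'}+\sqrt p).
\end{align}
Second, $\kappa-\kappa'=2(\log\sqrt{r'}-\log\sqrt r)$. The mean value theorem gives
\begin{align}
|\kappa-\kappa'|\le 2|\sqrt r-\sqrt{r'}|/\min(\sqrt r,\sqrt{r'})\le 2e^{K/2}|\sqrt r-\sqrt{r'}|.
\end{align}
Cauchy--Schwarz then yields
\begin{align}
2d_H(\pi,\pi')^2\le 2e^{K/2}\,\norm{\sqrt p+\sqrt{p'}}_\infty\, d_H(\mathcal{R},\mathcal{R}')\,d_H(\pi,\pi').
\end{align}
Dividing by $d_H(\pi,\pi')$, it remains to show $\norm{\sqrt p+\sqrt{p'}}_\infty\lesssim e^{K}$, in the sharp form $\sup p\le e^{2K}/4$ or a variant with the constants rebalanced. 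That would give the stated factor $e^{3K/2}$.

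\emph{Step 3 (uniform density bound; the main obstacle).} We need an $L^\infty$ bound on the bridge density depending only on $\kappa^+$. From the Schr\"odinger system,
\begin{align}
e^{-f(x)}\ge e^{-\norm{\kappa^+}_\infty}\int e^{g}\,d\mu_1,
\end{align}
and similarly for $g$. Hence
\begin{align}
p(x,y)\le \frac{e^{2\norm{\kappa^+}_\infty}\,e^{-\kappa(x,y)}}{\int e^f d\mu_0\int e^g d\mu_1}.
\end{align}
The difficulty is to control the factor $e^{-\kappa}$ and the product $\int e^f\int e^g$ without any dependence on $\kappa^-$. The tool for this is the constraint that $\mathcal{R}$ is a probability measure, $\int e^{-\kappa}\,d\sigma=1$, together with the marginal equations. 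If a clean pointwise bound fails, I would first try to avoid needing $\sup p$ altogether. One option is to place the weight differently in Cauchy--Schwarz, writing $\sqrt p=e^{(f+g)/2}\sqrt r$ so that only $\sup e^{(f\oplus g)/2}\le e^{K}$ is needed; this is plausible from the two displayed inequalities combined with Jensen. Another is to use the triangle inequality through the intermediate measure $e^{f\oplus g}\mathcal{R}'$. In either variant the exponent bookkeeping must be checked: $e^{K/2}$ from the logarithm and $e^{K}$ from the potentials should give exactly $\tfrac32 K$.
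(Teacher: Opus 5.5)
Your Steps 1 and 2 are correct. The symmetrized identity $D_{\textup{KL}}(\pi'\Vert\pi)+D_{\textup{KL}}(\pi\Vert\pi')=\int(\kappa-\kappa')(p'-p)\,d\sigma$ holds, and so does the Cauchy--Schwarz step.

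The gap is Step 3, and it is structural, not bookkeeping. The quantity you reduce to, $\norm{\sqrt p+\sqrt{p'}}_\infty$, cannot be bounded in terms of $\norm{\kappa^+}_\infty$ alone. Take $\mu_0=\mu_1$ uniform on $\{1,\dots,n\}$, with reference density $r_{11}=n^2/2$ and $r_{ij}=b_n\approx 1/2$ elsewhere. Then $\norm{\kappa^+}_\infty=-\log b_n\to\log 2$. On the other hand, the bridge has the same cross-ratios as $r$, so $\pi_{11}\pi_{22}/(\pi_{12}\pi_{21})=n^2/(2b_n)\approx n^2$. This forces $\pi_{11}=1/n-O(n^{-3/2})$, hence $p_{11}=n^2\pi_{11}\approx n\to\infty$.

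Separately, your target $\sup p\le e^{2K}/4$ already fails for $\kappa=\kappa'=0$, where $p\equiv 1$. That factor of $2$ is lost in $D_{\textup{KL}}\ge d_H^2$. Pointwise one has $(p-p')\log(p/p')\ge 4(\sqrt p-\sqrt{p'})^2$, equivalent to $\log s\ge 2(s-1)/(s+1)$ for $s\ge 1$. So the left side of Step 1 is at least $4d_H(\pi,\pi')^2$.

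Your reweighting fallback fails for a related reason. The potential bound itself is true: your two displays, Jensen, and $\int f\,d\mu_0+\int g\,d\mu_1=D_{\textup{KL}}(\pi\Vert\mathcal R)\ge 0$ give $e^{f\oplus g}\le e^{2\norm{\kappa^+}_\infty}$. But to use $\sqrt p=e^{(f\oplus g)/2}\sqrt r$ you must keep the $\sqrt r$, rather than bounding $1/\min(\sqrt r,\sqrt{r'})$ by $e^{K/2}$. The pointwise Cauchy--Schwarz factor then becomes
\[
\frac{|\log r-\log r'|\,(\sqrt r+\sqrt{r'})}{|\sqrt r-\sqrt{r'}|}=2\psi(s),\qquad s=\sqrt{r/r'},\quad \psi(s)=\frac{(s+1)\log s}{s-1}.
\]
This grows like $|\log s|=|\kappa-\kappa'|/2$, so it depends on $\norm{\kappa^-}_\infty$. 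In a one-shot comparison of $\mathcal R$ with $\mathcal R'$ you cannot collect both $e^{K/2}$ from the logarithm and $e^{K}$ from the potentials. The detour through $e^{f\oplus g}\mathcal R'$ would need Hellinger control of a Sinkhorn projection step, which is the original problem.

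The missing idea is localization along the segment $\sqrt{\phi_t}=t\sqrt{\phi}+(1-t)\sqrt{\phi'}$. The paper differentiates $\Gamma:\sqrt\phi\mapsto\sqrt W$ via the implicit function theorem and finds $\partial\Gamma(\sqrt{\phi_t})=(I-O_t)\,\mathrm{Diag}\bigl(e^{\frac12\balpha_{\phi_t}\oplus\bbeta_{\phi_t}}\bigr)$, with $O_t$ an orthogonal projection. It bounds $\balpha_{\phi_t}\oplus\bbeta_{\phi_t}\le 3K$; the extra $K$ comes from $-\log\norm{\phi_t}_1\le K$, since the intermediate densities are only sub-probability. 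It then applies the mean value theorem.

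Your identity can be repaired the same way without the implicit function theorem:
\begin{enumerate}
\item It holds verbatim for the unnormalized references $\phi_s,\phi_t$.
\item For $|s-t|$ small, $\phi_t/\phi_s\to 1$ uniformly because $\kappa,\kappa'\in L^\infty$, so $\psi\to 2$.
\item With the lower bound $4d_H^2$, this gives $d_H(\pi_s,\pi_t)\le(1+o(1))\sup_u\norm{e^{\frac12\balpha_{\phi_u}\oplus\bbeta_{\phi_u}}}_\infty\norm{\sqrt{\phi_t}-\sqrt{\phi_s}}_{L^2}$.
\item Summing over a fine partition, the segment has total $L^2$-length $d_H(\mathcal R,\mathcal R')$, which yields the constant $e^{3K/2}$.
\end{enumerate}
As written, however, your argument produces at best a constant depending on $\norm{\kappa^-}_\infty$.
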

	
	A few aspects of the result above are worth noting. Since we assume $\mathcal{R}, \mathcal{R}' \sim \mu_0\otimes\mu_1$, the bridges $\pi^{\mu_0,\mu_1;\mathcal{R}}$ and $\pi^{\mu_0,\mu_1;\mathcal{R}'}$ share the same support. Comparing them in the Hellinger metric --- a strong topology on densities --- is therefore natural, in contrast to the weaker topologies (e.g., Wasserstein) %or total variation)
	used in much of the literature.
	Furthermore, the factor $\exp\bigl(\tfrac{3}{2}\|\kappa^{+}\|_{\infty} \vee \|(\kappa')^{+}\|_{\infty}\bigr)$ cannot be improved in general: Example \ref{ex:kernel_stability_tightness} exhibits a family of reference measures for which the Lipschitz constant must diverge as the cost grows.
	
	Since perturbing the reference measure $d\mathcal{R} = e^{-\kappa}d(\mu_0 \otimes \mu_1)$ is equivalent to perturbing the cost function $\kappa$, Theorem \ref{thm:continuous_kernel_stability} can be viewed as a stability result with respect to the cost under fixed marginals. A detailed comparison with existing stability results in the literature is deferred to Section \ref{sec:rw_stability}.

	Our second result controls the bridge under perturbations of the marginals when the kernel is fixed. 
	
	\begin{theorem}[Stability of SSB w.r.t.\ margins]
		\label{thm:continuous_margin_stability}
		Fix pairs of marginal probability measures $(\mu_{0},\mu_{1})$  and $(\mu_{0}',\mu_{1}')$, where $\frac{d\mu_{0}}{d\mu}=\rho_{\r}$, $\frac{d\mu_{0}'}{d\mu}=\rho_{\r}'$, $\frac{d\mu_{1}}{d\nu}=\rho_{\c}$, and $\frac{d\mu_{1}'}{d\nu}=\rho_{\c}'$ for some positive probability densities $\rho_{\r},\rho_{\r}'$, $\rho_{\c}$, and $\rho_{\c}'$. Fix a reference probability measure $\mathcal{R}$ and suppose $\mathcal{R}\sim \mu_{0}\otimes \mu_{1}$  and $\mathcal{R}\sim \mu_{0}'\otimes \mu_{1}'$. Write cost functions $\kappa,\kappa'$ as $ e^{-\kappa} =  \frac{d\mathcal{R}}{d(\mu_{0}\otimes \mu_{1})}$ and $ e^{-\kappa'} =  \frac{d\mathcal{R}}{d(\mu_{0}'\otimes \mu_{1}')}$. 
		Then we have 
		\begin{align}
			d_{H}(\pi^{\mu_{0},\mu_{1};\mathcal{R}}, \pi^{\mu_{0}',\mu_{1}';\mathcal{R}})^2 \leq 4 (\| \kappa \|_{\infty}\lor \| \kappa' \|_{\infty}) \left( \norm{\rho_{\r} - \rho_{\r}'}_{L^1(\mu)} + \norm{\rho_{\c} - \rho_{\c}'}_{L^1(\nu)} \right).
		\end{align}
	\end{theorem}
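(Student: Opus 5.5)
We will prove the bound through the dual (Kantorovich) formulation, using a symmetric KL-type identity. All measures are taken relative to the common base $\sigma=\mu\otimes\nu$. Write $\pi=\pi^{\mu_0,\mu_1;\mathcal{R}}$ and $\pi'=\pi^{\mu_0',\mu_1';\mathcal{R}}$. The standard structure of the SSB gives
\begin{align}
	\frac{d\pi}{d\mathcal{R}}=e^{f\oplus g}, \qquad \frac{d\pi'}{d\mathcal{R}}=e^{f'\oplus g'},
\end{align}
with Schr\"odinger potentials $f,g$ and $f',g'$. Both bridges are densities against the same $\mathcal{R}$, so their log-likelihood ratio is simply
\begin{align}
	\log\frac{d\pi}{d\pi'}=(f-f')\oplus(g-g').
\end{align}

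\textbf{Step 1 (symmetrized KL).} Summing the two KL divergences and using that $f\oplus g$ and $f'\oplus g'$ are sums of functions of one variable, only the marginals enter:
\begin{align}
	D_{KL}(\pi\Vert\pi')+D_{KL}(\pi'\Vert\pi)=\int (f-f')\,(\rho_{\r}-\rho_{\r}')\,d\mu+\int (g-g')\,(\rho_{\c}-\rho_{\c}')\,d\nu.
\end{align}
We then use the elementary inequality $d_H^2\le D_{KL}$ (with whatever normalization constant matches the paper's definition of $d_H$). Applying it to both orderings shows that $2d_H(\pi,\pi')^2$ is at most the right-hand side above.

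\textbf{Step 2 (oscillation bounds on potentials).} It remains to bound the oscillations of $f-f'$ and $g-g'$. Choose the additive normalization $g\mapsto g+c$, $f\mapsto f-c$ conveniently, noting that the pairing with $\rho_{\r}-\rho'_{\r}$ is unaffected by adding constants since both densities integrate to one. This gives
\begin{align}
	\Bigl|\int (f-f')(\rho_{\r}-\rho_{\r}')\,d\mu\Bigr|\le \tfrac12\,\mathrm{osc}(f-f')\,\|\rho_{\r}-\rho_{\r}'\|_{L^1(\mu)}.
\end{align}

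We convert to potentials relative to $\mu_0\otimes\mu_1$: $d\pi/d(\mu_0\otimes\mu_1)=e^{\phi\oplus\psi-\kappa}$, where $\phi=f$ and $\psi=g$ up to the convention. The Schr\"odinger system reads
\begin{align}
	e^{-\phi(x)}=\int e^{\psi(y)-\kappa(x,y)}\,d\mu_1(y).
\end{align}
This implies $\mathrm{osc}(\phi)\le \mathrm{osc}_x\kappa\le 2\|\kappa\|_\infty$, and similarly for $\psi$. The same holds for $\phi',\psi'$ with $\kappa'$.

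The delicate point is that $f$ and $f'$ are potentials against $\mathcal{R}$ itself. The conversion between $\kappa$ and $\kappa'$ is
\begin{align}
	e^{-\kappa}\rho_{\r}\rho_{\c}=e^{-\kappa'}\rho_{\r}'\rho_{\c}'=\frac{d\mathcal{R}}{d\sigma},
\end{align}
so $f$ differs from $\phi$ by $\log\rho_{\r}$-type terms. I would instead phrase the symmetrized KL directly in $\sigma$-densities: $\log(d\pi/d\sigma)=\phi\oplus\psi-\kappa+\log\rho_{\r}\oplus\log\rho_{\c}$. In this form the $\log\rho$ contributions must be shown to cancel, or be absorbed, using the identity above. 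This yields $\mathrm{osc}(f-f')\le \mathrm{osc}\,\phi+\mathrm{osc}\,\phi'\le 4(\|\kappa\|_\infty\vee\|\kappa'\|_\infty)$.

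\textbf{Main obstacle.} Combining the pieces gives $d_H^2\le 4(\|\kappa\|_\infty\vee\|\kappa'\|_\infty)(\|\rho_{\r}-\rho_{\r}'\|_{L^1(\mu)}+\|\rho_{\c}-\rho_{\c}'\|_{L^1(\nu)})$, up to constant bookkeeping. The hard step is the oscillation control of $f-f'$ in Step 2. The $\sigma$-referenced potentials $f=\phi+\log\rho_{\r}$-type terms need not be bounded, since the densities may be unbounded. The plan is to exploit that $\rho_{\r}e^{-\kappa}$ is tied to $\rho_{\r}'e^{-\kappa'}$ by the identity above. If the $\log\rho$ terms do not cancel exactly, I would write $f-f'=(\phi-\phi')+\log(\rho_{\r}/\rho_{\r}')$ and bound $\int\log(\rho_{\r}/\rho_{\r}')(\rho_{\r}-\rho'_{\r})\,d\mu$ via $D_{KL}(\pi\Vert\pi')\ge D_{KL}(\mu_0\Vert\mu_0')$ (data processing), moving that term to the left side.
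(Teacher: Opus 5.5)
Your argument is correct, and the step you single out as the main obstacle is not actually one. Since $e^{-\kappa}=d\mathcal{R}/d(\mu_0\otimes\mu_1)$ and $e^{-\kappa'}=d\mathcal{R}/d(\mu_0'\otimes\mu_1')$, you have $d\pi/d(\mu_0\otimes\mu_1)=e^{f\oplus g-\kappa}$ and $d\pi'/d(\mu_0'\otimes\mu_1')=e^{f'\oplus g'-\kappa'}$. Hence $f=\phi$ and $f'=\phi'$ exactly: the $\log\rho$ factors are already absorbed into $\kappa$ and $\kappa'$. Equivalently, they cancel exactly in $\log(d\pi/d\pi')$ by the identity $e^{-\kappa}\rho_{\r}\rho_{\c}=e^{-\kappa'}\rho_{\r}'\rho_{\c}'$ that you wrote down.

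The Schr\"odinger equations then give $\mathrm{osc}(f)\le 2\|\kappa\|_\infty$ and $\mathrm{osc}(f')\le 2\|\kappa'\|_\infty$ directly. The potentials are bounded, so splitting $\int (f-f')\oplus(g-g')\,d\pi$ into the two marginal integrals is legitimate. Drop the data-processing fallback: it is unnecessary, and it would subtract off the very divergence that controls $d_H^2$.

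With the paper's normalization $d_H^2=\int(\sqrt{p}-\sqrt{q})^2$ one has $d_H^2\le D_{\textup{KL}}$. Your bookkeeping then yields $d_H^2\le(\|\kappa\|_\infty\vee\|\kappa'\|_\infty)(\|\rho_{\r}-\rho_{\r}'\|_{L^1(\mu)}+\|\rho_{\c}-\rho_{\c}'\|_{L^1(\nu)})$, i.e.\ constant $1$ instead of $4$.

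The paper's proof takes a differential route instead.
\begin{itemize}
\item It interpolates linearly between the two potential pairs, both taken relative to $\mathcal{R}$.
\item Using the factorizations $\partial\Psi=\Xi H$ and $\partial F=H^{\top}\Xi H$, it writes $\sqrt{W}-\sqrt{W'}$ and $(\rho_{\r},\rho_{\c})-(\rho_{\r}',\rho_{\c}')$ as $t$-integrals.
\item Jensen in $t$ bounds $d_H^2$ by $\int_0^1\|\Xi_t^{1/2}Hh\|^2\,dt=\langle\rho_{\r}-\rho_{\r}',\balpha-\balpha'\rangle+\langle\rho_{\c}-\rho_{\c}',\bbeta-\bbeta'\rangle$, where $h:=(\balpha-\balpha',\bbeta-\bbeta')$.
\item It finishes with $L^1$--$L^\infty$ duality and the normalized a priori bound of Lemma~\ref{lem: boundedness_continuous_potentials}.
\end{itemize}
That pairing is exactly your symmetrized divergence $\mathrm{SKL}:=D_{\textup{KL}}(\pi\Vert\pi')+D_{\textup{KL}}(\pi'\Vert\pi)$. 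So the two proofs share the endgame and differ only in how $d_H^2$ is compared to it.

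Your route is more elementary: it needs no Fr\'echet calculus or $L^2$-valued integrals, only the exponential form of $d\pi/d\pi'$ and the inequality $d_H^2\le D_{\textup{KL}}$. It is gauge-free thanks to the oscillation/midrange trick, and it gives a smaller constant.

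The paper's route fits the operator framework used for its kernel-stability theorem. If the factor $\tfrac12$ in the derivative $\tfrac12\Xi^{1/2}H$ of $(\xi,\eta)\mapsto\sqrt{\varphi e^{\xi\oplus\eta}}$ is kept, it yields $d_H^2\le\tfrac14\,\mathrm{SKL}$ rather than your $\tfrac12\,\mathrm{SKL}$. You can obtain the same gain by replacing $d_H^2\le D_{\textup{KL}}$ with the pointwise inequality $(\sqrt a-\sqrt b)^2\le\tfrac14(a-b)\log(a/b)$.
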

	
	Two features of this bound are worth noting. First, the left-hand side is the \textit{square} of the Hellinger distance, giving Hölder-$1/2$ regularity in the $L^1$ distance between the margins. Second, the constant depends only \textit{linearly} on the cost, milder than the exponential dependence $e^{\|\kappa\|_{\infty}}$ in the %kernel
	reference stability (Theorem \ref{thm:continuous_kernel_stability}). 
	
	By combining Theorems \ref{thm:continuous_kernel_stability} and \ref{thm:continuous_margin_stability} via the triangle inequality, we obtain total stability under simultaneous perturbation of both the kernel and the margins.
	
	\begin{theorem}[Total stability of SSB]
		\label{thm:total_stability}
		Make the same assumptions as in Theorems \ref{thm:continuous_kernel_stability} and \ref{thm:continuous_margin_stability}. Let $d\mathcal{R} = e^{-\kappa}d(\mu_0 \otimes \mu_1)$ and $d\mathcal{R}' = e^{-\kappa'}d(\mu_0' \otimes \mu_1')$. Further assume that
		\begin{align}
			e^{-M} \leq \frac{d(\mu_0 \otimes \mu_1)}{d(\mu_0' \otimes \mu_1')} \leq e^{M} \quad \mu \otimes \nu \text{ a.s. }
		\end{align}
		for some $M \ge 0$. Then 
		\begin{align}
		&	d_{H}(\pi^{\mu_{0}, \mu_{1}; \mathcal{R}}, \pi^{\mu_{0}', \mu_{1}'; \mathcal{R}'})^2\\
		&\qquad	\leq 8(\norm{\kappa}_{\infty} + M) \left( \norm{\rho_{\r} - \rho_{\r}'}_{L^1(\mu)} + \norm{\rho_{\c} - \rho_{\c}'}_{L^1(\nu)} \right) + 4e^{3(\norm{\kappa}_{\infty} + M) \vee \norm{(\kappa')}_{\infty}}\, d_{H}(\mathcal{R}, \mathcal{R}')^2.
		\end{align}
	\end{theorem}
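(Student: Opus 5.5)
The plan is to insert an intermediate bridge with the new margins and the old reference measure, and then combine the two earlier theorems through the triangle inequality. Set $\pi_1 := \pi^{\mu_0,\mu_1;\mathcal{R}}$, $\pi_2 := \pi^{\mu_0',\mu_1';\mathcal{R}}$ and $\pi_3 := \pi^{\mu_0',\mu_1';\mathcal{R}'}$. Using $d_H(\pi_1,\pi_3)^2 \le 2 d_H(\pi_1,\pi_2)^2 + 2 d_H(\pi_2,\pi_3)^2$, it suffices to show that $d_H(\pi_1,\pi_2)^2$ is at most $4(\|\kappa\|_\infty + M)$ times the $L^1$ margin discrepancy, and that $d_H(\pi_2,\pi_3)^2 \le 2e^{3(\|\kappa\|_\infty+M)\vee\|\kappa'\|_\infty} d_H(\mathcal{R},\mathcal{R}')^2$. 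This accounts for the factors $8$ and $4$ in the statement.

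\textbf{Step 1 (margin term).} I apply Theorem \ref{thm:continuous_margin_stability} with the fixed reference $\mathcal{R}$. The cost relative to $\mu_0\otimes\mu_1$ is $\kappa$. The cost relative to $\mu_0'\otimes\mu_1'$, call it $\tilde\kappa$, is defined by
\begin{align}
e^{-\tilde\kappa} = \frac{d\mathcal{R}}{d(\mu_0'\otimes\mu_1')} = e^{-\kappa}\,\frac{d(\mu_0\otimes\mu_1)}{d(\mu_0'\otimes\mu_1')}.
\end{align}
The two-sided assumption with $M$ then gives $\|\tilde\kappa\|_\infty \le \|\kappa\|_\infty + M$. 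It also shows that $\mathcal{R}\sim\mu_0'\otimes\mu_1'$, which is needed for both bridges to exist. Theorem \ref{thm:continuous_margin_stability} therefore yields $d_H(\pi_1,\pi_2)^2 \le 4(\|\kappa\|_\infty+M)(\|\rho_{\r}-\rho_{\r}'\|_{L^1(\mu)}+\|\rho_{\c}-\rho_{\c}'\|_{L^1(\nu)})$.

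\textbf{Step 2 (reference term).} I apply Theorem \ref{thm:continuous_kernel_stability} with the fixed margins $(\mu_0',\mu_1')$. Relative to $\mu_0'\otimes\mu_1'$, the reference measures have densities $e^{-\tilde\kappa}$ and $e^{-\kappa'}$, and both costs lie in $L^\infty$. The constant is $\exp(\tfrac32 \|\tilde\kappa^+\|_\infty\vee\|(\kappa')^+\|_\infty)$, which is at most $\exp(\tfrac32(\|\kappa\|_\infty+M)\vee\|\kappa'\|_\infty)$. Squaring gives $d_H(\pi_2,\pi_3)^2 \le e^{3(\|\kappa\|_\infty+M)\vee\|\kappa'\|_\infty} d_H(\mathcal{R},\mathcal{R}')^2$. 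Here I use that the Hellinger distance does not depend on the base measure, so $d_H(\mathcal{R},\mathcal{R}')$ is the quantity appearing in the statement. Multiplying by $2$ from the triangle step gives the stated constant $4$. The two estimates together prove the theorem.

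\textbf{Main obstacle.} The only delicate point is that all hypotheses of the cited theorems hold for the intermediate bridge $\pi_2$. Specifically, $\mathcal{R}$ must be a probability measure equivalent to $\mu_0'\otimes\mu_1'$, and its cost must be bounded. The two-sided density-ratio assumption with $M$ is exactly what provides both, so I expect this step to go through. I also need to check that the sign conventions in $\|\tilde\kappa^+\|_\infty$ cause no trouble; bounding $\|\tilde\kappa^+\|_\infty \le \|\tilde\kappa\|_\infty$ settles this.
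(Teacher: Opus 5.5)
Your proposal is correct and follows the paper's proof essentially verbatim. It uses the same intermediate bridge $\pi^{\mu_0',\mu_1';\mathcal{R}}$, the same $2a^2+2b^2$ splitting, the same change-of-reference bound $\|\log \tfrac{d\mathcal{R}}{d(\mu_0'\otimes\mu_1')}\|_\infty \le \|\kappa\|_\infty + M$, and then applies Theorem \ref{thm:continuous_margin_stability} and Theorem \ref{thm:continuous_kernel_stability} to the two pieces.

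One small arithmetic slip: doubling your Step 2 bound gives $2e^{3(\|\kappa\|_\infty+M)\vee\|\kappa'\|_\infty}\, d_H(\mathcal{R},\mathcal{R}')^2$, not constant $4$. This is only stronger than the statement; the paper's argument also yields $2$, so the stated $4$ is slack.
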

	
	\begin{remark}[Role of the constant $M$]
		\label{rmk:total_stability_M}
		The constant $M$ in Theorem \ref{thm:total_stability} bounds the Radon--Nikodym derivative $d(\mu_0\otimes\mu_1)/d(\mu_0'\otimes\mu_1')$ and appears solely to handle a change-of-reference step in the triangle inequality combining Theorems \ref{thm:continuous_kernel_stability} and \ref{thm:continuous_margin_stability}. In the scaling limit theory of Section \ref{sec:scaling_limit_statements}, where $\delta$-smoothness (see Def. \ref{def:delta_smooth_margins}) makes marginal densities uniformly comparable, $M$ is absorbed into constants depending only on $\delta$, and does not affect the asymptotic convergence rates.
	\end{remark}
	
	This total stability estimate is the main tool in our scaling limit theory: it allows us to control the convergence of a sequence of discrete Schr\"{o}dinger bridges (with converging margins and kernels) to the continuous SSB.

	The kernel and margin stability results above (Theorems \ref{thm:continuous_kernel_stability} and \ref{thm:continuous_margin_stability}) control the coupling $\pi$ in the Hellinger metric. While these results are sufficient for establishing scaling limits, the analysis of finite-dimensional Sinkhorn-rescaled random matrices requires a complementary result that controls the discrete \textit{Schr{\"o}dinger potentials} (the logarithms of the scaling factors) in the $L^{\infty}$ norm, and that remains valid when the input matrix may have zero entries. The following theorem provides such a stability bound. The closest precedent is \cite[Lem.~9]{landa2022scaling}, which establishes an $\ell^{\infty}$ stability bound for the scaling factors under approximate scaling by extending the uniqueness argument of Sinkhorn and Knopp \cite{sinkhorn1967concerning}; however, the constants there scale with the ratio of the maximum to the minimum entry of the input matrix, and therefore degenerate whenever any entry vanishes. Our result instead relies on a \textit{row-alignment} parameter that measures, in aggregate across columns, the inner-product overlap between pairs of rows. This condition admits matrices with many zero entries, and ensures that the Sinkhorn scaling map remains a well-conditioned local diffeomorphism, allowing the potentials to be controlled via a Banach fixed-point argument on its linearization at the origin. This is the key technical ingredient for the concentration theory for rescaled random matrices stated in Section \ref{sec:concentration_statements}.

	\begin{theorem}[Stability of discrete Schr\"{o}dinger potentials]\label{thm:potential_stability}
		Let $\A \in \R^{m\times n}$ be a matrix with non-negative entries, and let $\r \in \R^m_{>0}$ and $\c \in \R^n_{>0}$ be target margin vectors with strictly positive entries such that $\|\r\|_1 = \|\c\|_1 = N$. 
		Let $\r(\A) = \A \mathbf{1}_n$ and $\c(\A) = \A^T \mathbf{1}_m$ denote the current row and column margins of $\A$. Assume that the target margin $(\r,\c)$ is close to the margin of $\A$ in the sense that 
		\begin{align}
			\r(\A) \in [(1-\eps) \r, (1+\eps)\r], \qquad \c(\A) \in [(1-\eps) \c, (1+\eps)\c]
		\end{align}
		entry-wise for some  $\epsilon \in (0, \epsilon_{\textup{max}})$, where 
		\begin{align}\label{eq:eps_max_def}
			\eps_{\max}:=\frac{1}{50C_{\A}^2}, \quad   C_{\A} := %1 + \frac{9 \max_i \r_i \max_j \c_j}{2 \rho_{\A} m n}
			1 + \frac{9 \max_i \r_i \max_j \c_j}{\rho_{\A} m n}, \quad  \rho_{\A}:= \min_{i_{1},i_{2}\in \{1,\dots,m\}} n^{-1} \sum_{j=1}^{n}\A_{i_1 j}\A_{i_2 j}.
		\end{align}
		Then there exist exact Schr\"{o}dinger potentials $\balpha_{\A} \in \R^m, \bbeta_{\A} \in \R^n$ such that $\diag(e^{\balpha_{\A}})\A\diag(e^{\bbeta_{\A}})$ has exactly the margins $(\r, \c)$. These potentials are unique up to the one-parameter gauge shift $(\balpha_{\A},\bbeta_{\A})\mapsto(\balpha_{\A}-s\mathbf{1}_m,\bbeta_{\A}+s\mathbf{1}_n)$ with $s\in\R$, and can be chosen (via any canonical gauge-fixing, e.g.\ the one in the proof that projects onto the orthogonal complement of the gauge direction) such that
		\begin{align}\label{eq:potential_bd_approximate_scaling}
			\|(\balpha_{\A}, \bbeta_{\A})\|_{\infty} \le 4C_{\A} \epsilon.
		\end{align}
	\end{theorem}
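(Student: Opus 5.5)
We will set this up as a fixed-point problem for the nonlinear margin map and apply a quantitative inverse function (Banach fixed-point) argument around the origin.

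Define
\begin{align}
F(\balpha,\bbeta) := \Bigl( \bigl(\textstyle\sum_j \A_{ij} e^{\balpha(i)+\bbeta(j)}\bigr)_i / \r(i),\ \bigl(\textstyle\sum_i \A_{ij} e^{\balpha(i)+\bbeta(j)}\bigr)_j / \c(j) \Bigr) - \mathbf{1}_{m+n},
\end{align}
so that exact scaling means $F(\balpha,\bbeta)=0$. The hypothesis says $\|F(0,0)\|_\infty\le \eps$.

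The linearization at the origin is $L=DF(0)$. It acts by
\begin{align}
(L(a,b))_i &= \bigl(\r(\A)_i a_i + (\A b)_i\bigr)/\r_i, \\
(L(a,b))_j &= \bigl(\c(\A)_j b_j + (\A^T a)_j\bigr)/\c_j.
\end{align}
Its kernel contains the gauge direction $(\mathbf{1}_m,-\mathbf{1}_n)$. Up to multiplicative errors $1\pm\eps$ in the diagonal, $L$ is the normalized Laplacian-type operator $\diag(\r,\c)^{-1}\begin{pmatrix}\diag(\r) & \A\\ \A^T & \diag(\c)\end{pmatrix}$ with the sign flip $b\mapsto -b$. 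Its range lies in the hyperplane $\{(u,v):\langle \r,u\rangle=\langle \c,v\rangle\}$ only approximately. We therefore work on $V=$ the orthogonal complement of the gauge direction and target the image hyperplane $W=\{\langle\r,u\rangle=\langle\c,v\rangle\}$. The margin identity $\sum_i \r_i(F_i+1)=\sum_j \c_j(F_j+1)$, i.e.\ total mass equality, shows that $F$ maps into $W$ exactly, because $\|\r\|_1=\|\c\|_1$.

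The key step is an $\ell^\infty\to\ell^\infty$ bound $\|L|_V^{-1}\|\le C_\A$ (up to a factor). To get it, solve $L(a,b)=(u,v)$ by eliminating $b$. This gives $b_j\approx v_j-\c_j^{-1}(\A^Ta)_j$, and substituting yields $a - P a = $ (data), where $P=\diag(\r)^{-1}\A\diag(\c)^{-1}\A^T$ is approximately a row-stochastic Markov matrix. Its entries satisfy
\begin{align}
P_{i_1i_2}=\sum_j \A_{i_1j}\A_{i_2j}/(\r_{i_1}\c_j)\ \ge\ \rho_\A n/(\max\r\,\max\c).
\end{align}
Summing over $i_2$ gives a Doeblin minorization constant of at least $\rho_\A mn/(\max\r\max\c)$. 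Doeblin's condition then says $I-P$ is invertible on mean-zero functions (modulo constants) with $\ell^\infty$ inverse norm of order $1+\max\r\max\c/(\rho_\A mn)$, which is where $C_\A$ comes from. The constant $9$ will absorb the $(1\pm\eps)$ distortions together with the back-substitution for $b$ and the projection onto $V$. I expect this step to be the main obstacle. The difficulty is tracking the perturbation of $P$ from exact stochasticity, and the gauge fixing: handling constants in $a$ versus the projection onto $V$ in sup-norm.

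Next, the nonlinear remainder. For $\|(\balpha,\bbeta)\|_\infty\le\delta\le 1$ one has $|e^{x}-1-x|\le x^2$. This gives Lipschitz control of $DF$ in $\ell^\infty$, namely $\|DF(z)-DF(0)\|_{\infty\to\infty}\le c\,\delta$, using that the row sums of $\A/\r$ are at most $1+\eps$. Now define $T(z)=z-L|_V^{-1}F(z)$ on the ball $B=\{z\in V:\|z\|_\infty\le 4C_\A\eps\}$. Then $T$ is a contraction with constant $C_\A\cdot c\cdot 4C_\A\eps\le 1/2$, since $\eps<1/(50C_\A^2)$. It also satisfies $\|T(0)\|\le C_\A\eps$, and contraction plus the small initial step keeps $T(B)\subset B$ (with $2C_\A\eps$ slack absorbing equivalence-of-norms losses from the projection). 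Banach's theorem gives a fixed point, which is an exact zero of $F$ because $F(z)\in W$ and $L|_V:V\to W$ is bijective.

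Finally, uniqueness up to gauge follows from strict convexity of the dual objective \eqref{eq:dual} on $V$. Positivity of $\rho_\A$ makes the bipartite support graph connected, so the only degeneracy is the gauge direction.
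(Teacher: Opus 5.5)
Your proposal is essentially the paper's proof: a Banach fixed-point argument for the margin map around the origin, where the $\ell^\infty$ bound on the inverse of the linearization comes from eliminating the column block and applying a Doeblin minorization (driven by $\rho_{\A}$) to the resulting Markov chain on rows; your strict-concavity argument for global uniqueness modulo gauge is a useful addition, since the paper's contraction argument only gives uniqueness of the fixed point inside the ball. The obstacle you flag is self-inflicted: if you eliminate exactly, dividing by the current margins $\r(\A),\c(\A)$ rather than by $\r,\c$, you get the exactly row-stochastic matrix $P_1=\diag(\r(\A))^{-1}\A\,\diag(\c(\A))^{-1}\A^T$ with stationary law $\r(\A)/\|\r(\A)\|_1$, so the $(1\pm\eps)$ distortion enters only through $\r_i(\A)\le\tfrac32\r_i$ and $\c_j(\A)\le\tfrac32\c_j$ in the minorization (this is where the $9=4\cdot(3/2)^2$ in $C_{\A}$ comes from), and likewise the range of $L$ lies in $W$ exactly rather than approximately, which is what your bijectivity claim $L|_V:V\to W$ requires.
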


	The bound \eqref{eq:potential_bd_approximate_scaling} characterizes how the exact potentials deviate from zero when the current margins of $\A$ are already close to the target margins. The term $4C_{\A}\epsilon$ reflects two distinct sources of stability: the linear sensitivity to the relative margin error $\epsilon$, and the geometric stability of the scaling map captured by the constant $C_{\A}$.
	
	These stability results address different but complementary aspects of the perturbation theory. Theorems \ref{thm:continuous_kernel_stability} and \ref{thm:continuous_margin_stability} provide the "macroscopic" stability needed for scaling limits, while Theorem \ref{thm:potential_stability} provides the "microscopic" $L^\infty$ stability required for concentration analysis. Together, they provide a complete toolkit for analyzing Sinkhorn-rescaled random matrices.

	\subsection{Concentration of Sinkhorn-rescaled random matrices}
	\label{sec:concentration_statements}
	
	We now turn to the finite-dimensional analysis of Sinkhorn-rescaled random matrices. The results in this section provide the probabilistic machinery needed to pass from the deterministic stability theory (Section \ref{sec:SSB_stability_statements}) to the stochastic setting.
	
	Throughout this section, we fix a $\delta$-smooth margin $(\r,\c)$ in the following sense.
	
	\begin{definition}[Scalability]\label{def:scalability}
		Fix an $m\times n$ margin $(\r,\c)$. Let $\mathcal{S}(\r,\c)$ denote the set of all $m\times n$ nonnegative matrices $\A$ that are \textit{scalable to margin $(\r,\c)$}, that is, there exist positive diagonal matrices $\D_{1},\D_{2}$ such that $\D_{1}\A\D_{2}\in \T(\r,\c)$.
	\end{definition}
	
	Next, we introduce the following notion of ``smoothness'' of a margin, first used by Barvinok in \cite{barvinok2010does} in the study of random contingency tables. 
	
	\begin{definition}[$\delta$-smooth margins \cite{barvinok2010does}]\label{def:delta_smooth_margins}
		An $m\times n$ margin $(\r,\c)$ with total sum $N := \|\r\|_1 = \|\c\|_1$ is \textit{$\delta$-smooth} for some $\delta>0$ if 
		\begin{align}
			\frac{\delta N}{m} \leq \r(i) \leq \frac{N}{\delta m}, \qquad \frac{\delta N}{n} \leq \c(j) \leq \frac{N}{\delta n}
		\end{align}
		for all $i \in [m]$ and $j \in [n]$.
	\end{definition}
	
	\noindent Note that if $(\r,\c)$ is a positive margin, then it is $\delta$-smooth for some $\delta=\delta(\r,\c)>0$.

	The random matrices we consider satisfy the following standing assumptions.

	\begin{assumption}[Subexponential random matrix]\label{assump:random_matrix}
		Let $\X \in \R_{\geq 0}^{m \times n}$ be a random matrix with independent nonnegative entries and positive mean $\E[\X] = \bLambda\in \R_{> 0}^{m \times n}$. There exist constants $\sigma, R > 0$ such that for all $q \geq 2$,
		\begin{align}\label{eq:moment_growth}
			\E\left[|\X_{ij} - \bLambda_{ij}|^q\right] \leq \frac{q!}{2}\sigma^2 R^{q-2} \qquad \textup{for all $i,j$}.
		\end{align}
	\end{assumption}
	
	\begin{assumption}[Bounded cost]\label{assump:bounded_cost}
		Let $(\r,\c)$ be a $\delta$-smooth margin and $\bLambda \in \R_{>0}^{m\times n}$. There exists a constant $K \ge 0$ such that
		\begin{align}\label{eq:bounded_cost_K}
			\left| \log \frac{\r(i)\c(j)\,\|\bLambda\|_1}{\bLambda_{ij} N^2} \right| \le K \qquad \textup{for all $i\in [m], j\in [n]$}.
		\end{align}
	\end{assumption}

	Assumption \ref{assump:random_matrix} accommodates a wide class of random matrix ensembles, holding for Poisson, bounded, and general sub-exponential entry distributions. Assumption \ref{assump:bounded_cost} states that the normalized mean matrix $\bLambda/\|\bLambda\|_1$ has bounded cost relative to the product of the normalized margins; it serves as the discrete analogue of the bounded-cost condition in Section \ref{sec:SSB_stability_statements}.
	
	\begin{remark}[Scalability is not assumed]
		Note that scalability of $\X$ to the target margin $(\r,\c)$ is not imposed as a hypothesis in Assumptions \ref{assump:random_matrix}--\ref{assump:bounded_cost}: Theorem \ref{thm:concentration_margins_rs} will show that $\X \in \mathcal{S}(\r,\c)$ holds automatically on a high-probability event under these assumptions and $\delta$-smoothness of the margin. For ensembles outside the scope of Assumption \ref{assump:random_matrix}, a matrix-ensemble-agnostic bound on the scalability probability—purely in terms of the smoothness parameter and the maximum zero-probability of entries—can be derived from the Menon--Schneider characterization \cite{menon1969spectrum}; we record this separately as Lemma \ref{lem:scalability} in Section \ref{sec:rescaled_proofs}.
	\end{remark}

	We now apply the deterministic stability bound of Theorem \ref{thm:potential_stability} to the random matrix $\X$ with target margin $(\r,\c)$ to obtain concentration of the random Schr\"{o}dinger potentials around their deterministic counterparts. Specializing the random quantity $C_{\X}$ from \eqref{eq:eps_max_def} (with $\A=\X$) requires controlling the margin sizes and the row-alignment parameter. We use $\delta$-smoothness of $(\r,\c)$ to deduce $\max_i \r(i)\max_j \c(j)\le N^2/(\delta^2 mn)$, while we obtain a high-probability lower bound on $\rho_{\X}$ in terms of $\bLambda_{\min}$ and $K$, where $K$ is the bound on the cost function in  \eqref{eq:bounded_cost_K}.  Substituting these estimates into \eqref{eq:eps_max_def} yields the explicit deterministic constant $C_*$ defined in \eqref{eq:eps0_Cstar_def} below, which serves as a high-probability upper bound for $C_{\X}$. Crucially, under Assumptions \ref{assump:random_matrix}--\ref{assump:bounded_cost} and $\delta$-smoothness the parameters $\delta, K$, and $\bLambda_{\min}/\bLambda_{\max}$ are bounded away from zero, so $C_*=O(1)$; consequently the $4 C_{\X}\eps$ bound of Theorem \ref{thm:potential_stability} reduces to an order-$\eps$ control of the random potentials.
	
	\begin{theorem}[Concentration of random potentials]\label{thm:concentration_margins_rs}
		Let $(\r, \c)$ be an $m \times n$ $\delta$-smooth margin. Under Assumptions \ref{assump:random_matrix} and \ref{assump:bounded_cost}, define the minimum and maximum expected entries $\bLambda_{\min} := \min_{i,j} \bLambda_{ij}$ and $\bLambda_{\max} := \max_{i,j} \bLambda_{ij}$.  Let $(\balpha, \bbeta)$ denote the deterministic Schr\"{o}dinger potentials for $\bLambda$ with margin $(\r, \c)$. Fix $D > 0$ and let $\tau = (D+1)\log(m \vee n) + \log 4$. Define the approximate scaling error $\eps_0$ and the explicit stability constant $C_*$ by:
		\begin{align}\label{eq:eps0_Cstar_def}
			\eps_0 := \frac{e^{2K}(m\vee n)}{\delta \|\bLambda\|_1} \left( \sigma\sqrt{2(m\vee n)\tau} + 2R\tau \right), \qquad C_* := 1 + \frac{18\, e^{8K} \|\bLambda\|_1^2}{\delta^2 \bLambda_{\min}^2 m^2 n^2}.
		\end{align}
		Suppose $\eps_0 \le \frac{1}{50 C_*^2}$, and define the row-alignment rate
		\begin{align}\label{eq:Phi_def}
			\Phi_{\bLambda} := \min\!\left( n\, \frac{\bLambda_{\min}^4}{(\sigma + R + \bLambda_{\max})^4},\ \sqrt{n}\, \frac{\bLambda_{\min}}{\sigma + R + \bLambda_{\max}} \right).
		\end{align}
		Then there exists an absolute constant $c>0$ such that for the following event
		\begin{align}\label{eq:def_event_E1}
			\mathcal{E}_{1} := \{ \X\in \mathcal{S}(\r,\c) \} \cap \left\{ \exists t\in \R\,:\, \max\left( \left\|\balpha_{\X}+t\mathbf{1} - \balpha\right\|_{\infty},\; \left\|\bbeta_{\X}-t\mathbf{1} - \bbeta\right\|_{\infty} \right) \le 4C_* \eps_{0} \right\},
		\end{align}
		we have
		\begin{align}
			\P(\mathcal{E}_{1}) &\ge 1 - (m \vee n)^{-D} - m^2 \exp(-c\,\Phi_{\bLambda}).
		\end{align}
	\end{theorem}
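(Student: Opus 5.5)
The plan is to apply Theorem \ref{thm:potential_stability} to the random matrix
\[
\A := \diag(e^{\balpha})\,\X\,\diag(e^{\bbeta}) = \hat{\X}^{\r,\c}.
\]
This is the comparison model \eqref{eq:comparison_model_intro}, and its mean is $\bLambda^{\r,\c}\in\T(\r,\c)$. If Theorem \ref{thm:potential_stability} yields potentials $(\balpha_{\A},\bbeta_{\A})$ with $\|(\balpha_{\A},\bbeta_{\A})\|_\infty\le 4C_{\A}\eps$, then $\X$ is scalable. Moreover $\balpha_{\X}=\balpha+\balpha_{\A}$ and $\bbeta_{\X}=\bbeta+\bbeta_{\A}$ up to the gauge shift, which produces the $t$ in $\mathcal{E}_1$. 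It therefore suffices to show two things on a high-probability event: (i) the margins of $\A$ lie within a factor $(1\pm\eps_0)$ of $(\r,\c)$; (ii) $C_{\A}\le C_*$. Once these hold, the hypothesis $\eps_0\le 1/(50C_*^2)\le 1/(50C_{\A}^2)$ lets us invoke Theorem \ref{thm:potential_stability} with $\eps=\eps_0$.

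\emph{Step 1: a priori bounds on the deterministic potentials.} Using the bounded cost \eqref{eq:bounded_cost_K}, write $\bLambda_{ij}=\frac{\r(i)\c(j)\|\bLambda\|_1}{N^2}e^{-k_{ij}}$ with $|k_{ij}|\le K$. A standard argument on the Sinkhorn fixed point (compare row sums with the product kernel) gives, after a suitable gauge,
\[
e^{\balpha(i)+\bbeta(j)}\in \left[e^{-2K},e^{2K}\right]\cdot \frac{N}{\|\bLambda\|_1}.
\]
Hence every entry of $\A$ has mean $\bLambda^{\r,\c}_{ij}\le e^{2K}\frac{N}{\|\bLambda\|_1}\bLambda_{ij}$, and its fluctuation is $e^{\balpha(i)+\bbeta(j)}(\X_{ij}-\bLambda_{ij})$ with scale factor at most $e^{2K}N/\|\bLambda\|_1$.

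\emph{Step 2: margin concentration.} The row sum $\r(\A)_i-\r(i)=\sum_j e^{\balpha(i)+\bbeta(j)}(\X_{ij}-\bLambda_{ij})$ is a sum of $n$ independent centered variables satisfying \eqref{eq:moment_growth} with parameters scaled by $e^{2K}N/\|\bLambda\|_1$. Bernstein's inequality at level $e^{-\tau}$ gives
\[
|\r(\A)_i-\r(i)|\le e^{2K}\frac{N}{\|\bLambda\|_1}\bigl(\sigma\sqrt{2n\tau}+2R\tau\bigr).
\]
Dividing by $\r(i)\ge\delta N/m$ yields a relative error at most $\eps_0$. The same holds for columns. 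A union bound over $m+n$ margins with $\tau=(D+1)\log(m\vee n)+\log4$ costs at most $(m\vee n)^{-D}$.

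\emph{Step 3: row alignment, the main obstacle.} By $\delta$-smoothness, $\max_i\r(i)\max_j\c(j)\le N^2/(\delta^2mn)$, so it suffices to show $\rho_{\A}\gtrsim e^{-8K}N^2\bLambda_{\min}^2/\|\bLambda\|_1^2$ up to the factor $1/2$ hidden in $18=2\cdot 9$. Since the entries of $\A$ are those of $\X$ multiplied by $e^{\balpha(i)+\bbeta(j)}\ge e^{-2K}N/\|\bLambda\|_1$, we have
\[
\rho_{\A}\ge e^{-4K}\frac{N^2}{\|\bLambda\|_1^2}\,\rho_{\X},
\]
so the task reduces to showing $\rho_{\X}\ge\bLambda_{\min}^2/2$ (the remaining $e^{-4K}$ slack absorbs crude gauge losses). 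For each pair $(i_1,i_2)$, $n^{-1}\sum_j\X_{i_1j}\X_{i_2j}$ has mean at least $\bLambda_{\min}^2$ (for $i_1=i_2$ the second moment only helps). Its summands are independent products of sub-exponentials, so they are sub-Weibull of order $1/2$ with scale $(\sigma+R+\bLambda_{\max})^2$. A lower-tail bound for deviation $\bLambda_{\min}^2/2$ gives probability at most $\exp(-c\,\Phi_{\bLambda})$: the first term in \eqref{eq:Phi_def} is the Gaussian regime $n t^2/s^2$, and the second is the heavy-tail regime $\sqrt{nt/s}$ for products. I would handle the tail either by truncating the entries at level $\asymp\sigma+R+\bLambda_{\max}$ and applying Hoeffding/Bernstein to the truncated products, or by using the lower-tail inequality for nonnegative variables, which requires only second moments. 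A union bound over $m^2$ pairs gives the term $m^2e^{-c\Phi_{\bLambda}}$.

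Combining Steps 2 and 3 on the intersection of the two events and applying Theorem \ref{thm:potential_stability} gives $\mathcal{E}_1$ with the stated probability. The delicate points are the bookkeeping of the gauge in Step 1, needed to obtain exactly the constants $e^{2K}$ and $e^{8K}$, and the product-tail estimate in Step 3.
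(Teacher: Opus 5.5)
Your plan matches the paper's proof. You apply Theorem \ref{thm:potential_stability} to $\hat{\X}^{\r,\c}$. You get the margin event from Bernstein's inequality, $\r(i)\ge\delta N/m$, and a union bound over $m+n$ margins (Proposition \ref{prop:approximate_scaling}). You get row alignment of $\X$ from a sub-Weibull lower tail and a union bound over $m^2$ pairs (Lemma \ref{lem:row_overlap}). You transfer alignment to $\hat{\X}^{\r,\c}$ via an entrywise lower bound on $e^{\balpha(i)+\bbeta(j)}$, and read off $t$ from uniqueness up to gauge. Steps 2 and 3 are fine. Your second-moment lower-tail alternative for nonnegative summands would even give the first term of $\Phi_{\bLambda}$ directly, without sub-Weibull machinery.

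\textbf{Step 1 is wrong as stated.}
\begin{itemize}
\item \emph{The lower bound is false.} The sum $\balpha(i)+\bbeta(j)$ is invariant under the gauge shift, so no choice of gauge can affect it. The bound $e^{\balpha(i)+\bbeta(j)}\ge e^{-2K}N/\|\bLambda\|_1$ fails in general. Take uniform margins and put $\bar{\kappa}=-K$ on a block of rows and a block of columns, each of relative size $p\approx e^{-K}/2$ (chosen so that $\bar{\bLambda}$ has total mass one), with $\bar{\kappa}=+K$ elsewhere. By symmetry this is a $2\times 2$ problem with margins $(p,1-p)$ and cross-ratio $e^{-2K}$. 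Its solution puts mass $\approx e^{-2K}p^2$ on the corner block, where $\bar{\bLambda}$ has mass $p^2e^{K}$. Hence $e^{\bar{\balpha}(i)+\bar{\bbeta}(j)}\approx e^{-3K}$ there.
\item \emph{The comparison argument loses constants.} Comparing row sums with the product kernel gives only $e^{\bar{\balpha}(i)+\bar{\bbeta}(j)}\in[e^{-2K},e^{2K}]/(ab)$, where $a=\sum_i\bar{\r}(i)e^{\bar{\balpha}(i)}$, $b=\sum_j\bar{\c}(j)e^{\bar{\bbeta}(j)}$ and $ab\in[e^{-K},e^{K}]$. That is only $e^{\pm 3K}$, which would put $e^{3K}$ rather than $e^{2K}$ into $\eps_0$.
\end{itemize}

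\textbf{The fix.} Use Lemma \ref{lem: boundedness_matrix_scaling_potentials}, which gives $e^{-4K}\le e^{\balpha(i)+\bbeta(j)}\|\bLambda\|_1/N\le e^{2K}$. Its sharp upper bound comes from Jensen's inequality together with the normalization $\langle\bbeta,\mu_1\rangle=0$ and the inequality $\langle\balpha,\mu_0\rangle\ge 0$ from dual optimality (Lemma \ref{lem: boundedness_continuous_potentials}). With this input, Step 2 is unchanged. Step 3 then gives $\rho_{\hat{\X}^{\r,\c}}\ge e^{-8K}N^2\bLambda_{\min}^2/(2\|\bLambda\|_1^2)$, hence exactly $C_{\hat{\X}^{\r,\c}}\le C_*$. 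So the $e^{8K}$ in $C_*$ is not slack for "gauge losses": it is the square of the $e^{-4K}$ lower bound. Once Step 1 is replaced by this lemma, your argument goes through with the stated constants.
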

	
	Although the bound in \eqref{eq:eps0_Cstar_def} looks involved, $\eps_0$ is a small quantity in the regime of interest. Treating $\delta, K, \sigma, R$ as $O(1)$ and choosing $D = O(1)$, $\tau = \Theta(\log(m\vee n))$ so the definition of $\eps_0$ simplifies to
	\begin{align}\label{eq:eps0_asymptotic}
		\eps_0 \;=\; \Theta\!\left(\, \frac{(m\vee n)^{3/2}\,\sqrt{\log(m\vee n)}}{\|\bLambda\|_1} \,\right).
	\end{align}
	Under Assumptions \ref{assump:random_matrix}--\ref{assump:bounded_cost} and $\delta$-smoothness of the margin, $C_* = O(1)$ (see \eqref{eq:eps0_Cstar_def} and the computation in the proof), so the hypothesis $\eps_0 \le 1/(50 C_*^2)$ is satisfied for all sufficiently large dimensions whenever
	\begin{align}
		\|\bLambda\|_1 \,\gg\, (m\vee n)^{3/2}\,\sqrt{\log(m\vee n)} \qquad \text{(up to an absolute constant)}.
	\end{align}
	This is the same mass-growth condition that will drive our scaling limit theory in Section \ref{sec:scaling_limit_statements}. Under this regime, the bound $\|\balpha_{\X} + t\mathbf{1} - \balpha\|_\infty, \|\bbeta_{\X} - t\mathbf{1} - \bbeta\|_\infty \le 4 C_*\eps_0$ on $\mathcal{E}_1$ tends to zero at rate $(m\vee n)^{3/2}\sqrt{\log}/\|\bLambda\|_1$: the random Schr\"{o}dinger potentials concentrate sharply around their deterministic counterparts.
	
	The concentration of random potentials in Theorem \ref{thm:concentration_margins_rs} above allows us to compare $\X^{\r,\c}$ with the comparison model $\hat{\X}^{\r,\c}$ and with the deterministic bridge $\bLambda^{\r,\c}$.

	\begin{theorem}[Comparison model approximation]\label{thm:first_approximation_rescaled}
		Suppose Assumptions \ref{assump:random_matrix}--\ref{assump:bounded_cost} hold. % define the minimum and maximum expected entries $\bLambda_{\min} := \min_{i,j} \bLambda_{ij}$ and $\bLambda_{\max} := \max_{i,j} \bLambda_{ij}$.
		Fix $D>0$ and let $\tau$, $t_D$ be as in Lemma \ref{lem:spectral norm of centered poisson} with the sub-exponential parameters $(\sigma,R)$ of Assumption \ref{assump:random_matrix}. Let $\eps_0$ be the approximation error defined in \eqref{eq:eps0_Cstar_def} and let $C_*$ be the explicit stability constant defined in Theorem \ref{thm:concentration_margins_rs}. Assume the dimensions are sufficiently large such that $\eps_0 \le \frac{1}{50 C_*^2}$, and define the aggregate potential error $\eps_{\textup{pot}} := 16 C_* \eps_0 \le 1$.
		Define the two events
		\begin{align}\label{eq:def_event_E2}
			\mathcal{E}_{2} &:= \{ \X\in \mathcal{S}(\r,\c) \} \cap \left\{\tfrac{1}{N}\bigl\|\D(e^{\balpha_{\X}})(\X-\bLambda)\D(e^{\bbeta_{\X}})-\D(e^{\balpha})(\X-\bLambda)\D(e^{\bbeta})\bigr\|_2 \le \tfrac{3 e^{2K}\eps_{\textup{pot}} t_D}{\|\bLambda\|_1}\right\},\\
			\label{eq:def_event_E3}
			\mathcal{E}_{3} &:= \{ \X\in \mathcal{S}(\r,\c) \} \cap \left\{ \tfrac{1}{N}\left\|\X^{\r,\c} - \hat{\X}^{\r,\c}\right\|_1 \le 2\eps_{\textup{pot}} e^{2K} \right\}.
		\end{align}
		Then there exists an absolute constant $c>0$ such that
		\begin{align}
			\P(\mathcal{E}_{2}) &\ge 1 - 2(m \vee n)^{-D} - m^2 \exp(-c\,\Phi_{\bLambda}), \\
			\P(\mathcal{E}_{3}) &\ge 1 - (m \vee n)^{-D} - m^2 \exp(-c\,\Phi_{\bLambda}) - 2\exp\!\left(-\frac{\|\bLambda\|_1^2/2}{mn\sigma^2+R\|\bLambda\|_1}\right),
		\end{align}
		where $\Phi_{\bLambda}$ is the row-alignment rate from \eqref{eq:Phi_def}.
	\end{theorem}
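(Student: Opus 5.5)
Both events are controlled on the event $\mathcal{E}_1$ of Theorem \ref{thm:concentration_margins_rs}. We intersect it with one extra concentration event per claim and read off the bounds deterministically.

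On $\mathcal{E}_1$ there is a gauge $t$ such that, after replacing $(\balpha_{\X},\bbeta_{\X})$ by $(\balpha_{\X}+t\mathbf{1},\bbeta_{\X}-t\mathbf{1})$, each entry of $\balpha_{\X}-\balpha$ and $\bbeta_{\X}-\bbeta$ is bounded by $4C_*\eps_0$ in absolute value. This shift changes neither $\X^{\r,\c}$ nor the operator $\D(e^{\balpha_{\X}})(\cdot)\D(e^{\bbeta_{\X}})$. Write $\D(e^{\balpha_{\X}})=\D(e^{\balpha})(I+E_1)$ and $\D(e^{\bbeta_{\X}})=\D(e^{\bbeta})(I+E_2)$ with diagonal $E_1,E_2$. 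Since $4C_*\eps_0\le 1/4$, we get $\|E_i\|_\infty\le e^{4C_*\eps_0}-1\le 8C_*\eps_0=\eps_{\textup{pot}}/2$. The difference in $\mathcal{E}_2$ is then
\begin{align}
\D(e^{\balpha})\bigl(E_1 Y + Y E_2 + E_1 Y E_2\bigr)\D(e^{\bbeta}),\qquad Y:=\X-\bLambda,
\end{align}
and its operator norm is at most $\|e^{\balpha}\|_\infty\|e^{\bbeta}\|_\infty\,\|Y\|_2\,(\eps_{\textup{pot}}+\eps_{\textup{pot}}^2/4)$.

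Next we need the deterministic bound $\max_{ij}e^{\balpha(i)+\bbeta(j)}\lesssim e^{2K}N/\|\bLambda\|_1$, up to the gauge. I would prove it directly from the fixed-point equations. Assumption \ref{assump:bounded_cost} gives $\bLambda_{ij}\asymp_{e^{K}}\r(i)\c(j)\|\bLambda\|_1/N^2$. Then $\r(i)=e^{\balpha(i)}\sum_j\bLambda_{ij}e^{\bbeta(j)}$ forces $e^{\balpha(i)}\le e^{K}N^2/(\|\bLambda\|_1\sum_j\c(j)e^{\bbeta(j)})$, and symmetrically for $\bbeta$. Choosing the gauge so that $\sum_j\c(j)e^{\bbeta(j)}=N$ yields $e^{\balpha(i)}\le e^{K}N/\|\bLambda\|_1$. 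The column equation then gives $e^{\bbeta(j)}\le e^{K}N^2/(\|\bLambda\|_1\sum_i\r(i)e^{\balpha(i)})$, and the product bound follows from the row relation $\sum_i\r(i)e^{\balpha(i)}\ge e^{-K}N^2/(\|\bLambda\|_1\cdot N/N)$ after some bookkeeping. The outcome is $e^{\balpha(i)+\bbeta(j)}\le e^{2K}N/\|\bLambda\|_1$. Combining with Lemma \ref{lem:spectral norm of centered poisson}, which gives $\|Y\|_2\le t_D$ with probability $\ge 1-(m\vee n)^{-D}$, and using $\eps_{\textup{pot}}\le 1$ to absorb constants into the $3$, we obtain $\mathcal{E}_2$ on $\mathcal{E}_1\cap\{\|Y\|_2\le t_D\}$. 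A union bound gives the stated probability.

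For $\mathcal{E}_3$, the matrix $\X^{\r,\c}-\hat{\X}^{\r,\c}$ has entries $e^{\balpha(i)+\bbeta(j)}\X_{ij}\bigl(e^{(\balpha_{\X}-\balpha)(i)+(\bbeta_{\X}-\bbeta)(j)}-1\bigr)$. Each such entry is bounded in absolute value by $\eps_{\textup{pot}}\,e^{\balpha(i)+\bbeta(j)}\X_{ij}$, using $e^{8C_*\eps_0}-1\le 16C_*\eps_0$. Summing and using the product bound above gives $\frac1N\|\X^{\r,\c}-\hat{\X}^{\r,\c}\|_1\le \eps_{\textup{pot}}e^{2K}\|\X\|_1/\|\bLambda\|_1$. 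It remains to show $\|\X\|_1\le 2\|\bLambda\|_1$. By Bernstein's inequality for the sum of the $mn$ independent centered sub-exponential entries under \eqref{eq:moment_growth}, this holds with probability $\ge 1-2\exp\bigl(-\tfrac{\|\bLambda\|_1^2/2}{mn\sigma^2+R\|\bLambda\|_1}\bigr)$. A union bound with $\mathcal{E}_1$ finishes the argument.

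The main obstacle is the deterministic $\ell^\infty$ bound on $e^{\balpha(i)+\bbeta(j)}$ with exactly the constant $e^{2K}N/\|\bLambda\|_1$. The gauge-invariant product is the right object to bound, and the delicate part is the bookkeeping between the two fixed-point equations. If the direct bound proves lossy, I would instead use that $\bLambda^{\r,\c}_{ij}=e^{\balpha(i)+\bbeta(j)}\bLambda_{ij}$ lies within a factor $e^{K}$ of $\r(i)\c(j)/N$. This follows by comparing with the product coupling, whose cost relative to $\bLambda$ is bounded by $K$. Combined with Assumption \ref{assump:bounded_cost}, it gives the product bound directly.
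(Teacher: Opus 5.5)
Your probabilistic skeleton is the paper's proof. You restrict to $\mathcal{E}_1$, absorb the gauge shift, expand into three terms with $\|E_i\|_\infty\le\eps_{\textup{pot}}/2$, use $\|\X-\bLambda\|_2\le t_D$ from Lemma~\ref{lem:spectral norm of centered poisson} and Bernstein for $\|\X\|_1\le 2\|\bLambda\|_1$, and then union-bound. Those steps are fine.

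\textbf{The gap.} It is the step you flagged: the deterministic bound $\max_{i,j}e^{\balpha(i)+\bbeta(j)}\le e^{2K}N/\|\bLambda\|_1$. This bound is the source of the constants $3e^{2K}$ and $2e^{2K}$ in $\mathcal{E}_2$ and $\mathcal{E}_3$, and your bookkeeping does not produce it. With the gauge $\sum_j\c(j)e^{\bbeta(j)}=N$ you correctly get $e^{\balpha(i)}\le e^{K}N/\|\bLambda\|_1$. But the only lower bound your inequalities supply is $\sum_i\r(i)e^{\balpha(i)}\ge e^{-K}N^2/\|\bLambda\|_1$, which comes from $e^{\balpha(i)}\ge e^{-K}N/\|\bLambda\|_1$. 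That gives $e^{\bbeta(j)}\le e^{2K}$, hence only $e^{\balpha(i)+\bbeta(j)}\le e^{3K}N/\|\bLambda\|_1$. The extra factor $e^{K}$ means the statement with its stated constants is not reached.

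\textbf{The fallback is false.} The bridge need not lie within a factor $e^{K}$ of $\r(i)\c(j)/N$. Take $m=n=3$, uniform margins, and $\bLambda_{ij}\propto 1+\eta h_{ij}$ with $h_{11}=1$, $h_{1j}=h_{i1}=-1$ for $i,j\neq1$, and $h_{ij}=3/4$ otherwise. Then $h$ has mean zero and the smallest admissible $K$ is $-\log(1-\eta)=\eta+O(\eta^2)$. Linearizing the scaling equations gives $\bLambda^{\r,\c}_{ij}/(\r(i)\c(j)/N)=1+\eta(h_{ij}-\bar h_{i\cdot}-\bar h_{\cdot j})+O(\eta^2)$, where $\bar h_{i\cdot},\bar h_{\cdot j}$ are the row and column averages of $h$. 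At $(1,1)$ this equals $1+\tfrac53\eta+O(\eta^2)>e^{K}$ for small $\eta$.

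\textbf{The fix.} You need a lower bound on the \emph{product} of the two weighted sums, not on each one separately. Set $u_i:=e^{\balpha(i)}\sqrt{\|\bLambda\|_1/N}$ and $v_j:=e^{\bbeta(j)}\sqrt{\|\bLambda\|_1/N}$. Let $\lambda_{ij}:=\bLambda_{ij}N^2/(\r(i)\c(j)\|\bLambda\|_1)\in[e^{-K},e^{K}]$, $\langle u\rangle:=\sum_i\frac{\r(i)}{N}u_i$ and $\langle v\rangle:=\sum_j\frac{\c(j)}{N}v_j$. The row equation reads $u_i\sum_j\frac{\c(j)}{N}\lambda_{ij}v_j=1$, so $u_i\le e^{K}/\langle v\rangle$; likewise $v_j\le e^{K}/\langle u\rangle$. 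Now $\pi:=\bLambda^{\r,\c}/N$ and $\bar\bLambda:=\bLambda/\|\bLambda\|_1$ are probability matrices with $\pi_{ij}/\bar\bLambda_{ij}=u_iv_j$. By Gibbs' and Jensen's inequalities,
\[
0\le D_{\textup{KL}}(\pi\,\Vert\,\bar\bLambda)=\sum_i\tfrac{\r(i)}{N}\log u_i+\sum_j\tfrac{\c(j)}{N}\log v_j\le\log\bigl(\langle u\rangle\langle v\rangle\bigr).
\]
Hence $u_iv_j\le e^{2K}$, i.e.\ $e^{\balpha(i)+\bbeta(j)}\le e^{2K}N/\|\bLambda\|_1$.

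The paper obtains the same bound by citing Lemma~\ref{lem: boundedness_matrix_scaling_potentials}. That lemma normalizes and applies Lemma~\ref{lem: boundedness_continuous_potentials}, using Jensen on each Schr\"{o}dinger equation plus the dual-optimality inequality $\langle\bar\balpha,\mu_0\rangle+\langle\bar\bbeta,\mu_1\rangle\ge 0$. In the discrete case that inequality is exactly $D_{\textup{KL}}(\pi\Vert\bar\bLambda)\ge0$. With this bound inserted, the rest of your proof goes through as written.
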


	The right-hand sides of both bounds tend to zero provided $\|\bLambda\|_1 \gg (m \vee n)^{3/2}$ up to logarithmic factors. Under this condition, $\eps_0 \le 1/(50 C_*^2)$ holds for sufficiently large dimension, and $\eps_{\textup{pot}} = 16 C_* \eps_0 \to 0$ forces both the spectral and the $L^1$ error bounds to vanish. In the balanced case $m \asymp n$, the condition simplifies to $\|\bLambda\|_1 \gg n^{3/2}$. The spectral-norm bound $\mathcal{E}_2$ will be used for the fluctuation analysis in Section \ref{sec:fluctuation_statements}, while the $L^1$ bound $\mathcal{E}_3$ will drive the scaling limit theory in Section \ref{sec:scaling_limit_statements}.
	
	The $L^1$ bound in $\mathcal{E}_3$ compares the Sinkhorn-rescaled matrix $\X^{\r,\c}$ with the comparison model $\hat{\X}^{\r,\c}$. To pass from $\hat{\X}^{\r,\c}$ to the deterministic bridge $\bLambda^{\r,\c}$---the object that governs the scaling limit---we must also absorb the concentration of $\X$ around $\bLambda$, which contributes an additional Bernstein-type error term. The next theorem carries out this step at the level of integration against bounded test functions on the unit square, yielding the weak-norm control of $\varphi_{\X^{\r,\c}} - \varphi_{\bLambda^{\r,\c}}$ that feeds directly into the scaling limit results of Section \ref{sec:scaling_limit_statements}.
	
	\begin{theorem}[Approximation against test functions]\label{thm:bound_X^rs-Z}
		Suppose Assumptions \ref{assump:random_matrix}--\ref{assump:bounded_cost} hold. For any bounded test function $g \in L^{\infty}([0,1]^2)$ and any $D > 0$, denote $\tau = (D+1)\log(m \vee n) + \log 4$, and let $\eps_{\textup{pot}}$ be as defined in Theorem \ref{thm:first_approximation_rescaled} evaluated at this $\tau$. Assume the dimensions are sufficiently large such that $\eps_{\textup{pot}} \le 1$. Then there exists an absolute constant $c>0$ such that with probability at least
		\[
		1 - 2(m \vee n)^{-D} - m^2 \exp(-c\,\Phi_{\bLambda}) - 2\exp\!\left(-\frac{\|\bLambda\|_1^2/2}{mn\sigma^2+R\|\bLambda\|_1}\right),
		\]
		the integration error against $g$ is bounded by
		\begin{align}
			\left|\int_{[0,1]^2} g\!\left(\varphi_{\X^{\r,\c}} - \varphi_{\bLambda^{\r,\c}}\right) dx\, dy\right| \le 2\|g\|_{\infty} \eps_{\textup{pot}} e^{2K} + \frac{\|g\|_{\infty} e^{2K}}{\|\bLambda\|_1} \left( \sigma\sqrt{2mn\tau} + 2R\tau \right),
		\end{align}
		where $\varphi_{\X^{\r,\c}}(x,y) = \frac{mn}{N}\X^{\r,\c}(\lceil mx \rceil, \lceil ny \rceil)$ and $\varphi_{\bLambda^{\r,\c}}(x,y) = \frac{mn}{N}\bLambda^{\r,\c}(\lceil mx \rceil, \lceil ny \rceil)$ are the piecewise constant extensions to the unit square.
	\end{theorem}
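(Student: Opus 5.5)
We split the integration error by the triangle inequality through the comparison model:
\begin{align}
\varphi_{\X^{\r,\c}} - \varphi_{\bLambda^{\r,\c}} = \bigl(\varphi_{\X^{\r,\c}} - \varphi_{\hat{\X}^{\r,\c}}\bigr) + \bigl(\varphi_{\hat{\X}^{\r,\c}} - \varphi_{\bLambda^{\r,\c}}\bigr),
\end{align}
where $\varphi_{\hat{\X}^{\r,\c}}$ is the piecewise constant extension of $\hat{\X}^{\r,\c}=\D(e^{\balpha})\X\D(e^{\bbeta})$. The extension map sends a matrix $\mathbf{Y}$ to a function whose cell values are $\frac{mn}{N}\mathbf{Y}_{ij}$ on cells of area $1/(mn)$. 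Hence for any matrix
\begin{align}
\Bigl|\int g\,\varphi_{\mathbf{Y}}\Bigr|\le \|g\|_\infty \tfrac1N\|\mathbf{Y}\|_1 .
\end{align}
This reduces both terms to $\ell^1$ bounds on matrices.

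For the first term we work on $\mathcal{E}_3$ of Theorem \ref{thm:first_approximation_rescaled}. There $\frac1N\|\X^{\r,\c}-\hat{\X}^{\r,\c}\|_1\le 2\eps_{\textup{pot}}e^{2K}$, which yields the term $2\|g\|_\infty\eps_{\textup{pot}}e^{2K}$ directly. Its probability cost is $(m\vee n)^{-D}+m^2e^{-c\Phi_{\bLambda}}+2\exp(-\frac{\|\bLambda\|_1^2/2}{mn\sigma^2+R\|\bLambda\|_1})$.

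For the second term, $\hat{\X}^{\r,\c}-\bLambda^{\r,\c}=\D(e^{\balpha})(\X-\bLambda)\D(e^{\bbeta})$ has deterministic weights. So we keep the sign structure and write
\begin{align}
\int g(\varphi_{\hat{\X}^{\r,\c}}-\varphi_{\bLambda^{\r,\c}}) = \frac1N\sum_{ij} w_{ij}(\X_{ij}-\bLambda_{ij}), \qquad w_{ij}=e^{\balpha(i)+\bbeta(j)}\bar g_{ij},
\end{align}
where $\bar g_{ij}$ is the average of $g$ over cell $(i,j)$. This is a sum of independent centered sub-exponential variables.

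We need a uniform bound on the weights. From Assumption \ref{assump:bounded_cost} and the Schrödinger system, we expect
\begin{align}
e^{\balpha(i)+\bbeta(j)}\le e^{2K}\,\frac{N}{\|\bLambda\|_1}.
\end{align}
The heuristic is that $\bLambda^{\r,\c}_{ij}$ is within factor $e^{K}$ of $\r(i)\c(j)/N$, and $\bLambda_{ij}$ is within $e^{K}$ of $\r(i)\c(j)\|\bLambda\|_1/N^2$. This is presumably already established in the proof of Theorem \ref{thm:first_approximation_rescaled}, and we would cite it. Then $|w_{ij}|\le \|g\|_\infty e^{2K}N/\|\bLambda\|_1$.

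Bernstein's inequality under \eqref{eq:moment_growth}, applied to $\sum_{ij}w_{ij}(\X_{ij}-\bLambda_{ij})$ with variance proxy $mn\sigma^2\max|w|^2$ and scale $R\max|w|$, gives deviation at most $\max|w|(\sigma\sqrt{2mn\tau}+2R\tau)$ with probability $1-2e^{-\tau}$. Here $2e^{-\tau}=\tfrac12(m\vee n)^{-(D+1)}\le (m\vee n)^{-D}$. Dividing by $N$ produces exactly the second term $\frac{\|g\|_\infty e^{2K}}{\|\bLambda\|_1}(\sigma\sqrt{2mn\tau}+2R\tau)$.

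A union bound with $\mathcal{E}_3$ gives the total failure probability $2(m\vee n)^{-D}+m^2e^{-c\Phi_{\bLambda}}+2\exp(\cdots)$, matching the claim.

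The main obstacle is the weight bound $e^{\balpha(i)+\bbeta(j)}\le e^{2K}N/\|\bLambda\|_1$, that is, turning the bounded-cost assumption into an $\ell^\infty$ bound on the deterministic potentials in the right gauge. If it is not already available, we would derive it from the Sinkhorn fixed point equations, by sandwiching $\sum_j e^{\bbeta(j)}\bLambda_{ij}=\r(i)e^{-\balpha(i)}$ using the $e^{\pm K}$ comparison of $\bLambda$ with the product form $\r\c^T\|\bLambda\|_1/N^2$.
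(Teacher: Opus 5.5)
Your proposal is correct and is essentially the paper's own proof. It uses the same split through $\hat{\X}^{\r,\c}$, bounds Term 1 by $\|g\|_\infty N^{-1}\|\X^{\r,\c}-\hat{\X}^{\r,\c}\|_1$ on the event $\mathcal{E}_3$ of Theorem \ref{thm:first_approximation_rescaled}, handles Term 2 by Lemma \ref{lem:bernstein_RV} with weights bounded through $e^{\balpha(i)+\bbeta(j)}\le e^{2K}N/\|\bLambda\|_1$ and threshold $\sqrt{2V\tau}+2B\tau$, and closes with the same union bound using $2e^{-\tau}=\tfrac12(m\vee n)^{-(D+1)}$. The weight bound you flag is exactly Lemma \ref{lem: boundedness_matrix_scaling_potentials}, with no gauge issue since only $\balpha(i)+\bbeta(j)$ enters; your fallback of sandwiching the Sinkhorn equations with the $e^{\pm K}$ comparison alone would give only $e^{3K}$, and reaching $e^{2K}$ also needs Jensen together with $\langle\bar{\balpha},\mu_0\rangle+\langle\bar{\bbeta},\mu_1\rangle\ge 0$ (obtained in Lemma \ref{lem: boundedness_continuous_potentials} via dual optimality, and equal to $D_{\textup{KL}}(\pi\,\Vert\,\mathcal{R})\ge 0$).
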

	
	Theorem \ref{thm:bound_X^rs-Z} controls the integration error $\int_{[0,1]^2} g\bigl(\varphi_{\X^{\r,\c}} - \varphi_{\bLambda^{\r,\c}}\bigr)\, dx\, dy$ against any bounded test function $g$. The right-hand side consists of two terms with distinct origins: the first, $2\|g\|_\infty \eps_{\textup{pot}} e^{2K}$, inherits the potential-stability error from Theorem \ref{thm:first_approximation_rescaled} and reflects how far the random Schr\"{o}dinger potentials depart from their deterministic counterparts; the second, of order $\|g\|_\infty e^{2K} \sqrt{mn\tau}/\|\bLambda\|_1$, arises from a Bernstein-type concentration of the centered matrix $\X - \bLambda$ tested against $g$. Under the mass-growth condition $\|\bLambda\|_1 \gg (m\vee n)^{3/2}$, both terms vanish, so $\varphi_{\X^{\r,\c}}$ and $\varphi_{\bLambda^{\r,\c}}$ agree against any bounded test function with high probability. This weak control of $\varphi_{\X^{\r,\c}} - \varphi_{\bLambda^{\r,\c}}$ against $L^\infty$ observables is the input used in Section \ref{sec:scaling_limit_statements} to transfer the deterministic scaling limit of $\bLambda^{\r,\c}$ (obtained via the total stability theorem) to the random matrix $\X^{\r,\c}$.
	
	%\commHL{Add CLT statement here or somewhere}

	\subsection{Scaling limit of Sinkhorn-rescaled random matrices}
	\label{sec:scaling_limit_statements}
	
	We now state the scaling limit results, showing that Sinkhorn-rescaled random matrices converge to the continuous static Schr\"{o}dinger bridge as dimensions grow. The proof proceeds in two stages: first, the deterministic bridges $\bLambda_k^{\r_k,\c_k}$ converge to the continuous SSB via the total stability estimate (Theorem \ref{thm:total_stability}); second, the random matrices $\X_k^{\r_k,\c_k}$ concentrate around $\bLambda_k^{\r_k,\c_k}$ at each dimension via the results in Section \ref{sec:concentration_statements}.
	
	To embed the discrete problems into a common continuous framework, we introduce the following convergence assumption.

	\begin{assumption}[Convergence of margins and kernels]\label{assumption: limit_theory}
		Fix a sequence of $m_k \times n_k$ $\delta$-smooth margins $(\r_k, \c_k)$ with $\sum_{i} \r_k(i) = \sum_{j} \c_k(j) = N_k$, together with prior mean matrices $\bLambda_k$.
		Define the probability measures on $[0, 1]$,  $d\mu_0^k = \rho_{\r_k}dx$ and $d\mu_1^k = \rho_{\c_k}dx$ where
		\begin{align}
			\rho_{\r_k}(x) = \frac{m_k}{N_k} \r_k(\lceil m_k x \rceil), \quad \rho_{\c_k}(x) = \frac{n_k}{N_k} \c_k(\lceil n_k x \rceil).
		\end{align}
		Additionally define the reference measure $d\mathcal{R}_k = \varphi_{\bLambda_k} dxdy$ where
		\begin{align}
			\varphi_{\bLambda_k}(x, y) = \frac{n_km_k}{N_k} \bLambda_k(\lceil m_k x \rceil, \lceil n_k y \rceil). 
		\end{align}
		Then there exist probability measures $\mu_0 = \rho_{\r} dx$, $\mu_1 = \rho_{\c} dx$, and $\mathcal{R} = \varphi_{\bLambda}dxdy$  
		\begin{align}
			\rho_{\r_k} \to \rho_{\r} \;\textup{in } L^1, \qquad \rho_{\c_k} \to \rho_{\c} \;\textup{in } L^1, \qquad \varphi_{\bLambda_k} \to \varphi_{\bLambda} \;\textup{in } L^1.
		\end{align}
		Moreover, Assumption \ref{assump:bounded_cost} holds uniformly along the sequence: there is a constant $K > 0$, independent of $k$, such that for all $k$ sufficiently large the triple $(\r_k, \c_k, \bLambda_k)$ satisfies \eqref{eq:bounded_cost_K} with the same $K$.
	\end{assumption}
	
	In terms of the histogram densities introduced above, the uniform bounded-cost condition \eqref{eq:bounded_cost_K} is equivalent to the uniform bound on the Radon--Nikodym derivative $\frac{d\mathcal{R}_k}{d(\mu_0^k \otimes \mu_1^k)}$:
	\begin{align}\label{eq:limit_theory_density_ratio}
		e^{-K} \leq \frac{\varphi_{\bLambda_k}}{\rho_{\r_k} \otimes \rho_{\c_k}} \leq e^K \qquad (\mu_0^k \otimes \mu_1^k)\textup{-a.e.}
	\end{align}
	This uniform bound on the density ratio 
	ensures that the cost functions remain uniformly bounded along the sequence.
	
	\begin{theorem}[Scaling limit of deterministic Schr\"{o}dinger bridges]\label{thm:SSB_deterministic_limit}
		Let Assumption \ref{assumption: limit_theory} hold. For each $k$, let
		\begin{align}
			\pi_k \in \argmin_{\pi \in \Pi(\mu_{0}^k, \mu_1^{k})} D_{\textup{KL}}(\pi \Vert \mathcal{R}_k)
		\end{align}
		be the static Schr\"{o}dinger bridge with margins $\mu_0^k, \mu_1^k$ and reference measure $\mathcal{R}_k$, and let $\pi^{\r,\c;\bLambda}$ denote the SSB with the limiting margins $\mu_0, \mu_1$ and reference measure $\mathcal{R}$. Then
		\begin{align}
			d_{H}(\pi_k, \pi^{\r, \c; \bLambda})^2 \leq 8(K + 4\log\delta^{-1}) \left( \|\rho_{\r_k} - \rho_{\r}\|_{L^1} + \|\rho_{\c_k} - \rho_{\c}\|_{L^1} \right) + 4\, e^{3K}\,\delta^{-12}\, d_{H}(\mathcal{R}_k, \mathcal{R})^2.
		\end{align}
		In particular, $d_{H}(\pi_k, \pi^{\r, \c; \bLambda}) \to 0$ as $k \to \infty$, so the sequence of discrete bridges converges to the continuous SSB in the Hellinger metric.
	\end{theorem}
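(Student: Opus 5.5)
The plan is to apply the total stability estimate, Theorem \ref{thm:total_stability}, with the primed data being the discrete triple and the unprimed data the limiting triple. Concretely, take $(\mu_0,\mu_1,\mathcal{R})$ to be the limiting margins and reference measure, and $(\mu_0',\mu_1',\mathcal{R}')=(\mu_0^k,\mu_1^k,\mathcal{R}_k)$. Everything then reduces to verifying the hypotheses and bounding the constants $\|\kappa\|_\infty$, $\|\kappa'\|_\infty$, and $M$ in terms of $K$ and $\delta$ only.

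\emph{Step 1: the discrete data.} By $\delta$-smoothness, $\rho_{\r_k},\rho_{\c_k}\in[\delta,\delta^{-1}]$ on $[0,1]$. By \eqref{eq:limit_theory_density_ratio}, the discrete cost $\kappa_k=-\log\bigl(\varphi_{\bLambda_k}/(\rho_{\r_k}\otimes\rho_{\c_k})\bigr)$ satisfies $\|\kappa_k\|_\infty\le K$. Since the densities are bounded below, $\mathcal{R}_k$ is equivalent to both $\mu_0^k\otimes\mu_1^k$ and Lebesgue measure.

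\emph{Step 2: passing the bounds to the limit.} Pass to an a.e.\ convergent subsequence of the $L^1$-convergent sequences. The pointwise bounds then survive:
\begin{align}
\rho_\r,\rho_\c\in[\delta,\delta^{-1}], \qquad e^{-K}\rho_\r\otimes\rho_\c\le\varphi_{\bLambda}\le e^{K}\rho_\r\otimes\rho_\c \quad \text{a.e.}
\end{align}
Hence the limiting cost $\kappa$ has $\|\kappa\|_\infty\le K$, and all limiting measures are mutually equivalent with Lebesgue measure.

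\emph{Step 3: the constant $M$.} The ratio of product densities is
\begin{align}
\frac{d(\mu_0\otimes\mu_1)}{d(\mu_0^k\otimes\mu_1^k)}=\frac{\rho_\r(x)\rho_\c(y)}{\rho_{\r_k}(x)\rho_{\c_k}(y)},
\end{align}
which lies in $[\delta^4,\delta^{-4}]$, so $M=4\log\delta^{-1}$. The margin term of Theorem \ref{thm:total_stability} then gives exactly $8(K+4\log\delta^{-1})(\|\rho_{\r_k}-\rho_\r\|_{L^1}+\|\rho_{\c_k}-\rho_\c\|_{L^1})$.

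\emph{Step 4: the reference term.} The exponential factor is
\begin{align}
e^{3(K+M)\vee K}=e^{3K+12\log\delta^{-1}}=e^{3K}\delta^{-12},
\end{align}
which matches the claimed constant.

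\emph{Identification of $\pi_k$.} A minor point is that $\pi_k$ is defined as an argmin over couplings on $[0,1]^2$ rather than as a matrix. Existence and uniqueness follow from \cite[Thm.~2.1]{nutz2021introduction}, since the cost is bounded, so the product coupling has finite KL divergence. No identification with $\bLambda_k^{\r_k,\c_k}$ is needed for this statement.

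\emph{Convergence.} The right-hand side tends to zero because $\rho_{\r_k}\to\rho_\r$ and $\rho_{\c_k}\to\rho_\c$ in $L^1$, and $\varphi_{\bLambda_k}\to\varphi_{\bLambda}$ in $L^1$. The last convergence gives $d_H(\mathcal{R}_k,\mathcal{R})^2\le d_{TV}(\mathcal{R}_k,\mathcal{R})\to0$ by \eqref{eq:TV_Hellinger}.

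\emph{Main obstacle.} I expect the hardest step to be careful bookkeeping of which measure plays the "primed" role in Theorem \ref{thm:total_stability}. The role assignment must make the $M$-dependent factors come out as $K+4\log\delta^{-1}$ and $e^{3K}\delta^{-12}$ rather than involving unbounded quantities. A related check is that the limiting $\mathcal{R}$ is a probability measure, which holds by $L^1$ convergence. If the roles turn out reversed, the same bounds hold symmetrically, since both costs are bounded by $K$ and $M$ is symmetric.
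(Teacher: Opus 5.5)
Your proposal is correct and follows essentially the same route as the paper's proof. You pass the pointwise bounds $\rho\in[\delta,\delta^{-1}]$ and $e^{-K}\le \varphi/(\rho_{\r}\otimes\rho_{\c})\le e^{K}$ to the limit, which gives $M=4\log\delta^{-1}$ from the product-density ratio lying in $[\delta^4,\delta^{-4}]$, and then plug everything into Theorem \ref{thm:total_stability}. The only difference is that the paper puts the discrete triple in the unprimed slot and the limiting triple in the primed slot. That is the reverse of your assignment, and as you anticipated it makes no difference, since both costs are bounded by $K$ and $M$ is symmetric.
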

	
	Combining Theorem \ref{thm:SSB_deterministic_limit} with the concentration results from Section \ref{sec:concentration_statements}, we obtain the full scaling limit for random matrices.
	
	\begin{theorem}[Scaling limit of Sinkhorn-rescaled random matrices] \label{thm:RM_rescaled_limit}
		Let $(\r_k, \c_k)$ and $\bLambda_k$ satisfy Assumption \ref{assumption: limit_theory}, and for each $k$ let $\X_k$ be a random matrix satisfying Assumption \ref{assump:random_matrix} with mean $\bLambda_k$ and target margin $(\r_k, \c_k)$. Let $\varphi_{\X_k^{\r_k,\c_k}}$ denote the piecewise constant extension of $\X_k^{\r_k,\c_k}$ to $[0,1]^2$ as in Theorem \ref{thm:bound_X^rs-Z}, and let $W^{\r,\c;\bLambda} := d\pi^{\r,\c;\bLambda}/(dx\,dy)$ be the Lebesgue density of the limiting SSB from Theorem \ref{thm:SSB_deterministic_limit}. Assume
		\begin{enumerate}[label=\textup{(\roman*)}]
			\item $\sup_k \sigma_k \le \sigma < \infty$ and $\sup_k R_k \le R < \infty$;
			\item $\liminf_k (m_k/n_k) \wedge (n_k/m_k) \ge \gamma > 0$;
			\item $(m_k \vee n_k)^{3/2} \sqrt{\log(m_k \vee n_k)} = o_{K,\delta,\gamma,\sigma,R}(\|\bLambda_k\|_1)$.
		\end{enumerate}
		Fix $D > 0$. Then for any $g \in L^{\infty}([0,1]^2)$,
		\begin{align}
			&\left|\int_{[0,1]^2} g\!\left(\varphi_{\X_k^{\r_k,\c_k}} - W^{\r,\c;\bLambda}\right) dx\, dy\right| \nonumber \\
			&\qquad = O_{K, \delta, \|g\|_{\infty}, \sigma, R}\!\left( \frac{(m_k \vee n_k)^{3/2} \sqrt{D\log(m_k \vee n_k)}}{\|\bLambda_k\|_1} + \sqrt{\norm{\rho_{\r} - \rho_{\r_k}}_{L^1} + \norm{\rho_{\c} - \rho_{\c_k}}_{L^1}} + d_H(\varphi_{\bLambda_k}, \varphi_{\bLambda}) \right)
		\end{align}
		with probability at least 
		\begin{align}
			1 - 2(m_k \vee n_k)^{-D} - m_k^2 \exp(-c\,\Phi_{\bLambda_k}) - 2\exp\left(-\frac{\|\bLambda_k\|_1^2}{2(m_k n_k \sigma^2 + R\|\bLambda_k\|_1)}\right)
		\end{align}
		for an absolute constant $c > 0$, where $\Phi_{\bLambda_k}$ is the row-alignment rate in \eqref{eq:Phi_def}. 
	\end{theorem}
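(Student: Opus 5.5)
The proof is a triangle inequality through the deterministic discrete bridge. For $g\in L^\infty([0,1]^2)$ we write
\begin{align}
\int g\,(\varphi_{\X_k^{\r_k,\c_k}} - W^{\r,\c;\bLambda}) = \int g\,(\varphi_{\X_k^{\r_k,\c_k}} - \varphi_{\bLambda_k^{\r_k,\c_k}}) + \int g\,(\varphi_{\bLambda_k^{\r_k,\c_k}} - W^{\r,\c;\bLambda}),
\end{align}
and bound the first term by Theorem \ref{thm:bound_X^rs-Z} and the second by Theorem \ref{thm:SSB_deterministic_limit}.

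\emph{Random term.} We first identify $\varphi_{\bLambda_k^{\r_k,\c_k}}\,dx\,dy$ with the bridge $\pi_k$ of Theorem \ref{thm:SSB_deterministic_limit}. The histogram embedding sends $\T(\r_k,\c_k)$, normalized by $N_k$, bijectively onto the piecewise-constant couplings in $\Pi(\mu_0^k,\mu_1^k)$. We need to rule out that the continuous minimizer is not piecewise constant. Averaging a coupling over cells preserves the marginals, and by convexity (Jensen) it does not increase $D_{\textup{KL}}(\cdot\Vert\mathcal{R}_k)$, since $\mathcal{R}_k$ is constant on cells. Hence the minimizer is piecewise constant. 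The scale factor is also consistent: $\varphi_{\bLambda_k}$ uses $\|\bLambda_k\|_1$ implicitly through $N_k$, so we normalize $\bLambda_k$ to total mass $N_k$, and this normalization does not change the bridge.

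Next we verify the hypotheses of Theorem \ref{thm:bound_X^rs-Z} for large $k$. By (i), (ii) and $\delta$-smoothness, $\eps_0=O((m_k\vee n_k)^{3/2}\sqrt{\tau}/\|\bLambda_k\|_1)$ with $\tau=\Theta(D\log(m_k\vee n_k))$. For $C_*$, Assumption \ref{assump:bounded_cost} with $\delta$-smoothness gives $\bLambda_{\min}\ge e^{-K}\delta^2\|\bLambda_k\|_1/(m_kn_k)$, so $C_*=O_{K,\delta}(1)$. Therefore (iii) gives $\eps_0\le 1/(50C_*^2)$ and $\eps_{\textup{pot}}\le 1$ eventually. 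The first term is then at most
\begin{align}
2\|g\|_\infty e^{2K}\eps_{\textup{pot}} + \|g\|_\infty e^{2K}\bigl(\sigma\sqrt{2m_kn_k\tau}+2R\tau\bigr)/\|\bLambda_k\|_1 = O\bigl((m_k\vee n_k)^{3/2}\sqrt{D\log(m_k\vee n_k)}/\|\bLambda_k\|_1\bigr),
\end{align}
because $\sqrt{m_kn_k}\le m_k\vee n_k$. This holds with exactly the stated probability.

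\emph{Deterministic term.} We have $|\int g(\pi_k-\pi)|\le\|g\|_\infty d_{TV}(\pi_k,\pi)\le 2\sqrt2\|g\|_\infty d_H(\pi_k,\pi)$ by \eqref{eq:TV_Hellinger}. Theorem \ref{thm:SSB_deterministic_limit} combined with $\sqrt{a+b}\le\sqrt a+\sqrt b$ then yields $O_{K,\delta}$ of the square-root margin term plus $d_H(\mathcal{R}_k,\mathcal{R})$, which is the remaining part of the claimed bound.

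\emph{Main obstacle.} The only delicate step is the bookkeeping above: checking that the continuous bridge on the histogram margins coincides with the embedded discrete bridge, and that the constants $C_*$ and $\eps_0$ are uniform in $k$ under (i)--(iii). The first is handled by the Jensen averaging argument; the second follows from the uniform bounded-cost bound relating $\bLambda_{\min}$ to $\|\bLambda_k\|_1/(m_kn_k)$.
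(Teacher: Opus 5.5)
Your proposal is correct and follows the paper's proof. Both use the same triangle inequality through $\varphi_{\bLambda_k^{\r_k,\c_k}}$, bound the random term by Theorem \ref{thm:bound_X^rs-Z}, and bound the deterministic term by the $L^1$--Hellinger comparison plus Theorem \ref{thm:SSB_deterministic_limit}. Your added bookkeeping fills in steps the paper leaves implicit or justifies loosely: the cell-averaging argument identifying the embedded discrete bridge with $\pi_k$, and $C_*=O_{K,\delta}(1)$ derived directly from the bounded-cost bound $\bLambda_{\min}\ge e^{-K}\delta^2\|\bLambda_k\|_1/(m_kn_k)$ rather than from $L^1$ convergence of the kernel.
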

	
	Conditions (i)--(iii) have clear interpretations: (i) fixes uniform sub-exponential noise parameters along the sequence, (ii) forces $m_k$ and $n_k$ to grow comparably, and (iii) is the mass-growth condition ensuring that the total expected mass $\|\bLambda_k\|_1$ outpaces the stochastic noise.
	
	The three terms on the right-hand side correspond precisely to the three distinct layers of approximation. The first term isolates the stochastic fluctuation $\X_k^{\r_k,\c_k} \approx \bLambda_k^{\r_k,\c_k}$ governed by the discrete concentration theory; it vanishes asymptotically because the total expected mass $\|\bLambda_k\|_1$ is assumed to strictly overpower the dimensional growth and variance parameters. The second and third terms isolate the deterministic macroscopic limits—the margin convergence $(\rho_{\r_k}, \rho_{\c_k}) \to (\rho_{\r}, \rho_{\c})$ and the %kernel
	reference convergence $\varphi_{\bLambda_k} \to \varphi_{\bLambda}$, respectively—governed by the continuous SSB stability theory. Driven jointly by the explicit expected mass growth condition and the function space convergence in Assumption \ref{assumption: limit_theory}, all three error bounds rigorously tend to zero as $k \to \infty$. This establishes the final scaling limit: the discrete Sinkhorn-rescaled random matrices converge to the continuous static Schr\"{o}dinger bridge.

	\subsection{Fluctuation of Sinkhorn-rescaled random matrices}
	\label{sec:fluctuation_statements}
	
	Having established the average behavior of the rescaled random matrix, we now characterize the fluctuations of the exact rescaled matrix $\X^{\r,\c}$ around the rescaled mean $\bLambda^{\r,\c}$. Specifically, we establish the bulk rigidity of the empirical spectral distribution for the associated sample covariance matrix.
	
	We first introduce the fluctuation matrices. Let $(\balpha, \bbeta)$ denote the deterministic Schr\"{o}dinger potentials for the mean matrix $\bLambda$, and let $(\balpha_{\X}, \bbeta_{\X})$ denote the (random) potentials scaling $\X$ to margin $(\r,\c)$, defined on the event $\{\X \in \mathcal{S}(\r,\c)\}$. Define the flatness parameter
	\begin{align}\label{eq:def_s_star}
		s_{\textup{max}} := \sup_{i\in[m],\, j\in[n]} e^{2\balpha(i)+2\bbeta(j)}\,\operatorname{Var}(\X_{ij}),
	\end{align}
	which is finite and positive under Assumption~\ref{assump:random_matrix} and Assumption~\ref{assump:bounded_cost} since $\operatorname{Var}(\X_{ij})>0$ and $e^{\balpha(i)+\bbeta(j)}>0$ for all $(i,j)$. 
	Define the \emph{rescaled fluctuation matrix} $\A$ and the \emph{comparison fluctuation matrix} $\check{\A}$ by
	\begin{align}\label{eq:def_check_A_and_A}
		\qquad \A := \frac{1}{\sqrt{(m+n)s_{\textup{max}}}}\, \D(e^{\balpha_{\X}})\,(\X - \bLambda)\,\D(e^{\bbeta_{\X}}), \quad \,\,  \check{\A} := \frac{1}{\sqrt{(m+n)s_{\textup{max}}}}\, \D(e^{\balpha})\,(\X - \bLambda)\,\D(e^{\bbeta}).
	\end{align}
	Both matrices lie in $\R^{m\times n}$. Our object of interest is the $m\times m$ Gram (sample covariance) matrix $\A\A^T$, which we will control by comparison with $\check{\A}\check{\A}^T$. The symmetric prefactor $\bigl((m+n)s_{\textup{max}}\bigr)^{-1/2}$ follows the convention of \cite[\S1.7.3]{lyu2024large} for inhomogeneous Wishart fluctuations: it is chosen so that the variance profile of $\check{\A}$ satisfies the flatness bound $\mathbf{S}_{ij}\le 1/(m+n)$ required by the Alt--Erd\H{o}s--Kr\"{u}ger local law for random Gram matrices \cite{alt2017local}, placing $\check{\A}\check{\A}^T$ directly in the scope of that theory. On the high-probability event $\mathcal{E}_2$ of Theorem \ref{thm:first_approximation_rescaled}, $\A$ and $\check{\A}$ are close in operator norm, and Weyl's inequality then transfers the rigidity and spectral confinement from $\check{\A}\check{\A}^T$ to $\A\A^T$.

	Before stating the bulk rigidity theorem, we characterize the deterministic approximation for the eigenvalue distribution of the comparison Gram matrix $\check{\A}\check{\A}^T$. The entries of $\check{\A}$ are independent and centered, with variance profile
	\begin{align}\label{eq:variance_profile_def}
		\mathbf{S}_{ij} := \operatorname{Var}(\check{\A}_{ij}) = \frac{1}{(m+n)\,s_{\textup{max}}}\, e^{2\balpha(i) + 2\bbeta(j)}\, \operatorname{Var}(\X_{ij}).
	\end{align}
	By definition of $s_{\textup{max}}$ in \eqref{eq:def_s_star}, this profile satisfies the uniform flatness bound $\mathbf{S}_{ij}\le 1/(m+n)$. Note that $\mathbf{S}$ is an $m \times n$ matrix of per-entry variances and should not be confused with the $m \times m$ Gram matrix $\check{\A}\check{\A}^T$.
	
	The deterministic approximation for the eigenvalue distribution of $\check{\A}\check{\A}^T$ is the probability measure $\nu$ on $\mathbb{R}_{\ge 0}$ determined by the Dyson equation associated with $\mathbf{S}$: for $z \in\mathbb{H}$,
	\begin{align}\label{eq:dyson_system}
		-\frac{1}{m_i(z)} = z - \sum_{k=1}^n \mathbf{S}_{ik} \frac{1}{1 + \sum_{j=1}^m \mathbf{S}_{jk} m_j(z)}, \qquad i = 1, \dots, m,
	\end{align}
	which admits a unique holomorphic solution $(m_i)_{i=1}^{m}:\mathbb{H}\to\mathbb{H}^m$, and the averaged Stieltjes transform $\bar m(z) := \tfrac{1}{m}\sum_{i=1}^m m_i(z)$ is the Stieltjes transform of $\nu$. Standard results on the Dyson equation \cite{alt2017local} further give the decomposition $d\nu = \pi_*\delta_0 + \pi(\omega)\mathbf{1}(\omega>0)\,d\omega$ with $\pi_*\in[0,1]$ a dimensionally-determined atom at zero and $\pi$ locally H\"older-continuous on $(0,\infty)$. The following theorem quantifies the finite-$n$ concentration of the eigenvalues of $\A\A^T$ around the classical locations $i(\tau) := \lceil m\int_0^{\tau}\nu(\omega)\,d\omega \rceil$ determined by $\nu$; Figure \ref{fig:ESD} illustrates the predicted limit shape against the empirical histogram. 
	
	\begin{theorem}[Bulk Rigidity of the Sample Covariance Matrix]\label{thm:ESD}
		Suppose Assumptions \ref{assump:random_matrix}--\ref{assump:bounded_cost} hold and the aspect ratio is bounded: $\gamma \le m/n \le \gamma^{-1}$ for some $\gamma \in (0,1]$.  Let $\xi := \min_{i,j,k,\ell} \frac{\Var(X_{ij})}{\Var(X_{k\ell})}>0$.  
		Let $\nu$ be the measure from the Dyson equation \eqref{eq:dyson_system} for the variance profile $\mathbf{S}$ in \eqref{eq:variance_profile_def}, with atom $\pi_*=\max\{0, 1-n/m\}$ and $\operatorname{supp}(\nu)\subseteq[0,4]$. 
		Fix $\eps,\Delta > 0$ and $\eps_* > 0$. Let $\eps_{\textup{cov}}$ be the covariance deviation error defined in Lemma \ref{lem:rescaled_approx} and let $\mathcal{E}_{2}$ be as in Theorem \ref{thm:first_approximation_rescaled}.
		For any $D > 0$, there exist constants $C_3=C_{3}(D,\eps, \Delta,\eps_*, \xi, K)$ and $C_4=C_{4}(D,\eps,\eps_*, \xi, K)$ such that the following hold:
		\begin{description}
			\item[(i)] $m \ne n$ (rectangular): 
			\begin{align}\label{eq:rigidity_rectangular}
				\P\left( \{ \X \in \mathcal{S}(\r,\c) \} \cap \left\{ \exists \tau \in (\Delta,4] : \pi(\tau) \ge \eps_*, |\lambda_{i(\tau)}({\A}{\A}^T) - \tau| \ge \frac{m^{\eps}}{m} + \eps_{\textup{cov}} \right\} \right) \le \P(\mathcal{E}_2^c) + \frac{C_3}{m^D}.
			\end{align}
			
			\item[(ii)] $m = n$ (square):
			\begin{align}\label{eq:rigidity_square}
				\P\left( \{ \X \in \mathcal{S}(\r,\c) \} \cap \left\{ \exists \tau \in (0,4] : \pi(\tau) \ge \eps_*, |\lambda_{i(\tau)}({\A}{\A}^T) - \tau| \ge \frac{m^{\eps}}{m}\left(\sqrt{\tau} + \frac{1}{m}\right) + \eps_{\textup{cov}} \right\} \right) \le \P(\mathcal{E}_2^c) + \frac{C_4}{m^D}.
			\end{align}
			
			\item[(iii)] For $m$ sufficiently large, there are asymptotically no eigenvalues far outside the support of $\nu$:
			\begin{align}\label{eq:no_eigenvalues_outside}
				\P\left( \{ \X \in \mathcal{S}(\r,\c) \} \cap \left\{ \sigma({\A}{\A}^T) \cap \left\{ \tau : \text{dist}(\tau, \text{supp}(\nu)) \ge \eps_* + \eps_{\textup{cov}} \right\} \ne \emptyset \right\} \right) \le \P(\mathcal{E}_2^c) + \frac{1}{m^D}.
			\end{align}
		\end{description}
	\end{theorem}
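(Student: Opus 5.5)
The plan is a comparison argument. We first establish rigidity for the comparison Gram matrix $\check{\A}\check{\A}^T$, whose entries are independent, centered, and have deterministic variance profile $\mathbf{S}$. We then transfer rigidity to $\A\A^T$ on the event $\mathcal{E}_2$ of Theorem \ref{thm:first_approximation_rescaled}, using Weyl's inequality.

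For the first step, we check that $\check{\A}$ meets the hypotheses of the Alt--Erd\H{o}s--Kr\"{u}ger local law for Gram matrices \cite{alt2017local}.
\begin{itemize}
\item \emph{Upper bound (flatness).} By the choice of $s_{\textup{max}}$ in \eqref{eq:def_s_star}, we have $\mathbf{S}_{ij}\le 1/(m+n)$.
\item \emph{Lower bound on the profile.} We need $\mathbf{S}_{ij}\ge c/(m+n)$ with $c=c(\xi,K)$. This follows because $e^{2\balpha(i)+2\bbeta(j)}$ varies by at most a factor $e^{O(K)}$ across entries. Indeed, $\bLambda^{\r,\c}_{ij}=e^{\balpha(i)+\bbeta(j)}\bLambda_{ij}$, and Assumption \ref{assump:bounded_cost} together with $\delta$-smoothness pins the ratio $\bLambda^{\r,\c}_{ij}/\bLambda_{ij}$ to within $e^{\pm O(K)}$ of $\r(i)\c(j)\|\bLambda\|_1/(N^2\bLambda_{ij})\cdot(\text{const})$. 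We plan to use the standard bounded-cost bound on the oscillation of the potentials, namely $\operatorname{osc}\balpha+\operatorname{osc}\bbeta\lesssim K$. The variance ratio is controlled by $\xi$.
\item \emph{Moments.} Bounded normalized moments of $\sqrt{m+n}\,\check{\A}_{ij}$ follow from \eqref{eq:moment_growth}, after rescaling by $s_{\textup{max}}$, which is comparable to the individual variances.
\end{itemize}

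With these inputs, the local law yields the conclusions for $\check{\A}\check{\A}^T$: the stated atom $\pi_*$ and $\operatorname{supp}\nu\subseteq[0,4]$ (the latter from $\mathbf{S}_{ij}\le 1/(m+n)$), bulk rigidity with error $m^{\eps}/m$ in the rectangular case, the refined error $\frac{m^{\eps}}{m}(\sqrt{\tau}+1/m)$ near the hard edge in the square case, and the absence of eigenvalues at distance $\ge\eps_*$ from the support. Each holds with probability $1-C m^{-D}$.

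For the transfer, we bound $\|\A\A^T-\check{\A}\check{\A}^T\|_2\le \|\A-\check{\A}\|_2(\|\A\|_2+\|\check{\A}\|_2)$. On $\mathcal{E}_2$, $\|\A-\check{\A}\|_2$ is at most $\frac{N}{\sqrt{(m+n)s_{\textup{max}}}}\cdot\frac{3e^{2K}\eps_{\textup{pot}}t_D}{\|\bLambda\|_1}$. Also $\|\check{\A}\|_2\le 2+o(1)$ with high probability, by the support bound or by Lemma \ref{lem:spectral norm of centered poisson}. We expect the resulting quantity to be exactly the $\eps_{\textup{cov}}$ of Lemma \ref{lem:rescaled_approx}, which we may take as given. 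Weyl's inequality then shifts every eigenvalue by at most $\eps_{\textup{cov}}$, so each of (i)--(iii) follows by a union bound: the bad event is contained in $\mathcal{E}_2^c$ together with the local-law failure event. Intersecting with $\{\X\in\mathcal{S}(\r,\c)\}$ is harmless, because $\A$ is only defined there.

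The main obstacle is the first step: verifying the lower flatness bound on $\mathbf{S}$ uniformly in the dimensions, and matching the exact hypotheses (including the Dyson-equation stability in the bulk where $\pi(\tau)\ge\eps_*$) of \cite{alt2017local}, so that the constants depend only on $(D,\eps,\Delta,\eps_*,\xi,K)$. To get the uniform potential oscillation bound, we would first compare $e^{\balpha(i)+\bbeta(j)}$ to $\r(i)\c(j)/(N\,\bLambda\text{-row/col sums})$, using that each row sum of $\bLambda^{\r,\c}$ equals $\r(i)$.
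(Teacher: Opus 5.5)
Your proposal is correct and follows essentially the same route as the paper. You verify the Alt--Erd\H{o}s--Kr\"{u}ger hypotheses for $\check{\A}$: flatness from $s_{\textup{max}}$, a lower bound $\mathbf{S}_{ij}\ge c(\xi,K)/(m+n)$ from the $K$-controlled oscillation of the potentials, moments, and aspect ratio. You then invoke their rigidity and confinement results for $\check{\A}\check{\A}^T$ and transfer them to $\A\A^T$ via Lemma \ref{lem:rescaled_approx} and Weyl's inequality on $\mathcal{E}_2$. One bookkeeping point: Lemma \ref{lem:rescaled_approx} holds on $\mathcal{E}_2\cap\{\|\X-\bLambda\|_2\le t_D\}$, so your union bound must also include $\{\|\X-\bLambda\|_2>t_D\}$, which has probability at most $(m\vee n)^{-D}$ and which the paper absorbs into $C_3/m^D$ (resp.\ $C_4/m^D$).
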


	\begin{figure}[h!]
		\centering
		\includegraphics[width=\linewidth]{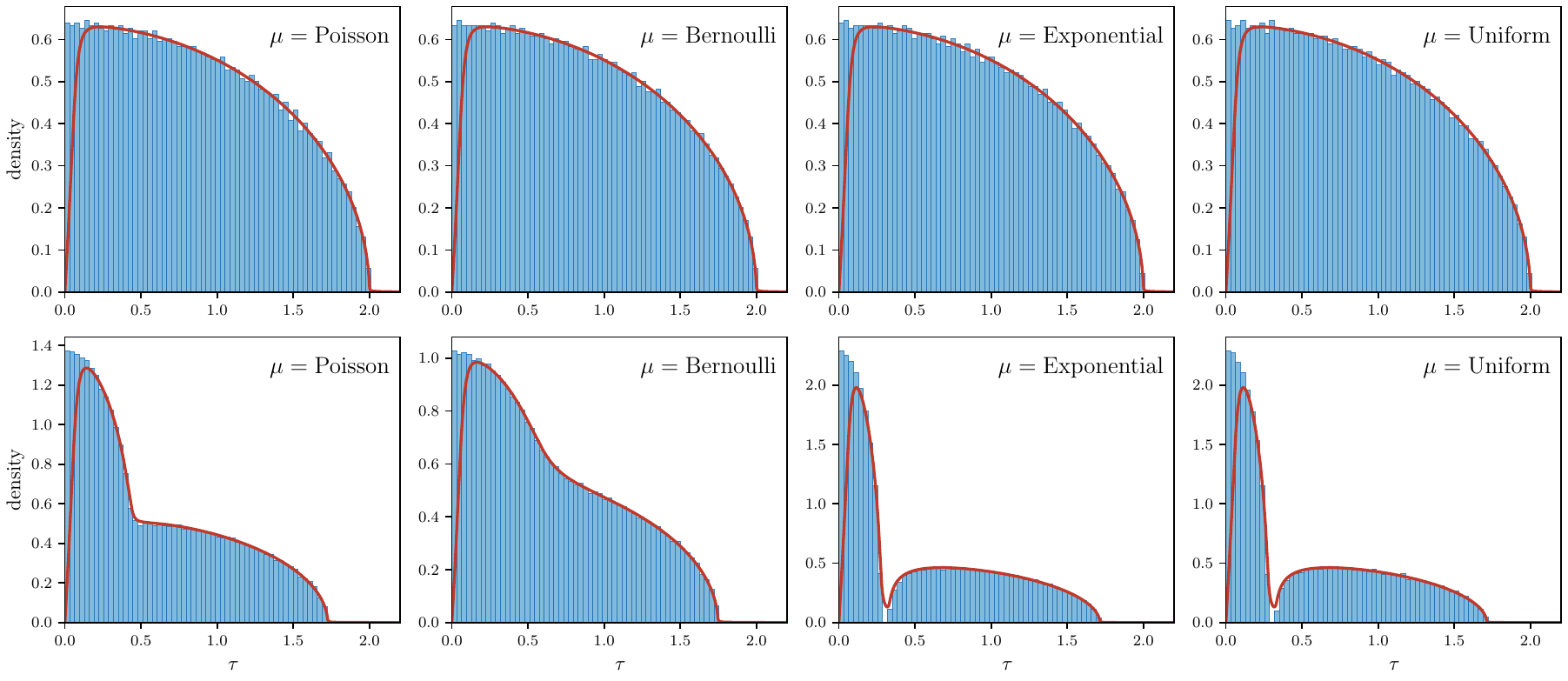}
		\caption{Empirical singular value distribution of the rescaled fluctuation matrix $\A$ at $m=n=5000$, for four entry distributions $\mu\in\{\mathrm{Poisson},\mathrm{Bernoulli},\mathrm{Exponential},\mathrm{Uniform}\}$. Top row: uniform margin $\r=\c=0.3n\cdot\mathbf{1}_n$ with homogeneous mean $\bLambda_{ij}\equiv 0.4$. Bottom row: row-block margin $\r_i\in\{0.1n, 0.5n\}$, uniform column margin $\c=0.3n\cdot\mathbf{1}_n$, row-block mean $\bLambda_{ij}\in\{0.2, 0.6\}$. Red curve: theoretical limit density of the singular values, obtained by solving the Dyson equation \eqref{eq:dyson_system} for the eigenvalue density of $\check{\A}\check{\A}^T$ and pushing forward under $\lambda\mapsto\sqrt{\lambda}$. See the discussion following Theorem \ref{thm:ESD} for the specialization to the Marchenko--Pastur quarter-circle in the homogeneous row and the non-trivial two-block shapes in the inhomogeneous row.}
		\label{fig:ESD}
	\end{figure}
	
	An important special case where the limiting ESD can be identified explicitly is the fully homogeneous setting: take $m=n$, $\r = \c = a\mathbf{1}_{n}$ for some $a>0$, $\bLambda_{ij} \equiv \lambda$ for some $\lambda>0$, and $\operatorname{Var}(\X_{ij}) \equiv \sigma^2$ for some $\sigma^2>0$ (e.g., $\X_{ij}$ i.i.d.\ Poisson$(\lambda)$). By symmetry the Schr\"{o}dinger potentials are then given by $\balpha = \bbeta = \tfrac{1}{2}\log\bigl(a/(n\lambda)\bigr)\mathbf{1}_{n}$, and together with the constant variance this makes the profile $\mathbf{S}_{ij}$ in \eqref{eq:variance_profile_def} uniformly equal to $1/n$. The solution to the Dyson equation \eqref{eq:dyson_system} then reduces to the Stieltjes transform of the Marchenko--Pastur law \cite{marchenko1967distribution}: the eigenvalue density of $\A\A^T$ converges to $\sqrt{4-\tau}/(2\pi\sqrt{\tau})$ on $(0,4]$, equivalently the singular value density of $\A$ converges to the quarter-circle density $\sqrt{4-\tau^2}/\pi$ on $[0,2]$. Theorem \ref{thm:ESD} specializes to bulk rigidity of $\A\A^T$ around this universal limit, independent of the entry distribution of $\X$. If instead $\bLambda$ is inhomogeneous, the variance profile $\mathbf{S}$ is no longer constant, the limiting density is no longer Marchenko--Pastur (equivalently, its singular value density is no longer the quarter-circle), and it depends genuinely on the entry distribution of $\X$ through $\operatorname{Var}(\X_{ij})$. Figure \ref{fig:ESD} illustrates both phenomena at the level of singular values.

Our last result concerns the asymptotic normality of the Schr\"{o}dinger potentials for rescaling random matrices to a given margin. Fix target margins $\r \in \R_{>0}^m, \c \in \R_{>0}^n$ and a strictly positive mean matrix $\bLambda \in \R_{>0}^{m \times n}$. Let $\{\X^{(\ell)}\}_{\ell =1}^{M}$ be a collection of $M$ independent and identically distributed random matrices satisfying Assumption \ref{assump:random_matrix} with mean $\E[\X^{(\ell)}] = \bLambda$. Define the empirical mean matrix:
\begin{align}
	\bar{\X}_M = \frac{1}{M} \sum_{\ell = 1}^{M} \X^{(\ell)}.
\end{align}
By the strong law of large numbers, $\bar{\X}_M \to \bLambda$ almost surely as $M \to \infty$. Since $\bLambda$ is strictly positive, $\bar{\X}_M \in \R_{>0}^{m \times n}$ with probability approaching one for sufficiently large $M$. Consequently, $\bar{\X}_M$ is exactly scalable to the target margins $(\r, \c)$ on this high-probability event.

Let $(\balpha, \bbeta)$ be the unique Sinkhorn scaling potentials for the mean matrix $\bLambda$ subject to the normalization condition $\langle \bbeta, \c \rangle = 0$. Similarly, let $(\balpha_{\bar{\X}_M}, \bbeta_{\bar{\X}_M})$ be the exact scaling potentials for the empirical mean $\bar{\X}_M$ satisfying the same normalization, such that:
\begin{align}
	\bar{\X}_M^{\textup{rs}} := \D(e^{\balpha_{\bar{\X}_M}}) \bar{\X}_M \D(e^{\bbeta_{\bar{\X}_M}}) 
\end{align}
has exact margins $(\r, \c)$. The next theorem establishes that the scaled fluctuations of these empirical potentials converge weakly to a multivariate Gaussian distribution.

\begin{theorem}[CLT for empirical Sinkhorn potentials] \label{thm: CLT_potentials_fixed_dim}
	Let $\X$ be a random matrix satisfying Assumption \ref{assump:random_matrix} with mean $\bLambda$ subject to a margin $(\r,\c)$ meeting Assumption \ref{assump:bounded_cost}, and let $\bSigma = \text{Cov}(\textup{vec}(\X))$ denote its covariance matrix. Let $\bar{\X}_M$ be the empirical mean of $M$ independent copies of $\X$. 
	
	Let $\bLambda^{\r,\c} = \D(e^{\balpha}) \bLambda \D(e^{\bbeta})$ denote the exact scaled mean matrix. Define the symmetric block matrix $\mathbf{L} \in \R^{(m+n) \times (m+n)}$ and the linear operator $\mathbf{H} \in \R^{(m+n) \times mn}$ by:
	\begin{align}
		\mathbf{L} := \begin{pmatrix}
			\D(\r) & \bLambda^{\r,\c} \\
			(\bLambda^{\r,\c})^\top & \D(\c)
		\end{pmatrix}, 
		\qquad 
		\mathbf{H} \, \textup{vec}(\Delta) := \begin{pmatrix}
			\big(\D(e^{\balpha}) \Delta \D(e^{\bbeta})\big) \mathbf{1}_n \\
			\big(\D(e^{\balpha}) \Delta \D(e^{\bbeta})\big)^\top \mathbf{1}_m
		\end{pmatrix}.
	\end{align}
	Let $\mathbf{L}^{\dagger}$ denote the Moore-Penrose pseudoinverse of $\mathbf{L}$. As $M \to \infty$, the scaled empirical potentials converge in distribution:
	\begin{align}
		\sqrt{M}\begin{pmatrix}
			\balpha_{\bar{\X}_M} - \balpha\\
			\bbeta_{\bar{\X}_M} - \bbeta
		\end{pmatrix}
		\xrightarrow{d}
		\mathcal{N}\left(0,  \mathbf{L}^{\dagger} \mathbf{H} \bSigma \mathbf{H}^{\top} (\mathbf{L}^{\dagger})^\top \right).
	\end{align}
\end{theorem}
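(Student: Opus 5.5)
The plan is a delta-method argument built on the implicit function theorem applied to the Sinkhorn margin equations. Set $\theta = (\balpha', \bbeta') \in \R^{m+n}$ and, for a matrix $\mathbf{Y}$ with positive entries, define
\begin{align}
	F(\theta, \mathbf{Y}) := \begin{pmatrix} \big(\D(e^{\balpha'})\mathbf{Y}\D(e^{\bbeta'})\big)\mathbf{1}_n - \r \\ \big(\D(e^{\balpha'})\mathbf{Y}\D(e^{\bbeta'})\big)^\top\mathbf{1}_m - \c \end{pmatrix}.
\end{align}

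\textbf{Step 1 (CLT for the input).} Assumption \ref{assump:random_matrix} gives finite second moments, so the multivariate CLT yields $\sqrt{M}\,\textup{vec}(\bar{\X}_M - \bLambda) \xrightarrow{d} \mathcal{N}(0,\bSigma)$.

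\textbf{Step 2 (linearization).} At $(\theta_0, \bLambda)$ with $\theta_0 = (\balpha,\bbeta)$, direct differentiation gives
\begin{align}
	\partial_{\balpha'(i)} \big[\text{row } i\big] = \r(i), \qquad \partial_{\bbeta'(j)} \big[\text{row } i\big] = \bLambda^{\r,\c}_{ij},
\end{align}
and symmetrically for the columns. Hence $D_\theta F = \mathbf{L}$ and $D_{\mathbf{Y}} F = \mathbf{H}$.

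\textbf{Step 3 (gauge and local smoothness).} $\mathbf{L}$ is singular. By positivity of $\bLambda^{\r,\c}$ its kernel is exactly the gauge line spanned by $(\mathbf{1}_m, -\mathbf{1}_n)$; I would check this with a Perron--Frobenius or connectedness argument on the bipartite graph. The range of $\mathbf{H}$ lies in $\mathbf{1}_{m+n}^\perp$, since total row mass equals total column mass. Likewise $F$ takes values in the hyperplane $\{\text{total row sum} = \text{total column sum}\}$ once $\|\r\|_1=\|\c\|_1$. I will therefore restrict $F$ to $\theta$ in the affine slice $\{\<\bbeta',\c\>=0\}$ and to values in the codimension-one hyperplane. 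On these spaces the Jacobian is an isomorphism, because the slice is transversal to the gauge direction: $\<-\mathbf{1}_n,\c\>=-N\neq0$. The implicit function theorem then gives a $C^1$ map $\mathbf{Y}\mapsto\theta(\mathbf{Y})$ near $\bLambda$ with $\theta(\bLambda)=\theta_0$. By uniqueness of the normalized potentials, $\theta(\bar{\X}_M)=(\balpha_{\bar{\X}_M},\bbeta_{\bar{\X}_M})$ on the event $\bar{\X}_M\to\bLambda$, which has probability tending to one. Alternatively, Theorem \ref{thm:potential_stability} gives a quantitative version of this local existence and uniqueness.

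\textbf{Step 4 (delta method).} The derivative is $D\theta(\bLambda) = -\mathbf{P}\mathbf{L}^\dagger\mathbf{H}$, where $\mathbf{P}$ is the oblique projection along the gauge direction onto the slice. This holds because every solution of $\mathbf{L}\,d\theta = -\mathbf{H}\,d\mathbf{Y}$ has the form $-\mathbf{L}^\dagger\mathbf{H}\,d\mathbf{Y} + t(\mathbf{1}_m,-\mathbf{1}_n)$, and $t$ is then fixed by the constraint. The delta method then gives Gaussian convergence with covariance $\mathbf{P}\mathbf{L}^\dagger\mathbf{H}\bSigma\mathbf{H}^\top(\mathbf{L}^\dagger)^\top\mathbf{P}^\top$; the sign of the derivative is irrelevant for the covariance.

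\textbf{Main obstacle: the gauge.} The pseudoinverse selects the representative orthogonal to the gauge line, i.e.\ $\sum_i\balpha(i)=\sum_j\bbeta(j)$. This differs from the normalization $\<\bbeta,\c\>=0$ unless $\mathbf{P}$ acts trivially. I would first try to reconcile the two by checking whether the paper's normalization can be read so that the gauge-fixed potentials are the $\mathbf{L}^\dagger$ representative. Failing that, the stated covariance holds for the gauge-invariant fluctuation, i.e.\ modulo the gauge direction, or after reinterpreting the normalization, and I would record the projection $\mathbf{P}$ explicitly.

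Two smaller checks remain. First, the kernel of $\mathbf{L}$ must be exactly one-dimensional. Second, the restricted implicit function theorem needs a clean statement, which I would obtain by parametrizing $\bbeta'$ in the slice by $n-1$ coordinates and dropping one redundant margin equation.
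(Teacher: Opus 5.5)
Your route is the paper's: the multivariate CLT for $\bar{\X}_M$, a $C^1$ implicit-function map $\mathbf{Y}\mapsto(\balpha_{\mathbf{Y}},\bbeta_{\mathbf{Y}})$ near $\bLambda$, and the delta method. The paper writes the increment as the integral of the Jacobian along the segment from $\bLambda$ to $\bar{\X}_M$ and closes with Slutsky. Your computation $D_\theta F=\mathbf{L}$, $D_{\mathbf{Y}}F=\mathbf{H}$ is correct. The kernel check is immediate from $\langle (u,v),\mathbf{L}(u,v)\rangle=\sum_{i,j}\bLambda^{\r,\c}_{ij}(u_i+v_j)^2$.

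You part ways with the paper exactly at your ``main obstacle'', and there you are right and the paper is not. The paper invokes the implicit-function map $B$ built from Lemma \ref{lem: implicit_function_justification} in Section \ref{sec: Kernel_stability}. That map is normalized by $\langle \eta,\mu_1\rangle=0$, the same type of normalization as the statement's $\langle \bbeta,\c\rangle=0$. The paper then simply asserts $J_{\bLambda}=-\mathbf{L}^{\dagger}\mathbf{H}$. But that matrix is the derivative of the representative orthogonal to $(\mathbf{1}_m,-\mathbf{1}_n)$, i.e.\ the one normalized by $\sum_i\balpha(i)=\sum_j\bbeta(j)$. For the stated normalization the derivative is your $-\mathbf{P}\mathbf{L}^{\dagger}\mathbf{H}$, with $\mathbf{P}w=w+N^{-1}\langle w_{\bbeta},\c\rangle(\mathbf{1}_m,-\mathbf{1}_n)$, where $w_{\bbeta}$ denotes the last $n$ coordinates of $w$.

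So drop the plan to ``reconcile'' the two normalizations: it cannot succeed, because the statement as written is false whenever all $\Var(\X_{ij})>0$. In that case:
\begin{enumerate}
\item $\bSigma\succ 0$.
\item $\mathbf{H}$ maps onto $V:=(\mathbf{1}_m,-\mathbf{1}_n)^{\perp}=\mathrm{range}(\mathbf{L})$, since any pair of row/column-sum vectors with equal totals is realized by some matrix.
\item $\mathbf{L}^{\dagger}$ is a bijection of $V$.
\end{enumerate}
Hence the stated covariance has range exactly $V$. The stated Gaussian then has a nondegenerate density on $V$ and gives mass zero to the closed hyperplane $W:=\{(u,v):\langle v,\c\rangle=0\}\neq V$. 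But $\sqrt{M}(\balpha_{\bar{\X}_M}-\balpha,\bbeta_{\bar{\X}_M}-\bbeta)$ lies in $W$ whenever $\bar{\X}_M$ is scalable, which happens with probability tending to one. By the portmanteau theorem every weak limit is therefore carried by $W$, a contradiction.

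Your argument proves the corrected theorem, in any of these equivalent forms:
\begin{itemize}
\item covariance $\mathbf{P}\mathbf{L}^{\dagger}\mathbf{H}\bSigma\mathbf{H}^{\top}(\mathbf{L}^{\dagger})^{\top}\mathbf{P}^{\top}$ under $\langle \bbeta,\c\rangle=0$;
\item the paper's covariance under the normalization $\sum_i\balpha(i)=\sum_j\bbeta(j)$ for both $\bLambda$ and $\bar{\X}_M$;
\item the paper's covariance for the orthogonal projection of the fluctuation onto $V$, since that projection kills the gauge direction.
\end{itemize}
State the result in one of these forms rather than hedging.

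One further advantage of your version: the finite-dimensional IFT on the slice, with one redundant margin equation dropped, applies directly to the unnormalized matrix $\bar{\X}_M$. The paper's lemma is formulated for densities relative to $\mu_0\otimes\mu_1$ and needs an unstated renormalization step.
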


\begin{remark}[Derivation of the Jacobian]
	The form of the covariance matrix follows from the Delta method, where the Jacobian of the map that takes an input matrix $\bLambda$ to the centered Schr\"{o}dinger potential for a fixed margin $(\r,\c)$ can be computed as  $J_{\bLambda} = -\mathbf{L}^{\dagger} \mathbf{H}$. Here, $\mathbf{L}$ is the partial derivative of the margin constraints with respect to the potentials, and $\mathbf{H}$ is the partial derivative with respect to the matrix entries. The pseudoinverse $\mathbf{L}^\dagger$ handles the translation invariance of the potentials within the subspace of normalized potentials. 
\end{remark}

\section{Background and related work}
\label{sec:related_work}

In this section, we give a detailed account of the background and related work. We organize the discussion into five threads: the Schrödinger bridge and matrix scaling (Section \ref{sec:rw_sb}), quantitative stability of entropic optimal transport (Section \ref{sec:rw_stability}), random contingency tables and matrices with given margins (Section \ref{sec:rw_contingency}), scaling of random matrices and biwhitening (Section \ref{sec:rw_biwhitening}), and the recent limit theory for conditioned random matrices (Section \ref{sec:rw_lyu_mukherjee}).

\subsection{Static Schrödinger bridge, matrix scaling, and Sinkhorn's algorithm}
\label{sec:rw_sb}

The static Schrödinger bridge problem \eqref{eq:RM_min_SB} traces back to Schrödinger \cite{schrodinger1931umkehrung}, who proposed it to model the most likely evolution of particle clouds. The modern treatment on general Polish spaces, including existence and duality, is covered in \cite{nutz2021introduction}. Its finite-dimensional counterpart is the matrix scaling problem: finding diagonal matrices $\D_1, \D_2$ such that $\D_1 \bLambda \D_2 \in \T(\r, \c)$. This problem is equivalent to a relative entropy minimization \eqref{eq:SSB_Poisson_main} \cite{deming1940least, ireland1968contingency}. 

The Sinkhorn algorithm \eqref{eq:sinkhon} computes these scaling factors via alternating renormalizations. Its convergence for strictly positive matrices is well-understood through the contraction properties of the Hilbert projective metric \cite{sinkhorn1967concerning, franklin1989scaling, chen2016entropic}. In the optimal transport community, the algorithm was popularized by Cuturi \cite{cuturi2013sinkhorn} as a regularization technique, sparking extensive research into the statistical properties of entropically regularized transport \cite{peyre2019computational}.

\subsection{Quantitative stability of entropic optimal transport}
\label{sec:rw_stability}

The stability of the Schrödinger bridge with respect to its defining data (margins and reference measure) has recently become a central topic in entropic optimal transport. We outline the closest related results and discuss how our bounds compare.

%\textit{Differential and qualitative approaches.} 
Carlier and Laborde \cite{carlier2020differential} established local invertibility of the Schrödinger system via an implicit-function-theorem argument, yielding Lipschitz continuity of the Schrödinger potentials with respect to $L^2$ and $L^\infty$ perturbations of the margins. Their result controls the potentials rather than the bridge itself, and the cost dependence of the Lipschitz constant is not made explicit. Ghosal et al.\ \cite{ghosal2022stability} and Nutz and Wiesel \cite{nutz2023stability} provided qualitative stability results under weak convergence, covering unbounded costs but without explicit rates.

%\textit{Quantitative stability via Wasserstein and total variation.}
Eckstein and Nutz \cite{eckstein2022quantitative} established quantitative stability bounds for the SSB; two of their results are directly comparable to ours.

\emph{(a) Reference-measure (cost) perturbation.} For \emph{fixed} marginals and bounded costs $c, \tilde{c}$, their Proposition~3.12 gives
\begin{align}\label{eq:EN_prop312}
	\|\pi^{*} - \tilde{\pi}^{*}\|_{\textup{TV}} \le \tfrac{1}{2}\,a^{1/(p+1)}\,\|c - \tilde{c}\|_{L^{p}(P)}^{p/(p+1)}, \qquad a = \exp(N\|c\|_\infty) + \exp(N\|\tilde{c}\|_\infty),
\end{align}
which is $\tfrac{p}{p+1}$-H\"older in the cost for $p<\infty$ (and degenerates to Lipschitz in the limiting case $p=\infty$). This is the direct analogue of our kernel-stability result (Theorem~\ref{thm:continuous_kernel_stability}).

\emph{(b) Marginal perturbation.} Their Theorems~3.11 and 3.13 bound $W_q(\pi^*, \tilde\pi^*)$ by a power of the Wasserstein distance between the marginals, with H\"older exponent $1/(2p)$ for general costs and $1/(p+1)$ for bounded costs (Lipschitz in the limit $p=\infty, q=1$). This is the analogue of our margin-stability result (Theorem~\ref{thm:continuous_margin_stability}).

%\textit{Comparison with our results.} 
Our stability theory differs from \cite{eckstein2022quantitative} in three respects.

(1) \textit{Hellinger control of densities.} Our scaling-limit theorem (Theorem~\ref{thm:RM_rescaled_limit}) requires control of SSB \emph{densities}, which Wasserstein bounds do not directly provide. Under the $\delta$-smoothness and bounded-cost assumptions, the discrete bridges $\pi_k$ in Theorem~\ref{thm:SSB_deterministic_limit} share a common support with the continuous SSB, and the Hellinger metric $d_H$---which is the $L^{2}$ distance between square-root densities and induces the same topology as total variation---is therefore the natural setting for stability. Its Hilbert-space structure is what enables the linear (Lipschitz, not H\"older) bound below via operator-theoretic arguments on square-root densities. By the comparison \eqref{eq:TV_Hellinger}, every bound we prove in $d_H$ also yields a TV bound at the same rate, up to an absolute factor.

(2) \textit{Lipschitz (not H\"older) kernel stability.} Theorem~\ref{thm:continuous_kernel_stability} shows that $d_H(\pi^{\mathcal{R}}, \pi^{\mathcal{R}'})$ is Lipschitz in $\|\kappa - \kappa'\|_{L^{2}(\mu_0\otimes\mu_1)}$, with constant $\exp\bigl(\tfrac{3}{2}\,\|\kappa^+\|_\infty \vee \|(\kappa')^+\|_\infty\bigr)$. Combined with the TV--Hellinger comparison \eqref{eq:TV_Hellinger}, as well as the mean value theorem estimate
\begin{align}
	d_{H}(\mathcal{R}, \mathcal{R}')\leq \frac{1}{2}e^{\tfrac{1}{2}\|\kappa^+\|_\infty \vee \|(\kappa')^+\|_\infty} \norm{\kappa - \kappa'}_{L^2(\mu_{0} \otimes \mu_1)}
\end{align}
this implies
\begin{align}\label{eq:our_TV_kernel}
	\|\pi^{\mathcal{R}} - \pi^{\mathcal{R}'}\|_{\textup{TV}} \;\le\; \sqrt{2}\, e^{2\|\kappa^+\|_\infty \vee \|(\kappa')^+\|_\infty}\,\|\kappa - \kappa'\|_{L^{2}(\mu_0\otimes\mu_1)},
\end{align}
which sharpens the H\"older exponent $p/(p+1) = 2/3$ in \eqref{eq:EN_prop312} to Lipschitz at $p=2$. The constant still grows exponentially in $\|\kappa^+\|_\infty$, as in \cite{eckstein2022quantitative}; Example~\ref{ex:kernel_stability_tightness} shows this dependence is unavoidable. The mechanism behind the improvement is the square-root density map
\begin{align}
	\Gamma : L^2(\mu_0 \otimes \mu_1) \to L^2(\mu_0 \otimes \mu_1), \qquad \sqrt{\frac{d\mathcal{R}}{d(\mu_0 \otimes \mu_1)}} \mapsto \sqrt{\frac{d\pi^{\mu_0,\mu_1;\mathcal{R}}}{d(\mu_0 \otimes \mu_1)}},
\end{align}
whose derivative factors into a multiplication operator and an orthogonal projection and is therefore bounded as a linear map. The argument is in spirit similar to the differential approach of \cite{carlier2020differential}, but the operator-theoretic perspective yields global (non-infinitesimal) bounds rather than a local-invertibility statement. By contrast, the sub-linear H\"older exponent in \eqref{eq:EN_prop312} arises from combining Pinsker's inequality with H\"older's inequality on $D_{\textup{KL}}$, which is unavoidable in a KL-based argument.

(3) \textit{Linear cost dependence for margin stability.} Theorem~\ref{thm:continuous_margin_stability} has merely \emph{linear} cost dependence for margin perturbations, milder than the exponential kernel case and of the same order as the Wasserstein bounds of \cite{eckstein2022quantitative} (Theorems~3.11, 3.13), though the metrics and proof techniques differ. Our argument is inspired by Lyu and Mukherjee \cite{lyu2024large}, who established Lipschitz continuity in $L^1$ of the typical table and the discrete Schr\"odinger potentials with respect to the margins for general $f$-divergences in place of the KL divergence but with uniform reference measure.

\subsection{Contingency tables and maximum entropy principle}
\label{sec:rw_contingency}

A \emph{contingency table} is an $m\times n$ matrix of non-negative integer entries with prescribed row sums $\r$ and column sums $\c$; we write $\mathrm{CT}(\r,\c)$ for the set of all such tables. Contingency tables are fundamental objects in statistics, combinatorics, and graph theory \cite{diaconis1995rectangular, diaconis1998algebraic, barvinok2009asymptotic}, and their exact counting and uniform sampling are computationally hard \cite{dyer1997sampling}, motivating a long line of approximate enumeration \cite{barvinok2009asymptotic, barvinok2010approximation, canfield2010asymptotic, lyu2022number} and sampling \cite{chen2005sequential, wang2020fast, besag1989generalized, diaconis1995rectangular}.

A unifying theme in this literature is the \textit{maximum entropy principle}. Classically \cite{good1963maximum}, the ``typical'' table with margins $(\r,\c)$ and total sum $N$ is the unique maximizer of the Shannon entropy $H(\X)=\sum_{ij}(\X_{ij}/N)\log(N/\X_{ij})$ on the transportation polytope $\T(\r,\c)$: the \textit{independence table} $\Y = (\r\otimes\c)/N$, whose $(i,j)$-entry is $\r(i)\c(j)/N$. This is the rank-one specialization of our discrete Schr\"{o}dinger bridge \eqref{eq:SSB_Poisson_main}: when the base matrix factorizes as $\bLambda = \mathbf{a}\otimes\mathbf{b}$, then for any $\X\in\T(\r,\c)$,
\begin{align*}
	D_{\mathrm{KL}}(\X\,\Vert\,\bLambda) = \sum_{ij}\X_{ij}\log\X_{ij} - \sum_i \r(i)\log\a(i) - \sum_j \c(j)\log\b(j),
\end{align*}
and the last two terms are constant on $\T(\r,\c)$; hence $\bLambda^{\r,\c} = \Y$ regardless of the factors $(\a,\b)$. The same matrix $\Y$ is produced by the Deming--Stephan iterative proportional fitting procedure \cite{deming1940least}, reinterpreted by Ireland and Kullback \cite{ireland1968contingency} as the $I$-projection onto the independence model---i.e.\ the Sinkhorn iteration.

Barvinok \cite{barvinok2010does, barvinok2010number} and Barvinok and Hartigan \cite{barvinok2010maximum, barvinok2012asymptotic, barvinok2013number} refined the maximum-entropy principle for the problem of \emph{counting} contingency tables. Their starting point is the probabilistic idea that a uniformly random element of $\mathrm{CT}(\r,\c)$ should be closely approximated by a deterministic matrix $\bZ \in \T(\r,\c)$---the \textit{typical table} for margin $(\r,\c)$---which maximizes the \emph{geometric-entropy functional}
\begin{align*}
	g(\X) = \sum_{ij} f(\X_{ij}), \qquad f(x) = (x+1)\log(x+1) - x\log x,
\end{align*}
where $f(x)$ is the entropy of the geometric distribution on $\mathbb{Z}_{\ge 0}$ with mean $x$. The rationale: the product measure that best approximates the uniform law on $\mathrm{CT}(\r,\c)$ should have expected margins $(\r,\c)$ while maximizing the total entropy of its entries under that first-moment constraint, and the geometric distribution is the maximum-entropy law on $\mathbb{Z}_{\ge 0}$ with a given mean. The resulting typical table $\bZ$ then satisfies a two-sided bound $g(\bZ) - O((m+n)\log N) \leq \log|\mathrm{CT}(\r,\c)| \leq g(\bZ)$ \cite{barvinok2009asymptotic,barvinok2012asymptotic}, sharpening Good's heuristic to a rigorous asymptotic count.

\subsection{Random matrices conditioned on margins}
\label{sec:rw_lyu_mukherjee}

Recently, Lyu and Mukherjee \cite{lyu2024large} extended the maximum-entropy principle from the uniform measure on $\mathrm{CT}(\r,\c)$ to random matrices $\X$ with i.i.d.\ entries $\X_{ij}\sim\mu$ conditioned on their margins $(\r,\c)$. They showed that the product measure best approximating such a conditional ensemble has independent but non-identically distributed entries, each drawn from the exponential family generated by $\mu$, with the law of $\X_{ij}$ determined by a tilt parameter of the form $\balpha(i)+\bbeta(j)$ depending only on its row and column. The row- and column-tilts $(\balpha,\bbeta)$ can be obtained either by relative-entropy minimization or by maximum-likelihood estimation, and they establish a Kantorovich-type duality between these two formulations. Moreover, $(\balpha,\bbeta)$ coincide with the Schr\"{o}dinger potentials of an $f$-divergence static Schr\"{o}dinger bridge with counting base measure, where the divergence $f$ is the KL divergence from $\mu$ to its exponential tilt.

Our discrete Schr\"{o}dinger bridge \eqref{eq:SSB_Poisson_main} sits inside this framework: $\bLambda^{\r,\c}$ is precisely the typical table for a Poisson random matrix with independent entries of means $\bLambda_{ij}$, conditioned on its margin being $(\r,\c)$. Interestingly, our main results show that a different but closely related random matrix model---obtained by Sinkhorn-\emph{rescaling}, rather than conditioning, an unconstrained random matrix to the target margin $(\r,\c)$---is also closely approximated by the same typical table $\bLambda^{\r,\c}$. The analytic mechanism behind our results is different: where \cite{lyu2024large} exploits the exponential-family / sufficient-statistic structure of the conditioned ensemble, our concentration and scaling-limit theorems are driven by a structural stability of the Sinkhorn mapping itself (Theorem~\ref{thm:potential_stability}).

\subsection{Scaling of random matrices and biwhitening}
\label{sec:rw_biwhitening}

A direction closer in methodology to the present paper concerns Sinkhorn rescaling of positive random matrices. Landa \cite{landa2022scaling} considers a random matrix $\X$ with strictly positive entries and mean $\bLambda = \E[\X]$, and establishes a concentration inequality for the random scaling factors of $\X$ around the deterministic scaling factors of $\bLambda$. Combined with standard vector and matrix concentration tools, this yields operator-norm convergence of the rescaled random matrix $\X^{\r,\c}$ to $\bLambda^{\r,\c}$ at rate $O(\sqrt{\log N/N})$ in the doubly-stochastic setting. A central deterministic ingredient is \cite[Lem.~9]{landa2022scaling}, an $\ell^{\infty}$ stability bound for the scaling factors under margin perturbations, whose proof extends Sinkhorn and Knopp's uniqueness argument \cite{sinkhorn1967concerning} by a direct manipulation of the scaling equations. Both the main theorem and this lemma assume that the entries of $\X$ lie in a bounded interval $[a,b]$ with $a>0$, and the resulting constants depend on the ratio $b/a$; in particular, they degenerate whenever any entry of $\bLambda$ vanishes.

In a closely related applied direction, Landa, Zhang, and Kluger \cite{landa2022biwhitening} use matrix scaling for rank estimation in corrupted count-data matrices arising in single-cell RNA sequencing. Their key observation is that, for Poisson (or more generally quadratic-variance) noise, one can scale the rows and columns of an observed data matrix so that the spectrum of the scaled \emph{noise} matrix matches the standard Marchenko--Pastur law on $[0,4]$; the Marchenko--Pastur upper edge then provides a principled threshold for rank selection. They refer to this procedure as \textit{biwhitening} and establish its asymptotic validity in a growing-mean regime $\min_{ij}\bLambda_{ij}\to\infty$, which ensures that the random scaling factors remain high-probability well-conditioned.

The present paper operates in a complementary regime along two axes. Analytically, our potential-stability theorem (Theorem~\ref{thm:potential_stability}) replaces the strict-positivity / bounded-ratio hypothesis of \cite{landa2022scaling} by a row-alignment condition expressed through the parameter $\rho_{\A}$, which measures the aggregate inner-product overlap between pairs of rows of $\A$. The stability constants depend on $\rho_{\A}$ rather than on the min-to-max ratio $b/a$, so the Sinkhorn mapping remains a well-conditioned local diffeomorphism even when many individual entries of $\bLambda$ vanish. Probabilistically, our concentration and scaling-limit theorems (Theorems~\ref{thm:concentration_margins_rs}, \ref{thm:RM_rescaled_limit}) hold for random matrices with $O(1)$ entrywise means and sparse supports, a regime excluded by the growing-mean hypothesis of \cite{landa2022biwhitening}. Finally, where biwhitening describes the spectrum of the \emph{fluctuation} of the scaled random matrix, our scaling-limit theorem targets the scaled \emph{mean} and identifies its limit as the continuous static Schr\"odinger bridge---a connection absent from the scaling / biwhitening literature.

\section{Proof of Stability for Static Schr\"{o}dinger Bridges}
\label{sec:SSB_stability}

In Section \ref{sec: sb_preliminaries}, we will briefly cover the background of the Schr\"{o}dinger bridge problem. Section \ref{sec: Kernel_stability} is for the stability under kernel perturbations and Section \ref{sec: margin_stability} covers stability with respect to margins.

\subsection{Preliminaries}
\label{sec: sb_preliminaries}

First, we record the following theorem, found as Theorem 2.1 of \cite{nutz2021introduction}, which gives mild sufficient conditions on the margins and base measure for $\pi^{\mu_0, \mu_1; \mathcal{R}}$ to uniquely exist. In this case, it has the special form
\begin{align}
	W^{\mu_0, \mu_1; \mathcal{R}} := \frac{d\pi^{\mu_0, \mu_1; \mathcal{R}}}{d\mathcal{R}} =  \exp(\balpha \oplus \bbeta)
\end{align}
for some functions $(\balpha, \bbeta) \in L^1(\mu_0) \times L^1(\mu_1)$.

\begin{theorem}[Theorem 2.1 of \cite{nutz2021introduction}]
	\label{thm: Schrodinger_bridge_existence_uniqueness}
	Suppose that $\mathcal{R} \sim \mu_0 \otimes \mu_1$ and that $D_{\textup{KL}}(\mu_{0} \otimes \mu_{1} || \mathcal{R}) < \infty$. Then the following hold.
	\begin{description}[leftmargin=0.6cm]
		\item[(i)] The static Schr\"{o}dinger bridge exists and is unique. Furthermore, there are $\balpha \in L^1(\mu_0)$ and $\bbeta \in L^1(\mu_1)$, unique up to transformations of the form $(\balpha, \bbeta) \mapsto (\balpha + \lambda, \bbeta - \lambda)$ for $\lambda \in \R$, such that
		\begin{align}
			\frac{d\pi^{\mu_0, \mu_1; \mathcal{R}}}{d\mathcal{R}} =  \exp(\balpha \oplus \bbeta) \quad \mathcal{R}  \text{-a.s.}
		\end{align}
		
		\item[(ii)] Conversely, if $\pi \in \Pi(\mu_0, \mu_1)$ admits a density 
		\begin{align}
			\frac{d\pi}{d\mathcal{R}} = \exp (\balpha \oplus \bbeta) \quad \mathcal{R} \text{-a.s.}
		\end{align}
		for some measurable functions $\balpha,\bbeta$, then $\pi$ is the Schr\"{o}dinger bridge. Moreover, $\balpha \in L^1(\mu_0)$ and $\bbeta \in L^1(\mu_1)$. 
	\end{description}
\end{theorem}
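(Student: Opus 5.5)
This statement is quoted verbatim from \cite[Thm.~2.1]{nutz2021introduction}, so in the paper we would simply cite it. For completeness, here is the route I would follow to reprove it; it is the standard variational and duality argument.

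\emph{Existence and uniqueness of the minimizer.} The product coupling $\mu_0\otimes\mu_1$ has finite divergence by hypothesis, so the minimization is not vacuous. The set $\Pi(\mu_0,\mu_1)$ is convex and closed under setwise convergence. The functional $D_{\textup{KL}}(\cdot\Vert\mathcal{R})$ is strictly convex and lower semicontinuous, and its sublevel sets are uniformly integrable with respect to $\mathcal{R}$ by de la Vallée-Poussin. Dunford--Pettis then gives weak $L^1(\mathcal{R})$ compactness of minimizing sequences of densities. The limit is a coupling, so a minimizer exists. Strict convexity of $x\log x$ gives uniqueness.

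\emph{Product structure of the minimizer $\pi$.} First I would show $\pi\sim\mathcal{R}$. Suppose $Z = d\pi/d\mathcal{R}$ vanishes on a set of positive $\mathcal{R}$-measure. Perturb toward $\mu_0\otimes\mu_1$ by setting $\pi_t=(1-t)\pi+t\,\mu_0\otimes\mu_1$. The derivative of the entropy at $t=0^+$ is $-\infty$ because $\frac{d}{dx}(x\log x)\to-\infty$ at $0$. This contradicts minimality.

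Next, for bounded $h$ with vanishing marginal projections, I would perturb by $\pi_t = (1+th)\pi$ for small $t$, which stays in $\Pi(\mu_0,\mu_1)$. The first-order condition gives $\int \log Z\,h\,d\pi=0$. So $\log Z$ is orthogonal in $L^2(\pi)$ to the annihilator of the sums $L^2(\mu_0)\oplus L^2(\mu_1)$, after truncation. This is the main obstacle, because $\log Z$ need not be bounded or square integrable a priori and the closedness of $L^2(\mu_0)+L^2(\mu_1)$ is delicate.

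To get around it, I would follow Nutz's approach and use Csiszár's characterization of the $I$-projection onto a linear family. Csiszár's theorem gives $\log Z = \balpha\oplus\bbeta$ $\pi$-a.s. for some measurable $\balpha,\bbeta$. Since $\pi\sim\mathcal{R}$, this holds $\mathcal{R}$-a.s. Uniqueness up to constants holds because if $\balpha\oplus\bbeta=\balpha'\oplus\bbeta'$ a.s. under a product-equivalent measure, then $\balpha-\balpha'$ is a.s. constant, by Fubini.

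\emph{Integrability and part (ii).} Let $\pi$ be any coupling with $d\pi/d\mathcal{R}=e^{\balpha\oplus\bbeta}$, and let $\pi'$ be any coupling with finite divergence. Then
\[
D_{\textup{KL}}(\pi'\Vert\mathcal{R}) = D_{\textup{KL}}(\pi'\Vert\pi)+\int(\balpha\oplus\bbeta)\,d\pi'.
\]
The last integral depends only on the marginals, provided $\balpha\in L^1(\mu_0)$ and $\bbeta\in L^1(\mu_1)$. So $\pi$ minimizes, and it is the unique minimizer since $D_{\textup{KL}}(\pi'\Vert\pi)>0$ for $\pi'\neq\pi$.

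Integrability comes from two estimates. On one side, $D_{\textup{KL}}(\pi\Vert\mathcal{R})<\infty$ controls the positive parts of the potentials. On the other, the marginal equations $\int e^{\balpha(x)+\bbeta(y)}\,d\mathcal{R}(\cdot\mid x)=1$, combined with Jensen's inequality and $D_{\textup{KL}}(\mu_0\otimes\mu_1\Vert\mathcal{R})<\infty$, give upper bounds on $\balpha$ and $\bbeta$ integrated against $\mu_0$ and $\mu_1$ respectively. Together these yield $L^1$ bounds, after normalizing the constant gauge.
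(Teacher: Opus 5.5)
Quoting \cite[Thm.~2.1]{nutz2021introduction} is exactly what the paper does: it gives no proof of its own, so at that level there is nothing to compare. Your reproof sketch is sound in outline for existence, uniqueness and $\pi\sim\mathcal{R}$. Its final paragraph, however, has a genuine gap, and your proof of (ii) rests on it.

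For a $\pi$ as in (ii), finiteness of $D_{\textup{KL}}(\pi\Vert\mathcal{R})=\int(\balpha\oplus\bbeta)\,d\pi$ is not a hypothesis. Even when it holds, an $L^1(\pi)$ bound on $(\balpha\oplus\bbeta)^+$ does not split into separate bounds on $\balpha^+$ and $\bbeta^+$, because $\balpha(x)$ and $\bbeta(y)$ can cancel along the coupling. Your Jensen step $\balpha(x)\le\int\kappa(x,\cdot)\,d\mu_1-\int\bbeta\,d\mu_1$ presupposes that $\int\bbeta\,d\mu_1$ is defined. After integrating, it only bounds the sum $\int\balpha\,d\mu_0+\int\bbeta\,d\mu_1$ from above. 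So both of your estimates are upper bounds, and nothing controls $\balpha^-$ and $\bbeta^-$. Consequently, the claim that $\int(\balpha\oplus\bbeta)\,d\pi'$ depends only on the marginals is unproved, and your identity $D_{\textup{KL}}(\pi'\Vert\mathcal{R})=D_{\textup{KL}}(\pi'\Vert\pi)+\int(\balpha\oplus\bbeta)\,d\pi'$ needs exactly that claim.

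The missing idea is to get the positive parts from the product measure and the negative parts from a truncation. Write $d\mathcal{R}=e^{-\kappa}\,d(\mu_0\otimes\mu_1)$. Since $\mathcal{R}$ is finite, $D_{\textup{KL}}(\mu_0\otimes\mu_1\Vert\mathcal{R})<\infty$ means $\kappa\in L^1(\mu_0\otimes\mu_1)$.

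\emph{Positive parts.} The density $p=e^{\balpha\oplus\bbeta-\kappa}$ of $\pi$ with respect to $\mu_0\otimes\mu_1$ integrates to $1$, and $\log^+p\le p$. Hence $(\balpha\oplus\bbeta)^+\le\log^+p+\kappa^+$ is $\mu_0\otimes\mu_1$-integrable. Fubini at a good point $y_0$ then gives $\balpha^+\in L^1(\mu_0)$, and likewise $\bbeta^+\in L^1(\mu_1)$.

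\emph{Negative parts.} The truncations $\balpha\vee(-n)$ and $\bbeta\vee(-n)$ are integrable. So for every $\pi'\in\Pi(\mu_0,\mu_1)$, $\int(\balpha\vee(-n))\oplus(\bbeta\vee(-n))\,d\pi'=\int(\balpha\vee(-n))\,d\mu_0+\int(\bbeta\vee(-n))\,d\mu_1$. Monotone convergence then yields $\int(\balpha\oplus\bbeta)\,d\pi'=\int\balpha\,d\mu_0+\int\bbeta\,d\mu_1\in[-\infty,\infty)$ for every coupling. Take $\pi'=\pi$ and use $\int(\balpha\oplus\bbeta)\,d\pi=D_{\textup{KL}}(\pi\Vert\mathcal{R})\ge 0$; this forces both integrals to be finite. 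That gives (ii) via your identity, and also the integrability claimed in (i), since the minimizer has this form.

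By contrast, the closedness issue you call the main obstacle is elementary here. Your first-order condition plus Hahn--Banach in the $L^1(\pi)$--$L^\infty(\pi)$ pairing places $\log Z$ in the $L^1(\pi)$-closure of $\{f\oplus g\}$. Since $\pi\sim\mu_0\otimes\mu_1$, take an a.e.\ convergent subsequence and apply Fubini at good points $x_0,y_0$. This shows the limit $h$ satisfies $h(x,y)=h(x,y_0)+h(x_0,y)-h(x_0,y_0)$ a.e., so no appeal to Csisz\'ar is needed.
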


\begin{remark}[Continuous vs.\ discrete scalability conditions]
	In Theorem \ref{thm: Schrodinger_bridge_existence_uniqueness}, the requirement that the reference measure $\mathcal{R}$ is equivalent to $\mu_0 \otimes \mu_1$ is only a sufficient condition to guarantee the existence of potentials and the factorization $\mathcal{R}$-a.s., but is not necessary. Indeed, in the discrete instance of matrix scaling \eqref{eq:SSB_Poisson_main}, a necessary and sufficient condition for the existence of potentials (or scaling factors) is known due to Menon and Schneider \cite{menon1969spectrum} (see Lemma \ref{lem:scalability}).
	
	In the continuous entropic optimal transport (EOT) literature, it is common to assume that $\mathcal{R} \sim \mu_0 \otimes \mu_1$ in order to guarantee the existence, global uniqueness, and measurability of the scaling potentials (see, e.g., \cite[Theorem 2.1]{nutz2021introduction}). This strict equivalence is used to bypass pathological configurations where the optimal coupling $\pi_*$ is forced to drop support relative to $\mathcal{R}$; if such support dropping occurs, the standard exponential decomposition $\frac{d\pi_*}{d\mathcal{R}} = \exp(\balpha \oplus \bbeta)$ could fail $\mathcal{R}$-a.s.\ because the required potentials diverge. Nutz provides the following discrete counterexample:
	\begin{align}
		\X = \begin{bmatrix}
			1 & 1 \\
			0 & 1
		\end{bmatrix},
		\qquad \mu=\nu=(1/2, 1/2).
	\end{align}
	The optimal coupling must be the diagonal matrix $(1/2)\I$, so the potentials cannot be finite.
	
	However, this assumption is overly restrictive for the discrete setting, as it is equivalent to requiring $\X$ to have strictly positive entries everywhere. In the discrete regime, a necessary and sufficient condition for strict scalability—which guarantees the existence of finite, strictly positive diagonal scaling matrices and finite potentials—is given by the  combinatorial consistency condition of Menon and Schneider \cite[Thm. 4.1]{menon1969spectrum}. Nutz's counterexample above fails this condition.
\end{remark}

If $\mathcal{R} \in \mathcal{P}_{>0}(\Omega_{\r} \otimes \Omega_{\c}, \mu_0 \otimes \mu_1)$, then we may write $\frac{d\mathcal{R}}{d(\mu_0 \otimes \mu_1)} = e^{-\kappa}$ for some cost function $\kappa: \Omega_{\r} \times \Omega_{\c} \to \R$. In this case, it is well-known that the potentials $\balpha, \bbeta$ are the solutions to the dual problem
\begin{align}
	\label{eq: Schr\"{o}dinger_dual_cnts}
	\sup_{\xi \in L^1(\mu_0), \eta \in L^1(\mu_1)} \left( \int_{\Omega_{\r}} \xi d\mu_0 + \int_{\Omega_{\c}} \eta d\mu_1 - \iint_{\Omega_{\r} \times \Omega_{\c}} e^{\xi \oplus \eta - \kappa }d(\mu_0 \otimes \mu_1)  + 1 \right)
\end{align}
and satisfy the Schr\"{o}dinger equations

\begin{align}
	\label{eq: Schr\"{o}dinger_cnts}
	\balpha(x) = -\log \left( \int_{\Omega_{\c}} e^{\bbeta(y) - \kappa(x, y)} d\mu_1(y) \right), \quad \bbeta(y) = -\log \left( \int_{\Omega_{\r}} e^{\balpha(x) - \kappa(x, y)} d\mu_0(y) \right)
\end{align}

We will need the following lemma showing boundedness of the potentials. The proof follows an argument from the notes \cite{nutz2021introduction} and we include it for completeness. Here, and in the remainder of this section, we will use the shorthand notation
\begin{align}
	\langle \balpha, \mu_0 \rangle = \int_{\Omega_{\r}} \balpha d\mu_0, \quad \langle \bbeta, \mu_1 \rangle = \int_{\Omega_{\c}} \bbeta d\mu_1. 
\end{align}

\begin{lemma}
	\label{lem: boundedness_continuous_potentials}
	Make the same assumptions as in Theorem \ref{thm: Schrodinger_bridge_existence_uniqueness}. Further, assume $\frac{d\mathcal{R}}{d(\mu_0 \otimes \mu_1)} = e^{-\kappa}$ for some $\kappa \in L^{\infty}(\mu_0 \otimes \mu_1)$. Then
	\begin{align}
		-(\norm{\kappa^{-}}_{\infty} + \norm{\kappa^{+}}_{\infty})  \leq \balpha(x) \leq \norm{\kappa^{+}}_{\infty} \quad \mu_0 \textup{-a.s.}
	\end{align}
	and
	\begin{align}
		-(\norm{\kappa^{-}}_{\infty} + \norm{\kappa^{+}}_{\infty})  \leq \bbeta(y) \leq \norm{\kappa^{+}}_{\infty} \quad \mu_1 \textup{-a.s.}
	\end{align}
	where $\balpha, \bbeta$ are the unique Schr\"{o}dinger potentials satisfying $\langle \bbeta, \mu_1 \rangle = 0$.
\end{lemma}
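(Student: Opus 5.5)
We work directly with the Schrödinger equations \eqref{eq: Schr\"{o}dinger_cnts} under the normalization $\langle \bbeta,\mu_1\rangle = 0$, and obtain the bounds in a fixed order: first the upper bound on $\balpha$, then the upper bound on $\bbeta$, then the two lower bounds. The main tools are Jensen's inequality (convexity of $-\log$) and the mass constraint $\int e^{\balpha\oplus\bbeta-\kappa}\,d(\mu_0\otimes\mu_1)=1$, which we use to fix the sign of $\langle\balpha,\mu_0\rangle$.

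\emph{Step 1 (upper bound on $\balpha$).} Since $\mu_1$ is a probability measure, Jensen applied to the first equation in \eqref{eq: Schr\"{o}dinger_cnts} gives, for $\mu_0$-a.e.\ $x$,
\begin{align}
\balpha(x) = -\log \int e^{\bbeta(y)-\kappa(x,y)}\,d\mu_1(y) \le \int (\kappa(x,y)-\bbeta(y))\,d\mu_1(y) \le \norm{\kappa^+}_\infty - \langle\bbeta,\mu_1\rangle = \norm{\kappa^+}_\infty.
\end{align}

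\emph{Step 2 (sign of $\langle\balpha,\mu_0\rangle$, then upper bound on $\bbeta$).} Because $\pi$ is a probability measure with density $e^{\balpha\oplus\bbeta-\kappa}$ relative to $\mu_0\otimes\mu_1$, Jensen applied to the total mass gives
\begin{align}
0=\log\int e^{\balpha\oplus\bbeta-\kappa}\,d(\mu_0\otimes\mu_1)\ge \langle\balpha,\mu_0\rangle+\langle\bbeta,\mu_1\rangle-\int\kappa\,d(\mu_0\otimes\mu_1).
\end{align}
This yields $\langle\balpha,\mu_0\rangle\le\int\kappa\le\norm{\kappa^+}_\infty$, which is not the direction we need. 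The correct way is to apply Jensen to the second equation: $\bbeta(y)\le \norm{\kappa^+}_\infty-\langle\balpha,\mu_0\rangle$. It therefore suffices to show $\langle\balpha,\mu_0\rangle\ge 0$. To get this, we integrate the pointwise inequality $\bbeta(y)\le \int(\kappa-\balpha)\,d\mu_0$ against $\mu_1$. Using $\langle\bbeta,\mu_1\rangle=0$, this gives $\langle\balpha,\mu_0\rangle\le \int\kappa$, which again only bounds $\langle\balpha,\mu_0\rangle$ from above.

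This sign issue is the \textbf{main obstacle}, and I would handle it by a case split. If $\langle\balpha,\mu_0\rangle\ge0$, Step 2 is done directly. Otherwise, I would run Step 1's argument with the roles of the two equations exchanged, so that the constant shift lands on $\bbeta$'s side. Concretely, the shift $(\balpha+\lambda,\bbeta-\lambda)$ preserves $\pi$, and the only issue is which normalization is in force. If this route fails, I would fall back on the route in Nutz's notes: bound $\bbeta\le\norm{\kappa^+}_\infty-\langle\balpha,\mu_0\rangle$, and use the lower bound from Step 3 to control $-\langle\balpha,\mu_0\rangle$. Combined with the dual optimality comparison against $(0,0)$, namely
\begin{align}
\langle\balpha,\mu_0\rangle+\langle\bbeta,\mu_1\rangle-1+1\ge 0-\int e^{-\kappa}+1=0,
\end{align}
this gives $\langle\balpha,\mu_0\rangle\ge0$ under the normalization. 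This dual comparison is exactly the missing sign fact, so I expect the final write-up to use it.

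\emph{Step 3 (lower bounds).} From the first equation and $e^{-\kappa}\le e^{\norm{\kappa^-}_\infty}$, we get
\begin{align}
\balpha(x)\ge -\norm{\kappa^-}_\infty-\log\int e^{\bbeta}\,d\mu_1\ge-\norm{\kappa^-}_\infty-\norm{\kappa^+}_\infty,
\end{align}
where the last step uses the upper bound $\bbeta\le\norm{\kappa^+}_\infty$ from Step 2. The lower bound on $\bbeta$ follows symmetrically, using $\balpha\le\norm{\kappa^+}_\infty$ from Step 1.
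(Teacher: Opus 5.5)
Your proposal is correct and is essentially the paper's proof once Step 2 is settled by the dual comparison against $(\xi,\eta)=(0,0)$ (using $\iint e^{\balpha\oplus\bbeta-\kappa}\,d(\mu_0\otimes\mu_1)=\iint e^{-\kappa}\,d(\mu_0\otimes\mu_1)=1$). The paper's proof likewise combines Jensen on the Schr\"odinger equations for the upper bounds, this comparison to get $\langle\balpha,\mu_0\rangle\ge 0$, and an ess-sup estimate for the lower bounds. In the write-up, drop the case split, which does not resolve anything. Also drop the appeal to Step 3 inside Step 2: it would be circular, because Step 3 uses the upper bound on $\bbeta$. The dual comparison alone gives the needed sign; equivalently, $D_{\textup{KL}}(\pi\,\Vert\,\mathcal{R})=\langle\balpha,\mu_0\rangle+\langle\bbeta,\mu_1\rangle\ge 0$, which is Jensen taken with respect to $\pi$ rather than $\mu_0\otimes\mu_1$.
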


\begin{proof}
	Applying Jensen's inequality to the first equation in \eqref{eq: Schr\"{o}dinger_cnts} we have for $\mu_0$ a.e. $x$
	\begin{align}
		\balpha(x) \leq \int_{\Omega_{\c}} -(\bbeta(y) - \kappa(x, y))  d\mu_1(y) \leq - \langle \bbeta, \mu_1 \rangle + \norm{\kappa^{+}}_{\infty} = \norm{\kappa^{+}}_{\infty}.
	\end{align}
	Reasoning similarly for $\bbeta$ we obtain
	\begin{align}
		\label{eq: pf_bounded_potentials_1}
		\bbeta(y) \leq - \langle \balpha, \mu_0 \rangle +  \norm{\kappa^{+}}_{\infty} 
	\end{align}
	for $\mu_1$ a.e. $y$. 
	Recall that $\balpha$ and $\bbeta$ are solutions to the dual problem \eqref{eq: Schr\"{o}dinger_dual_cnts} and that $\iint e^{\balpha \oplus \bbeta - \kappa} d(\mu_0 \otimes \mu_1) = \iint e^{-\kappa}d(\mu_0 \otimes \mu_1) = 1$. Then by taking $\eta = \xi = 0$ in \eqref{eq: Schr\"{o}dinger_dual_cnts} it follows
	\begin{align}
		\langle \balpha, \mu_0 \rangle &= \langle \balpha, \mu_0 \rangle + \langle \bbeta, \mu_1 \rangle\\
		&= \langle \balpha, \mu_0 \rangle + \langle \bbeta, \mu_1 \rangle - \iint_{\Omega_{\r} \times \Omega_{\c}} e^{\balpha \oplus \bbeta - \kappa }d(\mu_0 \otimes \mu_1) + 1\\
		& \geq \langle \xi, \mu_0 \rangle + \langle \eta, \mu_1 \rangle - \iint_{\Omega_{\r} \times \Omega_{\c}} e^{\xi \oplus \eta - \kappa }d(\mu_0 \otimes \mu_1) + 1 = 0.
	\end{align}
	Combining this with \eqref{eq: pf_bounded_potentials_1} we prove the upper bound. 
	
	For the lower bound, observe that for $\mu_0$-a.e.\ $x$,
	\begin{align}
		\balpha(x) &\geq -\log \left( \int_{\Omega_{\c}} e^{\textup{ess-sup}_{y} [\bbeta(y) - \kappa(x, y)]} d\mu_1(y) \right)\\
		&= \textup{ess-inf}_{y}[\kappa(x,y) - \bbeta(y)]\\
		& \geq -\norm{\kappa^{-}}_{\infty} - \textup{ess-sup}_{y} \bbeta(y) \geq -(\norm{\kappa^{-}}_{\infty} + \norm{\kappa^{+}}_{\infty}).
	\end{align}
	Similarly, one may show that
	\begin{align}
		\bbeta(y) \geq -(\norm{\kappa^{-}}_{\infty} + \norm{\kappa^{+}}_{\infty}).
	\end{align}
	This finishes the proof of the lower bound.
\end{proof}

\subsection{Stability with respect to reference measure}
\label{sec: Kernel_stability}

In this section, we prove Theorem \ref{thm:continuous_kernel_stability}. The techniques follow a similar idea from \cite{carlier2020differential} establishing local invertibility of the Schr\"{o}dinger system. However, since our primary goal is stability with respect to the reference measure, there are a number of important differences. 

Before the proof, we include include an example showing that the bounded cost assumption is necessary.

\begin{example}
	\label{ex:kernel_stability_tightness}
	Fix some $s \in (0, 1/4)$ which will be sent to zero. Let $\gamma(s)$ be a function with $\gamma(s) = o(s)$. We consider $\Omega_{\r} = \Omega_{\c} = \{0, 1\}$ with margins $\mu_{0} = \mu_{1} = \frac{1}{2}\delta_{0} + \frac{1}{2}\delta_{1}$.
	
	Consider the following reference measures (represented as matrices) 
	\begin{align}
		\mathcal{R} = \begin{bmatrix}
			1/4  & 1/2-(s +\gamma(s))\\
			1/4 & s + \gamma(s) 
		\end{bmatrix}, \quad 
		\mathcal{R}' = \begin{bmatrix}
			1/4 & 1/2-(s - \gamma(s)) \\
			1/4 & s - \gamma(s).
		\end{bmatrix} 
	\end{align}
	Put another way, we consider the problem of re-scaling matrices $\mathcal{R}$ and $\mathcal{R}'$ to have uniform row and column margins. Observe that the entry attaining the smallest ratio with respect to the product measure, and thus highest cost, is $\mathcal{R}'(2, 2)$. So, Theorem \ref{thm:continuous_kernel_stability} asserts that the rescaled matrices $\pi, \pi'$ satisfy
	\begin{align}
		d_{H}(\pi, \pi') \lesssim  \left(\frac{1}{s- \gamma(s)}\right)^{3/2}d_{H}(\mathcal{R}, \mathcal{R}') \lesssim s^{-3/2} d_{H}(\mathcal{R}, \mathcal{R}')
	\end{align} 
	for sufficiently small $s$. 
	We will show in this example that
	\begin{align}
		d_{H}(\pi, \pi') \gtrsim s^{-1/4}d_{H}(\mathcal{R}, \mathcal{R}')
	\end{align}
	implying that the bounded cost assumption of Theorem \ref{thm:continuous_kernel_stability} is necessary. Furthermore, it shows that Theorem \ref{thm:continuous_kernel_stability} is tight up to dependence on the coefficients of $\norm{\kappa^{+}}_{\infty}$ and $\norm{(\kappa')^{+}}_{\infty}$. 
	
	To see this, we start by computing $\pi$ and $\pi'$ explicitly. Note that they must be of the form
	\begin{align}
		\pi = \begin{bmatrix}
			a & \frac{1}{2}-a\\
			\frac{1}{2}-a & a
		\end{bmatrix}, \quad 
		\pi' = \begin{bmatrix}
			b & \frac{1}{2}-b\\
			\frac{1}{2}-b & b
		\end{bmatrix} 
	\end{align}
	for some $a, b \in [0, 1/2]$. 
	
	Let $(\balpha, \bbeta)$ be the potentials for $\mathcal{R}$, so that $\pi(i, j) = \mathcal{R}(i, j)e^{\balpha(i) + \bbeta(j)}$. Then, from the first row of $\pi$ we have
	\begin{align}
		\frac{a}{1/2- a} = \frac{\pi(1, 1)}{\pi(1, 2)} =  \frac{e^{\balpha(1) + \bbeta(1)}}{4(1/2 - s-\gamma(s))e^{\balpha(1)+\bbeta(2)}}
	\end{align}
	while the second row gives
	\begin{align}
		\frac{a}{1/2-a} = \frac{\pi(2, 2)}{\pi(2, 1)} =  4(s+\gamma(s))\frac{e^{\balpha(2) + \bbeta(2)}}{e^{\balpha(2)+\bbeta(1)}}.
	\end{align}
	Multiplying the two together we find
	\begin{align}
		\left(\frac{a}{1/2-a} \right)^2 = \frac{s+\gamma(s)}{1/2 - s - \gamma(s)}
	\end{align}
	or equivalently,
	\begin{align}
		a = \frac{\sqrt{s + \gamma(s)}}{\sqrt{1/2-s-\gamma(s)} + \sqrt{s+\gamma(s)}}.
	\end{align}
	Applying similar reasoning to compute the entries of $\pi'$, we find
	\begin{align}
		b = \frac{\sqrt{s - \gamma(s)}}{\sqrt{1/2 - s + \gamma(s)} + \sqrt{s - \gamma(s)}}.
	\end{align}
	
	Let $f(x) = \frac{x^{1/4}}{(\sqrt{1/2 - x} + \sqrt{x})^{1/2}}$ so we may write
	\begin{align}
		d_{H}(\pi, \pi')^2 \geq (\sqrt{a} - \sqrt{b})^2 = \left(f(s+\gamma(s)) - f(s - \gamma(s)) \right)^2.
	\end{align}
	Calculus gives
	\begin{align}
		f'(x) = \frac{1}{8x^{3/4}\sqrt{1/2-x}(\sqrt{1/2-x}+\sqrt{x})^{3/2}}.
	\end{align}
	Thus, applying the mean value theorem to the difference $f(s+\gamma(s)) - f(s - \gamma(s))$ while observing that $\sqrt{1/2 - x}(\sqrt{1/2-x} + \sqrt{x})^{3/2} \to 1/4$ as $x \to 0$, we have
	\begin{align}
		d_{H}(\pi, \pi')^2 \gtrsim \frac{\gamma(s)^2}{s^{3/2}}
	\end{align}
	for $s$ sufficiently small. On the other hand, a similar Taylor expansion gives
	\begin{align}
		d_{H}(\mathcal{R}, \mathcal{R}')^2 = \left(\sqrt{s + \gamma(s)} - \sqrt{s-\gamma(s)}\right)^2 + \left(\sqrt{1/2 - s - \gamma(s)} - \sqrt{1/2 - s + \gamma(s)} \right)^2 \lesssim \frac{\gamma(s)^2}{s}
	\end{align}
	implying that
	\begin{align}
		d_{H}(\pi, \pi') \gtrsim s^{-1/4}d_{H}(\mathcal{R}, \mathcal{R}').
	\end{align}
	as desired. \hfill $\blacktriangle$ 
\end{example}

Let us define $\overline{L}^{\infty}(\mu_1)$ to be the set of all bounded functions $\eta$ with $\langle \eta, \mu_1 \rangle = 0$, which is a Banach space under the inherited norm. The main idea of the proof is as follows. As in \cite{carlier2020differential}, we consider $L^{\infty}_{>0}(\mu_0 \otimes \mu_1)$, the interior of the positive cone of $L^{\infty}(\mu_0 \otimes \mu_1)$ which contains all probability densities with respect to $\mu_0 \otimes \mu_1$ with bounded cost. 

We will use the implicit function theorem to identify construct a map \begin{align}
	B : L^{\infty}_{>0}(\mu_0 \otimes \mu_1) \to L^{\infty}(\mu_0) \times \overline{L}^{\infty}(\mu_1)
\end{align}
such that for any probability density $\phi$, $B(\phi)$ are the Schr\"{o}dinger potentials for reference measure $d\mathcal{R} = \phi d(\mu_0 \otimes \mu_1)$ and margins $\mu_0, \mu_1$. Then, define
\begin{align}
	\label{eq: def_Psi}
	&\Psi : L^{\infty}_{>0}(\mu_0 \otimes \mu_1) \times L^{\infty}(\mu_0) \times \overline{L}^{\infty}(\mu_1) \to L^{\infty}(\mu_0 \otimes \mu_1)\\
	& (\phi, \xi, \eta) \mapsto \phi e^{\xi \oplus \eta}
\end{align}
so that 
\begin{align}
	\Psi(\phi, B(\phi)) = W := \frac{d\pi^{\mu_0, \mu_1; \mathcal{R}}}{d(\mu_0 \otimes \mu_1)}.
\end{align}

To construct $B$ we first set
\begin{align}
	\mathcal{E} = \left\{ (f_{0}, f_{1}) \in L^{\infty}(\mu_0) \times L^{\infty}(\mu_1) : \langle f_{0}, \mu_0 \rangle = \langle f_{1}, \mu_1 \rangle \right\}.
\end{align}
We aim to apply to the implicit function theorem to
\begin{align}
	F : L^{\infty}(\mu_0 \otimes \mu_1) \times L^{\infty}(\mu_0) \times \overline{L}^{\infty}(\mu_1) \to \mathcal{E}
\end{align}
given by
\begin{align}
	F(\phi, \xi, \eta) = \left( \int_{\Omega_{\c}} \phi(\cdot, y)e^{\xi(\cdot) + \eta(y)}d\mu_1(y) - 1, \int_{\Omega_{\r}}\phi(x, \cdot)e^{\xi(x) + \eta(\cdot)}d\mu_0(x) - 1 \right).
\end{align}
From the Schr\"{o}dinger equations \eqref{eq: Schr\"{o}dinger_cnts}, if $\phi$ is a probability density (or at least a positive integrable function), then $F(\phi, \xi, \eta) = 0$ implies that $\phi e^{\xi \oplus \eta}$ is the Schr\"{o}dinger bridge for reference measure $\phi$

\begin{lemma}
	\label{lem: implicit_function_justification}
	Fix $\phi \in L^{\infty}_{>0}(\mu_0 \otimes \mu_1)$. Also, let $(\xi, \eta) \in L^{\infty}(\mu_0) \times \overline{L}^{\infty}(\mu_1)$. Then $F$ is continuously (Frechet) differentiable at $(\phi, \xi, \eta)$. Furthermore, let $\partial_1$ be the partial derivative in the first coordinate and $\partial_{2}$ be the partial derivative with respect to $F$'s second two coordinates. The derivatives are the linear operators
	\begin{align}
		\label{eq: kernel_to_margins_derivative}
		\partial_1F(\phi, \xi, \eta)\psi = \left( \int_{\Omega_{\c}} \psi(\cdot, y)e^{\xi(\cdot) + \eta(y)}d\mu_1(y), \int_{\Omega_{\r}}\psi(x, \cdot) e^{\xi(x) + \eta(\cdot)}d\mu_0(x) \right). 
	\end{align}
	and for $h = (h_0, h_1) \in L^{\infty}(\mu_0) \times \overline{L}^{\infty}(\mu_1)$, 
	\begin{multline}
		\label{eq: potentials_to_margins_derivative}
		\partial_{2}F(\phi, \xi, \eta)h\\
		= \left(\int_{\Omega_c}\phi(\cdot, y)e^{\xi(\cdot) + \eta(y)}(h_{0}(\cdot) + h_{1}(y))d\mu_1(y), \int_{\Omega_{\r}}\phi(x, \cdot) e^{\xi(x) + \eta(\cdot)}(h_{0}(x) + h_{1}(\cdot))d\mu_0(x) \right).
	\end{multline}
	Furthermore, $\partial_2F(\phi, \xi, \eta)$ is an isomorphism from $L^{\infty}(\mu_0) \times \overline{L}^{\infty}(\mu_1)$ to $\mathcal{E}$. 
\end{lemma}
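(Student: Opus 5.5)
Here is the plan. First I show that $F$ maps into $\mathcal{E}$ and is $C^1$, with the stated derivatives. Then I show $\partial_2 F$ is a bijection onto $\mathcal{E}$; the bounded inverse theorem then makes it an isomorphism.

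\emph{Range and differentiability.} Integrating either component of $F(\phi,\xi,\eta)$ against the corresponding margin gives the same number, $\iint \phi e^{\xi\oplus\eta}\,d(\mu_0\otimes\mu_1)-1$. So $F$ maps into $\mathcal{E}$, and since $\partial_1F$ and $\partial_2F$ are derivatives of an $\mathcal{E}$-valued map, they also take values in $\mathcal{E}$.

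The map $(\xi,\eta)\mapsto e^{\xi\oplus\eta}$ is Fréchet differentiable from $L^\infty(\mu_0)\times\overline{L}^\infty(\mu_1)$ into $L^\infty(\mu_0\otimes\mu_1)$. Its derivative is $h\mapsto e^{\xi\oplus\eta}(h_0\oplus h_1)$, and the remainder is controlled by $|e^t-1-t|\le t^2e^{|t|}$ applied pointwise in sup norm. The map $(\phi,w)\mapsto \phi w$ is bounded bilinear on $L^\infty$. The two marginalization maps $L^\infty(\mu_0\otimes\mu_1)\to L^\infty(\mu_i)$ are bounded linear with norm at most $1$. The chain rule gives the formulas \eqref{eq: kernel_to_margins_derivative} and \eqref{eq: potentials_to_margins_derivative}. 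Continuity of the derivatives follows because every ingredient is locally Lipschitz in sup norm.

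\emph{Setting up $\partial_2F$.} Write $w=\phi e^{\xi\oplus\eta}$. Since $\phi\in L^\infty_{>0}$ and $\xi,\eta$ are bounded, there are constants with $0<a\le w\le b$ a.e. Let $p(x)=\int w(x,y)\,d\mu_1(y)$ and $q(y)=\int w(x,y)\,d\mu_0(x)$; both lie in $[a,b]$. Then
\[
\partial_2F\,h=\big(p\,h_0+K h_1,\; K^*h_0+q\,h_1\big),
\]
where $Kh_1(x)=\int w(x,y)h_1(y)\,d\mu_1$ and $K^*h_0(y)=\int w(x,y)h_0(x)\,d\mu_0$.

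\emph{Injectivity.} Suppose $\partial_2F\,h=0$. Pair the first component with $h_0$ in $L^2(\mu_0)$ and the second with $h_1$ in $L^2(\mu_1)$, and add. This gives $\iint w\,(h_0\oplus h_1)^2\,d(\mu_0\otimes\mu_1)=0$. Since $w\ge a>0$, we get $h_0(x)+h_1(y)=0$ a.e., so $h_0$ and $h_1$ are constants summing to zero. The normalization $\langle h_1,\mu_1\rangle=0$ then forces $h=0$.

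\emph{Surjectivity (the main obstacle).} The difficulty is that we must invert in $L^\infty$, not in $L^2$. Given $(f_0,f_1)\in\mathcal{E}$, I plan to eliminate $h_0=(f_0-Kh_1)/p$ and reduce to the single equation
\[
\big(q-K^*(p^{-1}K)\big)h_1=f_1-K^*(f_0/p).
\]
The operator $T:=K^*p^{-1}K$ has kernel $t(y,y')=\int w(x,y)w(x,y')p(x)^{-1}\,d\mu_0(x)$, which satisfies $t\ge a^2/b>0$. Hence $T$ is compact, indeed Hilbert–Schmidt, on $L^2(\mu_1)$, and it maps $L^2(\mu_1)$ boundedly into $L^\infty(\mu_1)$. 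Therefore $q-T$ is Fredholm of index zero on $L^2(\mu_1)$.

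The kernel of $q-T$ consists of the constants, by the same quadratic-form argument as in the injectivity step. The right-hand side integrates to zero by the compatibility condition defining $\mathcal{E}$, so by the Fredholm alternative it lies in the range of $q-T$. This produces a solution $h_1\in L^2(\mu_1)$, which is unique after imposing $\langle h_1,\mu_1\rangle=0$. For $L^\infty$ regularity I bootstrap: $h_1=q^{-1}\big(\mathrm{rhs}+Th_1\big)$, and $Th_1\in L^\infty$, so $h_1\in L^\infty$. Then $h_0=(f_0-Kh_1)/p$ is also in $L^\infty$.

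The step needing most care is the Fredholm compatibility. I must check that the range of $q-T$ is exactly the $L^2$-orthogonal complement of the constants, using self-adjointness of $T$ in $L^2(\mu_1)$. Once $\partial_2F$ is a continuous bijection between Banach spaces, the open mapping theorem gives a bounded inverse, and this completes the proof.
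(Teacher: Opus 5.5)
Your proof is correct, and it takes a different route from the paper in both halves.

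\textbf{Differentiability.} The paper measures the $\phi$-perturbation $\psi$ in the $L^1(\mu_0\otimes\mu_1)$ norm. It writes $e^{(\xi+h_0)\oplus(\eta+h_1)}-e^{\xi\oplus\eta}=e^{f}(h_0\oplus h_1)$ by a pointwise mean value theorem, and then uses dominated convergence to show that the $L^1$ norm of the remainder is $o(\|\psi\|_1+\|h_0\|_\infty+\|h_1\|_\infty)$. You instead run the chain rule entirely in sup norms: the exponential map into $L^\infty(\mu_0\otimes\mu_1)$, bounded bilinear multiplication, and norm-one marginalizations. You also address continuity of the derivative, which the paper's argument does not. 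Your norms match the domain $L^\infty(\mu_0\otimes\mu_1)$ and the codomain $\mathcal{E}\subset L^\infty(\mu_0)\times L^\infty(\mu_1)$ in the statement. That consistency is what the implicit function theorem, invoked right afterwards, requires. The paper's estimate controls the remainder only in $L^1$, so strictly speaking it gives differentiability into a weaker space; its merit is that it admits $L^1$ perturbations of the reference density.

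\textbf{Isomorphism.} The paper gives no argument and simply asserts that the claim is equivalent to Proposition 3.1 of Carlier--Laborde. You prove it directly:
\begin{itemize}
\item Injectivity follows from the quadratic form $\iint w\,(h_0\oplus h_1)^2\,d(\mu_0\otimes\mu_1)$.
\item Surjectivity comes from eliminating $h_0$ and applying the Fredholm alternative to the self-adjoint Schur complement $q-T$ on $L^2(\mu_1)$. Its kernel is the constants, so its range is the mean-zero functions, and this is exactly where the compatibility condition defining $\mathcal{E}$ enters.
\item You bootstrap to $L^\infty$ through $T:L^2(\mu_1)\to L^\infty(\mu_1)$ and finish with the open mapping theorem.
\end{itemize}
The citation is shorter. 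Your argument is self-contained and shows precisely where the two-sided bounds on $\phi$ are used: the lower bound for injectivity and for inverting multiplication by $p$ and $q$, the upper bound for compactness of $T$ and for the $L^\infty$ regularity.
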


\begin{proof}
	Since $\phi$ is integrable and $\xi$ and $\eta$ are bounded, it is straightforward to see that $\partial_1F(\phi, \xi, \eta)$ and  $\partial_{2}F(\phi, \xi, \eta )$ are bounded as linear operators from $L^1(\mu_0 \otimes \mu_1) \to \mathcal{E}$ and $L^{\infty}(\mu_0) \times \overline{L}^{\infty}(\mu_1) \to \mathcal{E}$ respectively.
	To show $\partial_1 F + \partial_2 F$ is the derivative, we first observe that from Fubini's theorem and the triangle inequality, the $L^1$ norm of both coordinates of the difference
	\begin{align}
		F(\phi + \psi, \xi + h_{0}, \eta + h_{1}) - F(\phi, \xi, \eta) - \partial_1F(\phi, \xi, \eta)\psi -  \partial_{2}F(\phi, \xi, \eta)h
	\end{align}
	are bounded above by
	\begin{multline}
		\label{eq: pf_Schr\"{o}dinger_invertibility_1}
		\int_{\Omega_{\r} \times \Omega_{\c}}|\psi||e^{(\xi + h_{0}) \oplus (\eta + h_{1})} - e^{\xi \oplus \eta}|d(\mu_0 \otimes \mu_1)\\
		+ \int_{\Omega_{\r} \times \Omega_{\c}}|\phi||e^{(\xi + h_{0}) \oplus (\eta + h_{1})} - e^{\xi \oplus \eta} - e^{\xi \oplus \eta}(h_{0} \oplus h_{1})| d(\mu_0 \otimes \mu_1).
	\end{multline}
	
	Equip $L^1(\mu_0 \otimes \mu_1) \times L^{\infty}(\mu_0) \times \overline{L}^{\infty}(\mu_1)$ with the product norm
	\begin{align}
		\norm{(\psi, h_{0}, h_{1})} := \norm{\psi}_1 + \norm{h_{0}}_{\infty} + \norm{h_{1}}_{\infty}.
	\end{align}
	Assume that $(\psi, h_{0}, h_{1})$ is a sequence with $\norm{(\psi, h_{0}, h_{1})} \to 0$, where we omit the index for clarity. By a pointwise application of the mean value theorem, there exists a sequence of functions $f$ (also not indexed), with $\norm{f - \xi \oplus \eta}_{\infty} \leq \norm{h_{0}}_{\infty} + \norm{h_{1}}_{\infty}$ such that 
	\begin{align}
		e^{(\xi + h_{0}) \oplus (\eta + h_{1})} - e^{\xi \oplus \eta} = e^{f}(h_{0} \oplus h_{1}).
	\end{align}
	In particular, the sequence $f$ is uniformly bounded and converges to $\xi \oplus \eta$ in $L^{\infty}(\mu_0 \otimes \mu_1)$ as $\norm{h_{0}}_{\infty} + \norm{h_{1}}_{\infty} \to 0$. 
	Hence, may rewrite the integrals in  \eqref{eq: pf_Schr\"{o}dinger_invertibility_1} and continue as
	\begin{align}
		&\int_{\Omega_{\r} \times \Omega_{\c}}|\psi|e^{f}|h_{0} \oplus h_{1}|d(\mu_0 \otimes \mu_1) + \int_{\Omega_{\r} \times \Omega_{\c}} |\phi||e^{f} - e^{\xi \oplus \eta}||h_{0} \oplus h_{1}| d(\mu_0 \otimes \mu_1)\\
		& \leq \norm{\psi}_{1}\norm{e^{f}}_{\infty}(\norm{h_{0}}_{\infty} + \norm{h_{1}}_{\infty}) + (\norm{h_{0}}_{\infty} + \norm{h_{1}}_{\infty})\int_{\Omega_{\r} \times \Omega_{\c}} |\phi||e^{f} - e^{\xi \oplus \eta}|d(\mu_0 \otimes \mu_1)\\
		& = o(\norm{(\psi, h_{0}, h_{1})})
	\end{align}
	where the last equality follows from an application of the dominated convergence theorem to the second integral. 
	
	This shows that $F$ is differentiable. Then, using the assumption that $\phi$ is bounded above and away from zero, it is not hard to see that the claim that $\partial_2F(\phi, \xi, \eta)$ is an isomorphism is equivalent to Proposition 3.1 of \cite{carlier2020differential}.
\end{proof}

From the implicit function theorem, if $\mathcal{R} \in \mathcal{P}_{>0}(\Omega_{\r} \times \Omega_{\c}, \mu_0 \otimes \mu_1)$ has density $\phi$ satisfying the conditions of Lemma \ref{lem: implicit_function_justification} and $F(\phi, \xi, \eta) = 0$, then there is a map $B$ defined in a neighborhood of $\phi$ with $F(\phi, B(\varphi)) = 0$. That is, $B(\phi)$ are the Schr\"{o}dinger potentials for reference measure $\mathcal{R}$ and margins $\mu_0, \mu_1$. The derivative of $B$ is locally defined by the identity
\begin{align}
	\partial B(\psi) = - \partial_2F(\psi, B(\psi))^{-1}\partial_1F(\psi, B(\psi)). 
\end{align}
Finally, by Lemma \ref{lem: boundedness_continuous_potentials}, the image of $L^{\infty}_{>0}(\mu_0 \otimes \mu_1)$ under $B$ is contained in $L^{\infty}(\mu_0) \times \overline{L}^{\infty}(\mu_1)$

Recall the definition of $\Psi$ from \eqref{eq: def_Psi} and set $\Phi(\psi) = \Psi(\psi, B(\psi))$.
Finally, consider
\[
\Gamma: \begin{tikzcd}
	\sqrt{\phi} \arrow[r] & \phi \arrow[r,"\Phi"] & W \arrow[r] & \sqrt{W}
\end{tikzcd}
\]
where the square root is defined pointwise.
The main idea will be to use the mean value theorem, and bound the $L^2 \to L^2$ operator norm of $\partial \Gamma$ along the line connecting $\sqrt{\phi}$ and $\sqrt{\phi'}$ for given probability densities $\phi, \phi'$.

\begin{remark}
	\label{rem: extension_of_B}
	Note that we do not require  $\varphi$ to be a probability density in Lemma \ref{lem: implicit_function_justification}. If $\int \phi \neq 1$ then the potentials that solve $F(\phi, \xi, \eta) = 0$ are simply a re-scaling of those that solve $F(\bar{\phi}, \xi, \eta) = 0$ where $\bar{\phi}$ is the normalization of $\phi$ to a probability density. 
	
	Additionally, for a fixed $\phi$, the implicit function theorem only implies the existence of a locally defined map $B$. However, observe that if both $\varphi$ and $\varphi'$ are bounded above and below, then so is $\phi_t := (t\sqrt{\phi} + (1-t)\sqrt{\phi'})^2$ with the bounds holding uniformly for $t \in [0, 1]$. So we may apply the implicit function theorem at every point along the line. Then, using continuity of $t \mapsto \phi_t$ by a standard compactness argument, we may extend the definition of $B$ to the entire set $(\varphi_{t})_{t \in [0, 1]}$ to apply the mean value theorem. %then $\varphi_t$ is not guaranteed to be the density of a probability measure. At first glance, this may seem to be an issue since the map $B$, according to Lemma \ref{lem: implicit_function_justification}, was originally constructed with domain $\mathcal{P}_{>0}(\Omega, \rho)$. However, this turns out only to be a matter of scaling.
	
	%Indeed, if $\norm{\varphi_t}_{1} \neq 1$, then we can consider its normalization $\bar{\varphi}_t$ and associated potentials $(\bar{\balpha}, \bar{\bbeta})$. If we set $(\balpha, \bbeta) = (\bar{\balpha} - \log \norm{\varphi_t}_1, \bar{\bbeta})$, then $\phi_t e^{\balpha \oplus \bbeta}$ is a probability density with the required margins. In this way, we may extend $B$ to the positive subset of $L^{\infty}(\rho)$.
\end{remark}

Here, we introduce some notation that will be used in the rest of the proof. For a function $f \in L^{\infty}(\mu_0 \otimes \mu_1)$, $\text{Diag}(f): L^2(\mu_0 \otimes \mu_1) \to L^2(\mu_0 \otimes \mu_1)$ is the operator $\text{Diag}(f)g(x) = f(x)g(x)$.  For $(\xi, \eta) \in L^{\infty}(\mu_0) \times \overline{L}^{\infty}(\mu_1)$ and $\phi \in L^{\infty}(\mu_0 \otimes \mu_1)$, we set 
\begin{align}
	\label{eq: def_XI_Upsilon}
	\Xi(\phi, \xi, \eta) = \text{Diag}(\phi e^{\xi \oplus \eta})
	\text{ and } \Upsilon(\xi, \eta) = \text{Diag}(e^{\xi \oplus \eta}).
\end{align}
We also use the notation $(\balpha_{\phi}, \bbeta_{\phi}) := B(\varphi)$. Lemma \ref{lem: Gamma_derivative_bound} is the key step in proving Theorem \ref{thm:continuous_kernel_stability}. 

\begin{lemma}
	\label{lem: Gamma_derivative_bound}
	Let $\phi, \phi' \in L^{\infty}_{>0}(\mu_0 \otimes \mu_1)$ be probability densities with cost functions $\kappa$ and $\kappa'$. For $t \in [0, 1]$, let
	\begin{align}
		\sqrt{\phi_t} = t\sqrt{\phi} + (1-t)\sqrt{\phi'}. 
	\end{align} 
	There exists $g_t \in L^{\infty}(\mu_0 \otimes \mu_1)$, with $\norm{g_t}_{\infty} \leq e^{\tfrac{3}{2}\norm{\kappa^{+}}_{\infty} \vee \norm{(\kappa')^{+}}_{\infty}}$, and an orthogonal projection $P_t: L^2(\mu_0 \otimes \mu_1) \to L^2(\mu_0 \otimes \mu_1)$ so that 
	\begin{align}
		\partial \Gamma(\sqrt{\varphi_t}) = P_t\text{Diag}(g_t).
	\end{align} 
	Consequently, 
	\begin{align}
		\norm{\partial \Gamma(\sqrt{\varphi_t})f}_{L^2(\mu_0 \otimes \mu_1)} \leq e^{\tfrac{3}{2}(\norm{\kappa^{+}}_{\infty} \vee \norm{(\kappa')^{+}}_{\infty})}\norm{f}_{L^2(\mu_0 \otimes \mu_1)}.
	\end{align}
	for any $f \in L^{\infty}(\mu_0 \otimes \mu_1)$. 
\end{lemma}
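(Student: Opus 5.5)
The plan is to compute $\partial\Gamma$ by the chain rule and recognise it as a projection. Write $s=\sqrt{\phi_t}$ and $(\balpha_t,\bbeta_t)=B(\phi_t)$. Since $\Gamma(s)=\sqrt{s^2e^{\balpha_{s^2}\oplus\bbeta_{s^2}}}=s\,e^{(\balpha_{s^2}\oplus\bbeta_{s^2})/2}$, differentiating in a direction $f$ gives
\begin{align}
	\partial\Gamma(s)f = e^{(\balpha_t\oplus\bbeta_t)/2}f + \tfrac12\, w_t\,(h_0\oplus h_1), \qquad w_t:=\Gamma(s)=s\,e^{(\balpha_t\oplus\bbeta_t)/2},
\end{align}
where $(h_0,h_1)=\partial B(\phi_t)(2sf)$. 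The implicit function theorem construction (Lemma \ref{lem: implicit_function_justification}, extended along the segment by Remark \ref{rem: extension_of_B}) guarantees that this derivative exists. I then set $g_t:=e^{(\balpha_t\oplus\bbeta_t)/2}$. Let $V_t$ be the closed subspace of $L^2(\mu_0\otimes\mu_1)$ spanned by the functions $w_t\,(a\oplus b)$ with $a\in L^\infty(\mu_0)$ and $b\in L^\infty(\mu_1)$, and let $P_t$ be the orthogonal projection onto $V_t^\perp$.

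The key structural step is to show that $\partial\Gamma(s)f$ lies in $V_t^\perp$. Along the curve, $W=\Gamma^2$ always has margins $\mu_0,\mu_1$, since $F(\phi_t,B(\phi_t))=0$ up to the normalization in Remark \ref{rem: extension_of_B}. Differentiating, the marginals of $\partial W=2w_t\,\partial\Gamma(s)f$ vanish, i.e.
\begin{align}
	\int w_t(x,y)\,(\partial\Gamma(s)f)(x,y)\,d\mu_1(y)=0 \quad\text{and}\quad \int w_t(x,y)\,(\partial\Gamma(s)f)(x,y)\,d\mu_0(x)=0.
\end{align}
By Fubini, this is exactly orthogonality to every $w_t(a\oplus b)$. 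On the other hand, $\partial\Gamma(s)f-g_tf=\tfrac12 w_t(h_0\oplus h_1)\in V_t$. An element of $V_t^\perp$ that differs from $g_tf$ by an element of $V_t$ must equal $P_t(g_tf)$. Hence $\partial\Gamma(\sqrt{\phi_t})=P_t\,\mathrm{Diag}(g_t)$, and the operator norm is at most $\|g_t\|_\infty$.

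It remains to bound $\|g_t\|_\infty$; I expect this to be the main obstacle, because $\phi_t$ is not a probability density. First, pointwise $\phi_t\ge\min(\phi,\phi')\ge e^{-(\|\kappa^+\|_\infty\vee\|(\kappa')^+\|_\infty)}=:e^{-K}$, so $\|\phi_t\|_1\ge e^{-K}$. Second, by Cauchy--Schwarz,
\begin{align}
	\|\phi_t\|_1=t^2+(1-t)^2+2t(1-t)\textstyle\int\sqrt{\phi\phi'}\le 1.
\end{align}
Let $\bar\phi_t=\phi_t/\|\phi_t\|_1$ with cost $\bar\kappa_t=-\log\phi_t+\log\|\phi_t\|_1$. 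Then $\bar\kappa_t^+\le K$, since $-\log\phi_t\le K$ and $\log\|\phi_t\|_1\le0$. By Remark \ref{rem: extension_of_B}, the potentials of $\phi_t$ are those of $\bar\phi_t$ with $\balpha$ shifted by $-\log\|\phi_t\|_1\le K$. Lemma \ref{lem: boundedness_continuous_potentials} applied to $\bar\phi_t$ gives $\bar\balpha,\bar\bbeta\le K$, so $\balpha_t\oplus\bbeta_t\le 3K$ and $\|g_t\|_\infty\le e^{3K/2}$, as claimed. I would double-check that the gauge choice $\langle\bbeta,\mu_1\rangle=0$ in Lemma \ref{lem: boundedness_continuous_potentials} matches the normalization $\overline{L}^\infty(\mu_1)$ built into $B$; it does, since $B$ takes values in $L^\infty(\mu_0)\times\overline{L}^\infty(\mu_1)$.
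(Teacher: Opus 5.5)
Your proof is correct. The bound on $g_t$ is essentially the paper's. The paper gets $-\log\phi_t\le\kappa\vee\kappa'$ from convexity of $x\mapsto-\log x^2$, and $-\log\|\phi_t\|_1\le\|\kappa_t^+\|_\infty$ from Jensen applied to $-\log\int e^{-\kappa_t}$. You instead use the pointwise bound $\phi_t\ge\min(\phi,\phi')\ge e^{-K}$. Both then shift $\balpha$ by $-\log\|\phi_t\|_1$ and arrive at $\balpha_{\phi_t}\oplus\bbeta_{\phi_t}\le 3K$.

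The structural step is where you genuinely differ. The paper writes $\partial B=-\partial_2F^{-1}\partial_1F$ and uses the factorizations $\partial_2F=H^{\top}\Xi H$, $\partial_1F=H^{\top}\Upsilon$, $\partial_2\Psi=\Xi H$. This gives
\[
\partial_2\Psi\,\partial B=-\Xi^{1/2}O\,\Xi^{-1/2}\Upsilon,\qquad O=A(A^{\top}A)^{-1}A^{\top},\quad A=\Xi^{1/2}H,
\]
and simplifying, $\partial\Gamma(\sqrt{\phi_t})=(I-O_t)\mathrm{Diag}(g_t)$.

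You never invert $H^{\top}\Xi H$. You use only two facts: $\partial\Gamma(s)f-g_tf\in V_t$, and the linearized marginal constraint puts $\partial\Gamma(s)f$ in $V_t^{\perp}$. Together these force $\partial\Gamma(s)f=P_t(g_tf)$. Since $\Xi_t^{1/2}=\mathrm{Diag}(w_t)$, the range of $A$ is exactly your $V_t$, so $P_t=I-O_t$ and the two identities coincide.

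What each route buys:
\begin{itemize}
\item Your $P_t$ is an orthogonal projection on $L^2$ by construction. The paper instead must assert that $O$, built from $(H^{\top}\Xi H)^{-1}$ on $L^\infty(\mu_0)\times\overline{L}^{\infty}(\mu_1)$, extends to a bounded self-adjoint idempotent on $L^2$, a step it justifies only briefly.
\item The paper's route yields an explicit formula for $\partial B$, of the same form as the Jacobian $-\mathbf{L}^{\dagger}\mathbf{H}$ used in the CLT.
\end{itemize}

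One wording point needs fixing. The vanishing of the marginals of $\partial W=2w_t\,\partial\Gamma(s)f$ in an arbitrary direction $f$ does not follow from the constraint holding along the curve $t\mapsto\phi_t$. That only differentiates in the direction $\sqrt{\phi}-\sqrt{\phi'}$. What you need is $F(\psi,B(\psi))=0$ for all $\psi$ in a neighbourhood of $\phi_t$, i.e.\ $\partial_1F+\partial_2F\,\partial B=0$. The implicit function theorem, as extended along the segment in Remark \ref{rem: extension_of_B}, supplies exactly this, so you should cite it in that form.
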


Before proving Lemma \ref{lem: Gamma_derivative_bound}, we record a few intermediate observations. First, the derivatives of some of the operators introduced above admit some helpful decompositions. 

\begin{lemma}
	\label{lem: operator_factorizations}
	Let $H: L^2(\mu_1) \times L^2(\mu_0) \to L^2(\mu_0 \otimes \mu_1)$, be the linear map $(u, v) \mapsto u \oplus v$, whose adjoint $H^{\top} : L^2(\mu_0 \otimes \mu_1) \to L^2(\mu_0) \times L^2(\mu_1)$ is given by
	\begin{align}
		H^{\top} \psi = \left( \int_{\Omega_{\c}} \psi(\cdot, y)d\mu_1(y), \int_{\Omega_{\r}} \psi(x, \cdot) d\mu_0(x) \right). 
	\end{align}
	Let $\psi \in L^{\infty}_{>0}(\mu_0 \otimes \mu_1)$. Then
	\begin{itemize}
		\item[(i)]  $\partial_2 F(\psi, B(\psi)) = H^{\top}\Xi(\psi, \balpha_{\psi}, \bbeta_{\psi})H$
		\item[(ii)] $\partial_1 F(\psi, B(\psi)) = H^{\top} \Upsilon(\balpha_{\psi}, \bbeta_{\psi})$
		\item[(iii)] $\partial_2 \Psi(\psi, B(\psi)) = \Xi(\psi, \balpha_{\psi}, \bbeta_{\psi})H$. 
	\end{itemize}
\end{lemma}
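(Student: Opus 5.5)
The plan is to verify each of the three identities by computing both sides on a test element and matching them to the formulas of Lemma \ref{lem: implicit_function_justification} and the definitions \eqref{eq: def_XI_Upsilon}. The computation will be carried out at a general point $(\psi,\xi,\eta)$, and then specialized to $(\xi,\eta)=B(\psi)=(\balpha_\psi,\bbeta_\psi)$. First I record the adjoint formula for $H$. For $(u,v)\in L^2(\mu_0)\times L^2(\mu_1)$ (note the ordering: $u$ is a function of $x$, $v$ of $y$) and $\psi\in L^2(\mu_0\otimes\mu_1)$, Fubini gives
\begin{align}
\langle H(u,v),\psi\rangle_{L^2(\mu_0\otimes\mu_1)} = \int u(x)\Big(\int\psi(x,y)\,d\mu_1(y)\Big)d\mu_0(x)+\int v(y)\Big(\int \psi(x,y)\,d\mu_0(x)\Big)d\mu_1(y).
\end{align}
Since $\mu_0,\mu_1$ are probability measures, both inner integrals lie in $L^2$ by Jensen. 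This identifies $H^\top$ as stated, and shows $H$ and $H^\top$ are bounded.

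For (ii), I apply $H^\top\Upsilon(\xi,\eta)$ to $\psi'$. The operator $\Upsilon$ multiplies $\psi'$ by $e^{\xi\oplus\eta}$, and $H^\top$ then integrates out $y$ in the first component and $x$ in the second. This reproduces exactly \eqref{eq: kernel_to_margins_derivative}. For (i), I apply $H^\top\Xi(\psi,\xi,\eta)H$ to $h=(h_0,h_1)$. First $H h = h_0\oplus h_1$, then $\Xi$ multiplies by $\psi e^{\xi\oplus\eta}$, and finally $H^\top$ takes the two partial integrals. The result coincides with \eqref{eq: potentials_to_margins_derivative}. For (iii), I differentiate $\Psi(\psi,\xi,\eta)=\psi e^{\xi\oplus\eta}$ in $(\xi,\eta)$ along $h$. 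The first-order expansion is $\psi e^{\xi\oplus\eta}(h_0\oplus h_1)$, which equals $\Xi(\psi,\xi,\eta)Hh$. The remainder is controlled by the same pointwise mean value argument used in the proof of Lemma \ref{lem: implicit_function_justification}: it is bounded by $\|\psi\|_\infty e^{\|\xi\|_\infty+\|\eta\|_\infty+o(1)}(\|h_0\|_\infty+\|h_1\|_\infty)^2$, hence it is $o(\|h\|)$.

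The only point requiring care, and the expected obstacle, is domains. The derivatives were originally defined on $L^\infty(\mu_0)\times\overline{L}^\infty(\mu_1)$ with values in $\mathcal{E}$, whereas $H$ is defined on $L^2$. I will note that $L^\infty\subset L^2$ because $\mu_0,\mu_1$ are probability measures. The identities therefore hold on the $L^\infty$ domain, and they extend the operators boundedly to $L^2$ because $\psi$, $\balpha_\psi$ and $\bbeta_\psi$ are bounded (Lemma \ref{lem: boundedness_continuous_potentials}). Consequently $\Xi$ and $\Upsilon$ are bounded multiplication operators on $L^2$. Finally, at $(\xi,\eta)=B(\psi)$ the image of $H^\top\Xi H$ lands in $\mathcal{E}$, as required: both components have the same integral $\iint \psi e^{\xi\oplus\eta}(h_0\oplus h_1)\,d(\mu_0\otimes\mu_1)$.
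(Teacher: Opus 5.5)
Your proposal is correct and follows essentially the same route as the paper: each identity is checked by direct computation, composing $H$, the multiplication operators $\Xi$ and $\Upsilon$, and $H^{\top}$ and matching the result against the derivative formulas of Lemma \ref{lem: implicit_function_justification}. The extra material you supply is detail the paper leaves implicit and does not change the argument: the Fubini check of $H^{\top}$, the explicit second-order remainder bound for (iii), the domain bookkeeping, and the consistent ordering $(u,v)\in L^2(\mu_0)\times L^2(\mu_1)$, $h_0\oplus h_1$, which quietly fixes the paper's ordering slips.
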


\begin{proof}
	Items (i) and (ii) follow immediately from the form of the derivatives given in \eqref{eq: kernel_to_margins_derivative} and \eqref{eq: potentials_to_margins_derivative}. 
	
	For (iii), it is straightforward to see that for $h = (h_{0}, h_{1})$, 
	\begin{align}
		\partial_2 \Psi(\psi, B(\psi))h = \psi e^{\balpha_{\psi} \oplus \bbeta_{\psi}}(h_{1} \oplus h_{0})
	\end{align}
	which is precisely the result of applying $\Xi(\psi, \balpha_{\psi}, \bbeta_{\psi})H$ to $h$. 
\end{proof}

\begin{lemma}
	\label{lem: jacobian_factorization}
	Let $\psi \in L^1(\mu_0 \otimes \mu_1)$ satisfy the assumptions of Lemma \ref{lem: implicit_function_justification}. Then for $\Xi = \Xi(\psi, \balpha_{\psi}, \bbeta_{\psi})$ and $\Upsilon = \Upsilon(\balpha_{\psi}, \bbeta_{\psi})$ we have
	\begin{align}
		\partial_2\Psi(\psi, B(\psi))\partial B(\psi) = -\Xi^{1/2}O\Xi^{-1/2}\Upsilon
	\end{align}
	where $O = O(\psi, \balpha_{\psi}, \bbeta_{\psi}) : L^{2}(\mu_0 \otimes \mu_1) \to L^{2}(\mu_0 \otimes \mu_1)$ is an orthogonal projection. 
\end{lemma}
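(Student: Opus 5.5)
We combine the factorizations of Lemma \ref{lem: operator_factorizations} with the implicit-function formula $\partial B(\psi) = -\partial_2F(\psi,B(\psi))^{-1}\partial_1F(\psi,B(\psi))$. By items (i)--(iii),
\begin{align}
	\partial_2\Psi(\psi,B(\psi))\,\partial B(\psi) = -\,\Xi H\,(H^{\top}\Xi H)^{-1} H^{\top}\Upsilon .
\end{align}
Here $\Xi = \text{Diag}(\psi e^{\balpha_\psi\oplus\bbeta_\psi})$ is a multiplication operator by a function bounded above and away from zero. This holds because $\psi\in L^\infty_{>0}$ and, by Lemma \ref{lem: boundedness_continuous_potentials}, the potentials are bounded. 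Hence $\Xi^{1/2}$ and $\Xi^{-1/2}$ are bounded, self-adjoint, positive operators on $L^2(\mu_0\otimes\mu_1)$. Inserting $\Xi = \Xi^{1/2}\Xi^{1/2}$ and writing $\Upsilon = \Xi^{1/2}\Xi^{-1/2}\Upsilon$ gives
\begin{align}
	\Xi H (H^{\top}\Xi H)^{-1}H^{\top}\Upsilon = \Xi^{1/2}\Big[\Xi^{1/2}H\,(H^{\top}\Xi^{1/2}\Xi^{1/2}H)^{-1}H^{\top}\Xi^{1/2}\Big]\Xi^{-1/2}\Upsilon .
\end{align}
So the natural candidate is $O := A(A^{\top}A)^{-1}A^{\top}$ with $A := \Xi^{1/2}H$.

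Next we verify that $O$ is a well-defined orthogonal projection. Formally $O^2 = O$ and $O^{\top}=O$, so $O$ is the orthogonal projection onto the range of $A$. The difficulty is the domain bookkeeping. Lemma \ref{lem: implicit_function_justification} gives invertibility of $\partial_2F = H^{\top}\Xi H$ only as a map $L^\infty(\mu_0)\times\overline{L}^\infty(\mu_1)\to\mathcal{E}$, whereas $O$ must act on $L^2$. We therefore restrict $H$ to $L^2(\mu_0)\times\overline{L}^2(\mu_1)$, which quotients out the gauge direction $(\lambda,-\lambda)$, the kernel of $H$. On this space $A^{\top}A = H^{\top}\Xi H$ is coercive. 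Indeed,
\begin{align}
	\langle (u,v), H^{\top}\Xi H (u,v)\rangle = \|\Xi^{1/2}(u\oplus v)\|_2^2 \ge c\,\|u\oplus v\|_2^2 ,
\end{align}
where $c$ is a positive lower bound on $\Xi$. Moreover, $\|u\oplus v\|_2^2 = \|u\|_2^2+\|v\|_2^2 + 2\langle u,\mu_0\rangle\langle v,\mu_1\rangle = \|u\|_2^2+\|v\|_2^2$ when $\langle v,\mu_1\rangle = 0$. By Lax--Milgram, $A^{\top}A$ is therefore an isomorphism of $L^2(\mu_0)\times\overline{L}^2(\mu_1)$ onto its image, which is the $L^2$ version of $\mathcal{E}$. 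The range of $A$ is closed, and $O$ is a genuine orthogonal projection onto it.

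It remains to check that the expression for $\partial_2\Psi\,\partial B$, computed with the $L^\infty$ inverse, agrees with the $L^2$ formula. It suffices to apply both to $L^\infty$ inputs, on which the two inverses coincide by uniqueness: $(H^{\top}\Xi H)^{-1}$ restricted to $\mathcal{E}$ has values in $L^\infty$ by Lemma \ref{lem: implicit_function_justification}.

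I expect this domain step, reconciling the $L^\infty$ implicit-function setting with the $L^2$ projection and handling the gauge kernel, to be the main obstacle. The algebraic factorization itself is routine once the inverse is justified.
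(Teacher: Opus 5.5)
Your proposal is correct and follows the paper's proof. It uses the same substitution $\partial B(\psi) = -\partial_2F^{-1}\partial_1F$ with the factorizations (i)--(iii), the same choice $A=\Xi^{1/2}H$, and the same projection $O=A(A^{\top}A)^{-1}A^{\top}$. The paper simply asserts that $O$ is bounded on $L^\infty$, extends to $L^2$, and is idempotent and self-adjoint; your coercivity/Lax--Milgram argument on $L^2(\mu_0)\times\overline{L}^2(\mu_1)$, together with the check that the $L^\infty$ and $L^2$ inverses agree, is a more careful justification of that same step.
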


\begin{proof}
	Recall that $\partial B(\psi) = -\partial_2F(\psi, B(\psi))^{-1}\partial_1F(\psi, B(\psi))$. Using the factorizations from Lemma \ref{lem: operator_factorizations} we can write
	\begin{align}
		\partial_2\Psi(\psi, B(\psi))\partial B(\psi) &= -\partial_2\Psi(\psi, B(\psi)) \partial_2F(\psi, B(\psi))^{-1}\partial_1F(\psi, B(\psi))\\
		&= -\Xi H (H^{\top}\Xi H)^{-1}H^{\top}\Upsilon\\
		&= -\Xi H(H^{\top} \Xi H)^{-1}H^{\top} \Xi^{1/2} \Xi^{-1/2}\Upsilon
	\end{align}
	Set $A = \Xi^{1/2}H$. Noting that $\Xi^{1/2}$ is self adjoint, the above implies
	\begin{align}
		\partial_2\Psi(\varphi, B(\varphi))\partial B(\varphi) =  -\Xi^{1/2}A(A^{\top}A)^{-1}A^{\top}\Xi^{-1/2}\Upsilon. 
	\end{align}
	Let $O = A (A^{\top}A)^{-1}A^{\top}$. We observe that $O$ is well defined and bounded on $L^{\infty}(\mu_0 \otimes \mu_1)$ and thus extends as a bounded linear operator to $L^2(\mu_0 \otimes \mu_1)$. Furthermore, $O$ is idempotent and self-adjoint with respect to the $L^2$ inner product. This implies that $O$ is an orthogonal projection and finishes the proof. 
\end{proof}

We are now ready to prove Lemma \ref{lem: Gamma_derivative_bound}.

\begin{proof}[\textbf{Proof of Lemma \ref{lem: Gamma_derivative_bound}}]
	Let $\phi_t$ be as in the statement of the Lemma. We will need an upper bound on $e^{\balpha_{\varphi_t} \oplus \bbeta_{\varphi_t}}$.
	
	First, let $\kappa_t$ be the cost of $\phi_t$, i.e. $\kappa_t = -\log \phi_t$. Since $x \mapsto -\log(x^2)$ is convex,
	\begin{align}
		-\log \phi_t = -\log \left( \left(t \sqrt{\phi} + (1-t)\sqrt{\phi'}\right)^2 \right) &\leq -t\log(\phi) - (1-t)\log(\phi')\\
		&= t\kappa + (1-t)\kappa'\\
		& \leq \kappa \vee \kappa'.
	\end{align}
	Since $\phi_t$ is only guaranteed to be a sub-probability measure, we first consider its normalization $\bar{\phi}_t = \frac{\phi_t}{\norm{\phi_t}_1}$ and its potentials, $(\bar{\balpha}_t, \bar{\bbeta}_t) = B(\bar{\phi}_t)$. Let $\bar{\kappa}_t$ be such that $\bar{\phi}_t = e^{-\bar{\kappa}_t}$. If $\kappa_t$ is the cost of $\phi_t$ then
	\begin{align}
		\bar{\kappa}_t = \kappa_t + \log \norm{\phi_t}_1 \leq \kappa_t
	\end{align}
	since Jensen's inequality implies $\norm{\phi_t}_1 \leq 1$. Since $\bar{\phi}_t$ is a probability density, we can apply Lemma \ref{lem: boundedness_continuous_potentials} to get
	\begin{align}
		\bar{\balpha}_t \oplus \bar{\bbeta}_t \leq 2\norm{\bar{\kappa}^{+}_t}_{\infty} \leq 2\norm{\kappa_t^{+}}_{\infty} \leq 2(\norm{\kappa^{+}}_{\infty} \vee \norm{(\kappa')^{+}}_{\infty}).
	\end{align}
	By simple rescaling, the potentials for $\phi_t$ are given by
	\begin{align}
		(\balpha_{\varphi_t}, \bbeta_{\varphi_t}) = (\bar{\balpha}_t - \log \norm{\varphi_t}_1, \bar{\bbeta}_t).
	\end{align}
	Now, using Jensen's inequality again
	\begin{align}
		-\log \norm{\phi_t}_1 = -\log \left( \int_{\Omega_{\r} \times \Omega_{\c}} \phi_t d(\mu_0 \otimes \mu_1) \right) &= -\log \left( \int_{\Omega_{\r} \times \Omega_{\c}} e^{-\kappa_t} d(\mu_0 \otimes \mu_1) \right)\\
		& \leq \int_{\Omega_{\r} \times \Omega_{\c}} \kappa_t \,d(\mu_0 \otimes \mu_1)\\
		& \leq \norm{\kappa_t^{+}}_{\infty}.
		%& \leq \norm{\kappa^{+}}_{\infty} \vee \norm{(\kappa')^{+}}_{\infty}.
	\end{align}
	Thus, it follows
	\begin{align*}
		\label{eq: dGamma_decomp_pf_1}
		e^{\balpha_{\phi_t} \oplus \bbeta_{\phi_t}} \le e^{3(\norm{\kappa^{+}}_{\infty} \vee \norm{(\kappa')^{+}}_{\infty})}.
	\end{align*}
	
	Now observe that, by the chain rule
	\begin{align}
		\partial \Gamma(\sqrt{\phi_t}) = \textup{Diag}(\Phi(\phi_t)^{-1/2
		})\partial\Phi(\phi_t)\textup{Diag}(\sqrt{\phi_t})
		)
	\end{align}
	Furthermore, $\partial \Phi$ decomposes as
	\begin{align}
		\partial \Phi(\phi_t) &= \partial_1 \Psi(\phi_t, B(\phi_t)) + \partial_2 \Psi(\phi_t, B(\phi_t))\partial B(\phi_t)\\
		&= \Upsilon_t - \Xi_t^{1/2}O_t\Xi_t^{-1/2}\Upsilon_t.
	\end{align}
	where we use the notation of Lemma \ref{lem: jacobian_factorization}.
	We observe that, from the definition of $\Phi$
	\begin{align}
		\textup{Diag}(\Phi(\phi_t)^{-1/2}) = \Xi_t^{-1/2}.
	\end{align}
	Therefore, 
	\begin{align}
		\partial \Gamma(\sqrt{\phi_t}) = \Xi_t^{-1/2} \Upsilon_t \textup{Diag}(\sqrt{\phi_t}) - O_t \Xi_t^{-1/2} \Upsilon_t \textup{Diag}(\sqrt{\phi_t}) = (I - O_t) \textup{Diag}(g_t)
	\end{align}
	where $g_t = e^{\tfrac{1}{2}\balpha_{\phi_t} \oplus \bbeta_{\phi_t}}$. Since $O_t$ is an orthogonal projection, so is $I - O_t$.  From \eqref{eq: dGamma_decomp_pf_1}, $\norm{g_t}_{\infty} \leq e^{\tfrac{3}{2}(\norm{\kappa^{+}}_{\infty} \vee \norm{(\kappa')^{+}}_{\infty}}$. The proof is complete. 
\end{proof}

\begin{proof}[\textbf{Proof of Theorem \ref{thm:continuous_kernel_stability}}]
	Let
	\begin{align}
		d\pi^{\mu_0, \mu_1; \mathcal{R}} = Wd(\mu_0 \otimes \mu_1), \quad d\pi^{\mu_0, \mu_1; \mathcal{R}'} = W'd(\mu_0 \otimes \mu_1)
	\end{align}
	By Lemma \ref{lem: Gamma_derivative_bound} and the mean value theorem
	\begin{align}
		d_{H}(\pi^{\mu_0, \mu_1; \mathcal{R}}, \pi^{\mu_0, \mu_1; \mathcal{R}'}) &= \norm{\sqrt{W} - \sqrt{W'}}_{L^2(\mu_0 \otimes \mu_1)}\\
		&\leq \sup_{t \in [0, 1]} \norm{\partial \Gamma(\sqrt{\phi_t})(\sqrt{\phi} - \sqrt{\phi'})}_{L^2(\mu_0 \otimes \mu_1)}\\
		& \leq e^{\tfrac{3}{2}\norm{\kappa^{+}}_{\infty} \vee \norm{(\kappa')^{+}}_{\infty}}\norm{(\sqrt{\phi} - \sqrt{\phi'})}_{L^2(\mu_0 \otimes \mu_1)}\\
		&= e^{\tfrac{3}{2}\norm{\kappa^{+}}_{\infty} \vee \norm{(\kappa')^{+}}_{\infty}}d_{H}(\mathcal{R}, \mathcal{R}').
	\end{align}
	This shows the assertion. 
\end{proof}

\subsection{Stability with respect to margins}
\label{sec: margin_stability}

Recall that in this section, we suppose that all marginals have density with respect to some common reference measures $\mu$ and $\nu$. 

We will recycle some notation from the previous section. %Let $\overline{L}^{\infty}(\nu)$ be the set of functions $f \in L^{\infty}(\nu)$ with $\langle f,\nu \rangle = 0$. 
Since we are considering stability with respect to perturbations of the margins only, we consider a fixed reference measure $\mathcal{R} \in \mathcal{P}_{>0}(\Omega_{\r} \times \Omega_{\c}, \mu \otimes \nu)$ whose density with respect to the fixed base measure $\mu \otimes \nu$ will be denoted $\varphi$. Define the map
\begin{align}
	\Psi : L^{\infty}(\mu) \times L^{\infty}(\nu) \to L^{\infty}(\mu \otimes \nu), \qquad (\xi, \eta) \mapsto \varphi e^{\xi \oplus \eta}.
\end{align}
Thus, if $\balpha, \bbeta$ are the Schrodinger potentials for Schrodinger bridge $\pi$, $\Psi(\balpha, \bbeta)$ returns $\frac{d\pi}{d(\mu \otimes \nu)}$. Now let $F(\xi, \eta)$ be the margins of $\varphi e^{\xi \oplus \eta}$. That is,
\begin{align}
	F: L^{\infty}(\mu) \times L^{\infty}(\nu) \to L^{\infty}(\mu) \times L^{\infty}(\nu), \qquad (\xi, \eta) \mapsto H^{\top}\Psi(\xi, \eta).
\end{align}
Here $H$ is the same operator from Lemma \ref{lem: operator_factorizations}, but now viewed as acting on $L^2(\mu) \times L^2(\nu)$.

By calculations similar to the previous section, we observe that 
\begin{align}
	\partial \Psi(\xi, \eta) = \Xi(\varphi, \xi, \eta) H.
\end{align}
Thus, from the chain rule, it follows 
\begin{align}
	\partial F(\xi, \eta) = H^{\top}\Xi(\varphi, \xi, \eta)H.
\end{align}

\begin{proof}[\textbf{Proof of Theorem \ref{thm:continuous_margin_stability}}]
	Let us observe that, since we assume $\varphi > 0$, the map
	\begin{align}
		\Delta : L^{\infty}(\mu) \times L^{\infty}(\nu) \to L^{\infty}(\mu \otimes \nu),\qquad (\xi, \eta) \mapsto \sqrt{\varphi e^{\xi \oplus \eta}}
	\end{align}
	is $C^1$ with derivative
	\begin{align}
		\partial \Delta(\xi, \eta) = \frac{1}{2} \textup{Diag}(\Psi(\xi, \eta))^{-1/2} \partial \Psi(\xi, \eta)  = \frac{1}{2}\Xi(\varphi, \xi, \eta)^{1/2}H,
	\end{align}
	which extends as a bounded linear operator
	
	Let $(\balpha, \bbeta), (\balpha', \bbeta') \in L^{\infty}(\mu) \times L^{\infty}(\nu)$ be the unique potentials for margins $\mu_0, \mu_1$ and $\mu_0, \mu_1'$ respectively satisfying the normalization condition
	\begin{align}
		\langle \bbeta, \mu_1 \rangle = \langle \bbeta', \mu_1' \rangle = 0.
	\end{align}
	
	Set $(\balpha_t, \bbeta_t) = t(\balpha, \bbeta) + (1-t)(\balpha', \bbeta')$ and note that, under the assumptions of the theorem, Lemma \ref{lem: boundedness_continuous_potentials} implies $\sup_{t \in [0, 1]} \norm{\balpha_t \oplus \bbeta_t}_{\infty} < \infty$. Thus, from the fundamental theorem of calculus we have
	\begin{align}
		\sqrt{\frac{d\pi^{\mu_0, \mu_1; \mathcal{R}}}{d(\mu \otimes \nu)}} - \sqrt{\frac{d\pi^{\mu_0', \mu_1'; \mathcal{R}}}{d(\mu \otimes \nu)}} &= \int_0^1 \partial \Delta(\balpha_t, \bbeta_t)((\balpha, \bbeta) - (\balpha', \bbeta'))dt\\
		&= \int_0^1 \Xi(\varphi, \balpha_t, \bbeta_t)^{1/2}H((\balpha, \bbeta) - (\balpha', \bbeta'))dt
	\end{align}
	where the integral is understood in the $L^2$ sense. Recall that the densities of the margins $\mu_0$ and $\mu_1$ are denoted $\rho_{\r}$ and $\rho_{\c}$, respectively. We have
	\begin{align}
		\label{eq: pf_margin_stability_1}
		(\rho_{\r}, \rho_{\c}) - (\rho_{\r}', \rho_{\c}') &= F(\balpha, \bbeta) - F(\balpha', \bbeta')\\
		&= \int_0^1H^{\top} \Xi(\varphi, \balpha_t, \bbeta_t)H((\balpha, \bbeta) - (\balpha', \bbeta')) dt
	\end{align}
	where the integral is understood in $L^2(\mu) \times L^2(\nu)$.  
	
	Now we observe by Jensen's inequality 
	\begin{align}
		\left \lVert \sqrt{\frac{d\pi^{\mu_0,\mu_1; \mathcal{R}}}{d(\mu_0 \otimes \mu_1)}} - \sqrt{\frac{d\pi^{\mu_0',\mu_1'; \mathcal{R}}}{d(\mu_0 \otimes \mu_1)}} \right \rVert_{L^2(\mu_0 \otimes \mu_1)}^2 &= \left \lVert \int_0^1 \Xi(\varphi, \balpha_t, \bbeta_t)^{1/2}H((\balpha, \bbeta) - (\balpha', \bbeta'))dt \right \rVert_{L^2(\mu \otimes \nu)}^2\\
		& \leq \int_0^1 \norm{\Xi(\varphi, \balpha_t, \bbeta_t)^{1/2}H((\balpha, \bbeta) - (\balpha', \bbeta'))}_{L^2(\mu \otimes \nu)}^2 dt.
	\end{align}
	Observe that expanding the norm inside the integral and using the definitions of $\Xi$ and $H$, 
	\begin{align}
		&\hspace{-1cm} \int_0^1 \norm{\Xi(\varphi, \balpha_t, \bbeta_t)^{1/2}H((\balpha, \bbeta) - (\balpha', \bbeta'))}_{L^2(\mu \otimes \nu)}^2 dt\\
		&= \int_0^1 \Big\langle \Xi(\varphi, \balpha_t, \bbeta_t)^{1/2}H((\balpha, \bbeta) - (\balpha', \bbeta')), \Xi(\varphi, \balpha_t, \bbeta_t)^{1/2}H((\balpha, \bbeta) - (\balpha', \bbeta')) \Big\rangle_{L^2(\mu \otimes \nu)}\\
		&= \int_0^1 \Big\langle H^{\top}\Xi(\varphi, \balpha_t, \bbeta_t)H\bigl((\balpha, \bbeta) - (\balpha', \bbeta')\bigr),\, (\balpha, \bbeta) - (\balpha', \bbeta')\Big\rangle_{L^{2}(\mu) \times L^{2}(\nu)}\,dt \\
		&= \left\langle\int_0^1 H^{\top}\Xi(\varphi, \balpha_t, \bbeta_t)H\bigl((\balpha, \bbeta) - (\balpha', \bbeta')\bigr)\,dt,\, (\balpha, \bbeta) - (\balpha', \bbeta')\right\rangle_{L^{2}(\mu) \times L^{2}(\nu)}\\
		&= \langle \rho_{\r} - \rho_{\r}',\, \balpha - \balpha' \rangle_{L^{2}(\mu)} + \langle \rho_{\c} - \rho_{\c}',\, \bbeta - \bbeta' \rangle_{L^{2}(\nu)}\\
		& \leq \norm{\rho_{\r} - \rho_{\r}'}_{L^1(\mu)}\norm{\balpha - \balpha'}_{L^{\infty}(\mu)} + \norm{\rho_{\c} - \rho_{\c}'}_{L^1(\nu)}\norm{\bbeta - \bbeta'}_{L^{\infty}(\nu)}, 
	\end{align}
	where the third and the fourth equalities  are due to Fubini's theorem and  \eqref{eq: pf_margin_stability_1}, respectively. 
	
	% The assumption $\varphi \in \mathcal{C}_{M}$ is equivalently stated as
	% \begin{align}
		%     \norm{\kappa}_{L^{\infty}(\mu \otimes \nu)}, \norm{\kappa'}_{L^{\infty}(\mu \otimes \nu)} \leq \log M
		% \end{align}
	It then follows from Lemma \ref{lem: boundedness_continuous_potentials} and the triangle inequality that
	\begin{align}
		\norm{\balpha - \balpha'}_{L^{\infty}(\mu)} \leq 4 \norm{\kappa}_{L^{\infty}(\mu)} \vee \norm{\kappa'}_{L^{\infty}(\mu)}.
	\end{align}
	% where $\kappa, \kappa'$ are cost functions satisfying
	% \begin{align}
		%     \frac{d\varphi}{d\rho} = e^{-\kappa}, \quad \frac{d\varphi}{d\rho'} = e^{-\kappa'}.
		% \end{align}
	Since the same inequality holds for $\norm{\bbeta - \bbeta'}_{L^{\infty}(\nu)}$ the proof is complete. 
\end{proof}

\begin{proof}[\textbf{Proof of Theorem \ref{thm:total_stability}}]
	From the triangle inequality and Cauchy-Schwartz
	\begin{align}
		d_{H}(\pi^{\mu_0, \mu_1; \mathcal{R}}, \pi^{\mu_0', \mu_1'; \mathcal{R}'})^2 \leq 2d_{H}(\pi^{\mu_0, \mu_1; \mathcal{R}}, \pi^{\mu_0', \mu_1'; \mathcal{R}})^2 + 2d_{H}(\pi^{\mu_0', \mu_1'; \mathcal{R}}, \pi^{\mu_0', \mu_1'; \mathcal{R}'})^2.
	\end{align}
	Due to the assumption $e^{-M} \leq \frac{d(\mu_0 \otimes \mu_1)}{d(\mu_0' \otimes \mu_1')} \leq e^{M}$,   it follows from the idenity
	\begin{align}
		\frac{d\mathcal{R}}{d(\mu_0' \otimes \mu_1')} = \frac{d\mathcal{R}}{d(\mu_0 \otimes \mu_1)} \frac{d(\mu_0 \otimes \mu_1)}{d(\mu_0' \otimes \mu_1')},
	\end{align}
	the cost of $\frac{d\mathcal{R}}{d(\mu_0' \otimes \mu_1')}$ has the bound
	\begin{align}
		\left \lVert \log \frac{d\mathcal{R}}{d(\mu_0' \otimes \mu_1')} \right \rVert_{\infty} \leq \norm{\kappa}_{\infty} + M.
	\end{align}
	The proof follows by applying Theorem \ref{thm:continuous_margin_stability} to the first term on the right hand side and Theorem \ref{thm:continuous_kernel_stability} to the second.
\end{proof}

Next, we prove Theorem \ref{thm:potential_stability}. We establish the existence of exact discrete potentials by linearizing the Sinkhorn scaling mapping $F$ at the origin. We use the row alignment condition to bound the spectral gap of an associated Markov chain, ensuring the Jacobian is well-conditioned in the $\ell_\infty$ norm. A unique solution is then found via the Banach fixed-point theorem within an $\ell_\infty$ ball of radius $4C_{\mathbf{A}}\epsilon$.

\begin{proof}[\textbf{Proof of Theorem \ref{thm:potential_stability}}]
	Define the scaling function $F : \R^m \times \R^n \to \R^{m+n}$ by:
	\[
	F(\balpha, \bbeta) = \begin{bmatrix}
		(e^{\balpha\oplus\bbeta}\odot \A) \mathbf{1}_n - \r \\
		(e^{\balpha\oplus\bbeta}\odot \A)^T \mathbf{1}_m - \c
	\end{bmatrix},
	\]
	where $(e^{\balpha\oplus\bbeta})_{ij} = e^{\balpha_i + \bbeta_j}$. A root $\xi_{\A} = [\balpha_{\A}, \bbeta_{\A}]^T$ of $F(\xi) = \mathbf{0}$ corresponds to the exact scaling potentials. Note that $F(0) = [\r(\A)-\r, \c(\A)-\c]^T$. 
	
	We establish the existence of $\xi_{\A}$ by linearizing $F$ at the origin. Expanding $F$ at zero gives $F(\xi) = F(0) + DF|_{(0,0)}\xi + R(\xi)$. Let $D_r = \diag(\r(\A))$, $D_c = \diag(\c(\A))$, and $D = \diag(D_r, D_c)$. The Jacobian at the origin is:
	\[
	DF|_{(0,0)} = \begin{bmatrix}
		D_r & \A \\
		\A^T & D_c
	\end{bmatrix} = D \begin{bmatrix}
		I & D_r^{-1}\A \\
		D_c^{-1}\A^T & I
	\end{bmatrix} := D \begin{bmatrix}
		I & P_{12} \\
		P_{21} & I
	\end{bmatrix} := D L.
	\]
	Notice that $P_{12}$ and $P_{21}$ are row-stochastic matrices. We aim to show that the linear operator $L$ has a well-behaved pseudo-inverse with respect to the $\ell_{\infty}$ norm when applied to $D^{-1}F(0)$ and $D^{-1}R(\xi)$. Consider the consistent linear system $L[x_1, x_2]^T = [b_1, b_2]^T$. Block substitution yields:
	\begin{align}\label{eq:prop_schur}
		(I - P_{12}P_{21})x_1 = b_1 - P_{12}b_2, \qquad x_2 = b_2 - P_{21}x_1.
	\end{align}
	Denote $P_1 = P_{12}P_{21}$. This is the transition matrix of a reversible, aperiodic Markov chain on the rows of $\A$, with stationary distribution $\pi_1 = \r(\A)/\|\r(\A)\|_1$. 
	
	Since $\epsilon \le 1/2$, the current margins are strictly bounded by $\r_i(\A) \le \frac{3}{2}\r_i$ and $\c_j(\A) \le \frac{3}{2}\c_j$. Applying the row alignment definition for $\rho_{\A}$, we lower bound the transition probabilities of $P_1$: 
	\[
	P_1(i_1, i_2) = \frac{1}{\r_{i_1}(\A)}\sum_{j=1}^{n}\frac{\A_{i_1 j}\A_{i_2 j}}{\c_j(\A)} \ge \frac{\rho_{\A} n}{\left(\frac{3}{2}\max_i \r_i\right)\left(\frac{3}{2}\max_j \c_j\right)} = \frac{4 \rho_{\A} n}{9 \max_i \r_i \max_j \c_j} := \mu > 0.
	\]
	Because every entry of $P_1$ is strictly bounded below by $\mu$, the transition matrix satisfies a uniform minorization condition. Consequently, the total variation distance between any two rows is tightly bounded by $1 - m\mu$. Defining the spectral gap parameter $\nu_* = m\mu > 0$, we have $\nu_* = \frac{4 \rho_{\A} m n}{9 \max_i \r_i \max_j \c_j}$. Let $\Pi_1 = \mathbf{1}\pi_1^T$. For any vector $v$ orthogonal to the stationary distribution (i.e., $\pi_1^T v = 0$), the geometric decay implies
	\[
	\|P_1^tv\|_{\infty}\le \text{sp}(P_1^tv)\le (1-\nu_*)^t\text{sp}(v)\le 2(1-\nu_*)^t\|v\|_{\infty},
	\]
	where $\text{sp}(v)$ is the difference between the maximum and minimum entry of $v.$

	Since the target margins and current margins both sum to $N$, the projection of $b_1 - P_{12}b_2$ onto $\pi_1$ vanishes.
	When $[b_1,b_2]^{\top}=D^{-1}F(0)$, $\pi_1^{\top}b_1=\pi_1^{\top}P_{12}b_2=\frac{\|F(0)\|_1}{\|\A\|_1}$, and when $[b_1,b_2]^{\top}=D^{-1}R(\xi)$, $\pi_1^{\top}b_1=\pi_1^{\top}P_{12}b_2=\frac{\|F(\xi)\|_1-\langle \xi,[\r(\A),\c(\A)]^{\top}\rangle}{\|\A\|_1}$. In both cases, the projection of $b_1 - P_{12}b_2$ onto $\pi_1$ vanishes. Thus, we can invert $(I - P_1)$ on this subspace to solve \eqref{eq:prop_schur} with an absolutely convergent series:
	\[
	x_1 = \sum_{t=0}^{\infty} (P_1^t - \Pi_1)(b_1 - P_{12}b_2).
	\]
	Summing the geometric bounds gives $\sum_{t=0}^{\infty} \|P_1^t - \Pi_1\|_{\infty} \le \sum_{t=0}^{\infty} 2(1-\nu_*)^t = \frac{2}{\nu_*}$.
	
	Because $P_{12}$ is row-stochastic ($\|P_{12}\|_{\infty} = 1$), we obtain $\|x_1\|_{\infty} \le \frac{2}{\nu_*}(\|b_1\|_{\infty} + \|b_2\|_{\infty}) \le \frac{4}{\nu_*}\|b\|_{\infty}$. Consequently, $\|x_2\|_{\infty} \le \|b_2\|_{\infty} + \|x_1\|_{\infty} \le (1 + \frac{4}{\nu_*})\|b\|_{\infty}$.
	Defining $L^{\dagger}$ as the pseudo-inverse mapping into $\text{Ker}(L)^{\perp}$, we have established that
	\begin{align}\label{eq:pf_pot_stab_L_dagger_bound}
		\|L^{\dagger}\|_{\infty} \le 1 + \frac{4}{\nu_*} = 1 + \frac{9 \max_i \r_i \max_j \c_j}{\rho_{\A}\, m n} =: C_{\A},
	\end{align}
	so that $\|L^{\dagger} v\|_\infty \le C_{\A}\|v\|_\infty$ for every $v$ in the range of $L^{\dagger}$.
	
	We now rewrite $F(\xi) = 0$ as a fixed-point problem. Expanding $F$ at zero gives $F(\xi) = F(0) + DF|_{(0,0)}\xi + R(\xi)$. Restricting $\xi \in \text{Ker}(L)^{\perp}$, the root-finding equation becomes:
	\[
	\xi = -L^{\dagger}D^{-1}F(0) - L^{\dagger}D^{-1}R(\xi) := T(\xi).
	\] 
	We bound the components of $T$. The constant term satisfies
	\begin{align}\|D^{-1}F(0)\|_{\infty} = \max\left( \max_i |\frac{\r_i(\A)}{\r_i}-1|, \, \max_j |\frac{\c_j(\A)}{\c_j}-1| \right) \le \epsilon.
	\end{align}
	Thus, $\|L^{\dagger}D^{-1}F(0)\|_{\infty} \le C_{\A}\epsilon$.
	
	The remainder $R(\xi)$ has block components formed by $\sum_{j} \A_{ij} (e^{\alpha_i+\beta_j} - 1 - (\alpha_i+\beta_j))$. Using the scalar inequality $|e^x - 1 - x| \le \frac{1}{2}x^2 e^{|x|}$, we have:
	\[
	\|D^{-1}R(\xi)\|_{\infty} \le 2\|\xi\|_{\infty}^2 e^{2\|\xi\|_{\infty}}.
	\]
	Let $B_{R_0}$ be the closed $\ell_{\infty}$ ball of radius $R_0 = 4C_{\A}\epsilon$. We must show $T$ maps $B_{R_0}$ into itself and is a strict contraction. For $\xi \in B_{R_0}$,
	\[
	\|T(\xi)\|_{\infty} \le C_{\A} \epsilon + C_{\A} \left(2 R_0^2 e^{2R_0}\right) = C_{\A}\epsilon + 32 C_{\A}^3 \epsilon^2 e^{8C_{\A}\epsilon}.
	\]
	We require $\|T(\xi)\|_{\infty} \le 4C_{\A}\epsilon$, which reduces to $16 C_{\A}^2 \epsilon e^{8C_{\A}\epsilon} \le 1$. By our hypothesis, $\epsilon \le \epsilon_{\textup{max}} = \frac{1}{50C_{\A}^2}$. Since $C_{\A} \ge 1$, we have $8C_{\A}\epsilon \le \frac{8}{50C_{\A}} \le 0.16$, which implies $e^{8C_{\A}\epsilon} \le e^{0.16}$. Consequently,
	\[
	16 C_{\A}^2 \epsilon e^{8C_{\A}\epsilon} \le 16 C_{\A}^2 \left(\frac{1}{50C_{\A}^2}\right) (1.174) \le 0.38 < 1.
	\]
	Thus, $T$ maps $B_{R_0}$ strictly into itself. Furthermore, by the mean value theorem, the Lipschitz constant of $D^{-1}R(\xi)$ on $B_{R_0}$ is bounded by $4 R_0 e^{2 R_0} = 16C_{\A}\epsilon e^{8C_{\A}\epsilon}$. The Lipschitz constant of the map $T$ is therefore bounded by $C_{\A}(16C_{\A}\epsilon e^{8C_{\A}\epsilon}) \le 16C_{\A}^2(\frac{1}{50C_{\A}^2})(1.174) \approx 0.4 < 1$, verifying that $T$ is a strict contraction.
	
	By the Banach Fixed-Point Theorem, $T$ admits a unique fixed point $\xi_{\A} \in B_{R_0} \cap \text{Ker}(L)^{\perp}$, establishing the existence of the exact potentials with $\|(\balpha_{\A}, \bbeta_{\A})\|_{\infty} \le 4C_{\A}\epsilon$.
\end{proof}

We can now easily deduce the scaling limit result for deterministic Schr\"{o}dinger bridges stated in Theorem \ref{thm:SSB_deterministic_limit}. The proof demonstrates that under the $L^1$ convergence of the margins and the kernel, the sequence of discrete Schrödinger bridges converges to the continuous static Schrödinger bridge. It leverages the total stability estimate for the continuous problem.

\begin{proof}[\textbf{Proof of Theorem \ref{thm:SSB_deterministic_limit}}]
	Write $\mathcal{P} = \rho_{\r} \otimes \rho_{\c}$ and $\mathcal{P}_k = \rho_{\r_k} \otimes \rho_{\c_k}$ for the densities of the product measures $\mu_0 \otimes \mu_1$ and $\mu_0^k \otimes \mu_1^k$ with respect to the Lebesgue measure on $[0,1]^2$. We verify the hypotheses of Theorem \ref{thm:total_stability}.
	
	\emph{Cost bound.} Assumption \ref{assumption: limit_theory} gives the uniform bound $e^{-K} \le \varphi_{\bLambda_k}/\mathcal{P}_k \le e^K$ a.e.\ for all sufficiently large $k$. Since $L^1$ convergence implies convergence in measure and the range $[e^{-K}, e^K]$ is closed, the limit satisfies $e^{-K} \le \varphi_{\bLambda}/\mathcal{P} \le e^K$ a.e.\ as well. Writing $\kappa_k := -\log(\varphi_{\bLambda_k}/\mathcal{P}_k)$ and $\kappa := -\log(\varphi_{\bLambda}/\mathcal{P})$, we therefore have $\|\kappa_k\|_\infty, \|\kappa\|_\infty \le K$.
	
	\emph{Product-marginal ratio bound.} The $\delta$-smoothness of the margins gives $\delta \le \rho_{\r_k} \le \delta^{-1}$, with the same bound holding for $\rho_{\r}$ (by $L^1$ convergence and closed range), and analogously for the column densities. Consequently,
	\begin{align}\label{eq:pf_thm3_product_ratio}
		\delta^4 \le \frac{\mathcal{P}_k}{\mathcal{P}} \le \delta^{-4} \quad \textup{a.e.},
	\end{align}
	which is the hypothesis of Theorem \ref{thm:total_stability} with $M = 4\log\delta^{-1}$.
	
	\emph{Applying the total stability bound.} Theorem \ref{thm:total_stability} (taking the discrete bridge as $\pi^{\mu_0, \mu_1; \mathcal{R}}$ and the continuous bridge as $\pi^{\mu_0', \mu_1'; \mathcal{R}'}$) gives
	\begin{align*}
		d_H(\pi_k, \pi^{\r,\c;\bLambda})^2 &\le 8(\|\kappa_k\|_\infty + M)\bigl(\|\rho_{\r_k} - \rho_\r\|_{L^1} + \|\rho_{\c_k} - \rho_\c\|_{L^1}\bigr) \\
		&\qquad + 4\,e^{3(\|\kappa_k\|_\infty + M)\vee\|\kappa\|_\infty}\, d_H(\mathcal{R}_k, \mathcal{R})^2\\
		&\le 8(K + 4\log\delta^{-1})\bigl(\|\rho_{\r_k} - \rho_\r\|_{L^1} + \|\rho_{\c_k} - \rho_\c\|_{L^1}\bigr) + 4\, e^{3K}\,\delta^{-12}\, d_H(\mathcal{R}_k, \mathcal{R})^2,
	\end{align*}
	using $(K + 4\log\delta^{-1})\vee K = K + 4\log\delta^{-1}$ and $e^{12\log\delta^{-1}} = \delta^{-12}$. By Assumption \ref{assumption: limit_theory}, both the marginal $L^1$ errors and $d_H(\mathcal{R}_k, \mathcal{R})$ tend to zero, so $d_H(\pi_k, \pi^{\r,\c;\bLambda}) \to 0$.
\end{proof}

\section{Proofs for Sinkhorn-rescaled random matrices}
\label{sec:rescaled_proofs}

\subsection{Proofs of concentration results}

We record here, for completeness, a matrix-ensemble-agnostic bound on the probability that a nonnegative random matrix fails to be scalable to a $\delta$-smooth margin. This result is not invoked in the proofs of our main theorems, but it may be of independent interest and applies to ensembles outside the scope of Assumption \ref{assump:random_matrix}. Our argument is based on the following classical characterization of scalability due to Menon and Schneider \cite{menon1969spectrum}.

\begin{prop}[An equivalent condition for matrix scalability; Thm. 4.1 in \cite{menon1969spectrum}]\label{prop:matrix_scaling_equiv}
	Let $\A\in \R_{\ge 0}^{m\times n}$ be an $m\times n$ nonnegative matrix and let $(\r,\c)$ be an $m\times n$ margin with strictly positive row and column sums. Then the following are equivalent:
	\begin{description}
		\item[(i)] There exist positive diagonal matrices $\D_{1},\D_{2}$ such that $\D_{1}\A \D_{2}$ has margin $(\r,\c)$.
		
		\item[(ii)] For any $I\subset \{1,\dots,m\}$, $J\subset\{1,\dots,n\}$ such that $\A_{I^c,J}=O$, we have $\sum_{i\in I}\r_i \ge \sum_{j\in J}\c_j$, with equality if and only if $\A_{I,J^{c}}=O$.
	\end{description}
\end{prop}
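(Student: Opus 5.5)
The plan is to prove the Menon--Schneider equivalence by combining a necessity argument based on mass counting with a sufficiency argument that realizes the target margins as a flow pattern supported on the pattern of $\A$ and then invokes the entropic (Schr\"odinger) characterization.

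\emph{(i)$\Rightarrow$(ii).} Let $\bZ=\D_1\A\D_2$ have margin $(\r,\c)$. Its support coincides with that of $\A$. Suppose $\A_{I^c,J}=O$. Then every column $j\in J$ carries all of its mass in the rows of $I$, so
\[
\sum_{j\in J}\c_j=\sum_{i\in I,\,j\in J}\bZ_{ij}\le \sum_{i\in I}\sum_{j}\bZ_{ij}=\sum_{i\in I}\r_i .
\]
Equality holds exactly when $\bZ_{I,J^c}=O$. Since $\D_1,\D_2$ are positive, this is equivalent to $\A_{I,J^c}=O$.

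\emph{(ii)$\Rightarrow$(i).} I would proceed in three steps.

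\textbf{Step 1: a feasible matrix on the support.} First I produce some $\bZ\in\T(\r,\c)$ with $\operatorname{supp}\bZ\subseteq\operatorname{supp}\A$. Consider the bipartite network with source-to-row capacities $\r_i$, column-to-sink capacities $\c_j$, and infinite capacities on the edges $(i,j)$ with $\A_{ij}>0$. By max-flow/min-cut, a flow of value $N$ exists iff every cut has capacity at least $N$. A finite cut is determined by a set $J$ of columns on the source side together with its neighbourhood $I=N(J)$, which forces $\A_{I^c,J}=O$. Its capacity is $\sum_{i\in I}\r_i+\sum_{j\notin J}\c_j$, and this is $\ge N$ precisely by the inequality in (ii).

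\textbf{Step 2: a feasible matrix with full support.} Next I upgrade to $\bZ$ with $\operatorname{supp}\bZ=\operatorname{supp}\A$. Take an edge $(i_0,j_0)$ with $\A_{i_0j_0}>0$. I would show it lies on an alternating cycle of positive-flow edges, so that mass can be pushed onto it while keeping the margins fixed. If no such cycle exists, the standard residual-graph argument produces a tight set $(I,J)$ with $\A_{I^c,J}=O$, $\sum_I\r=\sum_J\c$ and $\A_{I,J^c}\neq O$, which contradicts the equality clause of (ii). Averaging the matrices obtained for each edge then gives a feasible $\bZ$ whose support is all of $\operatorname{supp}\A$.

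\textbf{Step 3: the entropic minimizer.} Finally I minimize $D_{\mathrm{KL}}(\bZ\Vert\A)$ over $\T(\r,\c)$ restricted to $\operatorname{supp}\A$. The feasible set is compact and nonempty, so a minimizer $\bZ^*$ exists. Because the derivative of $x\log x$ is $-\infty$ at $0$, and Step 2 supplies a feasible point that is positive on the whole support, the minimizer has the same support as $\A$ (move slightly toward that point to see this). The KKT conditions on this relatively open face then give
\[
\log(\bZ^*_{ij}/\A_{ij})=\balpha_i+\bbeta_j \quad\text{on }\operatorname{supp}\A,
\]
so $\bZ^*=\D_1\A\D_2$ with positive diagonal matrices. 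Every row and column of $\A$ is nonzero, as (ii) forces (for example, with $I=\{i\}^c$ and $J=\emptyset$). Hence all multipliers are finite.

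I expect Step 2 to be the main obstacle: extracting from the absence of an augmenting cycle a tight pair $(I,J)$ that violates the equality clause. I would handle it by taking $I,J$ to be the rows and columns reachable from $j_0$ in the residual graph and checking the two flow-conservation identities.
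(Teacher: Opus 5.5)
Your proof is correct, and it takes a different route from the paper. The paper does not prove Proposition~\ref{prop:matrix_scaling_equiv} at all; it quotes it from Menon--Schneider. Your argument is self-contained and assigns each part of (ii) a separate role. The inequality part is exactly the supply--demand (max-flow) condition for a feasible $\bZ\in\T(\r,\c)$ supported in $\operatorname{supp}\A$. The equality clause is exactly what excludes the support dropping in Nutz's $2\times 2$ example from the paper's remark, so it yields a feasible point whose support is all of $\operatorname{supp}\A$. The KL step then identifies the scaled matrix as the minimizer in \eqref{eq:SSB_Poisson_main}, which is the characterization the rest of the paper relies on. Citing the result saves space; your route makes that link explicit. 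Your necessity direction, the one-line mass count, is fine.

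A few details need tidying.

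In Step~2, only the edges where mass is decreased need $\bZ_{ij}>0$; the edges where mass is increased need only $\A_{ij}>0$. Read literally, ``a cycle of positive-flow edges'' can fail: take $\A$ the $2\times 2$ all-ones matrix, $\r=\c=(1,1)$ and $\bZ$ the identity. No cycle through $(1,2)$ has all its other edges carrying flow. The residual network from Step~1 encodes the correct rule.

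Also in Step~2, the pair to which (ii) is applied is the complement of the reachable sets. Let $\mathcal{I},\mathcal{J}$ be the rows and columns reachable from $j_0$. Reachability gives $\A_{\mathcal{I},\mathcal{J}^c}=O$ and $\bZ_{\mathcal{I}^c,\mathcal{J}}=O$, hence $\sum_{i\in\mathcal{I}}\r_i=\sum_{j\in\mathcal{J}}\c_j$. Apply (ii) to $(I,J)=(\mathcal{I}^c,\mathcal{J}^c)$; then $\A_{i_0j_0}>0$ with $i_0\notin\mathcal{I}$ and $j_0\in\mathcal{J}$ violates the equality clause.

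In Step~1, with source-to-row and column-to-sink edges, the set $J$ in your capacity formula is the set of columns on the sink side, not the source side.

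Finally, $I=\{i\}^c$ with $J=\emptyset$ does not show that row $i$ is nonzero; use $I=\{i\}$, $J=\emptyset$. The remark is unnecessary anyway: Step~1 already forces every row and column to be nonzero, and the Lagrange multipliers are real numbers regardless.
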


\begin{lemma}[Scalability of random matrices]\label{lem:scalability}
	Let $(\r,\c)$ be an $m\times n$ $\delta$-smooth margin and let $\X$ be an $m\times n$ nonnegative random matrix with independent entries. Denoting $p_{0}=\max_{i,j} \P(\X_{ij}=0)$,
	\begin{align}\label{eq:scalability_prob_bd1}
		\P(\X\notin \mathcal{S}(\r,\c)) \le 2^{m+n-2}\, p_0^{\delta^3(m \wedge n)/2}.
	\end{align}
	In particular, suppose there exist constants $\gamma\in(0,1]$ and $\eta>0$ such that $\tfrac{m\land n}{m\lor n} \ge \gamma$ and
	\begin{align}\label{eq:p_0_threshold}
		p_0 \le \exp\!\left( -\frac{2\log 2}{\delta^3}\,(1 + \gamma^{-1}) - \eta \right).
	\end{align}
	Then
	\begin{align}\label{eq:scalability_prob_exp}
		\P(\X\notin \mathcal{S}(\r,\c)) \le \frac{1}{4} \exp\!\left( -\frac{\eta \delta^3}{2}\,(m \wedge n) \right).
	\end{align}
\end{lemma}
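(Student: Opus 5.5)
We will bound $\P(\X\notin\mathcal{S}(\r,\c))$ by a union bound over the combinatorial obstructions given by Proposition \ref{prop:matrix_scaling_equiv}. If $\X$ is not scalable, condition (ii) fails. Then there are nonempty index sets $I\subsetneq[m]$ and $J\subsetneq[n]$ with $\X_{I^c,J}=O$ such that one of two things happens. Either $\sum_{i\in I}\r_i<\sum_{j\in J}\c_j$, or equality holds while $\X_{I,J^c}\neq O$. The degenerate choices $I=[m]$, $J=\emptyset$ and their complements satisfy (ii) automatically, so there are at most $2^{m+n-2}$ pairs to consider; this is where the prefactor comes from. In either violating case we have $\sum_{i\in I}\r_i\le\sum_{j\in J}\c_j$ together with the zero block $\X_{I^c,J}=O$. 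So it suffices to show that every such ``bad'' pair forces a zero block of size $|I^c|\,|J|\ge \delta^3(m\wedge n)/2$. Independence of the entries then bounds the probability of that block vanishing by $p_0^{|I^c||J|}$.

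\emph{Key counting step.} Write $a=|I|$, $b=|J|$. By $\delta$-smoothness, $\sum_{i\in I}\r_i\ge \delta N a/m$ and $\sum_{j\in J}\c_j\le Nb/(\delta n)$, so a bad pair has $a\le mb/(\delta^2 n)$. Hence $|I^c|=m-a\ge m(1-b/(\delta^2 n))$. We split into two cases.

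\emph{Case $b\le \delta^2 n/2$.} Here $|I^c|\ge m/2$, and since $b\ge1$ this gives $|I^c|\,b\ge m/2$.

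\emph{Case $b>\delta^2 n/2$.} We cannot use the crude bound, so we use the complementary inequality. Because $\sum_{I^c}\r_i = N-\sum_I\r_i\ge N-\sum_J\c_j=\sum_{J^c}\c_j\ge \delta N (n-b)/n$, we get $|I^c|\ge \delta^2 m(n-b)/n$. Also $|I^c|\ge1$, since otherwise $\sum_I\r_i=N>\sum_J\c_j$ (as $J\ne[n]$ and the column sums are positive). So $|I^c|\,b\ge b>\delta^2 n/2$.

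In both cases $|I^c|\,|J|\ge \delta^2(m\wedge n)/2\ge\delta^3(m\wedge n)/2$, using $\delta\le1$. (We keep the weaker $\delta^3$ to match the statement; a more careful accounting could be tuned, but the stated exponent suffices.) This is the main step, and the only place where care is needed: the two regimes require different uses of smoothness. Summing $p_0^{\delta^3(m\wedge n)/2}$ over the at most $2^{m+n-2}$ pairs gives \eqref{eq:scalability_prob_bd1}.

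\emph{Exponential form.} Using $m+n\le (m\wedge n)(1+\gamma^{-1})$, we have $2^{m+n-2}\le \tfrac14\exp\bigl(\log 2\,(1+\gamma^{-1})(m\wedge n)\bigr)$. Substituting the threshold \eqref{eq:p_0_threshold} into $p_0^{\delta^3(m\wedge n)/2}$ gives a factor $\exp\bigl(-\log2\,(1+\gamma^{-1})(m\wedge n)-\tfrac{\eta\delta^3}{2}(m\wedge n)\bigr)$. The $\log 2$ terms cancel exactly, leaving \eqref{eq:scalability_prob_exp}.
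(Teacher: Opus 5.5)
Your block-size estimate is correct, and it takes a different route from the paper's. The paper sets $\rho=\min_{i,j}\bigl(-\log\P(\X_{ij}=0)\bigr)/(\r(i)\c(j))$ and bounds $\P(\X_{I^c,J}=O)\le e^{-\rho R_{I^c}C_J}$. It then lower-bounds the mass product $R_{I^c}C_J\ge \tfrac{N}{2}(\r_{\min}\wedge\c_{\min})$ via $R_{I^c}+C_J\ge N$ and concavity of $x(N-x)$. Only at the end does it convert to $p_0$ and $\delta$. You bound the cardinality $|I^c|\,|J|$ directly from smoothness. This is more elementary and even gives exponent $\delta^2(m\wedge n)/2$. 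The paper's weighted form would only help for heterogeneous zero probabilities, which the statement discards anyway.

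The gap is in the reduction to pairs. You list only two ways (ii) can fail, then discard $I=\emptyset$ and $J=[n]$ as automatically satisfying (ii). That is false.
\begin{itemize}
\item $I=\emptyset$ with $\X_{[m],J}=O$, $J\neq\emptyset$, is a zero column. It violates (ii), since $0<C_J$.
\item The third failure mode you omit is $R_I>C_J$ with $\X_{I,J^c}=O$. It becomes your first mode for $(I^c,J^c)$, and this is exactly how pairs with $J=[n]$ arise: zero rows violate (ii) at $(I,\emptyset)$ in this way.
\end{itemize}
Concretely, for $m=1$ your family of pairs is empty, so your argument would give $\P(\X\notin\mathcal{S}(\r,\c))=0$. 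For $m\ge 2$, take a single zero column $j_0$ with $\c(j_0)<\min_i\r(i)$ and all other entries positive. No pair in your family catches it, yet $\X$ is not scalable.

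The fix is to take the union over $I\subsetneq[m]$ and $\emptyset\neq J\subseteq[n]$ with $\X_{I^c,J}=O$ and $R_I\le C_J$. The paper keeps $I=\emptyset$ as its case $\mathcal{I}_1$, though it too leaves out $J=[n]$. Your block-size bound survives unchanged, except that $|I^c|\ge 1$ now comes from $I\neq[m]$ rather than from $J\neq[n]$.

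Separately, the claim that there are at most $2^{m+n-2}$ pairs is wrong in general. There are $(2^m-2)(2^n-2)$ pairs of nonempty proper subsets, and $(2^m-1)(2^n-1)$ pairs in the corrected family. For $m=n=3$ these are $36$ and $49$, against $2^{4}=16$.

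The paper's proof asserts the same count for its family, so this is a shared slip rather than a departure from it. Still, as written, a plain union bound gives the first inequality only with $2^{m+n}$ in place of $2^{m+n-2}$, and hence the exponential bound only without the factor $\tfrac14$. Your sharper exponent $\delta^2(m\wedge n)/2$ cannot absorb the discrepancy in general (take $\delta=1$). Recovering the stated constant would need a further reduction of the family of pairs, which neither argument supplies.
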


In particular, if $p_0 = 0$ (so that $\X$ is a.s.\ positive), then $\X$ is a.s.\ scalable for any positive margin $(\r,\c)$. More generally, a bounded aspect ratio together with the zero-probability threshold \eqref{eq:p_0_threshold} yields exponential decay of the failure probability in $m\wedge n$.

\begin{proof}[\textbf{Proof of Lemma \ref{lem:scalability}}]
	Let $R_I = \sum_{i \in I} \r(i)$ and $C_J = \sum_{j \in J} \c(j)$ for any row subset $I \subset \{1,\dots, m\}$ and column subset $J \subset \{1, \dots, n\}$. Denote the maximum zero-probability by $p_0 := \max_{i,j} \P(\X_{ij} = 0)$ and define 
	\begin{align}
		\rho := \min_{i,j} \frac{-\log \P(\X_{ij}=0)}{\r(i)\c(j)}.
	\end{align}
	This guarantees that $\P(\X_{ij} = 0) \le e^{-\rho \r(i) \c(j)}$ for all entries. By the independence of the matrix entries, the probability of any block $\X_{I^c, J}$ being entirely zero is
	\begin{align*}
		\P(\X_{I^c, J} = O) &= \prod_{i \in I^c, j \in J} \P(\X_{ij} = 0) \le \exp\left( -\rho \sum_{i \in I^c, j \in J} \r(i) \c(j) \right) = \exp(-\rho \cdot R_{I^c} \cdot C_J).
	\end{align*}
	
	Consider the event  $\{\X\notin \mathcal{S}(\r,\c)\}$ that $\X$ cannot be scaled to the margin $(\r,\c)$. By Proposition \ref{prop:matrix_scaling_equiv}, this occurs only if there exist subsets $I \subsetneq \{1,\dots,m\}$ and $J \subsetneq \{1,\dots,n\}$ such that $\X_{I^c,J} = O$ and $R_I \le C_J$.
	Note that $R_I \le C_J$ is equivalent to $N - R_{I^c} \le C_J$, which means $R_{I^c} + C_J \ge N$. Let us define the set of such failure configurations as:
	\begin{align*}
		\mathcal{I} &= \left\{(I,J): I\subsetneq\{1,\dots,m\},\, J\subsetneq\{1,\dots,n\},\, R_{I^c} + C_J \ge N \right\}.
	\end{align*}
	Thus, $\{\X\notin \mathcal{S}(\r,\c)\} \subseteq \bigcup_{(I,J)\in \mathcal{I}} \{\X_{I^c,J}=O\}$. 
	
	We bound the probability of these events by decomposing $\mathcal{I} = \mathcal{I}_1 \cup \mathcal{I}_2$, where $\mathcal{I}_1 := \{(\emptyset, J) \in \mathcal{I}\}$ and $\mathcal{I}_2 := \mathcal{I} \setminus \mathcal{I}_1$.
	
	For any $(\emptyset, J) \in \mathcal{I}_1$, we have $I^c = \{1, \dots, m\}$, so $R_{I^c} = N$. Since $J$ is non-empty, $C_J \ge \c_{\min}$. Therefore,
	\begin{align*}
		\P(\X_{I^c,J}=O) \le \exp(-\rho R_{I^c}C_{J}) \le \exp(-\rho N \c_{\min}).
	\end{align*}
	
	For any $(I,J) \in \mathcal{I}_2$, $I$ is non-empty, meaning $1 \le |I^c| \le m-1$, which implies $\r_{\min} \le R_{I^c} \le N - \r_{\min}$. Using the constraint $C_J \ge N - R_{I^c}$, we have:
	\begin{align*}
		\P(\X_{I^c,J}=O) \le \exp(-\rho R_{I^c}C_{J}) \le \exp(-\rho R_{I^c}(N-R_{I^c})).
	\end{align*}
	The quadratic function $x(N-x)$ achieves its minimum on the interval $[\r_{\min}, N-\r_{\min}]$ at the boundaries, so:
	\begin{align*}
		R_{I^c}(N-R_{I^c}) \ge \r_{\min}(N-\r_{\min}) \ge \r_{\min}\left(N - \tfrac{N}{m}\right) = \tfrac{m-1}{m}N\r_{\min} \ge \tfrac{N \r_{\min}}{2},
	\end{align*}
	
	Combining both cases, for any $(I,J) \in \mathcal{I}$, the block probability is bounded by:
	\begin{align*}
		\P(\X_{I^c,J}=O) \le \exp\left(-\rho\left(\c_{\min}\wedge\r_{\min}\right)\frac{N}{2}\right).
	\end{align*}
	Taking a union bound over all $2^{m+n-2}$ valid subset pairs $(I,J)$, we obtain:
	\begin{align*}
		\P(\X\notin \mathcal{S}(\r,\c)) \le 2^{m+n-2}\exp\left(-\rho \left(\c_{\min}\wedge\r_{\min}\right)\frac{N}{2}\right).
	\end{align*}
	
	Next, we use the $\delta$-smoothness of the margin $(\r,\c)$ to bound this expression entirely in terms of $p_0$ and $\delta$. By Definition \ref{def:delta_smooth_margins}, we have:
	\begin{align*}
		\r_{\max}\c_{\max} \le \frac{N^2}{\delta^2 mn} \qquad \text{and} \qquad \c_{\min} \wedge \r_{\min} \ge \frac{\delta N}{m \vee n}.
	\end{align*}
	This allows us to uniformly lower-bound the rate parameter $\rho$:
	\begin{align*}
		\rho \ge \frac{-\log p_0}{\r_{\max}\c_{\max}} \ge \frac{-\delta^2 mn \log p_0}{N^2}.
	\end{align*}
	Substituting this into the exponent of our worst-case failure probability yields:
	\begin{align*}
		\rho \left(\c_{\min}\wedge\r_{\min}\right)\frac{N}{2} \ge \left( \frac{-\delta^2 mn \log p_0}{N^2} \right) \left(\frac{\delta N}{m \vee n}\right) \frac{N}{2} = \frac{-\delta^3 (m \wedge n) \log p_0}{2}.
	\end{align*}
	Applying this to the union bound, we conclude:
	\begin{align*}
		\P(\X\notin \mathcal{S}(\r,\c)) \le 2^{m+n-2} \exp\left( \frac{\delta^3 (m \wedge n) \log p_0}{2} \right) = 2^{m+n-2} p_0^{\frac{\delta^3(m \wedge n)}{2}}.
	\end{align*}
	%Setting this upper bound to $\eps_{\textup{scale}}$ completes the proof.
	This shows \eqref{eq:scalability_prob_bd1}.

	For the second part of the statement, we incorporate the aspect ratio bound $\frac{m \wedge n}{m \vee n} \ge \gamma$, which implies $m \vee n \le \gamma^{-1}(m \wedge n)$. Therefore, the sum of the dimensions is bounded by:
	\begin{align*}
		m + n = (m \wedge n) + (m \vee n) \le (1 + \gamma^{-1})(m \wedge n).
	\end{align*}
	It follows that 
	\begin{align}
		2^{m+n-2}\le \frac{1}{4}\exp\big( (m+n)\log 2 \big) \le  \frac{1}{4} \exp\left( (1 + \gamma^{-1})(m \wedge n) \log 2 \right),
	\end{align}
	so \eqref{eq:scalability_prob_bd1} yields 
	\begin{align*}
		\P(\X\notin \mathcal{S}(\r,\c)) \le \frac{1}{4} \exp\left( (m \wedge n) \left[ (1+\gamma^{-1})\log 2 + \frac{\delta^3}{2} \log p_0 \right] \right).
	\end{align*}
	Then \eqref{eq:scalability_prob_exp} follows immediately from the above and the hypothesis  \eqref{eq:p_0_threshold}. This completes the proof.
\end{proof}

The next result shows that when $\X$ is rescaled by the \textit{deterministic} scaling factors (those computed for the mean $\bLambda$), the resulting matrix $\hat{\X}^{\r,\c} = \D(e^{\balpha})\X\D(e^{\bbeta})$ has margins close to $(\r, \c)$. This is the key step justifying the comparison model \eqref{eq:comparison_model_intro}.

\begin{prop}[Approximate scaling of the comparison model] \label{prop:approximate_scaling}
	Let $(\r, \c)$ be an $m \times n$ positive margin. Under Assumption \ref{assump:random_matrix}, let $(\balpha, \bbeta)$ denote the deterministic Schr\"{o}dinger potentials scaling $\bLambda$ to $(\r, \c)$, and let $\hat{\X}^{\r,\c} = \D(e^{\balpha})\X\D(e^{\bbeta})$ be the comparison model. Define the variance and scale proxies
	\begin{align}\label{eq:V_r_V_c_B_def}
		V_{r} := n\sigma^2 e^{4K}\,\frac{N^2}{\|\bLambda\|_1^2}, \qquad V_{c} := m\sigma^2 e^{4K}\,\frac{N^2}{\|\bLambda\|_1^2}, \qquad B := e^{2K}\,\frac{N R}{\|\bLambda\|_1}.
	\end{align}
	Then, for every $\eps > 0$, $i \in [m]$, and $j \in [n]$,
	\begin{align}\label{eq:prop_approx_row}
		\P\!\left( \bigl|r_i(\hat{\X}^{\r,\c}) - \r(i)\bigr| \ge \eps\, \r(i) \right) &\le 2\exp\!\left( -\frac{\eps^2 \r(i)^2 / 2}{V_r + B\,\eps\, \r(i)} \right), \\
		\label{eq:prop_approx_col}
		\P\!\left( \bigl|c_j(\hat{\X}^{\r,\c}) - \c(j)\bigr| \ge \eps\, \c(j) \right) &\le 2\exp\!\left( -\frac{\eps^2 \c(j)^2 / 2}{V_c + B\,\eps\, \c(j)} \right).
	\end{align}
\end{prop}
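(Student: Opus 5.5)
The plan is a direct application of Bernstein's inequality for sums of independent sub-exponential variables, written row by row. Fix $i\in[m]$. Since $\bLambda^{\r,\c}=\D(e^{\balpha})\bLambda\D(e^{\bbeta})$ has exact margins, we have $\r(i)=\sum_j e^{\balpha(i)+\bbeta(j)}\bLambda_{ij}$. Hence
\begin{align}
	r_i(\hat{\X}^{\r,\c})-\r(i)=\sum_{j=1}^n w_{ij}\,(\X_{ij}-\bLambda_{ij}),\qquad w_{ij}:=e^{\balpha(i)+\bbeta(j)},
\end{align}
which is a sum of independent centered terms. By Assumption \ref{assump:random_matrix}, each summand satisfies the Bernstein moment condition
\begin{align}
	\E|w_{ij}(\X_{ij}-\bLambda_{ij})|^q\le \tfrac{q!}{2}(w_{ij}\sigma)^2(w_{ij}R)^{q-2}.
\end{align}
Bernstein's inequality in moment form then gives
\begin{align}
	\P\bigl(|\textstyle\sum_j w_{ij}(\X_{ij}-\bLambda_{ij})|\ge t\bigr)\le 2\exp\!\Bigl(-\frac{t^2/2}{\sigma^2\sum_j w_{ij}^2+R\max_j w_{ij}\,t}\Bigr),
\end{align}
and we take $t=\eps\r(i)$. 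The column statement follows in the same way, with the roles of $m$ and $n$ swapped.

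What remains is to show $\sum_j w_{ij}^2\le n e^{4K}N^2/\|\bLambda\|_1^2$ and $\max_{ij}w_{ij}\le e^{2K}N/\|\bLambda\|_1$. Both reduce to the uniform bound $w_{ij}\le e^{2K}N/\|\bLambda\|_1$, since then $\sum_j w_{ij}^2\le n\max w^2$ and the proxies $V_r,V_c,B$ appear exactly as stated.

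This uniform bound on $w_{ij}$ is the main step. It uses Assumption \ref{assump:bounded_cost}, which, despite the proposition's phrasing with only Assumption \ref{assump:random_matrix}, is implicit through $K$. Rewrite the problem in normalized form. Set $\bar\bLambda=\bLambda/\|\bLambda\|_1$ and $\mu_0=\r/N$, $\mu_1=\c/N$. Then Assumption \ref{assump:bounded_cost} says exactly that the discrete cost $\kappa_{ij}=-\log\bigl(\bar\bLambda_{ij}/(\mu_0(i)\mu_1(j))\bigr)$ satisfies $\|\kappa\|_\infty\le K$. The bridge for $(\mu_0,\mu_1,\bar\bLambda)$ is $\bLambda^{\r,\c}/N$, with density $e^{\tilde\balpha(i)+\tilde\bbeta(j)}$ relative to $\bar\bLambda$. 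Comparing with $w_{ij}\bLambda_{ij}=\bLambda^{\r,\c}_{ij}$ gives
\begin{align}
	w_{ij}=\frac{N}{\|\bLambda\|_1}e^{\tilde\balpha(i)+\tilde\bbeta(j)}.
\end{align}

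Lemma \ref{lem: boundedness_continuous_potentials}, applied to the discrete measures, normalizes $\langle\tilde\bbeta,\mu_1\rangle=0$. Its upper bounds give $\tilde\balpha,\tilde\bbeta\le\|\kappa^+\|_\infty\le K$, and this gauge choice does not affect the sum $\tilde\balpha(i)+\tilde\bbeta(j)$. Therefore $\tilde\balpha(i)+\tilde\bbeta(j)\le 2K$, which yields $w_{ij}\le e^{2K}N/\|\bLambda\|_1$. This bound is also the reason the factors $e^{4K}$ and $e^{2K}$ appear in the statement.

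The lemma's hypotheses hold because the reference measure $\bar\bLambda$ is strictly positive, hence equivalent to $\mu_0\otimes\mu_1$, and has bounded cost. Substituting the bound on $w_{ij}$ into the Bernstein inequality completes the proof.
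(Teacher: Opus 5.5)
Your proof is correct and follows the paper's argument essentially step for step: you write the row deviation as the centered weighted sum $\sum_j e^{\balpha(i)+\bbeta(j)}(\X_{ij}-\bLambda_{ij})$, bound the weights uniformly by $e^{2K}N/\|\bLambda\|_1$, and apply Bernstein's inequality (Lemma \ref{lem:bernstein_RV}) with $t=\eps\,\r(i)$. The only difference is that you re-derive the weight bound inline from the normalized problem and Lemma \ref{lem: boundedness_continuous_potentials}, which is exactly what the paper packages as Lemma \ref{lem: boundedness_matrix_scaling_potentials}; your remark that Assumption \ref{assump:bounded_cost} is tacitly required is also accurate, since the paper's proof invokes it as well.
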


In the proof of the above result, we will use the following elementary Bernstein's inequality for sub-exponential random variables. 

\begin{lemma}[Bernstein's inequality for sub-exponential variables]\label{lem:bernstein_RV}
	Let $Y_1, \dots, Y_n$ be independent centered random variables satisfying the moment bounds
	\[
	\E\left[|Y_i|^q\right] \le \frac{q!}{2}\sigma_i^2 R_i^{q-2} \quad \text{for all integers } q \ge 2. 
	\]
	Then, for any deterministic constants $a_1, \dots, a_n \in \R$ and any $t > 0$, we have 
	\[
	\P\left(\left|\sum_{i=1}^{n}a_i Y_i\right| > t\right) \le 2\exp\left(-\frac{t^2/2}{\sum_{i=1}^{n}a_i^2\sigma_i^2 + a_{\max}R_{\max}t}\right),
	\]
	where $a_{\max} = \max_{1 \le i \le n} |a_i|$ and $R_{\max} = \max_{1 \le i \le n} R_i$.
\end{lemma}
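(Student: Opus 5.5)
The plan is the standard Chernoff (moment generating function) argument for sums of independent variables, with the moment condition supplying the MGF bound.

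\textbf{MGF bound for one summand.} Set $Z_i := a_i Y_i$. Each $Z_i$ is centered and inherits a rescaled moment bound:
\[
\E|Z_i|^q \le \tfrac{q!}{2}(a_i^2\sigma_i^2)(|a_i|R_i)^{q-2} \le \tfrac{q!}{2}\,a_i^2\sigma_i^2\,b^{q-2},
\]
where $b := a_{\max}R_{\max}$. For $0\le \lambda < 1/b$, expand the exponential and use $\E Z_i = 0$:
\[
\E e^{\lambda Z_i} \le 1 + \sum_{q\ge 2}\frac{\lambda^q \E|Z_i|^q}{q!} \le 1 + \frac{\lambda^2 a_i^2\sigma_i^2}{2}\sum_{q\ge2}(\lambda b)^{q-2} = 1 + \frac{\lambda^2 a_i^2\sigma_i^2}{2(1-\lambda b)}.
\]
Applying $1+x\le e^x$ gives
\[
\log \E e^{\lambda Z_i} \le \frac{\lambda^2 a_i^2\sigma_i^2}{2(1-\lambda b)}.
\]
Exchanging expectation and the infinite sum is justified by monotone convergence, since the series $\sum_q \lambda^q|Z_i|^q/q!$ has nonnegative terms.

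\textbf{Summing and Chernoff.} Let $S := \sum_i Z_i$ and $v := \sum_i a_i^2\sigma_i^2$. Independence gives
\[
\log \E e^{\lambda S} \le \frac{\lambda^2 v}{2(1-\lambda b)}.
\]
Markov's inequality then yields
\[
\P(S>t) \le \exp\!\Big(-\lambda t + \frac{\lambda^2 v}{2(1-\lambda b)}\Big).
\]
Choose $\lambda = t/(v+bt)$, which satisfies $\lambda b<1$. Then $1-\lambda b = v/(v+bt)$, so the second term equals $t^2/(2(v+bt))$. The exponent becomes
\[
-\frac{t^2}{v+bt} + \frac{t^2}{2(v+bt)} = -\frac{t^2/2}{v+bt}.
\]
The lower tail follows by applying the same argument to $-Y_i$, which satisfies the identical moment bounds. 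A union bound over the two tails produces the factor $2$.

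\textbf{Main obstacle.} The only delicate point is the MGF series estimate: the $q=1$ term must vanish, which uses centering, and the geometric series needs $\lambda b<1$. The chosen $\lambda$ satisfies this automatically. Degenerate cases, such as $v=0$ or $b=0$, follow by taking limits or are trivial.
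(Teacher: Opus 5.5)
Your proposal is correct and matches the paper's proof essentially step for step: the per-summand MGF bound from the moment condition via the geometric series for $\lambda a_{\max}R_{\max}<1$, Markov's inequality with the choice $\lambda = t/(v+bt)$, and the symmetric argument for $-S$ with a union bound for the factor $2$. You also justify the series interchange and treat the degenerate cases $v=0$ and $b=0$, which the paper passes over.
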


\begin{proof}
	Let $S = \sum_{i=1}^n a_i Y_i$. By the independence of the variables $Y_i$, the moment generating function of $S$ factors as $\E[e^{\tau S}] = \prod_{i=1}^n \E[e^{\tau a_i Y_i}]$. 
	
	Using the standard Taylor expansion and the given moment bounds, for any $0 < \tau < 1/(a_{\max} R_{\max})$, we can bound the moment generating function by:
	\begin{align*}
		\E[\exp(\tau S)] &\le \exp\left( \frac{\tau^2}{2} \sum_{i=1}^n \frac{a_i^2 \sigma_i^2}{1 - |a_i| R_i \tau} \right) \le \exp\left(\frac{\tau^2 \sum_{i=1}^{n}a_i^2\sigma_i^2}{2(1 - a_{\max}R_{\max}\tau)}\right).
	\end{align*}
	Applying Markov's inequality to $e^{\tau S}$, we obtain the upper tail bound:
	\[
	\P(S \ge t) = \P(e^{\tau S} \ge e^{\tau t}) \le \exp\left(\frac{\tau^2 \sum_{i=1}^{n}a_i^2\sigma_i^2}{2(1 - a_{\max}R_{\max}\tau)} - \tau t\right).
	\]
	Optimizing the bound by choosing 
	\[
	\tau = \frac{t}{\sum_{i=1}^{n}a_i^2\sigma_i^2 + a_{\max}R_{\max}t},
	\]
	which satisfies $0 < \tau < 1/(a_{\max} R_{\max})$, yields:
	\[
	\P(S \ge t) \le \exp\left(-\frac{t^2/2}{\sum_{i=1}^{n}a_i^2\sigma_i^2 + a_{\max}R_{\max}t}\right).
	\]
	Applying the identical argument to $-S$ and taking a union bound provides the two-sided inequality, completing the proof.
\end{proof}

We will also repeatedly use the following two-sided uniform bound on the deterministic scaling potentials.

\begin{lemma}\label{lem: boundedness_matrix_scaling_potentials}
	Let $\bLambda \in \R^{m \times n}_{\ge 0}$ be a  matrix with $\bLambda_{ij} = 0$ if and only if $\r(i)\c(j) = 0$. Let $\bar{\bLambda} := \bLambda/\|\bLambda\|_1$, $(\bar{\r}, \bar{\c}) := (\r/N, \c/N)$ with $N=\sum_{i} \r(i)=\sum_{j} \c(j)$, and $\bar{\kappa}(i,j) := \log(\bar{\r}(i)\bar{\c}(j)/\bar{\bLambda}_{ij})$ (with the convention $0/0=1$). 
	Then a canonical choice of matrix-scaling potentials $(\balpha, \bbeta)$ (fixing the gauge $\sum_j \bar{\bbeta}(j)\bar{\c}(j) = 0$ in the normalized problem below) satisfies
	\begin{align}\label{eq:mat_scaling_potentials_bounds}
		-2(\bar{\kappa}_{\textup{-}} + \bar{\kappa}_{\textup{+}}) + \log\!\left(\tfrac{N}{\|\bLambda\|_1}\right) \;\le\; \balpha(i) + \bbeta(j) \;\le\; 2\bar{\kappa}_{\textup{+}} + \log\!\left(\tfrac{N}{\|\bLambda\|_1}\right).
	\end{align}
	In particular, \begin{align}\label{eq:mat_scaling_potentials_bounds_K}
		\tfrac{e^{-4 \| \bar{\kappa}\|_{\infty} } N}{\|\bLambda\|_1} \;\le\; e^{\balpha(i)+\bbeta(j)} \;\le\; \tfrac{e^{2 \| \bar{\kappa}\|_{\infty} } N}{\|\bLambda\|_1} \qquad \text{for all } i,j.
	\end{align}
\end{lemma}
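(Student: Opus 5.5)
The plan is to reduce to the continuous bound of Lemma \ref{lem: boundedness_continuous_potentials}. We view the discrete problem as an SSB on $[m]\times[n]$ with marginals $\mu_0=\bar{\r}$, $\mu_1=\bar{\c}$ and reference measure $\mathcal{R}=\bar{\bLambda}$. The hypothesis that $\bLambda_{ij}=0$ iff $\r(i)\c(j)=0$ guarantees $\mathcal{R}\sim\mu_0\otimes\mu_1$ on the support $\{\bar{\r}>0\}\times\{\bar{\c}>0\}$. On that support,
\begin{align}
\frac{d\mathcal{R}}{d(\mu_0\otimes\mu_1)}(i,j)=\frac{\bar{\bLambda}_{ij}}{\bar{\r}(i)\bar{\c}(j)}=e^{-\bar\kappa(i,j)}.
\end{align}
Since the setting is finite, $\bar\kappa$ is bounded and $D_{\textup{KL}}(\mu_0\otimes\mu_1\Vert\mathcal{R})<\infty$. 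Theorem \ref{thm: Schrodinger_bridge_existence_uniqueness} then yields normalized potentials $(\bar\balpha,\bar\bbeta)$ with $\bar\pi_{ij}=\bar{\bLambda}_{ij}e^{\bar\balpha(i)+\bar\bbeta(j)}\in\Pi(\bar{\r},\bar{\c})$. We fix the gauge $\sum_j\bar\bbeta(j)\bar{\c}(j)=0$. Off the support, $\bLambda_{ij}=0$, so the potential values there are irrelevant; we set them to $0$, which is consistent with the claimed bounds since $\bar\kappa_\pm\ge0$.

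Next we apply Lemma \ref{lem: boundedness_continuous_potentials} with $\kappa=\bar\kappa$. It gives
\begin{align}
-(\bar\kappa_-+\bar\kappa_+)\le\bar\balpha(i)\le\bar\kappa_+,\qquad -(\bar\kappa_-+\bar\kappa_+)\le\bar\bbeta(j)\le\bar\kappa_+,
\end{align}
where $\bar\kappa_\pm=\|\bar\kappa^\pm\|_\infty$. Adding these yields
\begin{align}
-2(\bar\kappa_-+\bar\kappa_+)\le\bar\balpha(i)+\bar\bbeta(j)\le2\bar\kappa_+ .
\end{align}

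We then undo the normalization. Because $\bar\pi=\bLambda^{\r,\c}/N$ and $\bar{\bLambda}=\bLambda/\|\bLambda\|_1$, we have
\begin{align}
\bLambda^{\r,\c}_{ij}=N\bar\pi_{ij}=\bLambda_{ij}\,\frac{N}{\|\bLambda\|_1}\,e^{\bar\balpha(i)+\bar\bbeta(j)}.
\end{align}
So the canonical unnormalized potentials are $\balpha=\bar\balpha+\log(N/\|\bLambda\|_1)$ and $\bbeta=\bar\bbeta$. The shift $\log(N/\|\bLambda\|_1)$ could equally be split between $\balpha$ and $\bbeta$; only the sum matters. This gives \eqref{eq:mat_scaling_potentials_bounds}. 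For \eqref{eq:mat_scaling_potentials_bounds_K}, bound $\bar\kappa_\pm\le\|\bar\kappa\|_\infty$. The lower bound then becomes $-4\|\bar\kappa\|_\infty$ and the upper bound $2\|\bar\kappa\|_\infty$, and exponentiating finishes the proof.

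The main obstacle is rigor about supports when $\r$ or $\c$ have zero entries. In that case we must restrict to the support so that the equivalence and boundedness hypotheses of Lemma \ref{lem: boundedness_continuous_potentials} hold there, and then check that the convention $0/0=1$ and the arbitrary potential values off the support do not affect the sum bounds. One also needs that the dual-optimality step in that lemma, which uses $\iint e^{-\kappa}\,d(\mu_0\otimes\mu_1)=1$, holds. It does, because $\bar{\bLambda}$ is a probability measure supported inside the support of $\mu_0\otimes\mu_1$.
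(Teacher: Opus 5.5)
Your proposal is correct and follows essentially the same route as the paper: realize the normalized problem as an SSB on $[m]\times[n]$ with cost $\bar\kappa$, apply Lemma~\ref{lem: boundedness_continuous_potentials} to bound $\bar\balpha$ and $\bar\bbeta$ separately, add the bounds, and undo the normalization with the additive shift $\log(N/\|\bLambda\|_1)$. The differences are cosmetic: the paper verifies the unnormalized row and column sums directly from the Schr\"{o}dinger equations (you use $N\bar\pi\in\T(\r,\c)$ instead), and it does not treat the zero-margin support issue that you handle explicitly.
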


\begin{proof}
	View the normalized margins $\bar{\r}, \bar{\c}$ as probability measures $\mu_0 = \sum_{i=1}^{m}\bar{\r}(i)\,\delta_i$ on $[m]$ and $\mu_1 = \sum_{j=1}^{n}\bar{\c}(j)\,\delta_j$ on $[n]$, and the normalized prior $\bar{\bLambda}$ as the probability measure $\mathcal{R} = \sum_{i,j}\bar{\bLambda}_{ij}\,\delta_{(i,j)}$ on $[m]\times[n]$. The Radon--Nikodym derivative of $\mathcal{R}$ with respect to $\mu_0\otimes\mu_1$ is
	\begin{align*}
		\frac{d\mathcal{R}}{d(\mu_0\otimes\mu_1)}(i,j) \;=\; \frac{\bar{\bLambda}_{ij}}{\bar{\r}(i)\bar{\c}(j)} \;=\; e^{-\bar{\kappa}(i,j)},
	\end{align*}
	so $\bar{\kappa}\in L^{\infty}(\mu_0\otimes\mu_1)$ plays the role of the cost function in Lemma \ref{lem: boundedness_continuous_potentials}. Let $(\bar{\balpha}, \bar{\bbeta})$ be the unique Schr\"{o}dinger potentials scaling $\bar{\bLambda}$ to margins $(\bar{\r},\bar{\c})$ with the gauge $\sum_j \bar{\bbeta}(j)\bar{\c}(j)=\langle\bar{\bbeta},\mu_1\rangle = 0$. Applying Lemma \ref{lem: boundedness_continuous_potentials} to $(\mu_0,\mu_1,\mathcal{R})$ gives
	\begin{align*}
		-(\bar{\kappa}_{\textup{-}} + \bar{\kappa}_{\textup{+}}) \;\le\; \bar{\balpha}(i)+ \bar{\bbeta}(j) \;\le\; \bar{\kappa}_{\textup{+}},
	\end{align*}
	where $\bar{\kappa}_{\textup{+}}:=\|\bar{\kappa}^+\|_\infty$ and $\bar{\kappa}_{\textup{-}}:=\|\bar{\kappa}^-\|_\infty$. Summing the bounds for $\bar{\balpha}(i)$ and $\bar{\bbeta}(j)$ yields
	\begin{align}\label{eq:pf_mat_normalized_twosided}
		-2(\bar{\kappa}_{\textup{-}} + \bar{\kappa}_{\textup{+}}) \;\le\; \bar{\balpha}(i) + \bar{\bbeta}(j) \;\le\; 2\bar{\kappa}_{\textup{+}}.
	\end{align}
	
	\smallskip
	\textbf{Undoing the normalization.} Define the unnormalized potentials by $\balpha(i)+\bbeta(j) := \bar{\balpha}(i)+\bar{\bbeta}(j) + \log(N/\|\bLambda\|_1)$ (distributing the additive shift arbitrarily between $\balpha$ and $\bbeta$). Using $\bar{\bLambda}_{ij}=\bar{\r}(i)\bar{\c}(j)\,e^{-\bar{\kappa}(i,j)}$ and the Schr\"{o}dinger equation for $(\bar{\balpha},\bar{\bbeta})$,
	\begin{align*}
		\sum_j e^{\balpha(i)+\bbeta(j)}\bLambda_{ij} \;&=\; \tfrac{N}{\|\bLambda\|_1}\cdot\|\bLambda\|_1 \sum_j e^{\bar{\balpha}(i)+\bar{\bbeta}(j)}\bar{\bLambda}_{ij} \;=\; N\bar{\r}(i)\, e^{\bar{\balpha}(i)}\sum_j e^{\bar{\bbeta}(j)-\bar{\kappa}(i,j)}\bar{\c}(j) \\
		&=\; N\bar{\r}(i) \;=\; \r(i);
	\end{align*}
	the column identity $\sum_i e^{\balpha(i)+\bbeta(j)}\bLambda_{ij} = \c(j)$ follows by the symmetric computation. Hence $(\balpha,\bbeta)$ are matrix-scaling potentials for $(\bLambda,\r,\c)$, and inserting the additive shift $\log(N/\|\bLambda\|_1)$ into \eqref{eq:pf_mat_normalized_twosided} proves \eqref{eq:mat_scaling_potentials_bounds}. The specialization \eqref{eq:mat_scaling_potentials_bounds_K} follows from $\bar{\kappa}_{\textup{+}}, \bar{\kappa}_{\textup{-}}\le \|\bar{\kappa}\|_\infty$.
\end{proof}

Now we prove Proposition \ref{prop:approximate_scaling}. 

\begin{proof}[\textbf{Proof of Proposition \ref{prop:approximate_scaling}}]
	For the $(i,j)$-th entry of the comparison model, $\hat{\X}^{\r,\c}_{ij} = e^{\balpha(i)+\bbeta(j)}\X_{ij}$. Since the potentials $(\balpha, \bbeta)$ exactly scale the mean matrix $\bLambda$ to $(\r,\c)$, we have $\sum_{j=1}^n e^{\balpha(i)+\bbeta(j)}\bLambda_{ij} = \r(i)$. Hence for any fixed row $i$,
	\[
	\bigl|r_i(\hat{\X}^{\r,\c}) - \r(i)\bigr| = \left|\sum_{j=1}^{n} e^{\balpha(i)+\bbeta(j)} (\X_{ij} - \bLambda_{ij}) \right|.
	\]
	Set $a_{ij} := e^{\balpha(i)+\bbeta(j)}$. By Lemma \ref{lem: boundedness_matrix_scaling_potentials} and Assumption \ref{assump:bounded_cost}, the weights are uniformly bounded by $a_{ij} \le a_{\max} := e^{2K} N/\|\bLambda\|_1$. The variance proxy for the row sum is therefore bounded by
	\[
	\sum_{j=1}^n a_{ij}^2 \sigma^2 \le n \sigma^2 a_{\max}^2 = n\sigma^2 e^{4K}\,\frac{N^2}{\|\bLambda\|_1^2} = V_r,
	\]
	and the scale parameter by $a_{\max} R \le e^{2K}\,N R / \|\bLambda\|_1 = B$. Applying Bernstein's inequality (Lemma \ref{lem:bernstein_RV}) with threshold $t = \eps \r(i)$ yields \eqref{eq:prop_approx_row}. By the symmetric argument with the roles of $i$ and $j$ (and $n$, $m$) interchanged, the column bound \eqref{eq:prop_approx_col} follows.
\end{proof}

A central step in the proof of Theorem \ref{thm:concentration_margins_rs} is to verify that the random matrix $\X$ satisfies the row-alignment hypothesis of Theorem \ref{thm:potential_stability} with high probability. By independence across columns, each pairwise row inner product $\sum_{j=1}^n \X_{i_1 j}\X_{i_2 j}$ is a sum of $n$ independent non-negative summands whose mean is at least $n\bLambda_{\min}^2$ --- either by row independence when $i_1\ne i_2$, or by Jensen's inequality when $i_1 = i_2$. The lower tail of this sum is controlled by a sub-Weibull concentration inequality: products and squares of sub-exponential variables are sub-Weibull of order $1/2$, and the resulting tail bound is strong enough that a union bound over all $m^2$ ordered row pairs still leaves a polynomially-decaying failure probability. The lemma records the resulting uniform lower bound in a form used later in the proof of Theorem \ref{thm:concentration_margins_rs}.

\begin{lemma}[High-probability row alignment]\label{lem:row_overlap}
	Let $\X \in \R^{m\times n}$ be a random matrix with independent, non-negative entries. Assume there exists $\lambda_{\min} > 0$ such that $\E[\X_{ij}] = \bLambda_{ij} \ge \lambda_{\min}$ for all $i, j$. Suppose the entries have uniformly bounded sub-exponential norms, $\max_{i,j} \|\X_{ij}\|_{\psi_1} \le K_\psi$ (here $K_\psi$ denotes the Orlicz-$\psi_1$ scale of the entries, not the bounded-cost parameter $K$ of Assumption \ref{assump:bounded_cost}). Define the overlap threshold parameter $a = \frac{1}{2}\lambda_{\min}^2$. Then there exists an absolute constant $c > 0$ such that the row alignment condition holds uniformly for all pairs of rows (including $i_1 = i_2$) with probability at least:
	\begin{align}\label{eq:overlap_prob_bound}
		\P\left( \forall i_1, i_2 \in [m],\ \sum_{j=1}^n \X_{i_1 j}\X_{i_2 j} \ge a n \right) \ge 1 - m^2 \exp\left( - c \min\left( n \frac{\lambda_{\min}^4}{K_\psi^4}, \sqrt{n} \frac{\lambda_{\min}}{K_\psi} \right) \right).
	\end{align}
\end{lemma}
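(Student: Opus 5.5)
Fix a pair $(i_1,i_2)$ and set $Z_j := \X_{i_1 j}\X_{i_2 j}$ for $j\in[n]$ (so $Z_j=\X_{i_1 j}^2$ when $i_1=i_2$). By independence of entries, the $Z_j$ are independent across $j$ and nonnegative. The plan is to bound the lower tail of $S:=\sum_j Z_j$, namely $\P(S<an)$, and then take a union bound over the $m^2$ ordered pairs.

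\emph{Mean.} If $i_1\ne i_2$, then $\E Z_j=\bLambda_{i_1 j}\bLambda_{i_2 j}\ge\lambda_{\min}^2$ by independence. If $i_1=i_2$, then $\E Z_j=\E\X_{i_1j}^2\ge(\E\X_{i_1 j})^2\ge\lambda_{\min}^2$ by Jensen. Hence $\E S\ge n\lambda_{\min}^2=2an$, and
\[
\{S<an\}\subseteq\{S-\E S\le -\tfrac12 n\lambda_{\min}^2\}.
\]

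\emph{Tail of the summands.} Each $Z_j$ is a product of two sub-exponential variables, so $\|Z_j\|_{\psi_{1/2}}\lesssim\|\X_{i_1j}\|_{\psi_1}\|\X_{i_2j}\|_{\psi_1}\le K_\psi^2$. This follows from $\P(|XY|>t)\le\P(|X|>\sqrt t)+\P(|Y|>\sqrt t)$ in the product case, and directly in the square case. Centering costs only an absolute constant: $\|Z_j-\E Z_j\|_{\psi_{1/2}}\lesssim K_\psi^2$, using $\E Z_j\lesssim K_\psi^2$.

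\emph{Concentration.} I would apply a Bernstein-type inequality for sums of independent centered sub-Weibull variables of order $1/2$. Examples are Kuchibhotla--Chakrabortty, or Hitczenko--Montgomery-Smith--Oleszkiewicz type bounds:
\[
\P\bigl(|S-\E S|\ge t\bigr)\le 2\exp\Bigl(-c\min\Bigl(\frac{t^2}{nK_\psi^4},\Bigl(\frac{t}{K_\psi^2}\Bigr)^{1/2}\Bigr)\Bigr).
\]
Plugging in $t=\tfrac12 n\lambda_{\min}^2$ gives exponents $n\lambda_{\min}^4/K_\psi^4$ and $\sqrt{n}\,\lambda_{\min}/K_\psi$ up to constants, exactly the two terms in \eqref{eq:overlap_prob_bound}.

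\emph{Union bound.} Summing over the $m^2$ pairs (the factor $2$ is absorbed into $c$ or handled by a one-sided bound) finishes the argument.

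The main obstacle is the sub-Weibull concentration step, since Lemma \ref{lem:bernstein_RV} only handles $\psi_1$ tails. If no such inequality is imported, I would instead exploit nonnegativity: only the lower tail is needed, so I would use the one-sided bound $\E e^{-\lambda Z}\le 1-\lambda\E Z+\tfrac{\lambda^2}{2}\E Z^2$, with $\E Z^2\lesssim K_\psi^4$. This yields the Gaussian-regime term $n\lambda_{\min}^4/K_\psi^4$ directly. The $\sqrt{n}$ term would then be dominated, or handled by truncating $Z_j$ at level $\asymp K_\psi^2 (n\lambda_{\min}^2/K_\psi^2)^{1/2}\cdot$const and controlling the truncated mass via the $\psi_{1/2}$ tail.
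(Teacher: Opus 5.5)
Your main route is exactly the paper's proof: the independence of $Z_j=\X_{i_1j}\X_{i_2j}$ across $j$, the mean bound via independence or Jensen, $\|Z_j\|_{\psi_{1/2}}\lesssim K_\psi^2$, a sub-Weibull$(1/2)$ Bernstein bound at $t=\tfrac12 n\lambda_{\min}^2$, and a union bound over the $m^2$ pairs. Your nonnegativity fallback is actually cleaner than you suggest: $\E e^{-\lambda Z_j}\le\exp(-\lambda\E Z_j+\tfrac{\lambda^2}{2}\E Z_j^2)$ with $\E Z_j^2\lesssim K_\psi^4$ already gives $\P(S<an)\le\exp(-c\,n\lambda_{\min}^4/K_\psi^4)\le\exp\bigl(-c\min(n\lambda_{\min}^4/K_\psi^4,\sqrt{n}\,\lambda_{\min}/K_\psi)\bigr)$ with no prefactor, so you need no truncation, no imported sub-Weibull inequality, and no argument about absorbing the factor $2$.
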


\begin{proof}
	Fix a pair of rows $i_1, i_2 \in [m]$ (possibly equal). For $j = 1, \dots, n$, define the random variables $Z_j = \X_{i_1 j} \X_{i_2 j}$. Because the entries of $\X$ are independent across columns, the sequence $Z_1, \dots, Z_n$ consists of independent, non-negative random variables (this holds whether $i_1 \ne i_2$ or $i_1 = i_2$, since $Z_j$ depends only on column $j$).
	
	When $i_1 \ne i_2$, independence of the rows gives
	\[
	\E[Z_j] = \E[\X_{i_1 j}] \E[\X_{i_2 j}] = \bLambda_{i_1 j} \bLambda_{i_2 j} \ge \lambda_{\min}^2.
	\]
	When $i_1 = i_2$, Jensen's inequality gives $\E[Z_j] = \E[\X_{ij}^2] \ge (\E \X_{ij})^2 = \bLambda_{ij}^2 \ge \lambda_{\min}^2$. Furthermore, because the product of two independent sub-exponential random variables, or the square of a sub-exponential random variable, is a sub-Weibull random variable of order $\alpha = 1/2$, there exists an absolute constant $C_1 > 0$ such that the sub-Weibull norm is bounded by:
	\[
	\|Z_j\|_{\psi_{1/2}} \le C_1 \|\X_{i_1 j}\|_{\psi_1} \|\X_{i_2 j}\|_{\psi_1} \le C_1 K_\psi^2 =: M.
	\]
	We wish to bound the probability that the sum $S = \sum_{j=1}^n Z_j$ drops below $an = \frac{1}{2}n\lambda_{\min}^2$. If $S < \frac{1}{2}n\lambda_{\min}^2$, its deviation from its mean is at least $t = \frac{1}{2}n\lambda_{\min}^2$.
	
	We apply the generalized concentration inequality for sums of independent sub-Weibull($1/2$) random variables. For any $t > 0$, there exists an absolute constant $c_0 > 0$ such that:
	\[
	\P\left( \sum_{j=1}^n (Z_j - \E[Z_j]) \le -t \right) \le \exp\left( -c_0 \min\left( \frac{t^2}{n M^2}, \left(\frac{t}{M}\right)^{1/2} \right) \right).
	\]
	Substituting our deviation $t = \frac{1}{2}n\lambda_{\min}^2$ into the bound yields:
	\begin{align*}
		\P\left( \sum_{j=1}^n Z_j < a n \right) &\le \exp\left( -c_0 \min\left( \frac{n^2 \lambda_{\min}^4 / 4}{n M^2}, \left(\frac{n\lambda_{\min}^2 / 2}{M}\right)^{1/2} \right) \right) \\
		&= \exp\left( -c_0 \min\left( n \frac{\lambda_{\min}^4}{4 M^2}, \sqrt{n} \frac{\lambda_{\min}}{\sqrt{2M}} \right) \right).
	\end{align*}
	Substituting $M = C_1 K_\psi^2$ and absorbing the fractional and absolute constants into a new universal constant $c > 0$, we obtain the tail bound:
	\[
	\P\left( \sum_{j=1}^n Z_j < a n \right) \le \exp\left( - c \min\left( n \frac{\lambda_{\min}^4}{K_\psi^4}, \sqrt{n} \frac{\lambda_{\min}}{K_\psi} \right) \right).
	\]
	
	This bound holds for any fixed (ordered) pair of rows $(i_1, i_2) \in [m]^2$, of which there are $m^2$. Taking a union bound over all such pairs yields the uniform probability lower bound in \eqref{eq:overlap_prob_bound}.
\end{proof}

\begin{proof}[\textbf{Proof of Theorem \ref{thm:concentration_margins_rs}}]
	We construct the high-probability event $\mathcal{E}_{1}$ by intersecting two favorable events: $E_{\textup{approx}}(\eps_0)$ and $E_{\textup{overlap}}$. As will become clear, their intersection already forces $\X \in \mathcal{S}(\r,\c)$, so the scalability event from Lemma \ref{lem:scalability} need not be union-bounded separately.
	
	\smallskip
	\textbf{1. Approximate Scaling ($E_{\textup{approx}}(\eps_0)$):} Let $E_{\textup{approx}}(\eps_0)$ be the event that $r_i(\hat{\X}^{\r,\c}) \in [(1-\eps_0)\r(i),(1+\eps_0)\r(i)]$ for all $i \in [m]$ and $c_j(\hat{\X}^{\r,\c}) \in [(1-\eps_0)\c(j),(1+\eps_0)\c(j)]$ for all $j \in [n]$. By Proposition \ref{prop:approximate_scaling} (with $V_r, V_c, B$ as in \eqref{eq:V_r_V_c_B_def}), for every $i$,
	\[
	\P\!\left(\bigl|r_i(\hat{\X}^{\r,\c}) - \r(i)\bigr| \ge \eps_0 \r(i)\right) \le 2\exp\!\left(-\frac{\eps_0^2 \r(i)^2/2}{V_r + B\,\eps_0\, \r(i)}\right),
	\]
	and analogously for columns via $V_c$. Using the standard fact that for a sub-exponential Bernstein bound $2\exp(-t^2/2/(V + B t))$, the choice $t \ge \sqrt{2V\tau} + 2B\tau$ makes it $\le 2\exp(-\tau)$, it suffices that $\eps_0 \r(i) \ge \sqrt{2V_r \tau} + 2 B \tau$ for every $i$, and analogously for columns. By $\delta$-smoothness, $\r(i) \ge \delta N/m$, so it is enough to require
	\[
	\eps_0 \ge \frac{m}{\delta N}\bigl(\sqrt{2 V_r \tau} + 2 B \tau\bigr) = \frac{m\, e^{2K}}{\delta \|\bLambda\|_1}\bigl(\sigma\sqrt{2 n \tau} + 2 R\tau\bigr),
	\]
	and symmetrically $\eps_0 \ge \frac{n e^{2K}}{\delta\|\bLambda\|_1}(\sigma\sqrt{2m\tau} + 2R\tau)$ for the columns. Replacing $m$ and $n$ by $(m\vee n)$ on the right-hand sides yields exactly the definition of $\eps_0$ in \eqref{eq:eps0_Cstar_def}. A union bound over $m+n \le 2(m \vee n)$ rows and columns gives
	\[
	\P(E_{\textup{approx}}(\eps_0)^c) \le 4(m \vee n)\exp(-\tau) = (m \vee n)^{-D}.
	\]
	
	\smallskip
	\textbf{2. Row alignment ($E_{\textup{overlap}}$):} Let $E_{\textup{overlap}}$ be the event that the row alignment satisfies $\sum_{j=1}^n \X_{i_1 j} \X_{i_2 j} \ge \frac{1}{2}n \bLambda_{\min}^2$ uniformly for all pairs of rows $i_1, i_2 \in [m]$ (including $i_1 = i_2$). To bound the probability of this event via the sub-Weibull concentration of the product, we require a uniform upper bound on the sub-exponential norm of the matrix entries.
	
	By Assumption \ref{assump:random_matrix}, the centered entries $\X_{ij} - \bLambda_{ij}$ satisfy the Bernstein moment condition. By standard Orlicz norm equivalences, there exists an absolute constant $C_0 > 0$ such that their sub-exponential norm is bounded by $C_0(\sigma + R)$. Furthermore, the $\psi_1$ norm of a constant is bounded by the constant scaled by $1/\ln 2$. Applying the triangle inequality gives:
	\[
	\|\X_{ij}\|_{\psi_1} \le \|\X_{ij} - \bLambda_{ij}\|_{\psi_1} + \|\bLambda_{ij}\|_{\psi_1} \le C_0(\sigma + R) + \frac{\bLambda_{\max}}{\ln 2}.
	\]
	Defining the absolute constant $C_1 = \max(C_0, 1/\ln 2)$, we obtain the explicit uniform bound $\|\X_{ij}\|_{\psi_1} \le C_1 (\sigma + R + \bLambda_{\max})$. Because $C_1$ is a universal constant, it can be absorbed into the final absolute constant $c$ in the exponent of the sub-Weibull tail bound. Thus, utilizing the global scale parameter $M = \sigma + R + \bLambda_{\max}$, Lemma \ref{lem:row_overlap} yields the failure probability:
	\[
	\P(E_{\textup{overlap}}^c) \le m^2 \exp(-c\,\Phi_{\bLambda}).
	\]
	
	\smallskip
	\textbf{3. Combining the events.} We set $\mathcal{E}_{1} = E_{\textup{approx}}(\eps_0) \cap E_{\textup{overlap}}$ and work on this intersection. Consider the approximately scaled matrix $\hat{\X}^{\r,\c} = \D(e^{\balpha})\X\D(e^{\bbeta})$. By definition of $E_{\textup{approx}}(\eps_0)$, its margins satisfy $\r(\hat{\X}^{\r,\c}) \in (1 \pm \eps_0)\r$ and $\c(\hat{\X}^{\r,\c}) \in (1 \pm \eps_0)\c$.
	Furthermore, Lemma \ref{lem: boundedness_matrix_scaling_potentials} gives a uniform lower bound on the deterministic potentials, yielding $\hat{\X}^{\r,\c}_{ij} \ge (e^{-4K} N / \|\bLambda\|_1) \X_{ij}$. Since $E_{\textup{overlap}}$ holds for all $i_1, i_2 \in [m]$, the row alignment for $\hat{\X}^{\r,\c}$ is bounded below by:
	\[
	\sum_{j=1}^n \hat{\X}^{\r,\c}_{i_1 j}\hat{\X}^{\r,\c}_{i_2 j} \ge \left(\frac{e^{-4K} N}{\|\bLambda\|_1}\right)^2 \sum_{j=1}^n \X_{i_1 j}\X_{i_2 j} \ge \frac{e^{-8K} N^2}{2 \|\bLambda\|_1^2} \bLambda_{\min}^2 n =: \hat{a} n.
	\]
	We now apply Theorem \ref{thm:potential_stability} to $\hat{\X}^{\r,\c}$ with overlap parameter $\hat{a}$. Because $(\r,\c)$ is $\delta$-smooth, the stability constant evaluates to:
	\[
	C = 1 + \frac{9 N^2}{\hat{a}\, \delta^2 m^2 n^2} = 1 + \frac{9 N^2}{\delta^2 m^2 n^2} \left( \frac{2 e^{8K} \|\bLambda\|_1^2}{\bLambda_{\min}^2 N^2} \right) = 1 + \frac{18\, e^{8K} \|\bLambda\|_1^2}{\delta^2 \bLambda_{\min}^2 m^2 n^2} = C_*.
	\]
	By assumption, $\eps_0 \le \frac{1}{50 C_*^2}$. Therefore, Theorem \ref{thm:potential_stability} produces corrective potentials $\Delta \balpha, \Delta \bbeta$ with $\|(\Delta \balpha, \Delta \bbeta)\|_{\infty} \le 4C_* \eps_0$ such that $\D(e^{\Delta \balpha})\hat{\X}^{\r,\c}\D(e^{\Delta \bbeta})$ has exact margins $(\r, \c)$. Equivalently, we have $\D(e^{\balpha+\Delta\balpha})\, \X\, \D(e^{\bbeta+\Delta\bbeta}) \in \T(\r,\c)$, so $\X \in \mathcal{S}(\r,\c)$ on $\mathcal{E}_1$ — no separate scalability union bound is needed.
	
	By uniqueness of the Sinkhorn scaling potentials (up to global shifts), there exists a scalar $t \in \R$ such that $\balpha_{\X} = \balpha + \Delta \balpha - t\mathbf{1}$ and $\bbeta_{\X} = \bbeta + \Delta \bbeta + t\mathbf{1}$. The bounds on the deviations of the exact potentials follow immediately. Taking the union bound $\P(\mathcal{E}_1^c) \le \P(E_{\textup{approx}}(\eps_0)^c) + \P(E_{\textup{overlap}}^c)$ gives the probability estimate stated in the theorem.
\end{proof}

The remaining ingredient for the fluctuation analysis of Section \ref{sec:fluctuation_statements} is a spectral-norm concentration bound for the centered noise matrix $\X - \bLambda$, which will be applied in the proofs of Theorem \ref{thm:first_approximation_rescaled} and Lemma \ref{lem:rescaled_approx}. We derive it by Girko-type Hermitian dilation followed by a matrix Bernstein inequality. We begin by recording the vector-form matrix Bernstein inequality we will use.

\begin{theorem}[Matrix Bernstein Inequality, Thm. 6.2 in \cite{tropp2012user}]\label{thm:matrix_ber}
	Let $\{\bZ_k\}_{k=1}^{N}$ be a collection of independent, self-adjoint, random $d\times d$ mean zero matrices. Suppose the moment growth
	\[
	\E[\bZ_k^q]\preccurlyeq\frac{q!}{2}R^{q-2}\bA_k^2 \quad \text{ for } q=2,3,4,...
	\]
	Denote $\sigma^2=\|\sum_{k=1}^{N}\bA_k^2\|,$ then
	\[
	\P\left(\left\|\sum_{k=1}^{N}\bZ_k\right\|>t\right)\le \begin{cases}
		d\exp(-t^2/(4\sigma^2)) & \text{if } t\le \sigma^2/R,\\
		d\exp(-t/(4R))  & \text{if } t\ge\sigma^2/R.
	\end{cases}
	\]
\end{theorem}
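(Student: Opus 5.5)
This is the matrix Bernstein inequality of Tropp, which the paper cites rather than reproves. The natural proof is the matrix Laplace-transform method, and I would follow that route.

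\textbf{Step 1 (Laplace transform).} For $\theta>0$ and $Y=\sum_k \bZ_k$, I would bound
\[
\P(\lambda_{\max}(Y)\ge t)\le e^{-\theta t}\,\E\,\mathrm{tr}\, e^{\theta Y}.
\]
Lieb's concavity theorem (the map $A\mapsto \mathrm{tr}\exp(H+\log A)$ is concave) then gives the subadditivity of matrix cumulant generating functions:
\[
\E\,\mathrm{tr}\, e^{\theta Y}\le \mathrm{tr}\exp\Bigl(\sum_k \log \E e^{\theta \bZ_k}\Bigr).
\]

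\textbf{Step 2 (single-summand mgf).} Expanding the exponential and using mean zero together with the hypothesis $\E\bZ_k^q\preccurlyeq \tfrac{q!}{2}R^{q-2}\bA_k^2$, I would get, for $0<\theta<1/R$,
\[
\E e^{\theta\bZ_k}\preccurlyeq I+\frac{\theta^2}{2}\sum_{q\ge2}(\theta R)^{q-2}\bA_k^2
= I+\frac{\theta^2}{2(1-\theta R)}\bA_k^2 .
\]
Since $\log$ is operator monotone and $\log(1+x)\le x$, this yields
\[
\log \E e^{\theta\bZ_k}\preccurlyeq \frac{\theta^2}{2(1-\theta R)}\bA_k^2 .
\]
Monotonicity of $\mathrm{tr}\exp$ under the semidefinite order, together with $\mathrm{tr}\, e^{H}\le d\, e^{\lambda_{\max}(H)}$, then gives
\[
\P(\lambda_{\max}(Y)\ge t)\le d\exp\Bigl(-\theta t+\frac{\theta^2\sigma^2}{2(1-\theta R)}\Bigr).
\]

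\textbf{Step 3 (optimize $\theta$).} I would take $\theta=t/(\sigma^2+Rt)$, which gives
\[
d\exp\Bigl(-\frac{t^2/2}{\sigma^2+Rt}\Bigr).
\]
Splitting into the two cases: if $t\le\sigma^2/R$ then $\sigma^2+Rt\le 2\sigma^2$, so the exponent is at most $-t^2/(4\sigma^2)$; if $t\ge\sigma^2/R$ then $\sigma^2+Rt\le 2Rt$, so the exponent is at most $-t/(4R)$. Applying the same argument to $-\bZ_k$ controls $\lambda_{\min}$. Note that the even-moment hypothesis holds for $-\bZ_k$, but the odd moments flip sign; I would handle this by working with $\E|\bZ_k|^q$-type domination, or by accepting an extra factor of $2$ in front of $d$, which is the standard form in Tropp.

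\textbf{Main obstacle.} The only deep input is Lieb's theorem in Step 1, which I would simply quote. Beyond that, the delicate point is the two-sided norm statement as printed with prefactor $d$ rather than $2d$. The cleanest route is to apply the one-sided bound to the Hermitian dilation, whose spectrum is symmetric, so that $\|Y\|=\lambda_{\max}$ of the dilation. This is exactly how the result is used later for $\X-\bLambda$.
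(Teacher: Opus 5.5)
The paper does not prove this statement; it only quotes Tropp. Your Steps 1--3 correctly reconstruct Tropp's argument: the Laplace-transform bound, Lieb-based subadditivity of matrix cumulant generating functions, $\E e^{\theta\bZ_k}\preccurlyeq I+\frac{\theta^2}{2(1-\theta R)}\bA_k^2$ for $0<\theta<1/R$, and then $\theta=t/(\sigma^2+Rt)$. But what they prove is Tropp's Theorem 6.2 as Tropp states it, a bound on $\P(\lambda_{\max}(\sum_k\bZ_k)\ge t)$ with prefactor $d$. The version printed here uses $\|\sum_k\bZ_k\|$ and keeps the prefactor $d$, and in that form it is false, so the ``delicate point'' you identify cannot be resolved by any argument. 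Take $d=N=1$ and $\bZ_1=\pm a$ with probability $1/2$ each, with $\bA_1=a$ and $R=a$. The hypothesis holds because $\E\bZ_1^q\in\{0,a^q\}$ and $a^q\le\frac{q!}{2}a^q$, so $\sigma^2=a^2$ and $\sigma^2/R=a$. Yet for every $t\in(0,a)$ we have $\P(|\bZ_1|>t)=1>e^{-t^2/(4a^2)}$.

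Your proposed repair via the Hermitian dilation does not work. The dilation of a self-adjoint $d\times d$ matrix is $2d\times 2d$, so the one-sided bound applied to it returns the prefactor $2d$, the same as a union bound over $\pm$. Its odd powers also still require control of $-\E\bZ_k^q$. Likewise, the two options at the end of Step 3 are not alternatives. A two-sided bound needs both the factor $2$ and a moment hypothesis for $-\bZ_k$. The latter can be recovered from the even moments up to a constant, e.g.\ from $-\bZ^q\preccurlyeq\frac12(s\bZ^{q-1}+s^{-1}\bZ^{q+1})$ for odd $q$ with $s=R\sqrt{q(q+1)}$.

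What is true, and all the paper needs, is the $\lambda_{\max}$ statement plus one observation. If $\sum_k\bZ_k$ has spectrum symmetric about $0$, then $\|\sum_k\bZ_k\|=\lambda_{\max}(\sum_k\bZ_k)$, and the prefactor $d$ is legitimate. This is exactly the situation in Lemma \ref{lem:spectral norm of centered poisson}, where each $\bZ^{(ij)}$ is itself a dilation and $d=m+n$. You should therefore prove, and the theorem should state, either the one-sided version or the norm version with prefactor $2d$ under a two-sided moment hypothesis.
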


Applying Theorem \ref{thm:matrix_ber} to the $(m+n)\times(m+n)$ Hermitian dilation of $\X - \E[\X]$ with rank-one Bernstein coefficient matrices, and specializing the threshold $t$ to the value $t_D$ that makes the union-bound prefactor $(m+n)\exp(-\tau)$ decay polynomially in $m\vee n$, yields the following dimension-free spectral-norm bound that we use throughout the fluctuation analysis.

\begin{lemma}[Spectral concentration of centered random matrices]\label{lem:spectral norm of centered poisson}
	Let $\X = (\X_{ij})$ be an $m\times n$ matrix with independent real entries and finite means $\E[\X_{ij}]$. Assume the moment bounds
	\[
	\E\bigl[|\X_{ij}-\E[\X_{ij}]|^{q}\bigr] \le \frac{q!}{2}\sigma_{ij}^2 R_{ij}^{q-2} \qquad \text{for all } q \ge 2,
	\]
	and denote $\sigma = \max_{ij}\sigma_{ij}$, $R = \max_{ij} R_{ij}$. Then for any $t \ge 0$,
	\begin{align}\label{eq:Bernstein_spectral_noise}
		\P\bigl(\|\X-\E[\X]\|_2 \ge t\bigr) \;\le\; (m+n)\exp\!\left(-\frac{t^2/2}{(m\vee n)\sigma^2 + Rt}\right).
	\end{align}
	In particular, for any $D > 0$, set
	\begin{align}\label{eq:t_D_def}
		\tau \;:=\; (D+1)\log(m\vee n) + \log 4, \qquad t_D \;:=\; \sigma\sqrt{2(m\vee n)\tau} + 2R\tau.
	\end{align}
	Then we have 
	\begin{align}\label{eq:spec_prob_bd}
		\P\bigl( \|\X - \E[\X]\|_2 > t_D \bigr) \;\le\; (m\vee n)^{-D}.
	\end{align}
\end{lemma}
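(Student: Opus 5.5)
We prove Lemma \ref{lem:spectral norm of centered poisson} by Hermitian dilation combined with the matrix Bernstein inequality (Theorem \ref{thm:matrix_ber}), or more directly with its sub-exponential form from \cite{tropp2012user}, which gives the tail $d\exp(-\tfrac{t^2/2}{\sigma^2+Rt})$.

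\emph{Step 1: dilation.} Write $\X-\E[\X]=\sum_{i,j} Y_{ij}\, e_i e_j^{T}$ with $Y_{ij}=\X_{ij}-\E[\X_{ij}]$ independent and centered. Set
\begin{align}
\bZ_{ij}:=Y_{ij}\begin{pmatrix} 0 & e_ie_j^T\\ e_je_i^T & 0\end{pmatrix}=:Y_{ij}\,G_{ij}.
\end{align}
These are independent, self-adjoint, mean-zero $(m+n)\times(m+n)$ matrices. The spectral norm of $\sum\bZ_{ij}$ equals $\|\X-\E\X\|_2$.

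\emph{Step 2: moments.} The matrix $G_{ij}$ has eigenvalues $\pm1$ and $0$. Hence $G_{ij}^2=E_{ij}:=\mathrm{diag}(e_ie_i^T,e_je_j^T)$, and $G_{ij}^q$ equals $G_{ij}$ for odd $q$ and $E_{ij}$ for even $q$. Since $\pm G_{ij}\preccurlyeq E_{ij}$, we get
\begin{align}
\E[\bZ_{ij}^q]\preccurlyeq \E|Y_{ij}|^q\,E_{ij}\preccurlyeq \tfrac{q!}{2}R^{q-2}\,\sigma_{ij}^2E_{ij}.
\end{align}
So we may take $\bA_{ij}^2=\sigma^2E_{ij}$. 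Then $\sum_{ij}\bA_{ij}^2=\sigma^2\,\mathrm{diag}(n I_m, m I_n)$, whose norm is $\le (m\vee n)\sigma^2$.

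\emph{Step 3: tail.} Applying the sub-exponential matrix Bernstein bound (the standard Bernstein-form version of Tropp's Theorem 6.2, as stated in \eqref{eq:Bernstein_spectral_noise}) gives
\begin{align}
\P(\|\X-\E\X\|_2\ge t)\le (m+n)\exp\!\left(-\frac{t^2/2}{(m\vee n)\sigma^2+Rt}\right).
\end{align}
If one insists on using the two-regime form of Theorem \ref{thm:matrix_ber} literally, the constants change (a factor $4$ instead of $2$). The main obstacle is this bookkeeping of the precise constant in the exponent; I would cite the Bernstein-form statement directly from \cite{tropp2012user} to get exactly \eqref{eq:Bernstein_spectral_noise}.

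\emph{Step 4: specialization.} Let $V=(m\vee n)\sigma^2$ and $t=t_D=\sqrt{2V\tau}+2R\tau$. We check that $t^2/2\ge \tau(V+Rt)$. Indeed, $t^2\ge 2V\tau+2Rt\tau$ because
\begin{align}
t^2=2V\tau+4R\tau\sqrt{2V\tau}+4R^2\tau^2 \quad\text{and}\quad 2Rt\tau=2R\tau\sqrt{2V\tau}+4R^2\tau^2,
\end{align}
so the difference is $2R\tau\sqrt{2V\tau}\ge 0$. This is the same elementary fact already used in the proof of Theorem \ref{thm:concentration_margins_rs}. Therefore
\begin{align}
\P(\|\X-\E\X\|_2>t_D)\le (m+n)e^{-\tau}\le 2(m\vee n)\cdot\frac{(m\vee n)^{-(D+1)}}{4}\le (m\vee n)^{-D},
\end{align}
which is \eqref{eq:spec_prob_bd}.
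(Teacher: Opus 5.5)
Your proposal is correct and follows the paper's proof essentially step for step: Hermitian dilation into rank-two summands, the comparison $\E[\bZ_{ij}^q]\preccurlyeq \frac{q!}{2}R^{q-2}\sigma^2 E_{ij}$ (your observation $\pm G_{ij}\preccurlyeq E_{ij}$ is the paper's diagonal-dominance step), the variance parameter $\|\sum_{ij}\sigma^2E_{ij}\|=(m\vee n)\sigma^2$, and the same algebra $t_D^2-2\tau(V+Rt_D)=2R\tau\sqrt{2V\tau}\ge 0$ with $V=(m\vee n)\sigma^2$.

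You are also right about the constant. The two-regime form quoted as Theorem~\ref{thm:matrix_ber} only gives exponents $t^2/(4V)$ or $t/(4R)$. These are weaker than \eqref{eq:Bernstein_spectral_noise} and would not give $(m\vee n)^{-D}$ at $t_D$. So the paper's step ``applying Thm.~\ref{thm:matrix_ber}'' really needs your fix of citing the Bernstein form $d\exp\bigl(-\frac{t^2/2}{\sigma^2+Rt}\bigr)$ from Tropp's Theorem 6.2. Since that bound controls $\lambda_{\max}$, add one remark: the dilation's spectrum is symmetric about $0$, so its $\lambda_{\max}$ equals $\|\X-\E[\X]\|_2$. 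This is what keeps the prefactor at $m+n$ rather than $2(m+n)$.
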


\begin{proof}
		Consider the dilation of $\X-\E[\X]$
		\[
		\Y=\begin{bmatrix}
			\mathbf{0} & \X-\E[\X]\\
			(\X-\E[\X])^T & \mathbf{0}
		\end{bmatrix}.
		\]
		Since $\|\Y\|=\|\X-\E[\X]\|$, we can apply Matrix Bernstein Inequality to $\Y$ to bound the spectral norm of $\X-\E[\X]$. For $1\le i\le m, 1\le j\le n$, denote $[(\X_{kl}-\E[\X_{kl}])\delta_{ij}]_{k,l}$ the $m\times n$ matrix with $\X_{ij}-\E[\X_{ij}]$ at $(i,j)$ position and zero everywhere else. And define adjoint matrix $\bZ^{(ij)}$ of dimension $m+n$ by
		\[
		\bZ^{(ij)}=\begin{bmatrix}
			\mathbf{0} & [(\X_{kl}-\E[\X_{kl}])\delta_{ij}]_{k,l}\\
			([(\X_{kl}-\E[\X_{kl}])\delta_{ij}]_{k,l})^T & \mathbf{0}
		\end{bmatrix}.
		\]
		Then clearly $\Y=\sum_{i,j}\bZ^{(ij)}.$ Notice that $\bZ^{(ij)}$ has only two nonzero entries, $\X_{ij}-\E[\X_{ij}]$, at positions $(i,j+m)$ and $(j+m,i)$. So, it has the following singular value decomposition
		\[
		\bZ^{(ij)}=(\X_{ij}-\E[\X_{ij}])[\mathbf{e}_{j+m}\ \mathbf{e}_{i}]\mathbf{I}_{2\times 2}[\mathbf{e}_{i}\ \mathbf{e}_{j+m}]^T,
		\]
		where $\mathbf{e}_i$ denotes the standard $i$-th basis vector in $\R^{m+n}$. Then one can check that for an even integer $q>0$
		\[
		(\bZ^{(ij)})^q=(\X_{ij}-\E[\X_{ij}])^q[\mathbf{e}_{i}\ \mathbf{e}_{j+m}]\mathbf{I}_{2\times 2}[\mathbf{e}_{i}\ \mathbf{e}_{j+m}]^T,
		\]
		namely $(\bZ^{(ij)})^q$ has two nonzero entries of size $(\X_{ij}-\E[\X_{ij}])^q$ at $(i,i)$ and $(j+m,j+m)$ positions. And when $q$ is an odd positive integer, 
		\[
		(\bZ^{(ij)})^q=(\X_{ij}-\E[\X_{ij}])^q[\mathbf{e}_{j+m}\ \mathbf{e}_{i}]\mathbf{I}_{2\times 2}[\mathbf{e}_{i}\ \mathbf{e}_{j+m}]^T,
		\]
		namely $(\bZ^{(ij)})^q$ has two nonzero entries of size $(\X_{ij}-\E[\X_{ij}])^q$ at $(i,j+m)$ and $(j+m,i)$ positions. Notice that in either case $\E[|\X_{ij}-\E[\X_{ij}]|^q]\,\D(\mathbf{e}_{i}+\mathbf{e}_{j+m}) - \E[(\bZ^{(ij)})^q]$ is diagonal dominant, thus PSD by Gershgorin circle theorem. From the assumption on the moment bounds, we have
		\[
		\E[(\bZ^{(ij)})^q]\preccurlyeq \frac{q!}{2}R_{ij}^{q-2}\bA_{ij}^2,
		\]
		where $\A_{ij}^2=\sigma_{ij}^2\D(\mathbf{e}_{i}+\mathbf{e}_{j+m}).$
		
		\[
		\sum_{i,j}\A_{ij}^2\preccurlyeq \sigma^2\begin{bmatrix}
			n\mathbf{I}_{m\times m} & \mathbf{0}\\
			\mathbf{0} & m\mathbf{I}_{n\times n}
		\end{bmatrix},
		\]
		where $\sigma=\max_{ij}\sigma_{ij}.$
		Then applying Thm. \ref{thm:matrix_ber}, we get
		\[
		\P(\|\X-\E[\X]\|_2\ge t)<(m+n)\exp\left(-\frac{t^2/2}{(m\vee n)\sigma^2+Rt}\right),
		\]
		which is \eqref{eq:Bernstein_spectral_noise}. For the specialization, set $V := (m\vee n)\sigma^2$ and $t_D = \sqrt{2V\tau} + 2R\tau$ as in \eqref{eq:t_D_def}. A direct expansion gives
		\[
		t_D^{2} \;=\; 2V\tau + 4R\tau\sqrt{2V\tau} + 4R^{2}\tau^{2}, \qquad V + Rt_D \;=\; V + R\sqrt{2V\tau} + 2R^{2}\tau,
		\]
		so $t_D^{2} - 2\tau(V + Rt_D) = 2R\tau\sqrt{2V\tau} \ge 0$. Hence $t_D^{2}/\bigl(2(V+Rt_D)\bigr)\ge \tau$, and \eqref{eq:Bernstein_spectral_noise} yields
		\[
		\P\bigl(\|\X-\E[\X]\|_2 > t_D\bigr) \;\le\; (m+n)e^{-\tau} \;\le\; 2(m\vee n)\cdot\tfrac{1}{4}(m\vee n)^{-(D+1)} \;=\; \tfrac{1}{2}(m\vee n)^{-D} \;\le\; (m\vee n)^{-D},
		\]
		using $m+n \le 2(m\vee n)$ and $e^{-\log 4} = 1/4$.
\end{proof}

\begin{proof}[\textbf{Proof of Theorem \ref{thm:first_approximation_rescaled}}]
	The two bounds defining $\mathcal{E}_2$ (spectral) and $\mathcal{E}_3$ ($L^1$) are established via a common concentration step for the potentials followed by a spectral path and an $L^1$ path.
	
	\textbf{Step 1: Potential concentration.}
	By Theorem \ref{thm:concentration_margins_rs}, on the event $\mathcal{E}_1$ the random potentials are well-defined and there exists a scalar shift $t \in \R$ such that
	\begin{align}\label{eq:pf_thm1_pot_bound}
		\|\balpha_{\X} + t\mathbf{1} - \balpha\|_\infty \;\le\; 4 C_* \eps_0, \qquad \|\bbeta_{\X} - t\mathbf{1} - \bbeta\|_\infty \;\le\; 4 C_* \eps_0.
	\end{align}
	Since $\eps_0 \le 1/(50 C_*^2)$ and $C_* \ge 1$ give $4 C_* \eps_0 \le 4/50 < 1/2$, the inequality $|e^x - 1| \le 2|x|$ for $|x|\le 1/2$ yields the per-potential relative bound
	\begin{align}\label{eq:pf_thm1_relative_bound}
		\max_i\left|\frac{e^{\balpha_{\X}(i)+t}}{e^{\balpha(i)}} - 1\right|,\;\;\; \max_j\left|\frac{e^{\bbeta_{\X}(j)-t}}{e^{\bbeta(j)}} - 1\right| \;\le\; 8 C_* \eps_0 \;=\; \tfrac{1}{2}\eps_{\textup{pot}},
	\end{align}
	and, adding the two exponents, the joint relative bound
	\begin{align}\label{eq:pf_thm1_joint_bound}
		\max_{i,j}\left|\frac{e^{\balpha_{\X}(i)+\bbeta_{\X}(j)}}{e^{\balpha(i)+\bbeta(j)}} - 1\right| \;\le\; 16 C_* \eps_0 \;=\; \eps_{\textup{pot}}.
	\end{align}
	By Lemma \ref{lem: boundedness_matrix_scaling_potentials}, $e^{\balpha(i)+\bbeta(j)} \le e^{2K} N/\|\bLambda\|_1$; setting $M_{\textup{dev}} := \max_{i,j}\left|e^{\balpha_{\X}(i)+\bbeta_{\X}(j)} - e^{\balpha(i)+\bbeta(j)}\right|$, \eqref{eq:pf_thm1_joint_bound} gives
	\begin{align}\label{eq:pf_thm1_M_bound}
		\frac{M_{\textup{dev}}}{N} \;\le\; \frac{\eps_{\textup{pot}}\, e^{2K}}{\|\bLambda\|_1}.
	\end{align}

	\textbf{Step 2: Mass concentration.}
	Applying Lemma \ref{lem:bernstein_RV} with weights $a_{ij} = 1$, centered variables $Y_{ij} = \X_{ij} - \bLambda_{ij}$, and threshold $t = \|\bLambda\|_1$ (so $\sum_{i,j} a_{ij}^2 \sigma^2 \le mn\sigma^2$ and $a_{\max} R = R$) yields
	\[
	\P\!\left(\Bigl|\sum_{i,j}(\X_{ij} - \bLambda_{ij})\Bigr| \ge \|\bLambda\|_1\right) \le 2\exp\!\left(-\frac{\|\bLambda\|_1^2/2}{mn\sigma^2 + R\|\bLambda\|_1}\right).
	\]
	Let $\mathcal{E}_{\textup{mass}} := \{\sum_{i,j}\X_{ij} \le 2\|\bLambda\|_1\}$.
	
	\textbf{Step 3: Spectral path (for $\mathcal{E}_2$).}
	Write $\mathbf{D}_{\alpha} := \D(e^{\balpha})$, $\mathbf{D}_{\beta} := \D(e^{\bbeta})$, and analogously $\mathbf{D}_{\alpha_{\X}}, \mathbf{D}_{\beta_{\X}}$. The algebraic identity
	\begin{align*}
		\mathbf{D}_{\alpha_{\X}} (\X-\bLambda) \mathbf{D}_{\beta_{\X}} - \mathbf{D}_{\alpha} (\X-\bLambda) \mathbf{D}_{\beta} = \;&(\mathbf{D}_{\alpha_{\X}} - \mathbf{D}_{\alpha})(\X-\bLambda)\mathbf{D}_{\beta} + \mathbf{D}_{\alpha}(\X-\bLambda)(\mathbf{D}_{\beta_{\X}} - \mathbf{D}_{\beta}) \\
		&\phantom{}+ (\mathbf{D}_{\alpha_{\X}} - \mathbf{D}_{\alpha})(\X-\bLambda)(\mathbf{D}_{\beta_{\X}} - \mathbf{D}_{\beta}),
	\end{align*}
	combined with sub-multiplicativity of the spectral norm and the fact that a diagonal matrix's spectral norm equals the $\ell^{\infty}$ norm of its diagonal, gives
	\begin{align}\label{eq:pf_rescaled_approx_diag}
		\|\mathbf{D}_{\alpha_{\X}} (\X-\bLambda) \mathbf{D}_{\beta_{\X}} - \mathbf{D}_{\alpha} (\X-\bLambda) \mathbf{D}_{\beta} \|_2 \le \|\X-\bLambda\|_2 \Big( &\|e^{\balpha_{\X}} - e^{\balpha}\|_{\infty} \|e^{\bbeta}\|_{\infty} + \|e^{\balpha}\|_{\infty} \|e^{\bbeta_{\X}} - e^{\bbeta}\|_{\infty} \nonumber\\
		& + \|e^{\balpha_{\X}} - e^{\balpha}\|_{\infty} \|e^{\bbeta_{\X}} - e^{\bbeta}\|_{\infty} \Big).
	\end{align}
	The definitions of $(\balpha_{\X}, \bbeta_{\X})$ in Theorem \ref{thm:concentration_margins_rs} and of $(\balpha, \bbeta)$ are phrased modulo the gauge $(\balpha, \bbeta)\mapsto(\balpha - s\mathbf{1}, \bbeta + s\mathbf{1})$, under which the left-hand side of \eqref{eq:pf_rescaled_approx_diag} is invariant. We absorb the scalar shift $t$ from \eqref{eq:pf_thm1_pot_bound} into $(\balpha_{\X}, \bbeta_{\X})$, under which \eqref{eq:pf_thm1_relative_bound} yields
	\[
	\|e^{\balpha_{\X}} - e^{\balpha}\|_{\infty} \;\le\; \tfrac{1}{2}\eps_{\textup{pot}}\,\|e^{\balpha}\|_{\infty}, \qquad \|e^{\bbeta_{\X}} - e^{\bbeta}\|_{\infty} \;\le\; \tfrac{1}{2}\eps_{\textup{pot}}\,\|e^{\bbeta}\|_{\infty}.
	\]
	A further gauge choice equalizes $\|e^{\balpha}\|_{\infty} = \|e^{\bbeta}\|_{\infty}$; under this normalization, Lemma \ref{lem: boundedness_matrix_scaling_potentials} gives
	\[
	\|e^{\balpha}\|_{\infty}\,\|e^{\bbeta}\|_{\infty} = \max_{i,j} e^{\balpha(i)+\bbeta(j)} \le e^{2K}\frac{N}{\|\bLambda\|_1}.
	\]
	Factoring $\|e^{\balpha}\|_{\infty}\|e^{\bbeta}\|_{\infty}$ out of the parenthesized factor in \eqref{eq:pf_rescaled_approx_diag} and using $\eps_{\textup{pot}}\le 1$ to bound the residual by $\eps_{\textup{pot}} + \eps_{\textup{pot}}^2/4 \le \tfrac{5}{4}\eps_{\textup{pot}} \le 3\eps_{\textup{pot}}$, we have
	\begin{align}\label{eq:pf_thm1_spectral_conclusion}
		\frac{1}{N}\,\|\mathbf{D}_{\alpha_{\X}} (\X-\bLambda) \mathbf{D}_{\beta_{\X}} - \mathbf{D}_{\alpha} (\X-\bLambda) \mathbf{D}_{\beta} \|_2 \;\le\; \frac{3\, e^{2K}\,\eps_{\textup{pot}}\, t_D}{\|\bLambda\|_1}
	\end{align}
	on the event $\mathcal{E}_{1}\cap\{\|\X-\bLambda\|_{2}\le t_{D}\}$, which establishes the bound defining $\mathcal{E}_2$. Hence
	\begin{align}\label{eq:pf_thm1_E2_inclusion}
		\mathcal{E}_{1}\cap\{\|\X-\bLambda\|_{2}\le t_{D}\} \;\subseteq\; \mathcal{E}_{2}.
	\end{align}
	
	\textbf{Step 4: $L^1$ path (for $\mathcal{E}_3$).}
	The entry-wise factorization
	\[
	[\X^{\r,\c} - \hat{\X}^{\r,\c}]_{ij} = \bigl(e^{\balpha_{\X}(i)+\bbeta_{\X}(j)} - e^{\balpha(i)+\bbeta(j)}\bigr)\,\X_{ij}
	\]
	and the nonnegativity of $\X$ give $\|\X^{\r,\c} - \hat{\X}^{\r,\c}\|_1 \le M_{\textup{dev}} \sum_{i,j}\X_{ij}$. On $\mathcal{E}_1 \cap \mathcal{E}_{\textup{mass}}$, using \eqref{eq:pf_thm1_M_bound},
	\begin{align}\label{eq:pf_thm1_L1_conclusion}
		\frac{1}{N}\|\X^{\r,\c} - \hat{\X}^{\r,\c}\|_1 \;\le\; \frac{M_{\textup{dev}}}{N}\sum_{i,j}\X_{ij} \;\le\; \frac{\eps_{\textup{pot}} e^{2K}}{\|\bLambda\|_1}\cdot 2\|\bLambda\|_1 \;=\; 2\eps_{\textup{pot}} e^{2K},
	\end{align}
	establishing the bound in $\mathcal{E}_3$.
	
	\textbf{Step 5: Probability bounds.}
	Because $\mathcal{E}_1$ already forces $\X \in \mathcal{S}(\r,\c)$ (proof of Theorem \ref{thm:concentration_margins_rs}), the scalability component in \eqref{eq:def_event_E2} and \eqref{eq:def_event_E3} is automatic on $\mathcal{E}_1\cap\{\|\X-\bLambda\|_2\le t_D\}$ and $\mathcal{E}_1\cap\mathcal{E}_{\textup{mass}}$, respectively. Combining \eqref{eq:pf_thm1_E2_inclusion} with Lemma \ref{lem:spectral norm of centered poisson} and the union bound gives
	\begin{align*}
		\P(\mathcal{E}_2^c) &\le \P(\mathcal{E}_1^c) + \P\bigl(\|\X-\bLambda\|_2 > t_D\bigr) \le 2(m \vee n)^{-D} + m^2 \exp(-c\,\Phi_{\bLambda}), \\
		\P(\mathcal{E}_3^c) &\le \P(\mathcal{E}_1^c) + \P(\mathcal{E}_{\textup{mass}}^c) \le (m \vee n)^{-D} + m^2 \exp(-c\,\Phi_{\bLambda}) + 2\exp\!\left(-\tfrac{\|\bLambda\|_1^2/2}{mn\sigma^2 + R\|\bLambda\|_1}\right),
	\end{align*}
	matching the probability bounds stated in the theorem.
\end{proof}

\begin{proof}[\textbf{Proof of Theorem \ref{thm:bound_X^rs-Z}}]
	We split the integration error via the triangle inequality:
	\begin{align}\label{eq:pf_thm2_split}
		\left|\int_{[0,1]^2}g(\varphi_{\X^{\r,\c}} - \varphi_{\bLambda^{\r,\c}})\, dx\,dy\right| \le \underbrace{\left|\int_{[0,1]^2}g(\varphi_{\X^{\r,\c}} - \varphi_{\hat{\X}^{\r,\c}})\, dx\,dy\right|}_{\text{Term 1}} + \underbrace{\left|\int_{[0,1]^2}g(\varphi_{\hat{\X}^{\r,\c}} - \varphi_{\bLambda^{\r,\c}})\, dx\,dy\right|}_{\text{Term 2}}.
	\end{align}
	
	Since $\varphi_{\X^{\r,\c}}$ and $\varphi_{\hat{\X}^{\r,\c}}$ are piecewise constant on the $mn$ grid cells, denoting $G_{ij} = \int_{A_{ij}} g(x, y) dxdy$, 
	\[
	\text{Term 1} = %\left| \frac{1}{mn} \sum_{i=1}^m \sum_{j=1}^n g\!\left(\tfrac{i}{m}, \tfrac{j}{n}\right) \frac{mn}{N} \bigl(\X^{\r,\c}_{ij} - \hat{\X}^{\r,\c}_{ij}\bigr) \right|
	\left| \sum_{i = 1}^{m} \sum_{j = 1}^{n} G_{ij} \frac{mn}{N} \bigl(\X^{\r,\c}_{ij} - \hat{\X}^{\r,\c}_{ij}\bigr) \right| 
	\le \frac{\|g\|_{\infty}}{N} \|\X^{\r,\c} - \hat{\X}^{\r,\c}\|_1.
	\]
	By Theorem \ref{thm:first_approximation_rescaled}, on the event $\mathcal{E}_3$ (evaluated at the current $\tau$), $\frac{1}{N}\|\X^{\r,\c} - \hat{\X}^{\r,\c}\|_1 \le 2\eps_{\textup{pot}} e^{2K}$; hence
	\[
	\text{Term 1} \le 2\|g\|_{\infty}\, \eps_{\textup{pot}}\, e^{2K}.
	\]
	
	Next, since $\bLambda^{\r,\c}_{ij} = e^{\balpha(i)+\bbeta(j)} \bLambda_{ij}$ and $\hat{\X}^{\r,\c}_{ij} = e^{\balpha(i)+\bbeta(j)} \X_{ij}$, Term 2 is a weighted sum of centered entries:
	\[
	\text{Term 2} = \left| \sum_{i,j} W_{ij} (\X_{ij} - \bLambda_{ij}) \right|, \qquad W_{ij} := \frac{mn}{N}\, e^{\balpha(i)+\bbeta(j)} \int_{A_{ij}} g(x,y)\, dx\, dy,
	\]
	where $A_{ij}$ is the grid cell of area $1/(mn)$. Using $\bigl|\int_{A_{ij}} g\bigr| \le \|g\|_\infty / (mn)$ and Lemma \ref{lem: boundedness_matrix_scaling_potentials} (giving $e^{\balpha(i)+\bbeta(j)} \le e^{2K} N/\|\bLambda\|_1$), we uniformly bound the weights by
	\[
	|W_{ij}| \le \frac{\|g\|_{\infty} e^{2K}}{\|\bLambda\|_1}.
	\]
	Applying Lemma \ref{lem:bernstein_RV} with variance proxy $V = \sum_{i,j} W_{ij}^2 \sigma^2 \le mn\sigma^2 (\|g\|_\infty e^{2K}/\|\bLambda\|_1)^2$ and scale $B = \max_{i,j}|W_{ij}|\, R \le \|g\|_\infty e^{2K} R/\|\bLambda\|_1$, for any $t > 0$,
	\[
	\P(\text{Term 2} \ge t) \le 2\exp\!\left(-\tfrac{t^2/2}{V + Bt}\right).
	\]
	The threshold $t = \sqrt{2V\tau} + 2B\tau$ makes this at most $2\exp(-\tau)$, and substituting the bounds on $V$ and $B$ yields
	\[
	t \le \frac{\|g\|_\infty e^{2K}}{\|\bLambda\|_1}\bigl(\sigma\sqrt{2mn\tau} + 2R\tau\bigr).
	\]
	Let $\mathcal{E}_{\textup{conc}}$ be the event that Term 2 is bounded by this threshold; then $\P(\mathcal{E}_{\textup{conc}}^c) \le 2\exp(-\tau) = \tfrac{1}{2}(m\vee n)^{-(D+1)}$.
	
	On $\mathcal{E}_3 \cap \mathcal{E}_{\textup{conc}}$, summing the bounds for Term 1 and Term 2 yields the stated error bound. A union bound on the failure probabilities gives
	\begin{align*}
		\P(\mathcal{E}_3^c) + \P(\mathcal{E}_{\textup{conc}}^c)
		&\le (m\vee n)^{-D} + m^2\exp(-c\,\Phi_{\bLambda}) + 2\exp\!\left(-\tfrac{\|\bLambda\|_1^2/2}{mn\sigma^2+R\|\bLambda\|_1}\right) + \tfrac{1}{2}(m\vee n)^{-(D+1)}\\
		&\le 2(m\vee n)^{-D} + m^2\exp(-c\,\Phi_{\bLambda}) + 2\exp\!\left(-\tfrac{\|\bLambda\|_1^2/2}{mn\sigma^2+R\|\bLambda\|_1}\right),
	\end{align*}
	where the $(m\vee n)^{-D}$ bound on $\P(\mathcal{E}_3^c)$'s polynomial part is obtained by applying Theorem \ref{thm:first_approximation_rescaled} (and thus Theorem \ref{thm:concentration_margins_rs}) with the current $\tau = (D+1)\log(m\vee n) + \log 4$.
\end{proof}

\subsection{Proofs of scaling limit results}

Here we prove Theorem \ref{thm:RM_rescaled_limit}. This proof integrates the stochastic approximation bounds with the deterministic limit. The key to the proof is rigorously evaluating the asymptotic order of the algebraic terms generated by Theorem \ref{thm:bound_X^rs-Z} to show that the $O(1)$ stability constants hold and the stochastic noise term decays exactly as stated.

\begin{proof}[\textbf{Proof of Theorem \ref{thm:RM_rescaled_limit}}]
	Within this proof we write $N_k := m_k \vee n_k$ for the larger dimension; we trust context to distinguish this from the total margin mass $\sum_i \r_k(i)=\sum_j \c_k(j)$ also denoted $N_k$ in Assumption \ref{assumption: limit_theory}. By Assumption \ref{assumption: limit_theory}, the discrete kernel $\varphi_{\bLambda_k}$ converges to a strictly positive, bounded density $\varphi_{\bLambda}$, which forces the matrix entries to scale uniformly: $\bLambda_{k,\min} = \Theta(\|\bLambda_k\|_1/N_k^2)$ and $\bLambda_{k,\max} = \Theta(\|\bLambda_k\|_1/N_k^2)$. Consequently the explicit stability constant satisfies $C_{*,k} = 1 + O(1) = O(1)$, and the global scale parameter $\sigma + R + \bLambda_{k,\max}$ stays bounded away from $0$ and $\infty$.
	
	We separate the integration error via the triangle inequality:
	\begin{align}\label{eq:pf_thm4_split}
		\left|\int_{[0,1]^2}g\bigl(\varphi_{\X_k^{\r_k,\c_k}} - W^{\r,\c;\bLambda}\bigr)\right| \le \underbrace{\left|\int_{[0,1]^2}g\bigl(\varphi_{\X_k^{\r_k,\c_k}} - \varphi_{\bLambda_k^{\r_k,\c_k}}\bigr)\right|}_{\text{Term 1}} + \underbrace{\left|\int_{[0,1]^2}g\bigl(\varphi_{\bLambda_k^{\r_k,\c_k}} - W^{\r,\c;\bLambda}\bigr)\right|}_{\text{Term 2}}.
	\end{align}
	
	First, we apply Theorem \ref{thm:bound_X^rs-Z} with $\tau = (D+1)\log N_k + \log 4 = \Theta(D\log N_k)$. Since $C_{*,k} = O(1)$,
	\[
	\eps_{\textup{pot}} \asymp \eps_0 \asymp \frac{N_k^{3/2}\sqrt{D\log N_k}}{\|\bLambda_k\|_1}.
	\]
	The mass-growth hypothesis $N_k^{3/2}\sqrt{\log N_k} = o(\|\bLambda_k\|_1)$ gives $\eps_{\textup{pot}} \to 0$, in particular $\eps_{\textup{pot}} \le 1$ for all sufficiently large $k$. The two terms in the bound of Theorem \ref{thm:bound_X^rs-Z} scale as
	\[
	2\|g\|_\infty \eps_{\textup{pot}} e^{2K} \asymp \frac{N_k^{3/2}\sqrt{D\log N_k}}{\|\bLambda_k\|_1}, \qquad \frac{\|g\|_\infty e^{2K}}{\|\bLambda_k\|_1}\bigl(\sigma\sqrt{2 m_k n_k \tau} + 2R\tau\bigr) \asymp \frac{N_k \sqrt{D\log N_k}}{\|\bLambda_k\|_1},
	\]
	where we used $m_k n_k \asymp N_k^2$ under the balanced-dimension hypothesis. The first term dominates, so
	\[
	\text{Term 1} = O_{K,\delta,\gamma,\|g\|_\infty,\sigma,R}\!\left(\frac{N_k^{3/2}\sqrt{D\log N_k}}{\|\bLambda_k\|_1}\right).
	\]
	The failure probability is $O(N_k^{-D})$ plus the sub-Weibull row-alignment term (which decays faster than any polynomial under the fixed kernel hypothesis) plus $2\exp(-\|\bLambda_k\|_1^2/(2(m_k n_k \sigma^2 + R\|\bLambda_k\|_1)))$ (also decaying faster than any polynomial since $\|\bLambda_k\|_1 \gg N_k^{3/2}$).
	
	Next, we bound Term 2. Bounding the integral by the $L^1$ distance, then by the Hellinger distance, we get 
	\[
	\text{Term 2} \le \|g\|_{\infty}\, \bigl\|\varphi_{\bLambda_k^{\r_k,\c_k}} - W^{\r,\c;\bLambda}\bigr\|_{L^1} \le 2\|g\|_{\infty}\, d_H\!\bigl(\varphi_{\bLambda_k^{\r_k,\c_k}},\, W^{\r,\c;\bLambda}\bigr).
	\]
	Applying Theorem \ref{thm:SSB_deterministic_limit} yields
	\[
	\text{Term 2} = O_{K,\delta,\|g\|_\infty}\!\left(\sqrt{\|\rho_{\r_k} - \rho_{\r}\|_{L^1} + \|\rho_{\c_k} - \rho_{\c}\|_{L^1}} + d_H(\varphi_{\bLambda_k}, \varphi_{\bLambda})\right).
	\]
	Summing the bounds for Term 1 and Term 2 yields the stated estimate.
\end{proof}

\subsection{Proofs of fluctuation results}

\begin{lemma}[Spectral concentration of the rescaled fluctuation matrix]\label{lem:rescaled_approx}
	Suppose Assumption \ref{assump:random_matrix} holds, and let $s_{\max}$ be the maximum variance defined in \eqref{eq:def_s_star}, $\A$ and $\check{\A}$ be the rescaled and comparison fluctuation matrices defined in \eqref{eq:def_check_A_and_A}. Let $t_D$, $\tau$, and $\eps_{\textup{pot}}$ be as in Theorem \ref{thm:first_approximation_rescaled}, and assume $\eps_{\textup{pot}} \le 1$. Then, on the event $\mathcal{E}_{2} \cap \{\|\X-\bLambda\|_2 \le t_D\}$,
	\begin{align}\label{eq:A_Acheck_spectral_norm_bd}
		%&{\color{blue}\|\A\A^T - \check{\A}\check{\A}^T\|_2 \le \eps_{\textup{cov}} := \frac{6\, e^{4K}\, N^2}{(m+n)\,s_{\textup{max}}\,\|\bLambda\|_1^2}\, \eps_{\textup{pot}}\, t_D^2\, (2 + 3\eps_{\textup{pot}}).}\\
		&\|\A\A^T - \check{\A}\check{\A}^T\|_2 \le \eps_{\textup{cov}} := \frac{3\, e^{4K}\, N^2}{(m+n)\,s_{\textup{max}}\,\|\bLambda\|_1^2}\, \eps_{\textup{pot}}\, t_D^2\, (2 + 3\eps_{\textup{pot}}).
	\end{align}
\end{lemma}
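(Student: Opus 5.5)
The plan is the standard perturbation identity for Gram matrices,
\begin{align}
\A\A^T - \check{\A}\check{\A}^T = (\A-\check{\A})(\A-\check{\A})^T + (\A-\check{\A})\check{\A}^T + \check{\A}(\A-\check{\A})^T,
\end{align}
so that
\begin{align}
\|\A\A^T - \check{\A}\check{\A}^T\|_2 \le 2\|\A-\check{\A}\|_2\|\check{\A}\|_2 + \|\A-\check{\A}\|_2^2 .
\end{align}
It then remains to bound $\|\A-\check{\A}\|_2$ and $\|\check{\A}\|_2$ on the event $\mathcal{E}_2\cap\{\|\X-\bLambda\|_2\le t_D\}$. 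Both carry the common prefactor $((m+n)s_{\textup{max}})^{-1/2}$, whose square produces the factor $1/((m+n)s_{\textup{max}})$ in $\eps_{\textup{cov}}$.

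For the difference, the event $\mathcal{E}_2$ (Theorem \ref{thm:first_approximation_rescaled}) directly gives
\begin{align}
\|\D(e^{\balpha_{\X}})(\X-\bLambda)\D(e^{\bbeta_{\X}})-\D(e^{\balpha})(\X-\bLambda)\D(e^{\bbeta})\|_2 \le 3e^{2K}\eps_{\textup{pot}}t_D\, N/\|\bLambda\|_1,
\end{align}
and hence
\begin{align}
\|\A-\check{\A}\|_2\le \frac{3e^{2K}\eps_{\textup{pot}}t_D N}{\sqrt{(m+n)s_{\textup{max}}}\,\|\bLambda\|_1}.
\end{align}
This is gauge invariant, so no normalization issue arises. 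For $\check{\A}$, I would use submultiplicativity together with the gauge equalizing $\|e^{\balpha}\|_\infty=\|e^{\bbeta}\|_\infty$, as in Step 3 of the proof of Theorem \ref{thm:first_approximation_rescaled}. Since $\check{\A}$ is itself gauge invariant, Lemma \ref{lem: boundedness_matrix_scaling_potentials} yields
\begin{align}
\|\D(e^{\balpha})(\X-\bLambda)\D(e^{\bbeta})\|_2\le e^{2K}\frac{N}{\|\bLambda\|_1}\, t_D
\end{align}
on $\{\|\X-\bLambda\|_2\le t_D\}$.

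Substituting both bounds gives
\begin{align}
\|\A\A^T-\check{\A}\check{\A}^T\|_2\le \frac{e^{4K}N^2t_D^2}{(m+n)s_{\textup{max}}\|\bLambda\|_1^2}\bigl(6\eps_{\textup{pot}}+9\eps_{\textup{pot}}^2\bigr),
\end{align}
which is exactly $\frac{3e^{4K}N^2}{(m+n)s_{\textup{max}}\|\bLambda\|_1^2}\eps_{\textup{pot}}t_D^2(2+3\eps_{\textup{pot}})=\eps_{\textup{cov}}$. The hypothesis $\eps_{\textup{pot}}\le1$ is not needed for the algebra itself; it only ensures that $\mathcal{E}_2$ is defined as in Theorem \ref{thm:first_approximation_rescaled}.

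The main obstacle is bookkeeping rather than an analytic difficulty. One must check that the $\mathcal{E}_2$ bound is really stated for the difference of unnormalized matrices with the $1/N$ factor, so that multiplying by $N$ is correct. One must also check that the bound $\|e^{\balpha}\|_\infty\|e^{\bbeta}\|_\infty\le e^{2K}N/\|\bLambda\|_1$ is legitimate: it requires a simultaneous maximizer of $e^{\balpha(i)}$ and $e^{\bbeta(j)}$, which holds since the product of the maxima equals $\max_{i,j}e^{\balpha(i)+\bbeta(j)}$. Finally, Lemma \ref{lem: boundedness_matrix_scaling_potentials} gives $e^{2\|\bar\kappa\|_\infty}$, and the identification of $\bar\kappa$ with the cost of Assumption \ref{assump:bounded_cost} is what makes this $e^{2K}$.
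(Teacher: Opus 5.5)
Your proposal is correct and follows the paper's proof. The paper uses the two-term identity $\A\A^T-\check{\A}\check{\A}^T=(\A-\check{\A})\A^T+\check{\A}(\A-\check{\A})^T$ and then bounds $\|\A\|_2\le\|\check{\A}\|_2+\|\A-\check{\A}\|_2$, which gives exactly your $2\|\A-\check{\A}\|_2\|\check{\A}\|_2+\|\A-\check{\A}\|_2^2$, with the same $\mathcal{E}_2$ bound on the difference and the same Lemma \ref{lem: boundedness_matrix_scaling_potentials} bound on $\|\check{\A}\|_2$. As you essentially note, the gauge equalization is not needed, since $\|e^{\balpha}\|_\infty\|e^{\bbeta}\|_\infty=\max_{i,j}e^{\balpha(i)+\bbeta(j)}$ is already gauge invariant.
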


\begin{proof}	
	To bound the difference of the covariance matrices, we use the algebraic identity $\A\A^T - \check{\A}\check{\A}^T = (\A - \check{\A})\A^T + \check{\A}(\A^T - \check{\A}^T)$. Applying the triangle inequality and sub-multiplicativity gives:
	\[
	\|\A\A^T - \check{\A}\check{\A}^T\|_2 \le \|\A - \check{\A}\|_2 (\|\A\|_2 + \|\check{\A}\|_2).
	\]
	{
		By the definitions \eqref{eq:def_check_A_and_A} and %^Theorem \ref{thm:first_approximation_rescaled},
		\eqref{eq:def_event_E2}, on the event $\mathcal{E}_{2}$, 
		\begin{align}
			\|\A - \check{\A}\|_2 &= \frac{1}{\sqrt{(m+n)s_{\textup{max}}}}\bigl\|\D(e^{\balpha_{\X}})(\X-\bLambda)\D(e^{\bbeta_{\X}})-\D(e^{\balpha})(\X-\bLambda)\D(e^{\bbeta})\bigr\|_2 \\
			&\le \frac{3e^{2K}\, N}{\sqrt{(m+n)s_{\textup{max}}}\;\|\bLambda\|_1}\,\eps_{\textup{pot}}\, t_D.
		\end{align}
	}
	To bound the spectral norm of the comparison fluctuation matrix $\check{\A}$, we use the uniform bound $\|e^{\balpha}\|_{\infty}\|e^{\bbeta}\|_{\infty}\le e^{2K}N/\|\bLambda\|_1$ from Lemma \ref{lem: boundedness_matrix_scaling_potentials} (combined with a gauge choice $\|e^{\balpha}\|_\infty = \|e^{\bbeta}\|_\infty$ as in the proof of Theorem \ref{thm:first_approximation_rescaled}) and the spectral noise bound $\|\X-\bLambda\|_2 \le t_D$ from the lemma hypothesis:
		\[
		\|\check{\A}\|_2 \le \frac{1}{\sqrt{(m+n)s_{\textup{max}}}}\, \|e^{\balpha}\|_{\infty} \|e^{\bbeta}\|_{\infty}\, \|\X - \bLambda\|_2 \le \frac{e^{2K}\, N}{\sqrt{(m+n)s_{\textup{max}}}\;\|\bLambda\|_1}\, t_D.
		\]
	By the triangle inequality, the norm of the random fluctuation matrix is bounded by:
		\[
		\|\A\|_2 \le \|\check{\A}\|_2 + \|\A - \check{\A}\|_2 \le \frac{e^{2K}\, N}{\sqrt{(m+n)s_{\textup{max}}}\;\|\bLambda\|_1}\, t_D\,(1 + 3\eps_{\textup{pot}}).
		\]
	Substituting these bounds into the product yields:
	I think $3$ is a tigher constant
	\begin{align}
		\|\A\A^T - \check{\A}\check{\A}^T\|_2 &\le \left( \frac{3 e^{2K}\, N}{\sqrt{(m+n)s_{\textup{max}}}\;\|\bLambda\|_1}\, \eps_{\textup{pot}}\, t_D \right)\!\left( \frac{e^{2K}\, N}{\sqrt{(m+n)s_{\textup{max}}}\;\|\bLambda\|_1}\, t_D\,(2 + 3\eps_{\textup{pot}}) \right) \\
		&= \frac{3\, e^{4K}\, N^2}{(m+n)\,s_{\textup{max}}\,\|\bLambda\|_1^2}\, \eps_{\textup{pot}}\, t_D^2\, (2 + 3\eps_{\textup{pot}}),
	\end{align}
	which is the exact upper bound $\eps_{\textup{cov}}$, completing the proof.
\end{proof}

\begin{proof}[\textbf{Proof of Theorem \ref{thm:ESD}}]
	Our proof relies on the spectral concentration bounds from Lemma \ref{lem:rescaled_approx}, combined with the local law for random Gram matrices established by Alt, Erdős, and Krüger \cite{alt2017local}.
	
	Recall that the entries of the comparison fluctuation matrix $\check{\A}$ are given by \eqref{eq:def_check_A_and_A}. Since $\X$ has independent entries, the entries of $\check{\A}$ are independent and centered, with variance profile $\mathbf{S}$.
	
	To apply the local law and rigidity results from \cite{alt2017local} to $\check{\A}\check{\A}^T$, we verify that $\mathbf{S}$ satisfies the foundational assumptions (A), (F2), (C), (D) of that paper. Let $\sigma_{\max}^2 = \max_{i,j} \operatorname{Var}(\X_{ij})$ and $\sigma_{\min}^2 = \min_{i,j} \operatorname{Var}(\X_{ij}) > 0$.
	\begin{itemize}[leftmargin=0.3cm, itemsep=0.15cm]
		\item \textbf{(D) Comparable Dimensions:} By hypothesis of the theorem, $\gamma \le m/n \le \gamma^{-1}$, satisfying the dimensional-comparability requirement.
		\item \textbf{(A) Flatness (Upper Bound):}  By the definition of $s_{\textup{max}}$ in \eqref{eq:def_s_star}, $e^{2\balpha(i)+2\bbeta(j)}\operatorname{Var}(\X_{ij})\le s_{\textup{max}}$ for all $(i,j)$, so
		\[
		\mathbf{S}_{ij} \;=\; \frac{1}{(m+n)\,s_{\textup{max}}}\, e^{2\balpha(i)+2\bbeta(j)}\, \operatorname{Var}(\X_{ij}) \;\le\; \frac{1}{m+n},
		\]
		verifying the flatness assumption (A) of \cite{alt2017local}. Then \cite[Thm.~2.1]{alt2017local}  yields $\operatorname{supp}(\nu)\subseteq [0,4]$. 
	\item \textbf{(F2) Uniform Lower Bound:}  Let $s_{\min} := \min_{i,j} e^{2\balpha(i)+2\bbeta(j)}\,\operatorname{Var}(\X_{ij})$, which is strictly positive by Assumption \ref{assump:random_matrix}. Then directly from \eqref{eq:variance_profile_def} and the definition of $s_{\textup{max}}$ in \eqref{eq:def_s_star},
	\[
	\mathbf{S}_{ij} \;\ge\; \frac{\, s_{\min}}{(m+n)\,s_{\textup{max}}} \;=\; \frac{\varphi}{m+n},\qquad \varphi := \frac{ s_{\min}}{s_{\textup{max}}}.
	\]
	Using Lemma \ref{lem: boundedness_matrix_scaling_potentials} to bound the potentials ($e^{-2K}N/\|\bLambda\|_1 \le e^{\balpha(i)+\bbeta(j)}\le e^{2K}N/\|\bLambda\|_1$), this constant admits the explicit lower bound $\varphi \ge  e^{-8K} \xi>0$, depending only on $K$ and the variance ratio $\xi$. Since $\mathbf{S}_{ij}>0$ for every $(i,j)$, the underlying support matrix is the all-ones matrix, which is fully indecomposable and strictly connected; hence (F2) implies Assumption (B) of \cite{alt2017local} by their Remark 2.8.
	\item \textbf{(C) Bounded Moments:} Because $\X_{ij}$ has sub-exponential tails by Assumption \ref{assump:random_matrix}, the normalized entries $\check{\A}_{ij}$ possess uniformly bounded moments of all orders, satisfying the polynomial moment bounds required by the local law.
\end{itemize}

Because the regularity conditions are satisfied, the eigenvalues of the comparison model $\check{\A}\check{\A}^T$ tightly concentrate around their classical locations. Let $\mathcal{E}_{\textup{rig}}$ be the event on which both the optimal rigidity bounds and the spectral confinement hold for $\check{\A}\check{\A}^T$. Specifically, we define $\mathcal{E}_{\textup{rig}}$ as the event where the following two conditions are met simultaneously:
\begin{enumerate}
	\item $\sigma(\check{\A}\check{\A}^T) \subset \{\tau : \text{dist}(\tau, \text{supp}(\nu)) < \eps_*\}$.
	\item For all $\tau \in (0, 4]$ satisfying $\pi(\tau) \ge \eps_*$, the ordered eigenvalues satisfy:
	\[
	|\lambda_{i(\tau)}(\check{\A}\check{\A}^T) - \tau| \le \begin{cases} 
		\frac{m^{\eps}}{m} & \text{if } m \ne n, \\
		\frac{m^{\eps}}{m}\left(\sqrt{\tau} + \frac{1}{m}\right) & \text{if } m = n.
	\end{cases}
	\]
\end{enumerate}
By Theorem 2.9(ii) and (iii) for properly rectangular matrices, and Theorem 2.7(iii) and (iv) for square matrices from \cite{alt2017local}, the probability of the complement event is bounded by $\P(\mathcal{E}_{\textup{rig}}^c) \le O(m^{-D})$. (Those theorems state the rigidity on the range $\tau\in(0,10s_*^{\textup{AEK}}] = (0,20]$, but the extra condition $\pi(\tau)\ge\eps_*$ restricts $\tau$ to the interior of $\operatorname{supp}(\nu)\subseteq[0,4s_*^{\textup{AEK}}]=[0,8]$, so the two ranges are interchangeable.)

To conclude the proof, let $\mathcal{E}_{\textup{bad}}$ denote the eigenvalue deviation event inside the probability bound \eqref{eq:rigidity_rectangular} for case (i). Set $\mathcal{E}_{2}' := \mathcal{E}_{2} \cap \{\|\X-\bLambda\|_2 \le t_D\}$, the event on which Lemma \ref{lem:rescaled_approx} applies. We decompose the probability of the intersection:
\[
\P\left(\{ \X \in \mathcal{S}(\r,\c) \} \cap \mathcal{E}_{\textup{bad}}\right) \le \P\left((\mathcal{E}_2')^{c}\right) + \P\left(\mathcal{E}_2' \cap \mathcal{E}_{\textup{bad}}\right).
\]
The first term is bounded by $\P(\mathcal{E}_2^{c}) + \P(\|\X-\bLambda\|_2 > t_D)$. For the second term, we apply Lemma \ref{lem:rescaled_approx}: on $\mathcal{E}_2'$ the explicit spectral-norm bound $\|\A\A^T - \check{\A}\check{\A}^T\|_2 \le \eps_{\textup{cov}}$ holds, and by Weyl's inequality $|\lambda_k(\A\A^T) - \lambda_k(\check{\A}\check{\A}^T)| \le \eps_{\textup{cov}}$ for every $k$.

If the event $\mathcal{E}_{\textup{bad}}$ also occurs, the triangle inequality demands that there must exist some classical location $\tau$ where the comparison model fails its rigidity bound: $|\lambda_{i(\tau)}(\check{\A}\check{\A}^T) - \tau| > \frac{m^\eps}{m}$. However, this is exactly the complement of the rigidity event $\mathcal{E}_{\textup{rig}}$. Therefore, we have the deterministic set inclusion $\mathcal{E}_2' \cap \mathcal{E}_{\textup{bad}} \subset \mathcal{E}_{\textup{rig}}^c$.

Taking probabilities and invoking Lemma \ref{lem:spectral norm of centered poisson}:
\[
\P\left(\{ \X \in \mathcal{S}(\r,\c) \} \cap \mathcal{E}_{\textup{bad}}\right) \;\le\; \P(\mathcal{E}_2^{c}) + (m\vee n)^{-D} + \P(\mathcal{E}_{\textup{rig}}^c) \;\le\; \P(\mathcal{E}_2^{c}) + \frac{C_3}{m^D},
\]
absorbing the two $O(m^{-D})$ terms into the constant $C_3$, which establishes part \textbf{(i)}. The exact same logical decomposition applied to the rigidity bounds for square matrices yields part \textbf{(ii)}.

To establish the spectral confinement bound in part \textbf{(iii)}, let $\mathcal{E}_{\textup{out}}$ denote the event that $\A\A^T$ has an eigenvalue outside the $(\eps_* + \eps_{\textup{cov}})$-neighborhood of $\text{supp}(\nu)$, namely:
\[
\mathcal{E}_{\textup{out}} = \left\{ \sigma({\A}{\A}^T) \cap \left\{ \tau : \text{dist}(\tau, \text{supp}(\nu)) \ge \eps_* + \eps_{\textup{cov}} \right\} \ne \emptyset \right\}.
\]
We decompose the probability of the intersection as before, with $\mathcal{E}_2' := \mathcal{E}_2 \cap \{\|\X-\bLambda\|_2 \le t_D\}$:
\[
\P\left(\{ \X \in \mathcal{S}(\r,\c) \} \cap \mathcal{E}_{\textup{out}}\right) \le \P\left((\mathcal{E}_2')^{c}\right) + \P\left(\mathcal{E}_2' \cap \mathcal{E}_{\textup{out}}\right).
\]
Assume the event $\mathcal{E}_2' \cap \mathcal{E}_{\textup{out}}$ occurs. By definition of $\mathcal{E}_{\textup{out}}$, there exists some eigenvalue index $k$ such that $\text{dist}(\lambda_k(\A\A^T), \text{supp}(\nu)) \ge \eps_* + \eps_{\textup{cov}}$. Because $\mathcal{E}_2'$ holds, Lemma \ref{lem:rescaled_approx} and Weyl's inequality give $|\lambda_k(\A\A^T) - \lambda_k(\check{\A}\check{\A}^T)| \le \eps_{\textup{cov}}$. Applying the reverse triangle inequality for the distance to a set yields:
\begin{align*}
	\text{dist}(\lambda_k(\check{\A}\check{\A}^T), \text{supp}(\nu)) &\ge \text{dist}(\lambda_k(\A\A^T), \text{supp}(\nu)) - |\lambda_k(\A\A^T) - \lambda_k(\check{\A}\check{\A}^T)| \\
	&\ge (\eps_* + \eps_{\textup{cov}}) - \eps_{\textup{cov}} = \eps_*.
\end{align*}
This implies $\sigma(\check{\A}\check{\A}^T)$ is not fully confined to the open $\eps_*$-neighborhood of $\text{supp}(\nu)$, which means the first condition defining the rigidity event $\mathcal{E}_{\textup{rig}}$ has failed. Thus, we have the deterministic set inclusion $\mathcal{E}_2' \cap \mathcal{E}_{\textup{out}} \subset \mathcal{E}_{\textup{rig}}^c$.

Taking probabilities and invoking Lemma \ref{lem:spectral norm of centered poisson}:
\[
\P\left(\{ \X \in \mathcal{S}(\r,\c) \} \cap \mathcal{E}_{\textup{out}}\right) \le \P(\mathcal{E}_2^{c}) + (m\vee n)^{-D} + \P(\mathcal{E}_{\textup{rig}}^c) \le \P(\mathcal{E}_2^{c}) + \frac{1}{m^D},
\]
after absorbing the two $O(m^{-D})$ terms, which establishes \eqref{eq:no_eigenvalues_outside} and completes the proof.
\end{proof}

Next, we prove the CLT for the Schr\"{o}dinger potentials stated in Theorem \ref{thm: CLT_potentials_fixed_dim}. The proof relies on the multivariate delta method applied to the Sinkhorn scaling map.

\begin{proof}[\textbf{Proof of Theorem \ref{thm: CLT_potentials_fixed_dim}}]
Under Assumption \ref{assump:random_matrix}, the entries of the random matrix $\X$ have finite moments of all orders; in particular, the covariance matrix $\bSigma = \textup{Cov}(\textup{vec}(\X))$ is well-defined and finite. By the multivariate Central Limit Theorem, the fluctuations of the empirical mean $\bar{\X}_M$ around the true mean $\bLambda$ satisfy:
\begin{align}\label{eq:clt_foundation}
	\sqrt{M} \textup{vec}(\bar{\X}_M - \bLambda) \xrightarrow{d} \mathbf{Z}, \quad \text{where } \mathbf{Z} \sim \mathcal{N}(\mathbf{0}, \bSigma).
\end{align}

Let $B: \mathcal{U} \to \mathcal{E}$ be the continuously differentiable map from Lemma \ref{lem: implicit_function_justification} which satisfies $B(\bLambda) = (\balpha, \bbeta)$. By the Strong Law of Large Numbers, $\bar{\X}_M \xrightarrow{a.s.} \bLambda$. Since $\mathcal{U}$ is an open neighborhood of $\bLambda$, the event $\{\bar{\X}_M \in \mathcal{U}\}$ occurs with probability approaching $1$. On this event, we apply the Fundamental Theorem of Calculus to the map $B$ along the line segment connecting $\bLambda$ and $\bar{\X}_M$:
\begin{align}\label{eq:taylor_integral}
	B(\bar{\X}_M) - B(\bLambda) = \left[ \int_{0}^1 J_{B}(\bLambda + t(\bar{\X}_M - \bLambda)) \, dt \right] \textup{vec}(\bar{\X}_M - \bLambda),
\end{align}
where $J_{B}(\X) \in \R^{(m+n) \times mn}$ is the Jacobian matrix of $B$ evaluated at $\X$. We now analyze the difference between this integral and the Jacobian at the limit point $J_{\bLambda} = -\mathbf{L}^{\dagger} \mathbf{H}$:
\begin{align*}
	\left\| \int_{0}^1 J_{B}(\bLambda + t(\bar{\X}_M - \bLambda)) \, dt - J_{\bLambda} \right\|_{\textup{op}} \le \int_{0}^1 \left\| J_{B}(\bLambda + t(\bar{\X}_M - \bLambda)) - J_{\bLambda} \right\|_{\textup{op}} \, dt.
\end{align*}
Because $B$ is $\mathcal{C}^1$, the Jacobian mapping $\X \mapsto J_B(\X)$ is continuous. Since the segment $\{\bLambda + t(\bar{\X}_M - \bLambda) : t \in [0,1]\}$ collapses to the point $\bLambda$ as $M \to \infty$, the integrand converges to zero uniformly in $t$ almost surely. Thus, the entire integral expression in the brackets of \eqref{eq:taylor_integral} converges to $J_{\bLambda}$ in probability.

Multiplying \eqref{eq:taylor_integral} by $\sqrt{M}$, we have:
\begin{align*}
	\sqrt{M}(B(\bar{\X}_M) - B(\bLambda)) = \left( J_{\bLambda} + o_P(1) \right) \sqrt{M} \textup{vec}(\bar{\X}_M - \bLambda).
\end{align*}
From \eqref{eq:clt_foundation}, the term $\sqrt{M} \textup{vec}(\bar{\X}_M - \bLambda)$ converges in distribution, which implies it is stochastically bounded ($O_P(1)$). By Slutsky's Theorem, the $o_P(1)$ term times the $O_P(1)$ term vanishes in probability, yielding the linear approximation:
\begin{align}
	\sqrt{M}\begin{pmatrix} \balpha_{\bar{\X}_M} - \balpha \\ \bbeta_{\bar{\X}_M} - \bbeta \end{pmatrix} = J_{\bLambda} \sqrt{M} \textup{vec}(\bar{\X}_M - \bLambda) + o_P(1).
\end{align}
Finally, by the Continuous Mapping Theorem, the linear transformation of the Gaussian limit $\mathbf{Z}$ results in:
\begin{align}
	J_{\bLambda} \sqrt{M} \textup{vec}(\bar{\X}_M - \bLambda) \xrightarrow{d} J_{\bLambda} \mathbf{Z} \sim \mathcal{N}\left(\mathbf{0}, J_{\bLambda} \bSigma J_{\bLambda}^\top \right).
\end{align}
Substituting the explicit identity $J_{\bLambda} = -\mathbf{L}^{\dagger} \mathbf{H}$ into the covariance quadratic form (where the negative signs cancel) completes the proof.
\end{proof}

\section*{Acknowledgements} 

DD was supported in part by NSF Award DMS-2023239. HL was partially supported by NSF Award DMS-2206296.

\small{
\bibliographystyle{amsalpha}
\bibliography{refs}
}

\end{document}